\newtheorem{thm}{Theorem}[section]
\newtheorem{prop}[thm]{Proposition}
\newtheorem{lem}[thm]{Lemma}
\newtheorem{cor}[thm]{Corollary}
\theoremstyle{definition}
\newtheorem{remark}[thm]{Remark}
\numberwithin{equation}{section}
\newcommand{\sfA}{\mathsf{A}}
\newcommand{\sfD}{\mathsf{D}}
\newcommand{\sfE}{\mathsf{E}}
\newcommand{\sfc}{\mathsf{c}}
\newcommand{\frakm}{\mathfrak{m}}
\newcommand{\bbA}{{\mathbb{A}}}
\newcommand{\bbZ}{{\mathbb{Z}}}
\newcommand{\bbP}{{\mathbb{P}}}
\newcommand{\bbG}{{\mathbb{G}}}
\newcommand{\bbC}{{\mathbb{C}}}
\newcommand{\bbF}{{\mathbb{F}}}
\newcommand{\bbk}{{\Bbbk}}
\newcommand{\bfF}{{\mathbf{F}}}
\newcommand{\GL}{\operatorname{GL}}
\newcommand{\gl}{\mathfrak{gl}}
\newcommand{\PGL}{\operatorname{PGL}}
\newcommand{\Aut}{\operatorname{Aut}}
\newcommand{\Sym}{\operatorname{Sym}}
\newcommand{\id}{\operatorname{id}}
\newcommand{\Pic}{\operatorname{Pic}}
\newcommand{\He}{\operatorname{He}}
\newcommand{\Tr}{\operatorname{Tr}}
\newcommand{\PSU}{\operatorname{PSU}}
\newcommand{\cha}{\operatorname{char}}
\newcommand{\Span}{\operatorname{span}}
\newcommand{\Crit}{\operatorname{Crit}}
\newcommand{\disc}{\operatorname{disc}}
\newcommand{\PSL}{\operatorname{PSL}}
\newcommand{\SO}{\operatorname{SO}}
\newcommand{\cub}{\operatorname{cub}}
\newcommand{\bsm}{\left(\begin{smallmatrix}}
\newcommand{\esm}{\end{smallmatrix}\right)}
\newcommand{\calD}{\mathcal{D}}
\newcommand{\calO}{\mathcal{O}}
\newcommand{\calH}{\mathcal{H}}
\newcommand{\calX}{\mathcal{X}}
\newcommand{\calE}{\mathcal{E}}
\newcommand{\calL}{\mathcal{L}}
\newcommand{\calP}{\mathcal{P}}
\newcommand{\calT}{\mathcal{T}}
\newcommand{\calM}{\mathcal{M}}
\newcommand{\frakS}{\mathfrak{S}}
\newcommand{\frakA}{\mathfrak{A}}
\newcommand{\beq}{\begin{equation}}
\newcommand{\eeq}{\end{equation}}
\begin{document}

\title{Automorphisms of cubic surfaces in positive characteristic}
\author{Igor Dolgachev}
\address{Department of Mathematics, University of Michigan, 
Ann Arbor, MI 48109}
\email{idolga@umich.edu}
\author{Alexander Duncan}
\address{Department of Mathematics, University of South Carolina, 
Columbia, SC 29208}
\email{duncan@math.sc.edu}
\thanks{The second author was partially supported by
National Security Agency grant H98230-16-1-0309.}

\begin{abstract} 
We classify all possible automorphism groups of smooth cubic surfaces
over an algebraically closed field of arbitrary characteristic.
As an intermediate step we also classify automorphism groups of quartic
del Pezzo surfaces.
We show that the moduli space of smooth cubic surfaces is
rational in every characteristic,
determine the dimensions of the strata admitting each possible
isomorphism class of automorphism group, and find explicit normal forms
in each case.
Finally, we completely characterize when
a smooth cubic surface in positive characteristic, together with a group
action, can be lifted to characteristic zero.
\end{abstract}

\maketitle

\setcounter{tocdepth}{1}
\tableofcontents

\section{Introduction}

Let $X$ be a smooth cubic surface in $\bbP^3$ defined over an
algebraically closed field $\bbk$ of arbitrary characteristic $p$.
The primary purpose of this paper is to classify the possible
automorphism groups of $X$.
Along the way, we also classify the possible automorphism groups of del Pezzo
surfaces of degree 4.
This is progress towards the larger goal of classifying finite subgroups
of the plane Cremona group in positive characteristic
(see \cite{DI} for characteristic $0$).

In characteristic $0$, the first attempts at a classification of cubic
surfaces were undertaken by S.~Kantor~\cite{Kantor}, then
A.~Wiman~\cite{Wiman} in late nineteenth century.
In 1942, B.~Segre~\cite{Segre} classified all automorphism groups in
his book on cubic surfaces.
Unfortunately, all of these classification had errors.
The first correct classification was carried out using computers by
T.~Hosoh~\cite{Hosoh} in 1997.
A modern classification, which does not require computers, can be
found in~\cite{CAG}.

Our approach emphasizes the similarities across characteristics,
rather than the differences.
There is a well-known injective group homomorphism $\Aut(X) \to W(\sfE_6)$,
unique up to conjugacy, where $\Aut(X)$ is the group of automorphisms of
$X$ and $W(\sfE_6)$ is the Weyl group of the root system $\sfE_6$.
This homomorphism can also be described as the induced action of
$\Aut(X)$ on the configuration of 27 lines.
In this way, automorphism groups of cubic surfaces can be directly
compared to each other even if they are defined over different fields.

Let $\calM_{\cub}(\bbk)$ be the coarse moduli space of smooth
cubic surfaces over $\bbk$.
The space $\mathcal{M}_{\textrm{cub}}(\bbk)$ is rational in all characteristics
(Theorem~\ref{thm:rationality}).
The conjugacy classes of elements in Weyl groups of root systems were
classified by Carter~\cite{Carter}, although we use the
Atlas~\cite{ATLAS} labeling
1A, \ldots, 12C for the classes in $W(\sfE_6)$.
To each conjugacy class of $W(\sfE_6)$ we may associate the
closed subvariety of $\mathcal{M}_{\textrm{cub}}(\bbk)$
corresponding to surfaces admitting an automorphism of that class.
For example, the 1A stratum corresponds to the set of all cubic surfaces
since 1A is the class of the trivial automorphism.
This gives a stratification of $\mathcal{M}_{\textrm{cub}}(\bbk)$
by conjugacy classes of $W(\sfE_6)$.
Some of these strata will be empty (for example, 12C)
and some will coincide with others (for example, the 9A and 3C strata
coincide).
We will see that every nonempty stratum is irreducible and even
unirational (Corollary~\ref{cor:unirationalStrata}).

We see the strata for cubic surfaces in characteristic $0$ in
Figure~\ref{fig:cubicSpecialization},
where we choose only one label for each stratum.
Here an arrow $A \to B$ means that the stratum $A$ strictly contains the
stratum $B$ and those arrows implied by transitivity are omitted for clarity.
Each successive row corresponds to strata of dimension one less than the
preceding row with the 1A stratum having dimension 4 and the strata for 3C,
5A, 8A, and 12A all being single points.
See Figures~\ref{fig:cubicSpecializationP2},
\ref{fig:cubicSpecializationP3},
and \ref{fig:cubicSpecializationP5}
in the Appendix for other characteristics.

\begin{figure}[h]
\[ \xymatrix{
& 1A \ar[d] \\
& 2A \ar[dl] \ar[d] \ar[ddrr] \\
2B \ar[d] \ar[dr] &
3D \ar[dl] \ar[d] \ar[dr] \\
4B \ar[d] \ar[dr] &
6E \ar[dl] \ar[d] &
3A \ar[dl] \ar[d] &
4A \ar[dl] \ar[d] \\
5A & 3C & 12A & 8A\\
}
\]
\caption{Specialization of strata in $\mathcal{M}_{\textrm{cub}}$ when
$p \ne 2,3,5$.}
\label{fig:cubicSpecialization}
\end{figure}

We are interested in the full automorphism groups of smooth cubic surfaces.
Each automorphism group gives rise to a conjugacy class of
subgroups of $W(\sfE_6)$.
Remarkably and conveniently, the strata in this case are the same
as the strata for cyclic groups.
Thus, we can name the stratum of a conjugacy class of automorphism groups
by a cyclic subgroup that gives rise to the same stratum.
These labels may not be unique, but this is a feature when
comparing different characteristics.
For example, the 3C, 5A, 12A strata are all distinct in characteristic
$0$, but they coincide in characteristic $2$.

We can now state our main theorem:

\begin{thm} \label{thm:main}
Suppose $X$ is a smooth cubic surface in $\bbP^3$ defined over an
algebraically closed field $\bbk$ of arbitrary characteristic $p$.
Then the group of automorphisms of $X$ is one of the groups in
Table~\ref{tbl:cubicAutos} and all such groups occur.
Each row of the table corresponds to the stratum of
$\mathcal{M}_{\textrm{cub}}(\bbk)$ which admits an automorphism of the named class.
The heading ``dim'' gives the dimension of the stratum,
$\operatorname{char}(\bbk)$ gives the conditions on the characteristic,
and $\Aut(X)$ is the full automorphism group of a general
cubic surface in that stratum along with its order.
(See section~\ref{sec:groupTheory} for explanations of the group
theoretic notation.)
\end{thm}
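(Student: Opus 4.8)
The plan is to exploit the injective homomorphism $\Aut(X)\hookrightarrow W(\sfE_6)$, unique up to conjugacy, to reduce the classification to a finite list of candidates, and then to settle the two halves of the statement separately: that every group appearing in Table~\ref{tbl:cubicAutos} is realized, and that nothing else is. Since $W(\sfE_6)$ is finite, $\Aut(X)$ is, up to conjugacy, one of finitely many subgroups, and finiteness of $\Aut(X)$ for \emph{every} smooth $X$ is automatic; the content lies in deciding which conjugacy classes of subgroups are actually realized over $\bbk$ and, for each realized class, in identifying the general member of the corresponding stratum of $\calM_{\cub}(\bbk)$ and computing its full automorphism group. The stratification by conjugacy classes of $W(\sfE_6)$ — finitely many by Carter~\cite{Carter}, labelled $1$A through $12$C — is the organizing device: I would first dispose of cyclic subgroups and only afterwards assemble the larger groups.

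For the cyclic case, an automorphism $g$ of $X$ lifts to an element of $\PGL_4(\bbk)$ fixing the defining cubic $F$. When $\operatorname{ord}(g)$ is prime to $p$ I diagonalize a lift, record the eigenvalue multiset, and list the degree-$3$ monomials on which $g$ acts by a scalar; this yields an explicit normal form for $F$ with a controlled number of parameters, from which the dimension of the stratum and the exact characteristics for which the form stays smooth are read off. When $p\mid\operatorname{ord}(g)$ the lift need not be semisimple, so I use its Jordan form instead; this is the source of the genuinely new normal forms in characteristics $2,3,5$. To identify which conjugacy class of $W(\sfE_6)$ a given normal form realizes, I would compute the induced action on $\Pic(X)$ (equivalently, on the $27$ lines), and I would invoke the classification of quartic del Pezzo surfaces (the announced intermediate step) as a stepping stone, since blowing down a $g$-invariant line on $X$ produces a quartic del Pezzo and transfers the problem to that already-settled case whenever such a line exists.

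To promote each stratum from ``admits an automorphism of the named class'' to ``has the listed full automorphism group'', I would take the general normal form produced above and directly compute the stabilizer of $[F]$ in $\PGL_4(\bbk)$; with the normal form in hand this is a bounded calculation in each case. The structural point that must be proved along the way — and which explains why Table~\ref{tbl:cubicAutos} can be indexed by cyclic classes — is that the stratum attached to a conjugacy class of subgroups coincides with the stratum attached to a suitable cyclic subgroup; I would deduce this from the normalizer structure of the relevant cyclic subgroups of $W(\sfE_6)$ together with the parameter counts. The specialization relations of Figure~\ref{fig:cubicSpecialization} and its characteristic-$p$ analogues fall out of the evident inclusions among normal forms, and for $p\ne 2,3,5$ the outcome matches the characteristic-zero classification of~\cite{CAG}.

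The main obstacle is the small-characteristic analysis, $p\in\{2,3,5\}$. There, lifts of automorphisms may fail to be diagonalizable, $\PGL_4(\bbk)$ acquires unipotent symmetries of the surface, some strata become empty (for instance $12$C) while others collapse together (for instance $9$A onto $3$C, and $3$C, $5$A, $12$A in characteristic $2$), and an otherwise-valid normal form can acquire singularities; each such coincidence and degeneration has to be checked individually. Interlaced with this is the bookkeeping of matching Carter/ATLAS classes to normal forms, to quartic del Pezzo data, and to the specialization diagrams uniformly in $p$, all while verifying that the final list of groups is simultaneously exhaustive and sharp — no spurious extra automorphism at a general point of any stratum. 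That combination of casework and uniformity is where the real effort goes.
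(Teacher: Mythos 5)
Your skeleton matches the paper's up to a point: reduce to conjugacy classes of (cyclic subgroups of) $W(\sfE_6)$, use Jordan canonical forms of lifts to $\PGL_4(\bbk)$ to produce normal forms and detect wild behaviour, and use the quartic del Pezzo classification by blowing down an invariant line both to exclude classes and to transfer information. Where you diverge is on the crux of the theorem, namely the \emph{upper bound}: showing that a general member of each stratum has no automorphisms beyond the exhibited group. You propose to ``directly compute the stabilizer of $[F]$ in $\PGL_4(\bbk)$'' and call this a bounded calculation, but as stated this is a gap. The stabilizer of a cubic form in a $d$-parameter family is the solution set of a large polynomial system in the $15$ parameters of $\PGL_4$, and there is no a priori reason an element of the stabilizer should preserve your chosen coordinates or normal form; this is essentially Hosoh's computer-assisted method, and it is not feasible by hand across all strata and characteristics. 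To make it work you need an \emph{intrinsic} object attached to $X$ that every automorphism must preserve and that rigidifies the normal form --- the paper uses the Sylvester pentahedron recovered from the Hessian when $p\ne 2,3$, the canonical point and canonical plane when $p=2$, the critical locus when $p=3$, and above all the bijection between Eckardt points and reflections. The classification of all possible configurations of Eckardt points and the reflection groups they generate (together with the fact that reflections have index $1$, $2$, or $4$ in $\Aut(X)$, with the exceptions confined to classes $4$A, $8$A, $12$A) is what actually caps the automorphism groups; your plan contains no substitute for it.

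A second, related soft spot is your claim that the coincidence of subgroup strata with cyclic strata follows from ``the normalizer structure of the relevant cyclic subgroups of $W(\sfE_6)$ together with the parameter counts.'' The coincidences are not group-theoretic facts about $W(\sfE_6)$: for instance, in characteristic $2$ the $2$B stratum carries the full $2^4$ because the pointwise-fixed line blows down to the canonical point of a quartic del Pezzo (so the \emph{entire} automorphism group of that del Pezzo lifts), and an exceptional line with more than one Eckardt point automatically carries five of them; similarly $3$C${}={}5$A${}={}12$A in characteristic $2$ reflects the uniqueness of the supersingular elliptic curve as ramification divisor of a cyclic cover. These are geometric inputs, characteristic-specific, and they are exactly the individual checks you defer to at the end; without them the normalizer heuristic will not produce the correct groups (e.g.\ it would not predict $\PSU_4(2)$ for the Fermat cubic in characteristic $2$, nor $2^3\rtimes\frakS_4$ for $4$A). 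So the proposal is a sound opening but leaves the maximality argument --- the genuinely hard half of the theorem --- unproved.
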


\begin{table}[h]
\begin{center}
\renewcommand{\arraystretch}{1.3}
\begin{tabular}{|c|c|c|cc|cccccccccccccccc|}
\hline
Name & dim & $\operatorname{char}(\bbk)$ & $\Aut(X)$ & Order \\
\hline
1A & 4 & any & $1$ & 1 \\
\hline
2A & 3 & any & $2$ & 2 \\
\hline
2B & 2 & $\ne 2$ & $2^2$ & 4 \\
& & 2 & $2^4$ & 16 \\
\hline
3A & 1 & any & $\calH_3(3) \rtimes 2$ & 54 \\
\hline
3C & 0 & $\ne 2,3$ & $3^3 \rtimes \frakS_4$ & 648 \\
& & 2 & $\PSU_4(2)$ & 25920\\
\hline
3D & 2 & any & $\frakS_3$ & 6\\
\hline
4A & 1 & $\ne 2$ & $4$ & 4\\
& & 2 & $2^3 \rtimes \frakS_4$ & 192\\
\hline
4B & 1 & $\ne 2$ & $\frakS_4$ & 24 \\
& & 2 & \multicolumn{2}{c|}{\cellcolor{gray!20}(same as 4A)}\\
\hline
5A & 0 & $\ne 2,5$ & $\frakS_5$ & 120 \\
& & 2 & \multicolumn{2}{c|}{\cellcolor{gray!20}(same as 3C)}\\
\hline
6E & 1 & $\ne 2$ & $\frakS_3 \times \frakS_2$ & 12 \\
& & 2 & \multicolumn{2}{c|}{\cellcolor{gray!20}(same as 4A)}\\
\hline
8A & 0 & $\ne 2,3$ & $8$ & 8 \\
& & 3 & $\calH_3(3) \rtimes 8$ & 216 \\
\hline
12A & 0 & $\ne 2,3$ & $\calH_3(3) \rtimes 4$ & 108 \\
& & 3 & \multicolumn{2}{c|}{\cellcolor{gray!20}(same as 8A)} \\
& & 2 & \multicolumn{2}{c|}{\cellcolor{gray!20}(same as 3C)} \\
\hline
\end{tabular}
\end{center}
\caption{Automorphism groups of cubic surfaces.}
\label{tbl:cubicAutos}
\end{table}

In addition to classifying the automorphism groups and the dimensions of
their strata, we also find explicit equations for the cubic surfaces
in each class
(see the summary in Table~\ref{tbl:Normal Forms}).
The specific choices of normal forms are certainly subjective, but they all have
the nice property that the number of parameters is exactly the same as
the dimension of the corresponding stratum of the moduli space.
In particular, they show that all of the strata are unirational.

There are several approaches that one could take to carry out the
classification; for some other approaches see Remarks~\ref{rem:CohenWales},
\ref{rem:Hosoh}, and \ref{rem:Saito}.
We have taken a conceptual geometric approach;
as a consequence, we believe many of our intermediate results and their
corollaries will be of independent interest.
We discuss these in the remainder of the introduction,
while also providing a tour of the cubic surfaces and their
automorphisms.

\subsection{Lifting to characteristic 0}

Let $X$ be a smooth projective irreducible variety over an algebraically
closed field $\bbk$ of positive characteristic and $G$ a finite group of
automorphisms of $X$.
We say that a group $G$ is \emph{tame} (resp. \emph{wild}) if the
characteristic $p$ does not divide (resp. divides) the order of $G$.
One expects that the behavior of tame groups to be similar across
different characteristics.
The order of $|W(\sfE_6)|$ is $51840=2^7 \times 3^4 \times 5$, so one only expects
unusual automorphism groups of cubic surfaces in characteristics $2,3,5$.

More precisely, we say
a pair $(X,G)$ can be \emph{lifted to characteristic $0$}
if there exists a complete discrete valuation
ring $R$ of characteristic $0$ with residue field $\bbk$,
and a smooth projective $A$-scheme $\calX$ with
an action of $G$ over $A$, such that the fiber over the closed
point is $G$-equivariantly isomorphic to $X$.
When $X$ is a smooth rational surface (for example, a smooth cubic)
and $G$ is tame, the pair can be lifted to characteristic $0$
by a result of Serre (see \S{}5~of~\cite{SerreBourbaki}).

An obvious obstruction to doing this for wild groups is that some
automorphism groups in characteristic $2$ and $3$ do not act on
any cubic surface in characteristic $0$.
This is the only obstruction:

\begin{thm} \label{thm:liftingIntro}
Let $X$ be a smooth cubic surface defined over an algebraically closed field
$\bbk$ of
positive characteristic and let $G$ be a group of automorphisms of $X$.
Then $(X,G)$ can be lifted to characteristic $0$ if and only if
$G \subset W(\sfE_6)$ is realized as a group of automorphisms on some
smooth cubic surface in characteristic $0$.
\end{thm}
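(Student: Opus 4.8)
The plan is to prove both directions, the forward one being essentially trivial and the reverse one being the substance. For the forward direction: if $(X,G)$ lifts to a smooth projective $R$-scheme $\calX$ with $G$-action, then the generic fiber $\calX_\eta$ is a smooth cubic surface over the fraction field of $R$ (cubic surfaces being characterized among del Pezzo surfaces of the right degree, and the relative anticanonical embedding deforming flatly), and after base change to $\bar{\bbk}(\eta)$ and specialization of the field we obtain a smooth cubic surface in characteristic $0$ carrying a $G$-action. The induced embedding $G \hookrightarrow W(\sfE_6)$ is compatible with specialization of the $27$ lines, so the conjugacy class of $G$ in $W(\sfE_6)$ is the same one that occurs over $\bbk$. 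Hence $G \subset W(\sfE_6)$ is realized in characteristic $0$.

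For the reverse direction, suppose $G \subset W(\sfE_6)$ is realized on some smooth cubic surface in characteristic $0$. The strategy is to use the explicit normal forms (Table~\ref{tbl:Normal Forms}) together with the stratification by conjugacy classes. The key point established earlier in the paper is that each nonempty stratum is unirational and, more precisely, is cut out by explicit equations with the number of parameters equal to the dimension of the stratum, and that these normal forms are ``uniform'' across characteristics in the sense that they are defined over $\bbZ$ (or over $\bbZ$ localized away from the relevant small primes, but including the characteristic we care about). So I would argue stratum by stratum: given the pair $(X,G)$ over $\bbk$, identify which stratum $X$ lies in, write $X$ in the corresponding normal form with parameters specialized to values in $\bbk$, and then lift those parameter values to a complete DVR $R$ of mixed characteristic with residue field $\bbk$ — e.g. the Witt vectors $W(\bbk)$, or a suitable totally ramified extension if a root of unity of order $p$ or a $p$-th root of a unit is needed to write the normal form. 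The same defining polynomial with the lifted parameters defines a smooth cubic $\calX$ over $R$; smoothness of the special fiber is an open condition, so after possibly shrinking we may assume $\calX \to \Spec R$ is smooth, hence the generic fiber is also a smooth cubic surface. Crucially, the normal form is chosen so that $G$ acts on it by a fixed representation independent of the coefficients (the $G$-action on $\bbP^3$ in the normal form is given by matrices with entries in $\bbZ[\zeta]$ for an appropriate root of unity $\zeta$), so $G$ acts $R$-linearly on $\calX$, restricting to the given action on the special fiber. This produces the desired lift.

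The main obstacle is bookkeeping: one must check, case by case through Table~\ref{tbl:cubicAutos}, that whenever $G$ occurs in characteristic $0$ \emph{and} occurs over $\bbk$, the characteristic-$\bbk$ surface actually sits inside the \emph{same} stratum whose characteristic-$0$ normal form we want to lift — i.e. that the equality or splitting of strata (such as 4A, 4B, 6E coinciding in characteristic $2$, or 12A being absorbed into 3C) does not force us to lift a surface whose characteristic-$0$ counterpart would acquire \emph{extra} automorphisms incompatible with the prescribed $G$. This is handled by choosing, for the lift, the normal form of the stratum labeled by a \emph{cyclic} subgroup of $G$ realizing that stratum (the paper notes these labels are chosen precisely so the stratum for the full group matches the stratum for a cyclic subgroup): over $R$ the generic automorphism group of that stratum in characteristic $0$ is exactly the group listed in the characteristic-$0$ row, and by hypothesis that group equals (or contains, in which case we restrict to) a copy of $G$ already realized in characteristic $0$. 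The second, more technical obstacle is the wild cases in characteristics $2$ and $3$ where $p \mid |G|$: here one must verify that the normal form can be defined over a DVR in which the relevant $p$-torsion phenomenon (an additive-group or Heisenberg-type action) deforms — concretely, that the matrices generating $G$, which in characteristic $p$ may be unipotent, lift to matrices of the correct finite order over $R$ after adjoining $\zeta_p$, and that the specific cubic form is preserved; this is exactly where the ``$G$ is realized in characteristic $0$'' hypothesis is used, since it guarantees such a lift of the group action exists abstractly, and the normal-form analysis shows it can be made to preserve a lift of the surface.
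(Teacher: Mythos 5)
Your forward direction is fine, and your overall strategy for the converse (lift normal forms and group generators over a complete DVR, after a totally ramified quadratic extension when a root of unity is needed) is the same as the paper's. You also correctly locate the difficulty in the wild cases. But the step you describe as handled by "the normal-form analysis" is precisely the content of the theorem, and your justification for it is false as stated. You claim the $G$-action on the normal form "is given by a fixed representation independent of the coefficients... by matrices with entries in $\bbZ[\zeta]$," so that the same matrices act over $R$. For the wild classes this fails: in characteristic $p$ an element of order $p$ acts by a \emph{unipotent} matrix (e.g.\ class 3A for $p=3$ acts by $x_0\mapsto x_0+ix_3$ on the normal form of Proposition~\ref{prop:normalForms3Ap3}, and class 4A for $p=2$ acts by a single Jordan block), whereas any order-$p$ lift over $A$ is necessarily non-unipotent, with eigenvalues involving roots of unity. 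Consequently the characteristic-$p$ normal form and the characteristic-$0$ normal form for the same stratum are genuinely different (compare Propositions~\ref{prop:normalForms3Apnot3} and \ref{prop:normalForms3Ap3}), and one cannot "lift the parameters of the same polynomial." One must instead construct, by hand, a matrix over $A$ of the correct finite order reducing mod $\frakm$ to the given unipotent one, and then verify that the $A$-module of invariant cubics surjects onto the $\bbk$-space of invariant cubics — equivalently, that the invariant subspaces over $K$ and over $\bbk$ have the same dimension and that the specific form $F_\bbk$ lifts to a smooth invariant $F_A$. A priori these dimensions could differ, since the two representations have different structure. The paper does this explicitly: for 4A in $p=2$ it exhibits invariants $f_1,f_2,f_3$ over $A$ whose leading terms mod $\frakm$ are $x_1, x_0^2, x_0^3$, proving surjectivity of reduction; for 3A in $p=3$ it writes down an explicit invariant $F$ over $A$ reducing to the normal form.

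A second, related gap: lifting a cyclic generator of the stratum does not lift the full group $G$. For the 3A stratum in characteristic $3$ the group is $\calH_3(3)\rtimes 2$ of order $54$, and the paper must separately show that the nine reflections lift; it does so by proving that the nine Eckardt points lift to $A$-points (via an explicit Hessian computation identifying them on both the generic and special fibers) and then lifting each reflection from its Eckardt point. Your proposal of "choosing the normal form of the stratum labeled by a cyclic subgroup" only guarantees that the generic fiber has the right automorphism group abstractly; it does not show that a copy of $G$ acts on $\calX$ over $A$ restricting to the \emph{given} action on the special fiber. So the proposal identifies the right skeleton but asserts, rather than proves, the two computations that constitute the actual proof.
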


In positive characteristics, it is natural to ask if there
are actions of infinitesimal group schemes on cubic surfaces,
but this is impossible (Theorem~\ref{thm:infinitesimal}).

\subsection{Reflections and Eckardt points}

Recall that a \emph{reflection} of a vector space
is a linear transformation of order $2$ that fixes a hyperplane pointwise.
An automorphism $g : X \to X$ of a cubic surface $X$ is a \emph{reflection}
if $g$ is the restriction of an automorphism of $\bbP^3$ that can be
represented by a reflection in $\GL_4(\bbk)$.
Note that this definition makes sense even when the characteristic is $2$.
Reflections are of class 2A in $W(E_6)$.

An \emph{exceptional line} on a cubic surface $X$ is a line in the
ambient $\bbP^3$ that is contained in $X$. 
If three exceptional lines on $X$ all pass through the same
point, it is classically called an \emph{Eckardt point}.
There is a well-known bijective correspondence between Eckardt points
and reflections that leave $X$ invariant.

Most of the automorphism groups of cubic surfaces are generated by
reflections and can be completely described by the geometric
arrangements of their Eckardt points.
If two reflections commute, then their product has class 2B and their
corresponding Eckardt points lie on a common exceptional line.
Conversely, an involution of class 2B implies the existence of those two
Eckardt points.

If two reflections do not commute, then their product has class 3D and
the line $\ell$ between the corresponding Eckardt points does not lie inside
the cubic surface.  The third point on the $\ell$ is also an Eckardt
point and the three points generate a group isomorphic to the symmetric
group $\frakS_3$, which recovers the original automorphism of class 3D.
Similarly, geometric interpretations of the configurations of Eckardt
points exist for 3A, 3C, 4B, 5A, and 6E classes as well
(Section~\ref{sec:reflectionGroups}).

When the characteristic is not $2$, then there are at most 2 Eckardt
points on an exceptional line.
However, when $p=2$, there are exactly 5 Eckardt points on any
exceptional line as soon there is more than 1 Eckardt point.
We will see that this can be explained by the fact that the exceptional
line blows down to the \emph{canonical point} of a quartic del Pezzo surface
$Y$ described in \cite{pencils}, and thus the full automorphism group of
$Y$ must also act on $X$.
This distinction between the number of Eckardt points
is the fundamental reason why the automorphism groups are so much
larger, and why so many strata coincide,
when the characteristic is $2$.

It is well known that reflections generate a subgroup of index $1$, $2$,
or $4$ in the automorphism group of a cubic surface when $p=0$;
we show this holds in all characteristics (Corollary~\ref{cor:reflectionIndex}).
The reflections fail to generate only if there are automorphisms
with classes 4A, 8A, and 12A, which always have a power of class 2A.

\subsection{Forms of Sylvester and Emch}

A classical theorem of Sylvester states that a general cubic surface
in characteristic $0$ can be defined by
\[
\sum_{i=0}^4 x_i^3 = \sum_{i=0}^4 c_ix_i = 0
\]
in $\bbP^4$ where $c_0, \ldots, c_4$ are parameters.
When the parameters are all equal, one obtains the \emph{Clebsch
diagonal cubic surface}, which has an obvious $\frakS_5$-action.
The Clebsch cubic is the unique surface admitting an automorphism of
class 5A, which we take as its definition when $p \ne 0$.

More generally, by selectively setting certain parameters equal to one
another, we obtain families of cubic surfaces admitting automorphism
groups isomorphic to subgroups of $\frakS_5$.  In this way we obtain
general representatives of cubic surfaces realizing automorphisms of class
1A, 2A, 2B, 3D, 4B, 6E and 5A that correspond to the permutations
$()$, $(12)$, $(12)(34)$, $(123)$, $(1234)$, $(123)(12)$, and $(12345)$
respectively.

When $p=5$, the Clebsch cubic surface does not exist (Sylvester's
normal form yields a singular surface); this is essentially the only
difference between $p=5$ and higher characteristics.
When $p=3$, the cubics in Sylvester's normal form are totally singular.
When $p=2$, we will see that all cubics from Sylvester's normal form
are isomorphic to one another!

In Section~\ref{sec:generalForms}, we rectify this bad behavior
in characteristics $2$ and $3$ by instead
discussing a lesser known normal form due to Emch~\cite{Emch}.
We prove that, in all characteristics, a general cubic surface
can be defined by
\[
x_0^3 + x_1^3 + x_2^3 + x_3^3
+ c_0x_1x_2x_3 + c_1x_0x_2x_3 + c_2x_0x_1x_3 + c_3x_0x_1x_2 =0
\]
in $\bbP^3$ for parameters $c_0, \ldots, c_3$.

When these parameters are all equal to $0$ we obtain the
\emph{Fermat cubic surface}
\[
x_0^3 + x_1^3 + x_2^3 + x_3^3 = 0\ .
\]
This is the unique surface realizing an automorphism of class 3C.
When $p=3$, the surface is not reduced and the 3C stratum is empty.
When $p \ge 5$, the automorphism group is generated by permutations and
multiplying the coordinates by third roots of unity.
When $p=2$, the automorphism group of the Fermat cubic is the simple
group $\PSU_4(2)$ of order $25920$.
In fact, we will see that the possible automorphism groups of \emph{any}
cubic surface in
characteristic $2$ are closely related to the subspaces of the non-degenerate 
Hermitian space $\bbF_4^4$.

\subsection{Cyclic Surfaces}

A cubic surface $X$ is \emph{cyclic} if there exists a Galois cover of
degree $3$ over $\bbP^2$.  The deck transformation of the cover has
class $3A$.
These surfaces are discussed in more detail in Section~\ref{sec:3A}.

When $p \ne 3$, these can always be defined by the form
\begin{equation} \label{eq:cyclicIntro}
x_0^3 + x_1^3 + x_2^3 + x_3^3 + cx_0x_1x_2 = 0
\end{equation}
in $\bbP^3$ where $c$ is a parameter.
In this case, the covering map $X \to \bbP^2$ is the restriction of the
projection $\bbP^3 \to \bbP^2$ given by
\[(x_0:x_1:x_2:x_3) \to (x_0:x_1:x_2) \ . \]
The deck transformation $g$ of the cover is generated by
$x_3 \mapsto \zeta_3 x_3$ for a primitive third root of unity $\zeta_3$.
The ramification locus $C$ is a smooth curve of genus $1$
whose equation is obtained by removing the $x_3^3$ from
\eqref{eq:cyclicIntro}. 

A subgroup $G$ of the automorphism group of $X$ fits into an exact sequence
\[
1 \to \langle g \rangle \to G \to \Aut_{\bbP^2}(C) \to 1
\]
where $\Aut_{\bbP^2}(C)$ is the embedded automorphism group of
$C \subset \bbP^2$.
The group $\Aut_{\bbP^2}(C)$ is isomorphic to a semidirect product
$(\bbZ/3\bbZ)^2 \rtimes \bbZ/2\bbZ$ for a general curve of genus $1$.
This explains why $\Aut(X)$ has order $54$ for a general $X$ on the 3A
stratum.
For special values of $c$, the curve $C$ has additional
automorphisms, which give rise to additional automorphisms of $X$.
For example, if $c=0$ then one obtains the Fermat surface.
When $p \ge 5$, an additional automorphism of order $4$ occurs on the
unique surface of class 12A; and one of order $3$ occurs on
the surface of class 3C (the Fermat surface).
When $p = 2$, there is only one isomorphism class of genus $1$ curve
with a larger automorphism group: this is another explanation for why
the 12A and 3C strata coincide when $p=2$.

When $p \ne 3$, there are exactly 9 Eckardt points lying in the plane
fixed by the deck transformation $g$.  These points are the flexes
of the ramification curve $C$ and can be identified with the $3$-torsion
subgroup of a group structure on $C$.
There are 12 lines that pass through exactly 3 of these points
and each point lies on exactly 4 of these lines.
Together these points and lines form the
\emph{Hesse configuration} $(9_4\ 12_3)$.

When $p=3$, we find a different normal form for cyclic surfaces
(Proposition~\ref{prop:normalForms3Ap3}).
Here the ramification curve $C$ is a cuspidal cubic instead of an
elliptic curve.
Nevertheless, the automorphism group for the general cyclic surface is
still abstractly isomorphic to the situation when $p \ne 3$.
Again we find 9 Eckardt points in the Hesse configuration, which can be
identified with a subgroup of a group structure on $C$.
For one special surface in the 3A stratum, there is an additional
automorphism of $C$ of order 8 that stabilizes this subgroup.
This explains why the 8A and 12A strata coincide when $p=3$.

\subsection{Structure of the paper}

We fix notation and review the basics of cubic surfaces in
Section~\ref{sec:prelim}.
The following sections establish several general facts which will be
useful in carrying out the classification.
In Section~\ref{sec:DP4},
we establish some facts about del Pezzo surfaces of degree $4$
in arbitrary characteristic; in particular, we classify all the possible
automorphism groups.
In Section~\ref{sec:phenomena},
we discuss some interesting phenomena for cubic surfaces that only occur
in special characteristics --- much of this section is of
independent interest.
In Section~\ref{sec:Fermat}, we study the Fermat cubic surface,
which has an unusually large automorphism group.
In Section~\ref{sec:generalForms}, we discuss normal forms for general
cubic surfaces by Sylvester and Emch --- the latter will turn out to be
better behaved in general characteristics.
In Section~\ref{sec:rationality}, we show that the moduli space of cubic
surfaces is rational in all characteristics.

The paper then turns to the classification in earnest.
For each conjugacy class in $W(\sfE_6)$, we describe the cubic surfaces
that admit an automorphism with that class.
Sections~\ref{sec:ccs} through \ref{sec:higherOrder} go through each
possibility, describe automorphism groups acting on these surfaces, and
provide normal forms in each case.
In Section~\ref{sec:EckardtCollections}, we consider the possible
reflection groups acting on cubic surfaces, which we use
in Section~\ref{sec:proof} to finish the proof of the classification.
Finally, in Section~\ref{sec:lifting}, we prove
Theorem~\ref{thm:liftingIntro}.

\section*{Acknowledgements}
The authors would like to thank J.-P.~Serre and an anonymous referee for
several helpful comments.

\section{Preliminaries}
\label{sec:prelim}

Throughout this paper $\bbk$ will be an algebraically closed field
and $p$ will denote its characteristic.
Moreover, $X$ will be a smooth cubic surface and $\Aut(X)$ its
automorphism group.
Note that, since the usual embedding $X \hookrightarrow \bbP^3$ is the
anti-canonical embedding, we may assume $\Aut(X) \subseteq \PGL_4(\bbk)$.

It is known (see \cite{CAG}, Corollary 8.2.40, where the proof does not use the assumption on the characteristic) that for any del Pezzo surface $\calD$ of degree $\le 5$ the automorphism group $\Aut(\calD)$ has a faithful linear
representation in the orthogonal complement $K_{\calD}$ of the canonical class
in the Picard group $\Pic(\calD)$.
This defines an embedding of $\Aut(\calD)$ into the Weyl group of the root
lattice of type $\sfE_{9-d}$, where, by definition, $\sfE_5 = \sfD_5, \sfE_4 = \sfA_4$. In our case where $X$ is a del Pezzo surface of degree $3$, we have a faithful embedding of $\Aut(X)$ in $W(\sfE_6)$. 
In particular, since $|W(\sfE_6)|=51840$, the possible prime orders of
automorphisms of $X$ are $2,3$ and $5$.

It has been known for more than 150 years that a nonsingular cubic
surface contains exactly $27$ lines.
We will refer to these as \emph{exceptional lines} to differentiate them
from other lines in $\bbP^3$.
They are exceptional curves of the first kind on $X$,
i.e. smooth rational curves $E$ satisfying $E^2=-1$ and $E\cdot K_X = -1$.
We will identify them with their divisor classes in $\Pic(X)$.

A smooth cubic surface $X$ can be obtained by blowing up six points in
general position in $\bbP^2$ --- in this case, this means that no
three points are collinear and the six points do not lie on a conic.
The exceptional curves of the blow-up are six skew exceptional lines on $X$. 
Conversely, given six skew lines $E_1, \ldots, E_6$ on $X$,
we may blow them down to 6 points $p_1, \ldots, p_6$ on $\bbP^2$
(this is called a \emph{geometric marking} of $X$).
Given a geometric marking, the remaining 21 lines will be labeled by
$F_{ij}$ for the strict transforms of the lines $\overline{p_ip_j}$, and
$G_i$ for the strict transforms of the unique conic passing through the
five points excluding $p_i$.

A \emph{double-sixer} is a set of 12 lines divided into two sets of 6
skew lines such that each line is incident to exactly 5 lines in the other set.
An example is $E_1, \ldots, E_6, G_1, \ldots, G_6$.
There are exactly $36$ double-sixers in a cubic surface.

Given any two incident exceptional lines, there is a unique third exceptional
line incident to both.
We call such a collection a \emph{tritangent trio} --- there are exactly
45 tritangent trios on a cubic surface.
A \emph{tritangent plane} is a hyperplane section of $X$ which consists
of three lines, which are a tritangent trio.
If the three exceptional lines are concurrent (in other words, they
share a common point), then the common point is called an \emph{Eckardt
point.}

We will see in Theorem~\ref{thm:Eckardt} below that Eckardt points are
in bijection with reflections in the automorphism group of $X$.
In Lemma~\ref{lem:excLineEck} below, we will find that when $p \ne 2$ there are
either 0, 1, or 2 Eckardt points on an exceptional line; but when
$p = 2$ there are 0, 1 or 5 Eckardt points on an exceptional line.
A \emph{trihedral line} is a line $\ell$ which is \emph{not} contained
in $X$ and contains exactly three Eckardt points.
We will see in Corollary~\ref{cor:triadLineEck}
that, if a line $\ell$ contains two Eckardt
points and does not lie on $X$, then $\ell$ must contain a third Eckardt
point.

Below, we will see that Eckardt points are in bijection with
automorphisms of class 2A, pairs of Eckardt points sharing a common
exceptional line are in bijection with automorphisms of class 2B,
and trihedral lines are in bijection with cyclic groups generated by an
automorphism of class 3D.
Moreover, many of the other automorphism classes correspond to certain 
geometric arrangements of these objects.

\subsection{Group Theoretic notation}
\label{sec:groupTheory}

Throughout, we use the following notation,
some of which is borrowed from \cite{ATLAS}:
\begin{itemize}
\item $A \rtimes B$ is a semidirect product where $A$ is normal.
\item $n^m$ is the abelian group $(\bbZ/n\bbZ)^m$.
\item $\frakS_n$ is the symmetric group on $n$ letters.
\item $\frakA_n$ is the alternating group on $n$ letters.
\item $D_{2n}$ the dihedral group of order $2n$.
\item $\calH_n(q)$ is the Heisenberg group of $n\times n$ upper-triangular
matrices with coefficients in the field of $q$ elements where all
diagonal entries are $1$.
\item $\PGL_n(q)$ is the projective general linear group over a
vector space of dimension $n$ with $q$ elements.
\item $\PSL_n(q)$ is the projective special linear group over a
vector space of dimension $n$ with $q$ elements.
\item $\PSU_n(q)$ is the projective special unitary group over a
Hermitian vector space of dimension $n$ with $q^2$ elements.
\end{itemize}

We will also need the following well-known classification:

\begin{thm} \label{thm:suz}
Let $G$ be a finite subgroup of $\PGL_2(\bbk)$ where $\bbk$ is an
algebraically closed field of characteristic $p$.
Then $G$ is isomorphic to one of the following groups
\begin{enumerate}
\item $G \cong D_{2n}$, a dihedral group of order $2n$
where $n > 1$ is coprime to $p$.
\item $G \cong \frakA_4$, the alternating group on $4$ letters.
\item $G \cong \frakS_4$, the symmetric group on $4$ letters; provided $p \ne 2$.
\item $G \cong \frakA_5$, the alternating group on $5$ letters.
\item $G \simeq \PSL_2(p^n)$ for some $n>0$; provided $p > 0$.
\item $G \simeq \PGL_2(p^n)$ for some $n>0$; provided $p > 0$.
\item $G \simeq A \rtimes \mu_n$ where $n$ is coprime to $p$,
$\mu_n$ is the group of $n$th roots of unity of $\bbk$,
and the group $A$ is a $\mu_n$-stable
finite subgroup of the additive group of $\bbk$.
\end{enumerate}
Moreover, all of these possibilities occur.
\end{thm}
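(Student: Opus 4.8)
The plan is to let $G$ act faithfully on $\bbP^1 = \bbP^1_\bbk$ via the identification $G \subseteq \PGL_2(\bbk) = \Aut(\bbP^1)$, and to split into the \emph{tame} case $p \nmid |G|$ (which includes $p = 0$) and the \emph{wild} case $p \mid |G|$. In the tame case the quotient $\pi \colon \bbP^1 \to Y := \bbP^1/G$ is a finite separable morphism of smooth projective curves with $Y$ a smooth rational curve, hence $Y \cong \bbP^1$; since $p \nmid |G|$ all ramification is tame, so if $Q_1, \dots, Q_k \in Y$ are the branch points, with ramification indices $e_1 \le \cdots \le e_k$ (each $e_i > 1$ and $e_i \mid |G|$), Riemann--Hurwitz reads
\[
2 - \frac{2}{|G|} \;=\; \sum_{i=1}^{k}\Bigl(1 - \frac{1}{e_i}\Bigr).
\]
An elementary estimate gives $k \le 3$, and $k \ge 2$ unless $G = 1$ (which is case~(7) with $A = 0$, $n = 1$). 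If $k = 2$ then $e_1 = e_2 = |G|$, so $G$ fixes two points of $\bbP^1$, lies in a maximal torus, and is cyclic of order prime to $p$ --- again case~(7) with $A = 0$. If $k = 3$ the only solutions are $(e_1, e_2, e_3) \in \{(2,2,n),(2,3,3),(2,3,4),(2,3,5)\}$, and inspecting the fibres over the branch points (the point stabilizers being cyclic of order $e_i$) identifies $G$ with $D_{2n}$, $\frakA_4$, $\frakS_4$, or $\frakA_5$ respectively. The sharper hypothesis ``$p \ne 2$'' attached to $\frakS_4$ will come out of the wild analysis.

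Now suppose $p \mid |G|$. A nontrivial element of $p$-power order in $\PGL_2(\bbk)$ is unipotent, has order exactly $p$, and fixes a unique point of $\bbP^1$; hence a Sylow $p$-subgroup $P \le G$ is elementary abelian with a single fixed point $z$. If $P \trianglelefteq G$ then $G$ fixes $z$, so $G$ lies in the Borel subgroup $\mathrm{Stab}(z) \cong \bbG_a \rtimes \bbG_m$, realized as the affine maps $x \mapsto ax + b$ of $\bbA^1 = \bbP^1 \smallsetminus \{z\}$. Then $A := G \cap \bbG_a$ is a normal finite $p$-subgroup containing $P$, so $A = P$, while $G/A$ embeds in $\bbG_m = \bbk^\times$, hence $G/A \cong \mu_n$ with $n$ prime to $p$; by Schur--Zassenhaus $G \cong A \rtimes \mu_n$ with $A$ a $\mu_n$-stable finite subgroup of $(\bbk,+)$, which is case~(7). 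Conversely any $G$ lying in a Borel has normal Sylow $p$-subgroup, so the only remaining possibility is that $G$ fixes no point of $\bbP^1$.

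For that last subcase the plan is to descend to a finite field and invoke Dickson's classical classification of the subgroups of $\PGL_2(q)$. Since $\cha \bbk = p > 0$, every finite-dimensional module over the group algebra $\bbk[H]$ of a finite group $H$ is the base change of an $\bbF_q[H]$-module for a suitable $q = p^m$ (take $\bbF_q$ to be a splitting field for $H$ in characteristic $p$); applying this to a $2$-dimensional linear representation of a stem extension of $G$ that projectivizes to $G \hookrightarrow \PGL_2(\bbk)$, we may conjugate so that $G \subseteq \PGL_2(q)$. Running through Dickson's list: subgroups contained in a Borel do not occur here; normalizers of maximal tori give the dihedral groups of~(1), necessarily with $n$ prime to $p$ (so in the wild case only when $p = 2$); the exceptional subgroups give $\frakA_4$, $\frakS_4 \cong \PGL_2(3)$, and $\frakA_5$, where $\frakS_4$ is excluded when $p = 2$ because a Sylow $2$-subgroup of a subgroup of $\PGL_2(\bbk)$ in characteristic $2$ is elementary abelian while that of $\frakS_4$ is dihedral of order $8$; and everything else is a subfield subgroup $\PSL_2(p^m) \subseteq \PGL_2(p^m)$ as in~(5)--(6). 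That all of the listed groups do occur is classical: the tame groups are the usual cyclic, dihedral, tetrahedral, octahedral and icosahedral subgroups of $\PGL_2$ (realized over $\overline{\bbF_p}$ once the relevant roots of unity are present), the groups of~(7) sit visibly inside $\bbG_a \rtimes \bbG_m$, and $\PSL_2(p^m), \PGL_2(p^m) \subseteq \PGL_2(\overline{\bbF_p}) \subseteq \PGL_2(\bbk)$.

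The hard part is precisely this last subcase: its engine is Dickson's theorem on subgroups of $\PGL_2(q)$ --- proved, in outline, by analyzing the $G$-action on the set of fixed points of the Sylow $p$-subgroups and showing that a subgroup contained in no Borel and normalizing no maximal torus is forced to be a subfield subgroup --- which I would import rather than reprove. A secondary technical point will be making the reduction from $\bbk$ to a finite field clean, in particular accounting for a projective representation that fails to lift to a linear one by passing to a stem extension of $G$; the tame case and the Borel subcase, by contrast, are elementary.
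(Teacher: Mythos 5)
Your proposal is correct, and it reaches the same place as the paper by a genuinely different route. The paper's proof is very short: it cites Klein for $p=0$, and for $p>0$ it splits according to whether the lift of $G$ to $\GL_2(\bbk)$ is reducible --- reducible means upper triangular, hence case (7) --- while in the irreducible case it invokes a theorem of Winter to conjugate $G$ into $\PSL_2(q)$ and then reads off the answer from Dickson's classification as presented in Suzuki. Your decomposition is instead tame/wild, with the wild case split by normality of the Sylow $p$-subgroup; note that your ``normal Sylow $p$'' subcase is exactly the paper's ``reducible'' subcase in disguise (both say $G$ lies in a Borel), and your descent to $\PGL_2(q)$ via a stem extension and a finite splitting field is doing by hand precisely what the citation of Winter does for the paper. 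The real divergence is the tame case: you handle it uniformly in all characteristics by Riemann--Hurwitz on $\bbP^1 \to \bbP^1/G$, which is more self-contained and makes the list $\mu_n$, $D_{2n}$, $\frakA_4$, $\frakS_4$, $\frakA_5$ appear for structural reasons, whereas the paper cites Klein for $p=0$ and lets tame groups in positive characteristic fall through to Dickson. Your extra touches --- the observation that a finite $p$-subgroup of $\PGL_2(\bbk)$ is elementary abelian with a unique fixed point, and the resulting clean exclusion of $\frakS_4$ when $p=2$ --- are correct and are not spelled out in the paper. The only points needing care, which you already flag, are the lifting of the projective representation to a linear one of a finite central extension (harmless: take the preimage in $\SL_2(\bbk)$) and the descent of that representation to a finite field, where it suffices to treat the irreducible case since the reducible case has already been absorbed into case (7).
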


\begin{proof}
The classification in characteristic $0$ is due to F.~Klein
\cite{Klein} and coincides with the classification of
finite subgroups of $\SO(3)$ that can be found in many textbooks
(e.g. Theorem~9.1~of~\cite{Artin}). 
The classification in positive characteristic
is essentially due to L.~Dickson.
We recover it as follows.
Since $\Bbbk$ is algebraically closed, the groups
$\PGL_2(\Bbbk)$ and $\PSL_2(\Bbbk)$ coincide,
so it suffices to find subgroups of the latter.
Note that if the lift to $\GL_2(\bbk)$ is reducible, then it must
be upper triangular in some basis, and thus is described by the final case
in the statement of the theorem.
Otherwise, by \cite{Winter} we may assume that $G$ is contained in
$\PSL_2(q)$ for some $q=p^n$.
From Theorems~3.6.25 and 3.6.26 of~\cite{Suzuki}, we obtain all the
remaining cases.
\end{proof}

\begin{remark}
Groups from the last case (7) are always conjugate to a subgroup of the
affine group
\[
(x:y) \mapsto (mx+ay:y)
\]
for $m \in \bbk^\times$ and $a \in \bbk$.
This case contains cyclic groups of order coprime to $p$
(in particular, all finite cyclic groups when $p=0$).
This also contains all finite elementary abelian $p$-groups for $p>0$
when $n=1$.
Note that conjugacy is a delicate question here as, even up to
conjugacy, there are infinitely many elementary abelian $p$-subgroups
of the additive group of an infinite field of positive characteristic.
\end{remark}

\begin{remark}
A group isomorphic to $2^2$ occurs in all characteristics:
it is the degenerate dihedral group $D_4$ when $p \ne 2$ and is an
elementary abelian $2$-group when $p=2$.
Individual groups may occur multiple times in the above list due to
exceptional isomorphisms.
For example, $A_5 \cong \PSL_2(4) \cong \PSL_2(5)$,
and $\PSL_2(\bbF) \cong \PGL_2(\bbF)$ when $\cha(\bbF)=2$.
\end{remark}

\subsection{Groups acting on del Pezzo surfaces}

We know that the automorphism group $\Aut(X)$ of a del Pezzo surface of degree $d\le 5$ is a finite group $G$ that acts faithfully on $\Pic(X)$ preserving the intersection product. It is known that  the orthogonal complement $L$ of the canonical class $K_X$ equipped with the integral quadratic form defined by the intersection product is isomorphic to the root lattice of type $A_4, D_5,E_6,E_7,E_8$ if $d = 5,4,3,2,1$, respectively. The induced action of $G$ on $L$ is faithful and identifies $G$ with a subgroup of the Weil group $W(L)$ of the lattice of the corresponding type. In our case the lattice $L$ is the root lattice of type $E_6$. All of these facts can be found, for example, in \cite{CAG}, Chapter 8 and their proof there does not use any assumption on the characteristic.  We will identify elements $g\in \Aut(X)$ with the corresponding elements $g^{*}\in W(E_6)$.

 The conjugacy classes of elements of finite order in $W(L)$ can be
classified \cite{Carter}.  Table \ref{tableconj}, at the end of the
article, contains the classification of conjugacy classes of elements in
$W(E_6)$ that can be found in loc.~cit., in \cite{ATLAS}, or in
\cite{Manin}. We also include information about the characteristic
polynomial of the action of an element $w\in W(E_6)$ on the root space
(here $\Phi_d$ is the cyclotomic polynomial of degree $d$).
In particular, we include the trace of each element.
The fourth column gives the order of the centralizer subgroup of the conjugacy class.

\subsection{Infinitesimal Group Schemes}

\begin{thm} \label{thm:infinitesimal}
Let $X$ be a del Pezzo surface of degree $\le 5$.  There does not exist an
infinitesimal group scheme with a faithful action on $X$.
\end{thm}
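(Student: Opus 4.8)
The plan is to argue by contradiction: suppose $\mathfrak{G}$ is an infinitesimal group scheme acting faithfully on a del Pezzo surface $X$ of degree $d \le 5$. Since $X$ is a projective variety and infinitesimal group schemes are finite (in particular affine), such an action is equivalent to a homomorphism of group schemes $\mathfrak{G} \to \Aut_X$, where $\Aut_X$ is the automorphism group scheme of $X$. The first step is to pin down $\Aut_X$. Because the anticanonical embedding $X \hookrightarrow \bbP(H^0(X,-K_X)^\vee)$ is canonical and functorial, $\Aut_X$ is a closed subgroup scheme of $\PGL_N$ for $N = \dim H^0(X,-K_X) = 10-d$. The crucial input is that for $d \le 5$ this group scheme is in fact \emph{smooth} (equivalently, étale, hence $\Aut_X^0$ is trivial and $\Aut_X$ is a finite constant group scheme, identified with the finite group $\Aut(X)$ discussed above). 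Once we know $\Aut_X$ is étale, any homomorphism from the connected (indeed infinitesimal) group scheme $\mathfrak{G}$ to $\Aut_X$ is trivial, so the action cannot be faithful unless $\mathfrak{G}$ is trivial; this is the desired contradiction.

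The heart of the matter is therefore the smoothness of $\Aut_X$, and the main obstacle is establishing this in characteristics $2$, $3$, and $5$ where pathologies of automorphism schemes typically live. I see two routes. The first, more hands-on: by Lie theory, smoothness of $\Aut_X$ is equivalent to the vanishing of its Lie algebra $\mathfrak{aut}(X)$, which for a smooth projective variety is $H^0(X, T_X)$, the space of global vector fields. So it suffices to prove $H^0(X,T_X) = 0$ for every del Pezzo surface of degree $\le 5$ in every characteristic. Since such an $X$ is obtained by blowing up $9-d \ge 4$ points in general position in $\bbP^2$ (or, for $d = 5,\dots,1$, one can use the explicit point configurations), one can compute: on $\bbP^2$ we have $H^0(\bbP^2, T_{\bbP^2}) = \mathfrak{pgl}_3$, and blowing up a point $p$ identifies $H^0(\Bl_p \bbP^2, T) $ with the vector fields on $\bbP^2$ vanishing at $p$; blowing up $r$ points in general position imposes enough independent vanishing conditions that for $r \ge 4$ no nonzero vector field survives. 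One must check that "general position" genuinely forces $4$ independent conditions on $\mathfrak{pgl}_3$ (an $8$-dimensional space) regardless of characteristic — the subtlety being that $\dim \mathfrak{pgl}_3 = 8$ only when $p \ne 3$; when $p = 3$ one instead has $\mathfrak{pgl}_3$ of dimension $8$ still but $\mathfrak{sl}_3 \ne \mathfrak{pgl}_3$, and one should work directly with $H^0(\bbP^2,T_{\bbP^2})$, which always has dimension $8$. This reduces everything to a finite, characteristic-uniform linear-algebra check that a generic $4$-point configuration has no infinitesimal symmetry in $\PGL_3$; equivalently, the stabilizer of four general points in $\bbP^2$ under $\PGL_3$ is the trivial group scheme — and four general points impose independent conditions because the action of $\PGL_3$ on $(\bbP^2)^4$ has a dense orbit whose stabilizer is reduced (it is trivial, the four points being a projective frame).

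The second, slicker route: invoke the faithful representation of $\Aut(\mathcal D)$ on $K_{\mathcal D}^\perp \subset \Pic(\mathcal D)$ cited at the start of Section~\ref{sec:prelim} (from \cite{CAG}, Corollary 8.2.40), but upgraded at the scheme level. The action of $\Aut_X$ on $\Pic(X)$ (a constant group scheme $\bbZ^{10-d}$) through automorphisms preserving the intersection form gives a homomorphism $\Aut_X \to W(\sfE_{9-d})$, the latter a \emph{constant} finite group scheme; the content of the cited corollary is that this homomorphism has trivial kernel \emph{as a group scheme} (any automorphism fixing the Picard lattice pointwise is the identity — this can be checked on the blown-up model, since fixing all $9-d$ exceptional classes means descending to an automorphism of $\bbP^2$ fixing $9-d \ge 4$ points, forcing the identity, and the same argument applies to infinitesimal deformations). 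A homomorphism from an infinitesimal (connected) group scheme to an étale one is trivial, so $\Aut_X$ injects into nothing nontrivial infinitesimally, i.e. $\Aut_X$ is étale. I would present this second argument as the main line and relegate the vector-field computation to a remark or a lemma, since it is the cleanest and makes transparent why the bound $d \le 5$ (equivalently $\ge 4$ blown-up points) is exactly what is needed: for $d = 6,7$ the automorphism scheme is positive-dimensional and the statement genuinely fails.
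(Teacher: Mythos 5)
Your proposal is correct and follows essentially the same route as the paper: reduce faithfulness of an infinitesimal action to the vanishing of $H^0(X,\Theta_{X/\Bbbk})$, use the fact that a regular vector field lifts through a blow-up exactly when it vanishes at the center, and check directly that no nonzero element of $H^0(\bbP^2,\Theta_{\bbP^2})\cong\mathfrak{pgl}_3$ vanishes at four points in general position (a projective frame forces the matrix to be scalar). Note only that your ``slicker'' second route does not actually bypass this computation: the scheme-theoretic triviality of the kernel of $\Aut_X\to W(\sfE_{9-d})$ has $H^0(X,T_X)=0$ as its Lie-algebra content, so the Picard-lattice argument repackages, rather than replaces, the vector-field check.
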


\begin{proof}
Since the tangent space of such a group scheme is a linear subspace of
the space of global sections of the tangent bundle $\Theta_{X/\Bbbk}$ of
$X$, it is enough to show that $X$ has no non-zero regular vector
fields.
Following a suggestion of J.-P. Serre, we deduce it for del
Pezzo surfaces of high degree from the following well-known fact:

\smallskip

\noindent
{\bf Fact}: \emph{Let $\pi:Y'\to Y$ be the blow-up of a point $y$ on a
smooth algebraic variety $Y$ over $\Bbbk$.
Then a regular vector field on $Y$ lifts to a regular vector field on
$Y'$ if and only if it vanishes at $y$}. 

\smallskip

A coordinate-free proof of this fact can be given along the following lines. The exact sequence of sheaves of differentials
$$0\to \pi^*\Omega_{Y/\Bbbk}^1\to \Omega_{Y'/\Bbbk}^1\to \Omega_{Y'/Y}^1\to 0$$
defines, after passing to the duals, the exact sequence
\beq\label{theta}
0\to \Theta_{Y'/\Bbbk}\to \pi^*\Theta_{Y/\Bbbk}\to \mathcal{E}xt_{\calO_{Y'}}^1(\Omega_{Y'/Y}^1,\calO_{Y'})\to 0.
\eeq
We have $\Omega_{Y'/Y}^1 = i_*\Omega_{E/\Bbbk}^1$, where
$i:E\hookrightarrow Y'$ is the closed embedding of the exceptional
divisor $E$.
The exact sequence
\[0\to \mathcal{I}_E/\mathcal{I}_E^2\to \Omega_{Y'/\Bbbk}^1\otimes
\calO_E\to \Omega_{E/\Bbbk}^1\to 0\]
and the fundamental local isomorphism  formula $\omega_E\cong
\mathcal{E}xt_{\calO_{Y'}}^1(\calO_E,\omega_{Y'})$ obtained by applying the functor $\mathcal{H}om(-, \omega_{Y'})$ to the exact sequence 
$0\to \calO_{Y'}(-E)\to \calO_{Y'}\to \calO_{E} \to 0$ and using the adjunction formula, shows that the sheaf 
$\mathcal{E}xt_{\calO_{Y'}}^1(\Omega_{Y'/Y}^1,\calO_{Y'})$ is a subsheaf of 
\[\mathcal{E}xt_{\calO_{Y'}}^1(\Omega_{Y'/\Bbbk}^1\otimes \calO_E,\calO_{Y'})\cong 
\Theta_{Y'/\Bbbk}\otimes \omega_{E}\otimes \omega_{Y'}^{-1} \cong
\Theta_{Y'/\Bbbk}(E)\otimes \calO_E = \Theta_{Y'/\Bbbk}\otimes
\pi^*(T_{Y,y})\]
where $T_{Y,y}$ is the Zariski tangent space of $Y$ at $y$.
Taking the global sections in the exact sequence \eqref{theta},
we obtain the exact sequence
\[0\to H^0(Y',\Theta_{Y'/\Bbbk})\to H^0(Y',\pi^*\Theta_{Y/\Bbbk})\to
H^0(Y',\Theta_{Y'/\Bbbk}\otimes \pi^*(T_{Y,y}))\]
that shows that a vector field on $Y'$ is obtained as a lift  of a vector field on $Y$ vanishing at the point $y$. 

\smallskip

Returning to our case, we use that $X$ is isomorphic to the blow-up of
$k\ge 4$ distinct points on $\bbP^2$. So, it is enough to show in the
case $k = 4$; in other words, for del Pezzo surfaces of degree $5$. We
may assume that the points have coordinates $(1:0:0),(0:1,0),(0:0:1),
(1:1:1)$. Since $H^0(\bbP^2,\Theta_{\bbP^2/\Bbbk})$ is isomorphic to the
Lie algebra of ${\rm PGL}_3(\Bbbk)$ and vanishing at one point gives two
conditions, we see by direct computation that there are no non-zero
vector fields vanishing at the four points. 
\end{proof}

\begin{remark}
In fact, the proof shows that to obtain a non-trivial regular vector
field starting from $\bbP^2$, one has to blow-up either less than 3
points or blow-up infinitely near points. In particular, to construct a
smooth rational projective surface with infinitesimal automorphism group
one has to blow-up points in such a way that the automorphism group of
the surface is trivial but there exists a non-zero regular vector field.
An example of such a surface in characteristic $p = 2$ can be found in
\cite{Neuman}. Here one can find also a direct proof of the Fact in the
case of surfaces (loc.cit. Lemma (3.1)).

A regular vector field $D$ on a smooth projective algebraic varety over
an algebraically closed field of characteristic $p > 0$ is called
$p$-closed if $D^p = aD$, where $a\in \Bbbk$.  There is a natural
bijective correspondence between $p$-closed regular fields and finite
infinitesimal group schemes of order $p$ isomorphic to $\mu_p$ if $a\ne
0$ or $\alpha_p$ if $a = 0$. Also, if a variety admits a regular vector
field, then it also admits a $p$-closed vector field. It would be very
interesting to classify all finite infinitesimal group schemes that may
act on a rational surface.
\end{remark}

\section{Del Pezzo surfaces of degree 4}
\label{sec:DP4}

A smooth cubic surface is isomorphic to the blow-up of one point on a
del Pezzo surface of degree $4$ that does not lie on any of the $16$ lines
contained within (in its anti-canonical embedding as a quartic surface in
$\bbP^4$).
Since we will use this several times,
we will classify automorphism groups of del Pezzo surfaces of degree $4$
before moving to cubic surfaces.

When $p \ne 2$, the story is not much different than the well-known case of
$p=0$ discussed in, for example, \S{}8.6~of~\cite{CAG}.
The case of $p=2$ extends the work in \cite{pencils}. 

A del Pezzo surface $Y$ of degree $4$ is isomorphic to a complete intersection of two quadrics $V(q_1)\cap V(q_2)$ in $\bbP^4$. If $p\ne 2$, the equations can be reduced to the form
\begin{align}\label{delpezzo4-1}
q_1&= x_1^2+x_2^2+x_3^2+bx_4^2 = 0,\\ \notag
q_2& = x_0^2+x_2^2+ax_3^2+x_4^2 =0,\notag
\end{align}
otherwise
\begin{align}\label{delpezzo4-2}
q_1 &= (ab+b+1)x_2^2 + a x_3^2 + x_2y_1 + x_3y_2 = 0 \\ \notag
q_2 &= b x_1^2 + (ab+a+1)x_2^2 + x_1y_1 + x_2y_2 = 0. \notag
\end{align}
where $a$ and $b$ are parameters.
In both cases 
\begin{equation}\label{roots}
\{ (1:0), (0:1), (1:-1), (-a:1), (1:-b) \}\subset \bbP^1(\Bbbk)
\end{equation}
are the roots of a degree five binary form  $\Delta$. They correspond to parameters $(\lambda:\mu)\in \bbP^1$ such that the quadric $V(\lambda q_1+ \mu q_2)$ is singular.

In the case $p\ne 2$, the equations have an obvious symmetry defined by
negations of the projective coordinates.
They generate a subgroup of $\Aut(Y)$ isomorphic to an elementary
abelian group $2^4$.
One can choose generators to be reflections that change the sign of only one
coordinate. In the case $p = 2$, the surface $Y$ has a similar but less obvious
group of symmetries.
The subgroup $2^4$ of $\Aut(Y)$ in either case is generated by
reflections in $\Bbbk^5$ defined by
\[
\rho_i(v) = v-\frac{b_{\lambda q_1+\mu q_2}(z_i,v)}{(\lambda q_1+\mu q_2)(z_i)}z_i,
\]
where $z_i$
are vectors representing the singular points of quadrics in the pencil
$\lambda q_1+\mu q_2 = 0$ and $b_{\lambda q_1+\mu q_2}$ is the
associated polar symmetric bilinear form.
Here $(\lambda:\mu)$ must be chosen in such a way that the corresponding
quadric does not vanish at any of the points $z_i$;
the transformation does not depend on this choice.

The group of automorphisms $\Aut(Y)$ acts on the coordinates $(\lambda:\mu)$
of the pencil of quadrics leaving the polynomial $\Delta$ invariant.
In this way we realize $\Aut(Y)$ as the semi-direct product
\[ \Aut(Y) \cong 2^4\rtimes  G, \]
where $G$ is a finite subgroup of $\PGL_2(\bbk)$ that leaves the zeros of the binary form $\Delta$ invariant.
Note that this agrees with the known structure of the Weyl group $W(\sfD_5)\cong 2^4\rtimes \frakS_5$.
The group $G$ corresponds to the subgroup of $\frakS_5$ that permutes the roots of $\Delta$.

\begin{thm} \label{thm:dp4autos}
The coarse moduli space of del Pezzo surfaces of degree $4$ is
isomorphic to the coarse moduli space of $5$ distinct points in $\bbP^1$.
The automorphism group of a given surface $Y$
is isomorphic to $2^4 \rtimes G$ where $G$ is the subgroup of
$\PGL_2(\bbk)$ leaving invariant the corresponding set of $5$ points.
The possible groups are enumerated in Table~\ref{tbl:5pointAutos}. 
\end{thm}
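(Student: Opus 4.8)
The plan is to prove the three assertions of Theorem~\ref{thm:dp4autos} in turn, building on the normal forms and the semidirect product structure $\Aut(Y) \cong 2^4 \rtimes G$ already established in the discussion preceding the statement.

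\textbf{Step 1: The moduli space.} First I would establish the bijection between isomorphism classes of degree $4$ del Pezzo surfaces and configurations of $5$ distinct points in $\bbP^1$, in arbitrary characteristic. In one direction, a del Pezzo surface $Y$ of degree $4$ carries the pencil of quadrics $\lambda q_1 + \mu q_2$ vanishing on $Y$; the five values $(\lambda : \mu)$ for which the quadric degenerates (detected by the vanishing of the degree-$5$ binary form $\Delta$, the ``discriminant'' of the pencil) are distinct precisely because $Y$ is smooth, giving a well-defined point of the moduli space of $5$ points in $\bbP^1$ modulo $\PGL_2$. One checks this is independent of the choice of generators $q_1, q_2$ since a change of basis of the pencil acts on $(\lambda:\mu)$ by the corresponding element of $\PGL_2$. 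In the other direction, the normal forms \eqref{delpezzo4-1} and \eqref{delpezzo4-2} show every such configuration — via \eqref{roots}, which realizes any $5$ points as the roots of $\Delta$ after an $\PGL_2$-change, using that the cross-ratio of the first three normalized points is generic — arises from some $Y$; and two surfaces with the same $5$-point configuration are isomorphic because the normal forms depend only on the configuration. The main subtlety here is the characteristic $2$ case, where the quadrics are no longer diagonalizable and ``singular quadric in the pencil'' must be interpreted correctly (rank drop of the associated bilinear/quadratic form, as in \cite{pencils}); I would cite \cite{pencils} for the fact that \eqref{delpezzo4-2} is the correct normal form and that $\Delta$ still has five distinct roots exactly when $Y$ is smooth.

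\textbf{Step 2: The automorphism group.} This is largely already done in the text: any automorphism of $Y$ preserves the anticanonical embedding, hence the net of quadrics through $Y$, hence permutes the five singular members, giving a homomorphism $\Aut(Y) \to \PGL_2(\bbk)$ whose image $G$ lands in the stabilizer of the five-point configuration; the kernel is the reflection subgroup $2^4$ generated by the $\rho_i$; and the extension splits compatibly with $W(\sfD_5) = 2^4 \rtimes \frakS_5$. To finish I must check that $G$ is the \emph{full} stabilizer of the $5$ points, i.e. every $\phi \in \PGL_2(\bbk)$ fixing the configuration lifts to an automorphism of $Y$. For this I would argue that $\phi$ induces a linear change of the pencil $\langle q_1, q_2 \rangle$ and hence (by how the $x_i$ enter the normal forms, and possibly after adjusting by an element of $2^4$) extends to a projective transformation of $\bbP^4$ carrying $V(q_1) \cap V(q_2)$ to itself. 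Combined with $|W(\sfD_5)| = 2^4 \cdot 120$ and $|2^4 \rtimes G| \le |2^4 \rtimes \frakS_5|$, surjectivity onto the stabilizer follows.

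\textbf{Step 3: Enumeration.} Finally, the list in Table~\ref{tbl:5pointAutos} is obtained by running through Theorem~\ref{thm:suz} — the classification of finite subgroups of $\PGL_2(\bbk)$ — and determining, for each such group and each characteristic, whether it can act on a set of $5$ distinct points of $\bbP^1$, and in how many ways up to conjugacy. Here one uses the orbit-counting constraint that $5 = \sum (\text{orbit sizes})$ with each orbit size dividing $|G|$ (more precisely dividing the index of a point stabilizer), together with the known orbit structure of each group: e.g. $\frakS_4$ acting on $\bbP^1$ has special orbits of sizes $6, 8, 12$ so cannot preserve $5$ points, while $D_{2n}$, $\frakA_4$, $\frakA_5$, and the cyclic/affine groups admit $5$-point orbits for appropriate parameters. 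I expect \textbf{this enumeration step, in characteristics $2$, $3$, and $5$, to be the main obstacle}: one must carefully track which subgroups of $\PGL_2(\bbk)$ survive in small characteristic (the wild $p$-groups and the $\PSL_2(p^n)$, $\PGL_2(p^n)$ families from Theorem~\ref{thm:suz}), which of them fix a $5$-element set — in particular $\PSL_2(4) \cong \frakA_5$ acting on $\bbP^1(\bbF_4)$, which has exactly $5$ points, is the crucial new phenomenon at $p=2$ — and rule out collisions or omissions. The argument is finite but characteristic-sensitive, and the table entries must be matched against Table~\ref{tbl:5pointAutos}.
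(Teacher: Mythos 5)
Your plan is sound and its first two steps track the paper closely: the paper likewise builds the moduli identification and the semidirect product $2^4 \rtimes G$ out of the normal forms \eqref{delpezzo4-1}, \eqref{delpezzo4-2} and the reflections $\rho_i$ in the discussion preceding the theorem, and its proof is devoted almost entirely to your Step 3. Where you genuinely diverge is in how the enumeration is organized. You propose to start from the Klein--Dickson classification (Theorem~\ref{thm:suz}) and sift each group by orbit-counting on $5$ points; the paper instead begins from the elementary observation that no nontrivial element of $\PGL_2(\bbk)$ fixes three points of $\bbP^1$, so that $G \hookrightarrow \frakS_5$ contains no transpositions, which immediately reduces the candidates to a short list of subgroups of $\frakS_5$; existence is then settled by explicit matrices $g_2,\dots,g_5$ in normalized coordinates, and the remaining exclusions are local ($2^2$, hence $\frakA_5$, must fix one of the five points and act faithfully on its tangent line, impossible for $p\ne 2$; $5\rtimes 4$ is killed by Theorem~\ref{thm:suz} for $p \ne 5$). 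The paper's route buys you a bound $|G|\le 120$ and a finite subgroup lattice up front, so you never have to confront the infinite families $\PSL_2(p^n)$, $\PGL_2(p^n)$ and $A\rtimes\mu_n$ from the Dickson list; your route is more systematic but forces you to rule those out by hand in characteristics $2,3,5$ (you do correctly isolate $\PGL_2(\bbF_4)\cong\frakA_5$ acting on the five points of $\bbP^1(\bbF_4)$ as the decisive $p=2$ phenomenon).

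Two small cautions. First, your parenthetical that $\frakA_4$ ``admits $5$-point orbits for appropriate parameters'' is false for $p\ne 2$: the $\frakA_4$-orbits on $\bbP^1$ have sizes $4,4,6,12$ and there is no fixed point, so no union of orbits has cardinality $5$; for $p=2$ the configuration $\{0,\infty,1,\zeta,\zeta^2\}=\bbP^1(\bbF_4)$ does carry an $\frakA_4$, but its full stabilizer is already $\frakA_5$, so no new row appears. (The paper's own list \eqref{eq:goodGroups} quietly omits $\frakA_4$ from the transposition-free subgroups of $\frakS_5$, but the conclusion is unaffected for the same reason: $\frakA_4\supset 2^2$ is excluded for $p\ne2$ and absorbed into $\frakA_5$ for $p=2$.) Second, in Step~2 the order comparison with $|W(\sfD_5)|$ does not by itself yield that $G$ is the \emph{full} stabilizer; what is needed, and what your preceding sentence correctly gestures at, is that any $\phi\in\PGL_2(\bbk)$ preserving the five roots of $\Delta$ acts on the pencil and lifts to a projective transformation of $\bbP^4$ preserving $Y$ --- in practice this is checked case by case via the explicit matrices, exactly as the paper does.
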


\begin{table}[h]
\begin{center}
\begin{tabular}{|c|c|c|c|c|}
\hline
$p$ & $G$ & $(a,b)$ & $\lambda$ \\
\hline
$\ne 2,3,5$ & 1 & {\tiny general} & $(1,1,1,1,1)$ \\
 & $2$ & $a=b$ & $(2,2,1)$ \\
 & $4$ & $(i,i)$ & $(4,1)$ \\
 & $\frakS_3$ & $(\zeta,\zeta)$ & $(3,2)$ \\
 & $D_{10}$ & $(\phi,\phi)$ & $(5)$ \\
\hline
2 & 1 & {\tiny general} & $(1,1,1,1,1)$ \\
 & $2^2$ & $a=b$ & $(4,1)$ \\
 & $\frakA_5$ & $(\zeta,\zeta)$ & $(5)$ \\
\hline
3 & 1 & {\tiny general} & $(1,1,1,1,1)$ \\
 & $2$ & $a=b$ & $(2,2,1)$\\
 & $4$ & $(i,i)$ & $(4,1)$ \\
 & $D_{10}$ & $(\phi,\phi)$ & $(5)$ \\
\hline
5 & 1 & {\tiny general} & $(1,1,1,1,1)$ \\
 & $2$ & $(a,a)$ & $(2,2,1)$ \\
 & $\frakS_3$ & $(\zeta,\zeta)$ & $(3,2)$ \\
 & $5\rtimes 4$ & $(i,i)$ & $(5)$ \\
\hline
\end{tabular}
\end{center}
\caption{Automorphisms of $5$ distinct points in $\bbP^1$.}
\label{tbl:5pointAutos}
\end{table}

\begin{proof}
If $p = 0$, the possible groups $G$ and corresponding equations of
$Y$ are listed in \cite{CAG}, 8.6.4.
We will extend these results using a more uniform approach that works
for all characteristics.

Let us explain the remaining columns of Table~\ref{tbl:5pointAutos}.
The $(a,b)$ column give parameters for \eqref{delpezzo4-1} or
\eqref{delpezzo4-2} realizing the surface $Y$ on which $G$ acts,
while the $\lambda$ column lists the sizes of the orbits of
$G$ on the $5$ points.

Let $\zeta$, $i$ and $\phi$ be elements of $\bbk$ which are solutions to
the equations
$\zeta^2+\zeta+1=0$, $i^2+1=0$, and $\phi^2-\phi-1$.
When $p=0$, note $\zeta$ is a primitive cube root of unity,
$i$ is a primitive $4$th root of unity, and 
$\phi=\frac{1+\sqrt{5}}{2}$ is the Golden ratio.
When $p=2$, we may assume $\zeta=\phi$ and note that $i=1$ is not permitted
for $a$ or $b$ in \eqref{delpezzo4-2} since the 5 points would not be
distinct.
When $p=3$, we have $\zeta=1$ which is invalid for $a$ or $b$ in
\eqref{delpezzo4-1}.
When $p=5$, we have $\phi=3$ and $i=2$ or $i=3$.

Consider the following matrices:
\begin{equation*}
g_2 := \begin{pmatrix} 0 & 1 \\ 1 & 0\end{pmatrix},\
g_3 := \begin{pmatrix} \zeta & 0 \\ 0 & \zeta^{-1}\end{pmatrix},\
g_4 := \begin{pmatrix} \frac{1+i}{2} & \frac{1-i}{2} \\ \frac{1-i}{2} &
\frac{1+i}{2} \end{pmatrix},\
g_5 := \begin{pmatrix} -\phi & -1 \\ 1 & 0 \end{pmatrix}.
\end{equation*}
With the exception of $g_4$ when $p=2$ and $g_3$ when $p=3$,
each matrix $g_n$ is well-defined and has order $n$ in $\PGL_2(\bbk)$.

We will explicitly describe the groups $G$ that show up in the table.
When $a=b$, the matrix $g_2$ leaves invariant the $5$ points from
\eqref{roots}.  If $p=2$ and $a=b$, then the matrix
\[
g_2' = \begin{pmatrix} a & 1 \\ 1 & a \end{pmatrix}
\]
also leaves invariant the $5$ points.
Thus when $a=b$, we have exhibited the group
$G=\langle g_2 \rangle \cong 2$ when $p \ne 2$
and exhibited $G = \langle g_2, g_2' \rangle \cong 2^2$ when $p=2$.
If $a=b=\zeta$ and $p \ne 3$, then
$\frakS_3 \cong\langle g_2, g_3 \rangle \subseteq G$.
If $a=b=i$ and $p \ne 2$, then $4 \cong \langle g_4 \rangle \subseteq G$.
If $a=b=\phi$, then $D_{10} \cong \langle g_2, g_5 \rangle \subseteq G$.
If $p=2$, then $\zeta=\phi$ and so in the case $a=b=\zeta=\phi$
we obtain $\frakA_5 \cong \langle g_3, g_5 \rangle \subseteq G$.
If $p=5$, then in the case where $a=b=i$ 
we obtain $5 \rtimes 4 \cong \langle g_4, g_5 \rangle \subseteq G$;
note that $\phi$ is equal to one of the two possible values of $i$.

It remains to demonstrate that these are the only subgroups that occur.
First, observe that, in any characteristic, there is no non-trivial
automorphism of $\bbP^1$ which fixes three or more points.
Thus, as a subgroup of $\frakS_5$, the group $G$ cannot contain any
transpositions.
Up to conjugacy, these are the only subgroups of
$\frakS_5$ containing no transpositions:
\begin{equation} \label{eq:goodGroups}
1,\ 2,\ 3,\ 4,\ 2^2,\ 5,\ \frakS_3,\ D_{10},\ 5 \rtimes 4,\ \frakA_5 .
\end{equation}
(Note there are other subgroups isomorphic to $2$, $2^2$ and
$\frakS_3$ which \emph{do} have transpositions.)

This means that $G$ contains elements only of orders $1$, $2$, $3$, $4$
and $5$; and that an element of order $2$ must act as a double
transposition in $\frakS_5$.
In characteristic $2$, there are no elements of order $4$ in
$\PGL_2(\bbk)$.
In characteristic $3$, any element of order $3$ can fix at most one
point on $\bbP^1$, so $5$ points cannot be a disjoint union of orbits.
Since cyclic subgroups of fixed order are all conjugate in
$\PGL_2(\bbk)$ (consider diagonalizations or Jordan canonical forms),
we conclude that every non-trivial cyclic subgroup is generated by one
of $g_2, g_3, g_4, g_5$ up to conjugacy. 

Moreover, we claim that up to choice of coordinates the $5$ points can
be put into the form \eqref{roots} with one of the matrices
$g_2, g_3, g_4, g_5$ given above.
For order $2$, there is only one fixed point which we may
assume is $-1$; up to scaling, we may assume one of the orbits is
$\{0,\infty\}$ and so the other must be $\{-a,-a^{-1}\}$ for some choice of
$a$.
For the other orders, we only have to show that the group and set of
invariant points is unique up to conjugacy; that they can be put in form
\eqref{roots} then follows.
For order $3$, we may assume $p \ne 3$ and the group acts via
$x \mapsto \zeta x$, so the two fixed points are $0$ and $\infty$;
scaling $x$ commutes with the group action, so we may assume the
non-trivial orbit contains $1$ and so also $\zeta, \zeta^2$.
For order $4$, we may assume $p \ne 2$ and the group acts via
$x \mapsto ix$ fixing either $0$ or $\infty$; scaling $x$ commutes with
the group action and so we may assume the non-trivial orbit is
$\{1,-1,i,-i\}$.  The change of coordinates $x \to x^{-1}$ normalizes
the group action and leaves invariant the non-trivial orbit
$\{1,-1,i,-i\}$ and so we may assume the fixed point is $0$.
For order $5$, if we assume $p \ne 5$, then the group acts diagonally
and we may scale to ensure the orbit consists of the 5 fifth roots of unity.
If $p=5$, then we may assume the group acts via $x \mapsto x+1$
and, possibly after translation, the orbit is $\{0,1,2,3,4\}$.

Due to our choice of coordinates for the $5$ points and $g_2, g_3, g_4,
g_5$ above, we see that the presence of an element of order $3$ forces
$G$ to contain $\frakS_3$.
Using similar arguments, one concludes that the rows of
Table~\ref{tbl:5pointAutos} are as desired.
It only remains to exclude the following possibilities:
$2^2$ and $\frakA_5$ when $p \ne 2$, and
$5 \rtimes 4$ when $p \ne 5$.
Observe that $2^2$ must fix one of the $5$ points;
when $p \ne 2$,
the group $2^2$ must act faithfully on the tangent space at the fixed
point, which is impossible.
The group $\frakA_5$ contains a subgroup isomorphic to $2^2$ and so it
also does not occur when $p \ne 2$.
Finally, the group $5 \rtimes 4$ acts on $\bbP^1$ only when $p=5$ by
Theorem~\ref{thm:suz}.
\end{proof}

Let $Y$ be a del Pezzo surface of degree $4$.
An \emph{exceptional line} of $Y$ is a line of the ambient space $\bbP^4$ that is
contained in $Y$.  Similarly to cubic surfaces these are precisely the
$(-1)$-curves of $Y$. 
It is well known that there are exactly 16 exceptional lines
(see, for example, Theorem~26.2~of~\cite{Manin}).
The surface $Y$ can be obtained by blowing up 5 points in general
position on the plane, which always lie on a unique conic.
Indeed, del Pezzo surfaces of degree $4$ are determined up to
isomorphism by a set of $5$ distinct points on $\bbP^1$.

As we stated earlier, the automorphism group $\Aut(Y)$ has an injection
into the Weyl group $W(\sfD_5) \cong 2^4\rtimes \frakS_5$.
One representation of $W(\sfD_5)$ is an action on $\bbZ^5$ where the group
$\frakS_5$ acts by permuting basis elements and the group
$2^4$ acts by multiplying an even number of coordinates by $-1$.
For $A \subseteq \{1,2,3,4,5\}$, we will use the notation $\iota_{A}$
to denote the involution multiplying the corresponding coordinates by $-1$.
When $A$ has $4$ (resp. $2$) elements, we say $\iota_{A}$ is an \emph{involution of
the first} (resp. \emph{second}) \emph{kind}.
The group $2^4$ is realized on every del Pezzo surface of
degree $4$ and the group $2^4$ acts simply transitively on the set of 16
lines.

We recall from \cite{Carter} that
conjugacy classes of elements of the group $W(\sfD_5)$ correspond to
\emph{signed cycle types}.
They are listed in Table~\ref{tbl:D5ccs}.
The trace and characteristic polynomial are for the standard
$5$-dimensional representation.
The column ``realizable'' indicates whether, and in what
characteristics, the corresponding class can be realized on a del Pezzo
surface of degree $4$ --- this is obtained from
Theorem~\ref{thm:dp4autos}.
The class in $W(\sfE_6)$ indicates the corresponding conjugacy class
under the standard embedding $\sfD_5 \hookrightarrow \sfE_6$;
this is obtained by comparing characteristic polynomials with those from
Table~\ref{tbl:cyclicWE6} below.
Elements in the conjugacy class $\bar{1}\bar{1}\bar{1}\bar{1}1$
are involutions of the first kind,
while elements in the class $\bar{1}\bar{1}111$ are involutions of
the second kind.

\begin{table}
\begin{center}
\begin{tabular}{|c|c|r|l|c|l|}
\hline
Order & Name & Trace & Char. Poly & Realizable & Class in $W(\sfE_6)$\\
\hline
1 & $11111$ & $5$ & $\Phi_{1}^{5}$ & yes & 1A \\
\hline
2 & $111\bar{1}\bar{1}$ & $1$ & $\Phi_{1}^{3}\Phi_{2}^{2}$ & yes & 2B \\
2 & $1\bar{1}\bar{1}\bar{1}\bar{1}$ & $-3$ & $\Phi_{1}^{1}\Phi_{2}^{4}$ & yes & 2A \\
2 & $2111$ & $3$ & $\Phi_{1}^{4}\Phi_{2}^{1}$ &  & 2C \\
2 & $21\bar{1}\bar{1}$ & $-1$ & $\Phi_{1}^{2}\Phi_{2}^{3}$ &  & 2D \\
2 & $221$ & $1$ & $\Phi_{1}^{3}\Phi_{2}^{2}$ & yes & 2B \\
\hline
3 & $311$ & $2$ & $\Phi_{1}^{3}\Phi_{3}^{1}$ & $p \ne 3$ & 3C \\
\hline
4 & $2\bar{2}\bar{1}$ & $-1$ & $\Phi_{1}^{1}\Phi_{2}^{2}\Phi_{4}^{1}$ & yes & 4B \\
4 & $41$ & $1$ & $\Phi_{1}^{2}\Phi_{2}^{1}\Phi_{4}^{1}$ & $p \ne 2$ & 4D \\
4 & $\bar{2}11\bar{1}$ & $1$ & $\Phi_{1}^{2}\Phi_{2}^{1}\Phi_{4}^{1}$ &  & 4D \\
4 & $\bar{2}\bar{1}\bar{1}\bar{1}$ & $-3$ & $\Phi_{2}^{3}\Phi_{4}^{1}$ &  & 4C \\
4 & $\bar{2}\bar{2}1$ & $1$ & $\Phi_{1}^{1}\Phi_{4}^{2}$ & yes & 4A \\
\hline
5 & $5$ & $0$ & $\Phi_{1}^{1}\Phi_{5}^{1}$ & yes & 5A \\
\hline
6 & $32$ & $0$ & $\Phi_{1}^{2}\Phi_{2}^{1}\Phi_{3}^{1}$ &  & 6G \\
6 & $3\bar{1}\bar{1}$ & $-2$ & $\Phi_{1}^{1}\Phi_{2}^{2}\Phi_{3}^{1}$ & $p \ne 3$ & 6F \\
6 & $\bar{3}1\bar{1}$ & $0$ & $\Phi_{1}^{1}\Phi_{2}^{2}\Phi_{6}^{1}$ & $p \ne 3$ & 6C \\
\hline
8 & $\bar{4}\bar{1}$ & $-1$ & $\Phi_{2}^{1}\Phi_{8}^{1}$ & $p \ne 2$ & 8A \\
\hline
12 & $\bar{3}\bar{2}$ & $0$ & $\Phi_{2}^{1}\Phi_{4}^{1}\Phi_{6}^{1}$ &  & 12C \\
\hline
\end{tabular}
\end{center}
\caption{Conjugacy classes in $W(\sfD_5)$.}
\label{tbl:D5ccs}
\end{table}

It will be useful to explicitly describe the action of $W(\sfD_5)$
on the $16$ lines.
Let $E_1, \ldots, E_5$ be $5$ skew lines obtained by blowing up $5$
points $p_1, \ldots, p_5$ on the plane model; let $F_{ij}$ be the strict
transforms of the lines $\overline{p_ip_j}$ and let $G$ be the strict
transform of the unique conic containing all $5$ points (this would be
$G_6$ in the cubic surface).

One identifies the $16$ lines with sequences in $\bbF_2^5$ with an odd
number of non-zero entries.
The lines $E_i$ are identified with the standard basis
vectors.
The lines $F_{ij}$ are identified with the vectors which have
$0$'s in the $i$th and $j$th spot, but $1$'s elsewhere.
The line $G$ is identified with the vector $(1,1,1,1,1)$.
Two lines are incident if their sum has $4$ ones and skew if their sum
has $2$ ones.

The group $\frakS_5$ acts on $\bbF_2^5$ by permuting the coordinates.
The normal subgroup $2^4$ is identified with the subgroup of $\bbF_2^5$
consisting of an even number of non-zero entries.
The involutions of the first kind correspond to $4$ ones, while the
involutions of the second kind correspond to $2$ ones.
The action of $2^4$ on the 16 lines is simply by addition in $\bbF_2^5$.

This makes computations quite transparent.  For example, we now
immediately see that
involutions of the first kind partition the $16$ lines into $8$ orbits of
incident lines, while involutions of the second kind partition the lines
into $8$ orbits of skew lines.

The classification of possible groups of automorphisms of a quartic del Pezzo surface shows that  the conjugacy classes
$2111$,
$21\bar{1}\bar{1}$,
$\bar{2}11\bar{1}$ and
$\bar{2}\bar{1}\bar{1}\bar{1}$
are never realized geometrically on a del Pezzo surface of degree $4$.
Similarly, the realizability of the other classes are obstructed or
permitted, accordingly.

We will need a fairly detailed understanding of the fixed loci of
involutions of the first and second kind. First, note that in the case
$p = 2$, substituting $y_1 = y_2 = 0$ in the equations \eqref{delpezzo4-2}
we get a point 
\[
p_0 = ( a\sqrt{b}+a+\sqrt{a} : \sqrt{ab} : b\sqrt{a}+b+\sqrt{b} : 0 : 0 ) \ .
\] 
This point is called the \emph{canonical point} of $Y$ in \cite{pencils}.
It is independent of the choice of projective coordinates. 

\begin{prop} \label{prop:inv12kind}
If $p \ne 2$, the fixed loci of the 5 involutions of the first kind form 5
elliptic curves; the curves in each pair meet at 4 distinct points, which are
the fixed locus of an involution of the second kind.

If $p=2$, the fixed locus of an involution of the second kind is the
canonical point.
For an involution $\tau$ of the first kind there are two
possibilities:
\begin{itemize}
\item There exists an involution not of the first or second kind commuting
with $\tau$.  In this case, the fixed locus of $\tau$ is a pair
of smooth rational curves that are tangent to one another at the
canonical point.
\item Otherwise, the fixed locus is a rational curve with a unique
cuspidal singularity at the canonical point.
\end{itemize}
\end{prop}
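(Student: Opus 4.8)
The plan is to split into the cases $p\neq 2$ and $p=2$, and in each to reduce everything to the normal forms \eqref{delpezzo4-1} or \eqref{delpezzo4-2}, the decomposition $\Aut(Y)\cong 2^4\rtimes G$, and the description of $2^4$ via the reflections $\rho_1,\dots,\rho_5$ attached to the singular points $z_1,\dots,z_5$ of the five degenerate members of the pencil of quadrics. The starting point, valid in every characteristic, is that the $\rho_i$ are precisely the five involutions of the first kind and that the products $\rho_i\rho_j$ ($i\neq j$) are precisely the ten involutions of the second kind. From the formula defining $\rho_i$ one reads off that $\rho_i-\id$ has rank $1$, so the fixed locus of $\rho_i$ in $\bbP^4$ is a hyperplane $\Lambda_i\cong\bbP^3$ (together with an isolated coordinate point when $p\neq 2$, which lies off $Y$ precisely because the five points \eqref{roots} are distinct); likewise the fixed locus of $\rho_i\rho_j$ in $\bbP^4$ is $\Lambda_i\cap\Lambda_j\cong\bbP^2$ (together, when $p\neq 2$, with a line that misses $Y$). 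Hence the fixed locus on $Y$ of a first-kind involution is the curve $C_i:=Y\cap\Lambda_i$, a complete intersection of two quadrics in $\bbP^3$, hence connected of arithmetic genus $1$; and the fixed locus on $Y$ of $\rho_i\rho_j$ is $Y\cap\Lambda_i\cap\Lambda_j=C_i\cap C_j$, a complete intersection of two conics in $\bbP^2$, hence of length $4$.

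When $p\neq 2$ this already finishes the argument: the fixed scheme of an involution on a smooth surface in characteristic $\neq 2$ is smooth, so each $C_i$ is a smooth connected genus-$1$ curve, that is, an elliptic curve, and each $C_i\cap C_j$ is a reduced length-$4$ scheme, that is, four distinct points, which is exactly the fixed locus of the second-kind involution $\rho_i\rho_j$.

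Now suppose $p=2$. For a second-kind involution, restricting the pencil of quadrics to $\Lambda_i\cap\Lambda_j$ one finds that both members become sums of squares, hence squares $2L_1,2L_2$ of linear forms --- this is the characteristic-$2$ degeneration of the four fixed points into a single length-four point. Thus $Y\cap\Lambda_i\cap\Lambda_j=L_1\cap L_2$ is a single point, which a short computation with \eqref{delpezzo4-2} identifies with the canonical point $p_0$; since $p_0$ is intrinsic to $Y$ and hence $\Aut(Y)$-fixed, every second-kind involution has fixed locus exactly $\{p_0\}$. For a first-kind involution $\tau=\rho_i$, the curve $C:=Y\cap\Lambda_i$ is a $(2,2)$-complete intersection in $\bbP^3$, connected of arithmetic genus $1$, and contains $p_0$ (since $p_0$ is $\Aut(Y)$-fixed). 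A direct computation with \eqref{delpezzo4-2}, together with the analysis of the canonical point in \cite{pencils}, shows that $C$ is reduced, smooth away from $p_0$, singular exactly at $p_0$, and of geometric genus $0$ --- equivalently, the embedded tangent plane $T_{p_0}Y$ lies in $\Lambda_i$ for every first-kind $\rho_i$. Consequently $C$ is either an irreducible cuspidal curve or a union of two smooth rational curves tangent at $p_0$.

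It remains to decide which of the two occurs, and this is where I expect the real work to lie. By Theorem~\ref{thm:dp4autos}, an involution of $\Aut(Y)$ lying outside $2^4$ commutes with $\tau=\rho_i$ exactly when the image of $\Aut(Y)$ in $\PGL_2(\bbk)$ contains a nontrivial element fixing the point of the five-point set attached to $\rho_i$; by the classification of $G$ this holds for no first-kind involution when $Y$ is general, for exactly one when $G\cong 2^2$, and for all five when $G\cong\frakA_5$. One then verifies, by explicit computation with \eqref{delpezzo4-2} organized according to these finitely many configurations, that $C$ is reducible --- two smooth rational curves tangent at $p_0$, interchanged by the commuting involution --- precisely when such a commuting involution exists, and is the cuspidal curve otherwise. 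The main obstacle is thus twofold: establishing that in characteristic $2$ the fixed curve of every first-kind involution is singular at the canonical point, and matching the two geometric shapes with this group-theoretic dichotomy. This rests on a careful hands-on analysis of the degenerate $(2,2)$-curves attached to \eqref{delpezzo4-2} in each admissible case of Table~\ref{tbl:5pointAutos}, together with the results of \cite{pencils} on pencils of quadrics and the canonical point in characteristic $2$; everything else is routine linear algebra with the normal forms.
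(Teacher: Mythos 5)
Your setup and your treatment of $p\ne 2$ are essentially the paper's: reduce to the normal forms, identify the five reflections $\rho_i$ with the involutions of the first kind and the products $\rho_i\rho_j$ with those of the second kind, and intersect $Y$ with the fixed hyperplanes. Your appeal to smoothness of the fixed scheme of a tame involution to get smoothness of $C_i$ and reducedness of $C_i\cap C_j$ is a slightly slicker packaging of what the paper reads off directly from the diagonal form \eqref{delpezzo4-1}, and it is correct.

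The problem is the $p=2$ case for involutions of the first kind, which is the substance of the proposition, and there your argument stops exactly where the proof has to begin. You assert that ``a direct computation shows'' $C=Y\cap\Lambda_i$ is singular precisely at the canonical point, and that ``one then verifies, by explicit computation organized according to these finitely many configurations'' that reducibility of $C$ matches the existence of a commuting involution outside the first and second kinds --- but you never perform either computation, and you yourself flag this as ``where the real work lies.'' That work is the proof. The paper does it uniformly in the parameters $a,b$ rather than case-by-case over Table~\ref{tbl:5pointAutos}: it restricts the pencil $\lambda q_1+\mu q_2$ to the fixed hyperplane $H=\{y_1+y_2=0\}$ of $r_3$, writes down when a member of the restricted pencil factors into two linear forms, and extracts the condition $(a+b)(a+1)(b+1)=0$, i.e.\ $a=b$; this is then identified with the existence of the extra commuting involution $g_2$, and the two tangent conics (reducible case) versus the unique cusp at $p_0$ (irreducible case, whence geometric genus $0$ from arithmetic genus $1$) are read off from the factorization. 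Without this computation you have not established that $C$ is singular at $p_0$ at all, nor which side of the dichotomy occurs. A secondary soft spot: for second-kind involutions in characteristic $2$, $\Aut(Y)$-invariance of $p_0$ only shows $p_0$ lies \emph{in} the fixed locus; showing the fixed locus is \emph{exactly} $\{p_0\}$ still requires checking that the two restricted quadrics on $\Lambda_i\cap\Lambda_j$ are squares of linear forms meeting in one point, which you sketch for the plane $y_1=y_2=0$ but should not deduce from invariance alone.
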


\begin{proof}
For $p \ne 2$, this is easy to see from the diagonal normal form
\eqref{delpezzo4-1}.  Involutions of the first kind correspond to
changing the sign of only one coordinate, say $x_4$.  Thus the fixed
locus in $\bbP^4$ consists of the point $(0:0:0:0:1)$ and the hyperplane
$x_4=0$.  The point does not lie on $Y$ and the hyperplane cuts out a
smooth complete intersection of quadrics in $\bbP^3$, which is an
elliptic curve.  An involution of the second kind corresponds to
changing the signs of exactly two coordinates.  This fixes a line and a
plane in the ambient $\bbP^4$.  The line does not intersect any point of
$Y$, while the plane cuts out exactly $4$ distinct points.

Let us investigate the case where $p =2$ in more detail. 

The reflection $r_1$ corresponding to $(1:0)$ is given by
$r_1(x_3)=x_3+\frac{1}{a}y_2$ and acts as the identity on the other
coordinates.  The reflection $r_2$ corresponding to $(0:1)$ acts
similarly via $r_2(x_1)=x_1+\frac{1}{b}y_1$.
The reflection $r_3$ corresponding to $(1:1)$ acts via
\begin{align*}
r_3(x_i)&=x_i+\frac{1}{c}(y_1+y_2) \text{ for } i=1,2,3 \\
r_3(y_j)&=y_j \text{ for } j=1,2
\end{align*}
where $c=ab+a+b+1$.

Let $H$ be the fixed point locus of the reflection $r_3$; it is defined
by $y_1+y_2=0$.  Let $C = H \cap Y$.
The scheme $C$ is integral if and only if every member of the restricted
pencil is an irreducible conic.
The general element of the pencil $(\lambda q_1 + \mu q_2)|_H$
when restricted to $H$ is
\[
\mu b x_1^2 +
((\lambda +\mu)(ab+1) + \lambda b + \mu a)x_2^2
+ \lambda a x_3^2 +
(\mu x_1 + (\lambda+\mu)x_2 + \lambda x_3)y_1 \ .
\]
If this is reducible then it has the form
\[
(\mu x_1 + (\lambda+\mu)x_2 + \lambda x_3)(Sx_1+ Rx_2 + Tx_3+y_1) \ .
\]
Immediately, we see that $S=b$ and $T=a$ by considering the coefficients
of $x_1^2$ and $x_3^2$;
and thus $b\lambda=a\mu$ by considering the coefficient of the $x_1x_3$
term.
Note $a\ne 0,1$ and $b \ne 0,1$ or else the half-discriminant $\Delta$ has
multiple zeroes.  Thus $\lambda, \mu \ne 0$.
From the $x_1x_2$ term we conclude that $R=a+b$.
From the $x_2^2$ term we obtain $(\lambda+\mu)(ab+a+b+1)=0$,
or equivalently, $(a+b)(a+1)(b+1)=0$.
Since $a, b \ne 1$, we conclude that
$C$ is reducible if and only if $a=b$.
Note that, from the description of automorphisms above, $a=b$ if and
only if there exists a non-trivial involution commuting with $\tau$ not
of the first or second kind.

If $C$ is reducible, then there are two components;
one is contained in the plane $x_1+x_3=y_1+y_2=0$
and the other in the plane $ a(x_1+x_2)+y_1=y_1+y_2=0$.
One obtains smooth conics in each plane that intersect only at
the canonical point where they share the tangent line $x_1+x_3=y_1=y_2=0$.

If $C$ is irreducible, then one checks that the canonical point is the
unique singular point, which is a cusp.
Since $C$ has arithmetic genus $1$, we conclude that $C$ is a rational curve.
\end{proof}

\section{Differential structure in special characteristics}
\label{sec:phenomena}

Given a hypersurface $X$ in $\bbP^n$ of degree $d$ defined by a homogeneous form $F$, the
\emph{Hessian hypersurface} $\He(Y)$ of $X$ is the hypersurface defined by
the determinant of the matrix $D$ of second partial derivatives of $F$:
\[
H(F) := \det \left( \frac{\partial^2 F}{\partial x_i \partial x_j} \right) \ .
\]
It is a hypersurface of degree $(d-2)(n+1)$ unless the determinant vanishes identically.
The definition is independent of choice of coordinates for $F$.
Indeed, if $A$ is an invertible matrix corresponding to a change of
coordinates, then $D$ is taken to $A^TDA$; now $\det(D)$ vanishes if and
only if $\det(A^TDA)$ vanishes.

In this section, $X$ is a smooth cubic surface defined by a homogeneous form $F$ of
degree $3$, so $\He(X)$ is a hypersurface of degree $4$.

In characteristic $0$, we have the following result, essentially due to
Dardanelli and van Geemen~\cite{Dardanelli}, which partitions the cubic
surfaces based on the structure of their Hessian surfaces:

\begin{thm}
\label{thm:DG}
Let $X$ be a smooth cubic surface over an algebraically closed field of
characteristic $0$ and let $\He(X)$ be its Hessian hypersurface.
Then exactly one of the following holds:
\begin{enumerate}
\item[(a)] $\He(X)$ is irreducible with $10$ ordinary double points,
\item[(b)] $\He(X)$ is irreducible with $7$ singularities,
\item[(c)] $\He(X)$ is irreducible with $4$ singularities,
\item[(d)] $\He(X)$ is a union of a hyperplane and an irreducible surface,
\item[(e)] $\He(X)$ is a union of $4$ hyperplanes.
\end{enumerate}
\end{thm}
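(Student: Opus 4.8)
The plan is to reduce everything to Sylvester's pentahedral form and then compute. Over an algebraically closed field of characteristic $0$, a general smooth cubic surface $X = V(F)$ can be written in pentahedral form $F = \sum_{i=0}^{4} \lambda_i \ell_i^3$ with $\sum_{i=0}^{4} \ell_i = 0$, all $\lambda_i \ne 0$, and the five planes $V(\ell_i)$ in general position (any four of the $\ell_i$ independent --- a condition smoothness of $X$ would in any case force); the cubics admitting no such representation form a proper closed subvariety of the moduli space, which I would treat separately at the end. For a pentahedral $F$ I would compute $\He(X)$ directly. Choosing coordinates so that $\ell_0, \dots, \ell_3$ are the coordinate functions and $\ell_4 = -(\ell_0 + \cdots + \ell_3)$, the matrix of second partials of $F$ is $\operatorname{diag}(6\lambda_i \ell_i)$ corrected by a rank-one term coming from $\ell_4^3$, so the matrix determinant lemma gives, up to a nonzero scalar,
\[
H(F) = \sum_{i=0}^{4} \frac{1}{\lambda_i} \prod_{j \ne i} \ell_j ,
\]
a quartic in which every coefficient $1/\lambda_i$ is nonzero.

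From this formula the structure of $\He(X)$ follows. First, reducing $H(F)$ modulo any $\ell_k$ leaves the nonzero term $\lambda_k^{-1}\prod_{j\ne k}\ell_j$, so no plane $V(\ell_k)$ is a component, and a short further argument gives that $H(F)$ is irreducible; hence cases (d) and (e) cannot occur for a pentahedral cubic. Second, a local computation at each of the ten vertices $p_{ijk} = V(\ell_i)\cap V(\ell_j)\cap V(\ell_k)$ of the pentahedron shows $H(F)$ and all its first partials vanish there, and that for general $\lambda_i$ these ten distinct points are ordinary double points and the only singularities of $\He(X)$ --- this is case (a). Third, I would examine the special values of the $\lambda_i$: there the singular set of $\He(X)$ degenerates, and tracking this --- together with the dictionary between these loci and the configurations of Eckardt points on $X$ --- one finds, as in \cite{Dardanelli}, exactly $7$ singular points in one stratum (case (b)) and exactly $4$ in a deeper one (case (c)). Finally, for the cubic surfaces off the pentahedral locus I would use an explicit normal form to show $\He(X)$ is a hyperplane together with an irreducible cubic (case (d)), with the deepest specialization being the Fermat cubic $V(x_0^3 + x_1^3 + x_2^3 + x_3^3)$, whose Hessian is $6^4\, x_0 x_1 x_2 x_3$, a union of four hyperplanes (case (e)). Exclusivity of the five cases is then automatic, since they are distinguished by the number of irreducible components of $\He(X)$ and, when $\He(X)$ is irreducible, by the number of its singular points; every smooth cubic belongs to one of them, so exactly one holds.

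The conceptual core --- that the Hessian of a pentahedral cubic is the quartic $\sum_i \lambda_i^{-1}\prod_{j\ne i}\ell_j$, singular at (or near) the ten vertices of the pentahedron --- is a quick computation. The hard part will be the degenerate cases: precisely identifying the loci in the $\lambda_i$ on which the singular set of $\He(X)$ drops to $7$ and then to $4$ points, verifying the resulting singularity types, and pinning down the cubics with no pentahedral form together with a normal form that makes the splitting of $\He(X)$ visible. This detailed analysis is exactly what is carried out in \cite{Dardanelli}, which I would follow.
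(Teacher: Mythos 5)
The paper does not actually prove this statement: it is quoted from Dardanelli and van Geemen \cite{Dardanelli}, so there is no internal proof to compare against. Your Hessian computation for a pentahedral cubic is correct (it matches the formula $\sum_i (c_0\cdots\widehat{c_i}\cdots c_4)(x_0\cdots\widehat{x_i}\cdots x_4)$ used later in the proof of Theorem~\ref{thm:Sylvester}), and your identification of cases (a), (d), (e) is sound.

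The gap is in your route to cases (b) and (c). You propose to reach the strata with $7$ and $4$ singularities by ``examining special values of the $\lambda_i$'' inside the nondegenerate pentahedral family. But for \emph{every} smooth cubic admitting a nondegenerate Sylvester form --- i.e.\ for all admissible coefficients $\lambda_i$ with the five planes in general position --- the singular locus of $\He(X)$ is exactly the ten vertices of the pentahedron. This is proved in the paper itself in the proof of Theorem~\ref{thm:Sylvester}: a singular point of the Hessian with all $\ell_i\ne 0$ would satisfy $b_j\ell_i^2=b_i\ell_j^2$, which forces $X$ itself to be singular, and points with some $\ell_i=0$ reduce to the ten vertices. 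So no choice of the $\lambda_i$ keeping $X$ smooth and the pentahedron nondegenerate yields $7$ or $4$ singular points. As the paper notes right after the theorem, class (a) is \emph{exactly} the locus of cubics with a nondegenerate Sylvester form; classes (b) and (c) therefore consist of smooth cubics with \emph{no} such form that are also not cyclic. Your analysis of the off-pentahedral locus assigns it entirely to (d) and (e) via cyclic surfaces, so the surfaces of classes (b) and (c) fall through both halves of your case division. To repair the argument one must analyse the degenerate Sylvester representations --- degenerations of the pentahedron itself, where the limits of $\sum_i\lambda_i\ell_i^3$ involve terms of the shape $\ell^2 m$ --- and that is where the $7$- and $4$-point Hessians arise in \cite{Dardanelli}.
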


In~\cite{Dardanelli}, they also find normal forms for the cubics in each
of these classes; they correspond to (not necessarily closed)
subvarieties of the moduli space of smooth cubic surfaces.
The cubics in class (a) correspond to those which have a
non-degenerate Sylvester form
(see Section~\ref{sec:generalForms} below).
The cubics in classes (d) and (e) correspond to cyclic surfaces (see
Section~\ref{sec:3C}), with (e) being the Fermat cubic
surface (see Section~\ref{sec:Fermat}).

\begin{remark} \label{rem:DGapproach}
The Hessian surfaces constrain the possible automorphism groups of the
corresponding cubic surfaces.  One could classify cubic surfaces along
with their automorphisms by considering each class in turn and using the
normal forms given by Dardanelli and van Geemen.
Note that surfaces with isomorphic automorphism groups can lie in
different classes, however.
\end{remark}

In characteristic $2$ and $3$, the behavior of partial derivatives of
cubic forms is very different.
In characteristic $2$, the Hessian matrix always has zero determinant so
Hessian surfaces do not exist. 
In characteristic $3$, the Hessian surfaces exist but the information
they encode is more naturally captured by the critical loci of the cubic
surfaces, which are non-empty even though the surfaces are smooth!
We discuss these differences in the next two sections.

\subsection{Canonical Point in characteristic 2}
\label{sec:strange}

Note that while the Hessian matrix in characteristic $2$ always has zero
determinant, the matrix itself is not necessarily trivial. In fact, if
$a_{123}, a_{023}, a_{013}, a_{012}$ are the coefficients of the
monomials $x_1x_2x_3, x_0x_2x_3, x_0x_1x_3, x_0x_1x_2$ of a cubic form
$F$ defining the surface, we have
\[
H(F) = (h_{ij})_{0\le i\le j\le 3},
\]
where $h_{ii} =0$ and $h_{ij} = h_{ji} = x_ka_{ijk}+x_la_{ijl}$
for $i,j,k,l$ distinct.
 
The Hessian matrix $H(F)$ evaluated at the point
$y= (y_0,y_1,y_2,y_3)$ is the symmetric matrix associated to the
symmetric bilinear form associated to the polar
quadratic form $P_y(F) = \sum y_i\frac{\partial F}{\partial x_i}$
(this can be checked directly or see \cite{CAG}, Proposition 1.1.17).
The singular locus of the corresponding quadric $V(P_y(F))$ is the
projective space associated to the null space of the matrix $H(F)(y)$.

Assume at least one of the $a_{ijk}$ is non-zero.
Let $v= (a_{123}, a_{023}, a_{013}, a_{012})$ and
$\bar{v}$ be the corresponding point in $\bbP^3$.
One checks that $H(F)(v)$ is the zero matrix, and so the singular
locus of the quadric $V(P_v(F))$ is the whole space.
Thus $P_{v}(F)$ is the square of a linear form.
We immediately check that, for all $y$, we have  
\begin{equation}\label{hessianeq}
H(F)(y)\cdot v = 0.
\end{equation}
We claim that the point $\bar{v}$ is the unique point with this property.
Viewing $y_0,y_1,y_2,y_3$ as algebraically independent indeterminates,
equation \eqref{hessianeq} is equivalent to a linear system of 16 equations
in the coordinates of $v$.
The rank of the corresponding $16\times 4$ matrix is $3$,
so we have a unique solution up to proportionality.
This establishes the claim.

Let us record what we have found.

\begin{lem}
Let $X = V(F)$ be a smooth cubic surface with $p=2$.
If the Hessian matrix $H(F)$  is non-zero,
then there is a unique vector $v \in \bbk^4$ up to scaling
such that $H(F)(y)\cdot v=0$ at all $y \in \bbk^4$.
Given a defining form $F$ for $X$ in coordinates $x_0,x_1,x_2,x_3$,
we find
\[
v = (a_{123}, a_{023}, a_{013}, a_{012})
\]
where $a_{ijk}$ is the coefficient of the monomial $x_ix_jx_k$ in $F$.
The polar quadratic form $P_v(F)$ is a square of a linear form.
\end{lem}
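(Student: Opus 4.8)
The plan is to reduce everything to a single structural observation: the matrix of linear forms $H(F)(y)$ satisfies $H(F)(y)\cdot z = H(F)(z)\cdot y$ for all $y,z\in\bbk^4$, and in characteristic $2$ each coordinate of the candidate vector $v$ coincides with a coefficient occurring in the ``opposite'' off-diagonal entry of $H(F)$. First I would recall from the computation preceding the statement that $H(F)$ has zero diagonal and $(i,j)$-entry the linear form $x_k a_{ijk}+x_l a_{ijl}$ whenever $\{i,j,k,l\}=\{0,1,2,3\}$, where $a_{ijk}$ is the coefficient of $x_ix_jx_k$ (indices unordered); write $h_{ij}(y)=y_k a_{ijk}+y_l a_{ijl}$ for its value at a point $y$. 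With $v=(a_{123},a_{023},a_{013},a_{012})$, the $m$-th coordinate of $v$ is the coefficient $a_{ijk}$ with $\{i,j,k\}=\{0,1,2,3\}\setminus\{m\}$; so for a pair $\{i,j\}$ with complement $\{k,l\}$ one has $v_k=a_{ijl}$ and $v_l=a_{ijk}$, whence
\[
  h_{ij}(v) = v_k a_{ijk}+v_l a_{ijl} = a_{ijl}a_{ijk}+a_{ijk}a_{ijl} = 2\,a_{ijk}a_{ijl} = 0 ,
\]
so $H(F)(v)$ is the zero matrix.

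Next I would verify the symmetry: the $i$-th coordinate of $H(F)(y)\cdot z$ is $\sum_{j\ne i}h_{ij}(y)z_j = \sum a_{ijm}\,y_m z_j$, summed over pairs $j\ne m$ both different from $i$; relabelling the summation indices $j\leftrightarrow m$ and using $a_{ijm}=a_{imj}$ turns this into the $i$-th coordinate of $H(F)(z)\cdot y$. Existence is now free: $H(F)(y)\cdot v = H(F)(v)\cdot y = 0$ for all $y$. For uniqueness, the symmetry gives $H(F)(y)\cdot w = H(F)(w)\cdot y$, so $H(F)(y)\cdot w=0$ for all $y$ if and only if $H(F)(w)=0$ as a matrix, i.e. if and only if $h_{ij}(w)=w_ka_{ijk}+w_la_{ijl}=w_kv_l+w_lv_k$ vanishes for every pair. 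Since $p=2$, these are precisely the $2\times 2$ minors of $\bsm v_0 & v_1 & v_2 & v_3 \\ w_0 & w_1 & w_2 & w_3 \esm$, so their simultaneous vanishing means $w$ is proportional to $v$; and because $H(F)\ne 0$ forces some $a_{ijk}\ne 0$, the vector $v$ is non-zero, so this pins $w$ down up to a scalar.

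For the final assertion I would invoke, as already cited above, that $H(F)(v)$ is the Gram matrix of the polar bilinear form of the quadratic form $P_v(F)$ (\cite{CAG}, Proposition~1.1.17). Since $H(F)(v)=0$, that polar form is identically zero; a quadratic form over a field of characteristic $2$ with trivial polar form is additive and satisfies $P_v(F)(cu)=c^2P_v(F)(u)$, so writing it in coordinates as $\sum_i b_ix_i^2$ and extracting square roots --- legitimate since $\bbk$ is algebraically closed, hence perfect --- yields $P_v(F)=\big(\sum_i\sqrt{b_i}\,x_i\big)^2$, a square of a linear form.

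I do not anticipate a genuine obstacle: the argument is a short computation. The only point requiring care is the index bookkeeping, namely applying the identification $v_k=a_{ijl}$ (for $\{i,j,k,l\}=\{0,1,2,3\}$) consistently. It is worth noting that smoothness of $X$ is never used --- only $p=2$ and $H(F)\ne 0$ enter.
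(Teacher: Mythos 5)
Your proof is correct, and it sharpens the one step where the paper is thinnest. The overall skeleton is the same as the paper's: both start from the explicit entries $h_{ij}=x_ka_{ijk}+x_la_{ijl}$, both check $H(F)(v)=0$ directly (your $2a_{ijk}a_{ijl}=0$ computation is exactly what "one checks" hides), and both deduce that $P_v(F)$ is a square from the vanishing of its polar form --- the paper phrases this as the singular locus of the quadric $V(P_v(F))$ being all of $\bbP^3$, while you phrase it as additivity of a quadratic form with trivial polar form over a perfect field; these are the same fact. The genuine divergence is in the uniqueness argument. The paper treats $H(F)(y)\cdot v=0$ (with $y$ indeterminate) as a $16\times 4$ linear system in $v$ and simply asserts that the rank is $3$. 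You instead observe that $H(F)(y)\cdot z$ is symmetric in $y$ and $z$ (because it is the contraction of the symmetric tensor of coefficients $a_{ijm}$), reduce the condition to $H(F)(w)=0$, and then recognize the six equations $w_kv_l+w_lv_k=0$ as the $2\times 2$ minors of the matrix with rows $v$ and $w$. This makes the rank-$3$ claim transparent rather than asserted, and it also justifies your closing remark: smoothness of $X$ is indeed never used, only $p=2$ and $H(F)\neq 0$ (which is equivalent to $v\neq 0$). The index bookkeeping $v_k=a_{ijl}$, $v_l=a_{ijk}$ is applied consistently throughout, so there is no gap.
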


In the case when $(a_{123}, a_{023}, a_{013}, a_{012})\ne 0$ we denote
by $\sfc_X$ the  point $(a_{123}: a_{023}:a_{013}:a_{012})\in \bbP^3$
and call it the \emph{canonical point} of $X$
(note that this point does not necessarily lie on $X$).
The linear form whose square is $P_v(F)$ defines the \emph{canonical plane}. 
 
We find different normal forms for $X$ depending on the behavior of
this canonical point.  The following can be viewed as a rough analog to
Theorem~\ref{thm:DG} in characteristic $2$.

\begin{prop}\label{prop:5.4}
Assume $p = 2$. Then exactly one
of the following holds:
\begin{enumerate}
\item[(a)] $X$ has no canonical point and is isomorphic to the Fermat
surface defined by
\[
x_0^3+ x_1^3 + x_2^3+ x_3^3 = 0 \ .
\]
\item[(b)] $X$ has a canonical point but it does not lie on $X$;
the form $F$ has the normal form
\[
x_0(x_0+cx_1)^2 + C(x_1,x_2,x_3) = 0 \ .
\]
\item[(c)] $X$ contains its canonical point and the canonical plane is
not a tritangent plane; $F$ has the normal form
\[
x_0(x_0x_1+x_2^2) + C(x_1,x_2,x_3) = 0 \ .
\]
\item[(d)] the canonical plane of $X$ is a tritangent plane of $X$ with
the canonical point being its Eckardt point; $F$ has the normal form
\[
x_0x_1(x_0+x_1) + C(x_1,x_2,x_3) = 0 \ .
\]
\end{enumerate}
In each case, $c$ is a constant and $C$
is a homogeneous cubic form in $x_1,x_2,x_3$.
\end{prop}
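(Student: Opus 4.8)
The plan is to organise the four cases by the position of the canonical point $\sfc_X$ (when it exists) relative to $X$ and to the canonical plane, reducing $F$ to the stated normal form by a linear change of coordinates adapted to these data. I would first dispose of case (a): by the preceding lemma, $X$ has a canonical point exactly when $H(F)\not\equiv 0$, i.e.\ when some coefficient $a_{ijk}$ with $i,j,k$ distinct is nonzero. If they all vanish, then $F=\sum_{i,j}m_{ij}x_i^2x_j$, so every partial $\partial_jF=\sum_i m_{ij}x_i^2=\bigl(\sum_i m_{ij}^{1/2}x_i\bigr)^2$ is a perfect square (as $\bbk$ is perfect), and Euler's identity, which in characteristic $2$ reads $F=\sum_i x_i\,\partial_iF$, shows $X$ is smooth if and only if $(m_{ij})$ is invertible; since $\bbk$ is algebraically closed one then brings $(m_{ij})$ to the identity, so $X$ is the Fermat surface. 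From here on I assume $\sfc_X$ exists, and write $v$ for a representing vector, so $P_v(F)=\ell^2$ for a linear form $\ell$ whose zero locus is the canonical plane.

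Next I would check that for smooth $X$ the canonical plane is a genuine plane, i.e.\ $\ell\neq 0$. Normalising $\sfc_X=(1:0:0:0)$ so that $v=e_0$ and $P_v(F)=\partial_0F$, vanishing of $\ell$ forces $F=x_0^2L(x_1,x_2,x_3)+C(x_1,x_2,x_3)$ for a linear $L$ and a cubic $C$; then $X$ has a singular point (it is a cone if $L=0$, and if $L\neq 0$ one may take $L=x_1$, observe that the conics $\partial_2C=0$, $\partial_3C=0$ meet in $\bbP^2$, and solve $x_0^2=\partial_1C$ at such a point). The key structural point is that $P_v(F)(v)=\sum_i v_i\,\partial_iF(v)=3F(v)=F(v)$, together with $P_v(F)=\ell^2$, gives $F(v)=\ell(v)^2$; hence $\sfc_X\in X$ if and only if $\sfc_X$ lies on the canonical plane, which separates case (b) from cases (c) and (d).

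The bulk of the argument is then the coordinate normalisation in each case, always starting from $\sfc_X=(1:0:0:0)$ (so the $x_0x_jx_k$ coefficients vanish and the $x_1x_2x_3$ coefficient is nonzero, scaled to $1$). In case (b), where $\ell(v)\neq 0$, a further change preserving $(1:0:0:0)$ makes $\ell=x_0$; then $\partial_0F=x_0^2$ forces the $x_0^3$ coefficient to be $1$ and all $x_j^2x_0$ coefficients to vanish, so $F=x_0^2(x_0+\delta_1x_1+\delta_2x_2+\delta_3x_3)+C$; absorbing $\delta_2x_2+\delta_3x_3$ into $x_1$ and applying $x_0\mapsto x_0+cx_1$ yields $x_0(x_0+cx_1)^2+C$. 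In cases (c) and (d), $\sfc_X\in X$ forces the $x_0^3$ coefficient to vanish, $\ell$ is a nonzero linear form in $x_1,x_2,x_3$ which I take to be $x_3$, and $\partial_0F=x_3^2$ then forces $F=x_0\bigl(x_0\,m+x_3^2\bigr)+C$ with $m=\gamma_1x_1+\gamma_2x_2+\gamma_3x_3$, where $m\neq 0$ by smoothness (otherwise $(1:0:0:0)$ is singular). One checks that $F|_{x_3=0}=x_0^2\,m(x_1,x_2,0)+C(x_1,x_2,0)$ is a reduced triple of lines precisely when $m(x_1,x_2,0)=0$, i.e.\ when $m$ is a multiple of $x_3$ (and then the three lines are concurrent at $\sfc_X$, and are distinct exactly when $X$ is smooth, equivalently when $C(x_1,x_2,0)$ is reduced). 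So if the canonical plane is not a tritangent plane, $m$ has a nonzero $x_1$- or $x_2$-coefficient; absorbing $m$ into $x_1$ and relabelling gives $F=x_0(x_0x_1+x_2^2)+C$ (case (c)). If it is a tritangent plane, $m$ is a multiple of $x_3$; rescaling and relabelling gives $F=x_0x_1(x_0+x_1)+C$, in which $\{x_1=0\}\cap X$ is a tritangent trio with Eckardt point $\sfc_X$ (case (d)). Finally the four cases are mutually exclusive, since the existence of $\sfc_X$, the incidence $\sfc_X\in X$, and the property of the canonical plane being a tritangent plane are all intrinsic; so exactly one holds.

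The step I expect to be the main obstacle is case (a): showing that a smooth cubic with $H(F)\equiv 0$ is projectively equivalent to the Fermat surface is equivalent to showing that every invertible matrix over $\bbk$ has the form $(B^{(2)})^TB$ with $B\in\GL_4(\bbk)$ and $(\,\cdot\,)^{(2)}$ entrywise squaring, which is a surjectivity statement for a (twisted) Lang map on $\GL_4$; it may be cleanest to extract this from the detailed study of the Fermat surface in Section~\ref{sec:Fermat}. A secondary difficulty is the bookkeeping in the normal-form reductions, in particular matching the split ``$m$ is, or is not, a multiple of $x_3$'' with the tritangent-plane alternative, where one needs that $X$ is smooth precisely when the cubic cut out by the canonical plane is reduced in case (d).
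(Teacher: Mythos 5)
Your treatment of cases (b)--(d) is essentially the paper's own argument: place $\sfc_X$ at $(1:0:0:0)$, use that the polar $P_v(F)=\partial_0 F=a_0x_0^2+B_2$ is the square of a linear form $\ell$, and split according to whether $a_0$ vanishes and whether the residual linear form $m$ is a multiple of $\ell$. Your intrinsic observation that $P_v(F)(v)=3F(v)=F(v)$, hence $F(v)=\ell(v)^2$ and $\sfc_X\in X$ exactly when $\sfc_X$ lies on the canonical plane, is a nice clarification of the case division, and your check that ``$m$ is / is not a multiple of $\ell$'' matches ``the canonical plane is / is not a tritangent plane'' supplies the detail the paper dismisses with ``verified using the normal forms.'' Those parts are correct.

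The genuine gap is case (a). You correctly reduce it to the assertion that every $m\in\GL_4(\bbk)$ can be written as $(B^{(2)})^{T}B$ with $(\cdot)^{(2)}$ entrywise squaring, but you do not prove this, and your proposed fallback --- extracting it from the study of the Fermat surface in Section 6 --- is circular: that section's claim that the Fermat is the unique smooth cubic in characteristic $2$ without a canonical point is itself deduced from Proposition 5.4. The paper closes this step by invoking Beauville (implication (iv)\,$\Rightarrow$\,(v) of the Th\'eor\`eme in \S 2 of \cite{Beauville}): a smooth cubic surface all of whose first partials are squares of linear forms is projectively equivalent to the Fermat. If you prefer to keep your self-contained reduction, the surjectivity does hold over every algebraically closed field of characteristic $2$, and no appeal to Lang's theorem over finite fields is needed: entrywise squaring is multiplicative on matrices, so $m\cdot B=(B^{(2)})^{T}mB$ is a right action of $\GL_4$ on itself; since the differential of entrywise squaring vanishes, the differential of the orbit map at $B=I$ is $\xi\mapsto m\xi$, an isomorphism, so the orbit map is \'etale there and every orbit is open; as $\GL_4$ is irreducible, there is a single orbit, and every invertible $m$ is equivalent to $I$. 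As written, however, this step --- the only part of the proposition that is not routine coordinate bookkeeping --- is missing from your proof.
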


\begin{proof}
If the canonical point does not exist, then there are no non-zero
coefficients of monomials of the form $x_ix_jx_k$ with $i,j,k$
distinct; this implies that one can write the equation of $X$ in the form
$x_0^2L_0+x_1^2L_1+x_2^2L_2+x_3^2L_3 = 0,$ where $L_i$ are linear forms.
Note that first partials will be a linear combination of squares
$x_i^2$.  In other words, the first partials are squares of linear forms. 
We may now conclude this is the Fermat surface by a result of
Beauville:
implication (iv) $\Rightarrow$ (v) of Th\'eor\`eme from \S 2
of \cite{Beauville}.

Otherwise, there exists a canonical point that we may assume to be
$(1:0:0:0)$.  Write the form $F$ defining $X$ as follows:
\[
F= a_0x_0^3+A_1x_0^2+B_2x_0+C_3=0
\]
where $a_0,A_1,B_2,C_3$ are constant, linear, quadratic, and cubic forms of $x_1,x_2,x_3$.
We compute the polar $P=P_{\sfc_X}(X)=a_0x_0^2+B_2$ and see that
$B_2$ must be the square of a linear form.

If $a_0 \ne 0$, then, by a change of coordinates in $x_0$ only,
we may assume $B_2$ is a multiple of $x_1$.
If $a_0=0$, then by a change of coordinates in $x_1,x_2,x_3$ we may
again assume $B_2$ is a multiple of $x_1$.
In either case, the canonical point remains $(1:0:0:0)$.
Thus:
\[
F = x_0(a_1x_0+a_2x_1)^2 + A_1x_0^2 + C_3 = 0
\]
for $a_1=\sqrt{a_0}$ and some constant $a_2$.
Note that $\sfc_X \in X$ if and only if $a_1 = 0$.

Suppose $a_1 \ne 0$.  Then we may assume $a_1=1$ and take $x_0 \mapsto
x_0 + A_1$ to obtain the form
\[
F = x_0^3+a_2^2x_1^2x_0+C_3, \ P=(x_0+a_2x_1)^2
\]
for case (b).

We now suppose $a_1 = 0$, so $\sfc_X \in X$ and $P=(a_2x_1)^2$.
Since the polar cannot be zero, we may assume $a_2=1$.
Either $A_1$ is a multiple of $x_1$ or it is not.  The two remaining cases
are the obtained from these two possibilities.
The facts about tritangent planes are verified using the normal forms.
\end{proof}

\subsection{Critical loci in characteristic 3}
\label{sec:criticalLoci}

The \emph{critical locus} $\Crit(X)$ of a projective hypersurface $Z$
is the subscheme defined by the ideal of partial derivatives
$\partial_0 F, \ldots, \partial_n F, $
where $F$ is the homogeneous form defining $X$.
If $X$ has degree $m$, then by Euler's formula
\[
\sum_{i=0}^n x_i \partial_i F = mF
\]
the critical locus of $Z$ coincides with the singular locus of $X$
when the degree $m$ is coprime to the characteristic.
However, when the characteristic divides the degree, the critical locus
may be larger than the singular locus --- there may be a non-trivial
critical locus even when the variety $Z$ is smooth.

\begin{prop} \label{prop:criticalLocus}
Let $Z$ be a smooth hypersurface of degree $m$ in $\bbP^n$
over a field $\bbk$ of characteristic $p$ dividing $m$.
If $m=2$ and $n$ is even then $\Crit(Z)$ is empty.
Otherwise, $\Crit(Z)$ is a $0$-dimensional closed subscheme of degree $h^0(\calO_{\Crit(Z)})$ equal to 
\begin{equation} \label{critical}
J_m(n):=\frac{(m-1)^{n+1}-(-1)^{n+1}}{m} \ .
\end{equation}
\end{prop}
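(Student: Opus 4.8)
The plan is to reduce the statement to a computation about a complete intersection, specifically the scheme cut out by the $n+1$ partials $\partial_0 F, \dots, \partial_n F$ on $\bbP^n$. First I would verify the dimension claim: since $Z$ is smooth, the partials have no common zero \emph{on} $Z$, but they may vanish off $Z$; I need to see that the critical locus is nonempty only when it is $0$-dimensional. The clean way is to observe that each partial $\partial_i F$ has degree $m-1$, so if the common zero locus of the $\partial_i F$ had positive dimension it would have to meet the hypersurface $V(\partial_0 F)$ of degree $m-1$ in a positive-dimensional set (for $n \geq 2$), hence meet $Z$... but that last implication needs care since $V(\partial_0 F) \cap Z$ need not equal $V(\partial_0 F)$. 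Better: use that on a smooth hypersurface $Z$, by adjunction and the Euler sequence, the restriction of the partials generates the conormal-type sheaf, so the critical locus is disjoint from $Z$; then $\Crit(Z)$ lies in the affine-like open complement and one argues via Bézout/Koszul that the $n+1$ forms of degree $m-1$ in $n+1$ variables have only finitely many common projective zeros unless they share a component, which a smoothness/genericity argument rules out — except precisely in the degenerate case $m=2$, $n$ even, where the single quadric's Hessian is invertible and there is no critical point at all. I would handle the $m=2$, $n$ even exclusion separately and directly (invertible symmetric matrix in odd size... wait, $n$ even means $n+1$ variables, odd-sized; the Hessian of a smooth quadric is invertible, so no critical locus).

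**Next**, for the degree count, the key step is to recognize $\Crit(Z) = V(\partial_0 F, \dots, \partial_n F)$ as a zero-dimensional subscheme of $\bbP^n$ defined by $n+1$ forms of the same degree $m-1$, and to compute its length via the Koszul complex. The length of a complete intersection of $n$ general forms of degree $m-1$ would be $(m-1)^n$ by Bézout; but here we have $n+1$ forms, which is one too many. The right tool is the Eagon–Northcott / Koszul resolution associated to the map $\calO_{\bbP^n}(-(m-1))^{n+1} \to \calO_{\bbP^n}$, whose cokernel has support $\Crit(Z)$; taking Euler characteristics of the Koszul complex $\bigwedge^\bullet \calO(-(m-1))^{n+1}$ twisted suitably yields the length. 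Concretely, $h^0(\calO_{\Crit(Z)}) = \sum_{j=0}^{n+1} (-1)^j \binom{n+1}{j}\chi(\calO_{\bbP^n}(-j(m-1)))$, and plugging in $\chi(\calO_{\bbP^n}(d)) = \binom{d+n}{n}$ and simplifying the resulting alternating sum gives a closed form. I expect this sum to telescope to exactly $J_m(n) = \frac{(m-1)^{n+1} - (-1)^{n+1}}{m}$; alternatively, one can recognize $J_m(n)$ as the number $1 + (m-1) + \cdots$ type expression, or argue by the "excess intersection" formula: the $n+1$ partials define a section of $\calO(m-1)^{n+1}$ whose zero scheme's class, after removing the contribution explained by Euler's relation $\sum x_i \partial_i F = 0$ (valid since $p \mid m$), is computed by the top Chern/Segre class calculation on $\bbP^n$.

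**The main obstacle** I anticipate is not the Euler-characteristic bookkeeping — that is routine — but rather rigorously establishing that $\Crit(Z)$ is \emph{reduced of the expected length}, i.e. that the Koszul complex actually computes the length correctly, which requires that the $n+1$ partials have no common component (so that $\Crit(Z)$ really is zero-dimensional) and that no "hidden" contributions inflate the scheme structure. The cleanest argument: since the partials generate an ideal whose zero locus misses the smooth $Z$, the locus $\Crit(Z)$ sits inside $\bbP^n \setminus Z$, and a local computation at each critical point — using that such a point is a nondegenerate critical point of a dehomogenization precisely because $Z$ is smooth (the Hessian being nondegenerate there, which is where smoothness of $Z$ enters most essentially) — shows $\Crit(Z)$ is reduced. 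Then the length equals the number of points, and the number of points is forced by the intersection-theoretic computation. I would structure the proof as: (1) dispatch the $m=2$, $n$ even case; (2) show $\Crit(Z) \cap Z = \emptyset$ using smoothness; (3) deduce $\dim \Crit(Z) = 0$ via Bézout on the complement; (4) compute the length by the Koszul/Euler-characteristic formula and simplify to $J_m(n)$; (5) note reducedness follows from nondegeneracy of the Hessian at each point, so $h^0(\calO_{\Crit(Z)})$ equals both the length and the number of geometric points.
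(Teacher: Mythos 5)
Your skeleton (the critical locus misses $Z$ by smoothness, hence is finite since any positive-dimensional closed subvariety of $\bbP^n$ would meet the hypersurface $Z$; then compute the degree by intersection theory) matches the paper's, but the central computation you propose is wrong. The Koszul complex of the $n+1$ forms $\partial_0F,\dots,\partial_nF$ of degree $m-1$ is \emph{not} a resolution of $\calO_{\Crit(Z)}$: these forms cut out a locus of codimension only $n$ (their common zero locus in $\bbA^{n+1}$ contains the one-dimensional affine cone over $\Crit(Z)$), so they are not a regular sequence and the Koszul complex acquires higher homology supported on $\Crit(Z)$. Consequently your formula $\sum_{j=0}^{n+1}(-1)^j\binom{n+1}{j}\chi\bigl(\calO_{\bbP^n}(-j(m-1))\bigr)$ does not compute $h^0(\calO_{\Crit(Z)})$; in fact it vanishes identically, being the $(n+1)$-st finite difference of the degree-$n$ polynomial $t\mapsto\chi(\calO_{\bbP^n}(t))$. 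Concretely, for $m=3$, $n=3$ it gives $1+0-6+40-35=0$, whereas the correct answer is $J_3(3)=5$.

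The missing idea --- which you gesture at only in your ``alternatively'' clause --- is that the Euler relation $\sum_i x_i\partial_iF=mF=0$ (this is precisely where $p\mid m$ enters) is a linear syzygy among the partials, so $dF$ descends to a section of the \emph{rank-$n$} bundle $\Omega_{\bbP^n}^1(m)$ rather than merely of $\calO(m-1)^{n+1}$. Once the zero scheme of a section of a rank-$n$ bundle on an $n$-fold is known to be $0$-dimensional, its degree is the top Chern class, and the Euler sequence gives $c_n(\Omega_{\bbP^n}^1(m))=\sum_{i=0}^n\binom{n+1}{i}(-1)^im^{n-i}=J_m(n)$; this is the paper's proof. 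Two smaller points. Your step (5) is both unnecessary (the assertion concerns the scheme-theoretic degree $h^0(\calO_{\Crit(Z)})$, not a point count) and false: the Hessian at a critical point off $Z$ need not be nondegenerate, and the paper itself later exhibits smooth cubic surfaces in characteristic $3$ whose critical loci have points of multiplicity $3$, $4$, or $5$. Finally, in the case $m=2$ with $n+1$ odd the ``Hessian'' is an alternating matrix of odd size in characteristic $2$ and hence is \emph{never} invertible, so your dispatch of that case does not work as stated.
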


\begin{proof}
Let $V$ be the vector space such that $\bbP^n=\bbP(V)$ and
consider $F\in S^m(V^\vee) \cong \Bbbk[t_0,\ldots,t_n]_m$.
The differential $dF$ defines a section of
$\Omega_{S(V^\vee)}^1(m)$.
The critical locus $\Crit(Z)$ is the scheme of zeros of this
section.
Note that the intersection $\Crit(Z) \cap X$ is the set of singular
points of the hypersurface $X$.
Since $X$ is smooth, $\Crit(Z)$ is either empty or
purely $0$-dimensional.
Thus $h^0(\calO_{\Crit(Z)})$ is equal to the Chern
number $c_{n}(\Omega_{\bbP^n}^1(m))$.

The known properties of Chern classes (see \cite{Fulton}, Example 3.2.2)
allow one to compute $h^0(\calO_{\Crit(Z)})$.
We have $h^0(\calO_{\Crit(Z)}) = J_m(n)$, where
\begin{align*}
J_m(n) &=
\sum_{i=0}^nc_i(\Omega_{S(V^\vee)}^1)c_1(\calO_{\bbP^n}(m))^{n-i}\\
&= 
\sum_{i=0}^n\tbinom{n+1}{i}(-1)^ic_1(\calO_{\bbP^n}(1))^i\cdot
(mc_1(\calO_{\bbP^n}(1))^{n-i})\\
&=  
\sum_{i=0}^n\tbinom{n+1}{i}(-1)^im^{n-i}\\
&=
\frac{(m-1)^{n+1}-(-1)^{n+1}}{m}\ ,
\end{align*}
as desired.
\end{proof}

\begin{remark} \label{rem:quadCritical}
Consider the case $m=2$ where $Z$ is a quadric and $F$ is a quadratic
form.
The critical locus of $Z$ is the set of zeroes of the alternating
bilinear form $B$ corresponding to $F$.
For a smooth form when $n$ is odd, the form $B$ is non-degenerate and so
the critical locus is always empty.
When $n$ is even, the form $B$ is degenerate with a one-dimensional
radical in the ambient vector space;
thus $\Crit(Z)$ consists of a single point in $\bbP^n$.
This explains why $J_2(n)=0$ if $n$ is odd, but $J_2(n)=1$ if $n$ is even.
\end{remark}

\begin{remark}
Suppose $X \subseteq \bbP^n$ is a hypersurface defined by a homogeneous
polynomial $F$ of degree $m$.
Recall that the \emph{discriminant} $\disc(f)$ is the resultant
of the partial derivatives $\{\partial_0 f, \ldots \partial_n f\}$.
When $m$ is coprime to the characteristic $p$, the hypersurface $X$ is
smooth if and only if $\disc(f)$ is non-zero.
However, $\disc(f)$ is always zero if $p$ divides $m$.

In \cite{Demazure}, Demazure defines the \emph{divided discriminant} of
a hypersurface.  If one takes the usual discriminant of a generic
homogeneous polynomial $P_m$ of degree $m$ over $\bbZ$, Demazure shows
that the gcd of the coefficients of $\disc(P_m)$ is $m^a$ where
$a=J_m(n)$ as in Proposition~\ref{prop:criticalLocus}.
The \emph{universal divided discriminant} is then
the integer polynomial $\disc(P_m)/m^a$, while the divided discriminant of $f$
is simply the specialization to the coefficients of $f$.
The divided discriminant is non-zero if and only if $X$ is smooth,
regardless of the characteristic of the ground field.

In the case when $m = 2$ and $n$ is even, the divided discriminant coincides with the half-discriminant of Kneser. As we observed in the previous remark, we have $J_2(n) = 1$ in this case.

In the particular case of a cubic surface $X$ in characteristic $p=3$,
we see that $J_3(3)=5$.  Since $\Crit(Z)$ is invariant under projective
transformations of $X$, it is a useful invariant in studying
automorphism groups of $X$.
\end{remark}

Another useful observation is that the subspace of perfect cubes spanned
by $x_0^3, \ldots, x_n^3$ form a
$\GL(V)$-invariant subspace of $S^3(V^\vee)$ when $p=3$.
Thus the quotient space $S^3(V^\vee)/\bfF(V^\vee)$ has a
$\GL(V)$-action, where $\bfF$ is the Frobenius map raising a linear
form to the third power.
When $\dim V = 4$, it was shown in \cite{Chen} and \cite{Cohen} that
the quotient space only has finitely many $\GL(V)$-orbits.
In particular, the quotient is a prehomogeneous vector space
unique to characteristic $3$.

These orbits were completely classified by Cohen and Wales in \cite{Cohen}
with the help of a computer.
Since we are interested only in forms $F$ defining a smooth surface,
we state only this part of their classification.

\begin{prop}[Cohen, Wales]
\label{prop:CohenWales}
Let $F$ be a homogeneous form defining a smooth cubic surface with $p=3$.
Then $F$ belongs to one of the following $\GL_4(\bbk)$-orbits of cubic forms:
\begin{itemize}
\item[(i)] $\left( \sum_{i=0}^3 c_ix_i^3 \right)
+ x_0x_1x_2+x_1x_2x_3+x_2x_3x_0+x_3x_0x_1$,
\item[(ii)] $\left( \sum_{i=0}^3 c_ix_i^3 \right)
+ x_0^2x_1+x_2^2x_3+x_3^2x_1$,
\item[(iii)] $\left( \sum_{i=0}^3 c_ix_i^3 \right)
+ x_0^2x_1+x_0^2x_2+x_0x_2^2+x_1^2x_2+x_0x_2x_3$,
\item[(iv)] $\left( \sum_{i=0}^3 c_ix_i^3 \right)
+ x_0^2x_1 + x_2^2x_3+x_3^2x_0$,
\end{itemize}
\end{prop}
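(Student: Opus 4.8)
The plan is to reduce the statement to the Cohen--Wales enumeration of $\GL_4(\bbk)$-orbits in the prehomogeneous quotient space $Q := S^3(V^\vee)/\bfF(V^\vee)$, followed by a smoothness check. The starting observation, already recorded before the statement, is that when $p=3$ the Frobenius map $\ell \mapsto \ell^3$ is additive, and since $\bbk$ is algebraically closed it is also homogeneous of degree one for scalars; hence $\bfF(V^\vee) = \{\ell^3 : \ell \in V^\vee\}$ is a $4$-dimensional $\GL(V)$-invariant linear subspace of $S^3(V^\vee)$ with basis $x_0^3, x_1^3, x_2^3, x_3^3$. The reduction $\pi : S^3(V^\vee) \to Q$ is therefore $\GL(V)$-equivariant, and if $\pi(F)$ lies in the $\GL(V)$-orbit of $\pi(G)$, say $g\cdot \pi(F) = \pi(G)$, then $g\cdot F = G + \sum_{i=0}^3 c_i x_i^3$ for suitable $c_i \in \bbk$. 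By \cite{Chen} the space $Q$ has only finitely many $\GL(V)$-orbits, explicitly listed in \cite{Cohen}; fixing a lift $G$ to $S^3(V^\vee)$ of a representative of each $Q$-orbit, we conclude that every $F$ is $\GL_4(\bbk)$-equivalent to $G + \sum_i c_i x_i^3$ for one of these $G$. So the whole problem becomes: for each $Q$-orbit, decide whether some choice of the $c_i$ makes $V(G + \sum_i c_i x_i^3)$ smooth, and identify the orbits for which this succeeds.

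The key simplification is that in characteristic $3$ one has $\partial_j(x_i^3) = 0$ for all $i,j$, so the partial derivatives of $F = G + \sum_i c_i x_i^3$ coincide with those of $G$. Consequently $\Crit(V(F)) = \{\partial_0 F = \cdots = \partial_3 F = 0\} \subseteq \bbP^3$ equals $\Crit(V(G))$, independently of the $c_i$ --- in particular it is an invariant of the $Q$-orbit --- while $\Sing(V(F)) = V(F) \cap \Crit(V(G))$. Two cases then arise. If $\Crit(V(G))$ is positive-dimensional it contains a curve, which meets the cubic surface $V(F)$ for \emph{every} choice of $c_i$, so $V(F)$ is always singular and the orbit is discarded. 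If $\Crit(V(G))$ is a finite set $\{P_1, \ldots, P_m\}$, then $V(F)$ is smooth precisely when $F(P_s) \ne 0$ for all $s$; since $F(P_s) = G(P_s) + \sum_i c_i (P_s)_i^3$ is a non-constant affine function of $(c_0,\ldots,c_3)$ --- the vector $((P_s)_0^3,\ldots,(P_s)_3^3)$ being nonzero because $P_s \ne 0$ in $\bbP^3$ --- a general tuple $(c_i)$ avoids the finitely many forbidden hyperplanes and gives a smooth surface. Hence a $Q$-orbit admits a smooth lift if and only if its critical locus is $0$-dimensional.

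It then remains to run through the finite list of \cite{Cohen} and compute $\dim \Crit$ for each representative $G$. The orbit of $\ell^3$ (i.e.\ $\pi(F)=0$) is a triple hyperplane and is excluded at once. For every other orbit one forms the ideal of the four partials $\partial_j G$ and reads off the dimension of its zero scheme: exactly four orbits turn out to have $0$-dimensional critical locus --- namely those with the cubic parts displayed in (i)--(iv) --- and in those four cases $\Crit(V(G))$ has degree $5$, consistent with Proposition~\ref{prop:criticalLocus} applied to a smooth lift. For all remaining orbits the critical locus is positive-dimensional, so by the preceding paragraph none of them carries a smooth cubic surface. Combining this with the reduction of the first paragraph proves the proposition: a smooth cubic form with $p=3$ lies in the $\GL_4(\bbk)$-orbit of $G + \sum_i c_i x_i^3$ with $G$ and the $c_i$ as in one of (i)--(iv).

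The main obstacle I expect is precisely this case-by-case verification. The $\Crit$-dimension criterion makes each individual check mechanical --- a single Gr\"obner-basis computation for the ideal of partials of $G$ --- but there are many orbits in the Cohen--Wales list, and arranging the bookkeeping so that exactly (i)--(iv) survive is the step at which Cohen and Wales used a computer. A cleaner route would be to give a closed, invariant-theoretic description of the locus in $Q$ over which the critical scheme is $0$-dimensional, thereby bypassing the enumeration; I do not see how to obtain such a description without essentially re-deriving the orbit classification, so I would fall back on citing \cite{Cohen} for the list and only supplying the smoothness reduction above.
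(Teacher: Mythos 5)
Your proposal is correct and takes essentially the same route as the paper, which offers no proof of this proposition at all: it is stated as a citation of the computer-assisted orbit classification of Cohen and Wales, exactly as you ultimately fall back on doing. The one piece of genuine added content --- that since $\partial_j(x_i^3)=0$ when $p=3$, the critical locus depends only on the $Q$-orbit, a positive-dimensional critical locus forces every lift to be singular, and a finite critical locus admits a smooth lift because each condition $F(P_s)=0$ is a non-constant affine condition on $(c_0,\ldots,c_3)$ --- is sound and consistent with the remark the paper makes immediately after the proposition (the critical loci of (i)--(iv) are finite of degree $J_3(3)=5$ with multiplicities $\{1,1,1,1,1\}$, $\{3,1,1\}$, $\{4,1\}$, $\{5\}$); but the case-by-case verification that exactly these four orbits survive still rests on the cited enumeration, as you acknowledge.
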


Since the derivative of a perfect cube is trivial when $p=3$,
the critical locus (and Hessian surface) only depends on the orbit
and not the specific surface.
We can distinguish the orbits by the support of $\Crit(X)$.
We obtain points with multiplicities
$\{1,1,1,1,1\}$, $\{3,1,1\}$, $\{4,1\}$, and $\{5\}$
in cases (i),(ii),(iii), and (iv), respectively.

\begin{remark}
\label{rem:CohenWales}
Cohen and Wales~\cite{Cohen} actually computed the stabilizers of each of
these elements modulo $\bbF(V^\vee)$.
These stabilizers act as affine transformations on the coordinates
$c_1, \ldots, c_4$.
One can compute all the automorphism groups of smooth cubic
surfaces by identifying fixed points of subgroups of this action.
As with the proof of Proposition~\ref{prop:CohenWales},
this is probably best done with the aid of computers.
This can be seen as analogous to the approach of
Remark~\ref{rem:DGapproach}.
Again, surfaces with isomorphic automorphism groups can lie in
different classes.
\end{remark}

\section{The Fermat cubic surface}
\label{sec:Fermat}

The \emph{Fermat cubic surface} is the cubic surface defined by
\[
x_0^3 + x_1^3 + x_2^3 + x_3^3 = 0
\]
in $\bbP^4(k)$.
This is not even reduced when $p=3$, but is smooth in all other
characteristics.
We will see that the Fermat cubic is the unique smooth cubic surface
in the stratum 3C when $p\ne 3$.

There is an ``obvious'' group of automorphisms isomorphic to
$3^3 \rtimes \frakS_4$ generated by permutations of
the coordinates and multiplying coordinates by cube roots of unity.
In fact, this is the full group of automorphisms unless $p=2$.

\begin{lem} \label{lem:3Cautos}
Let $X$ be the Fermat cubic.  Then $\Aut(X)$ is generated by
reflections.
If $p \ne 2,3$ then $\Aut(X) \simeq 3^3 \rtimes \frakS_4$.
If $p = 2$ then $\Aut(X) \simeq \PSU_4(2)$.
\end{lem}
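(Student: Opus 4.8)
The plan is to identify $\Aut(X)$ with a quotient of the linear stabilizer of the Fermat form and to compute that stabilizer explicitly; the characteristic $2$ case is where the content lies. (When $p=3$ the surface is non-reduced, so assume $p\ne 3$.) Write $F=x_0^3+x_1^3+x_2^3+x_3^3$ and let $\Aut(F)=\{g\in\GL_4(\bbk):g\cdot F=F\}$. Since the anticanonical embedding realizes $\Aut(X)$ inside $\PGL_4(\bbk)$ and $\bbk$ is algebraically closed, any automorphism of $X$ preserves the principal ideal $(F)$, hence scales $F$; after rescaling the lift, it has a representative in $\Aut(F)$, and the kernel of $\Aut(F)\to\Aut(X)$ is the group $\mu_3$ of scalar matrices of cube-root order. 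Thus $\Aut(X)\cong\Aut(F)/\mu_3$, and the monomial subgroup $\mu_3^4\rtimes\frakS_4\subseteq\Aut(F)$ (diagonal cube-root matrices and coordinate permutations) descends to $3^3\rtimes\frakS_4\subseteq\Aut(X)$.

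For $p\ne 2,3$ I would finish with the Hessian. A one-line computation shows that $H(F)$ is a nonzero scalar multiple of $x_0x_1x_2x_3$, so $\He(X)$ is the union of the four coordinate hyperplanes. Since $\He(X)$ is intrinsic to $X$, any $g\in\Aut(F)$ permutes these hyperplanes, so $g\cdot x_i=\omega_ix_{\pi(i)}$ for some $\pi\in\frakS_4$ and $\omega_i\in\bbk^\times$, and $g\cdot F=F$ then forces $\omega_i^3=1$. Hence $\Aut(F)=\mu_3^4\rtimes\frakS_4$ and $\Aut(X)\cong 3^3\rtimes\frakS_4$, of order $648$. (Alternatively, an apolarity computation identifies the apolar scheme of $F$ with the four coordinate points, again forcing any element of $\Aut(F)$ to be monomial.)

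The case $p=2$ is the main obstacle, and it turns on recognizing a Hermitian structure hidden in the Fermat form. Since $x^3=x\cdot x^2=x\bar x$ for the Frobenius $\bar a:=a^2$, the form $F$ is obtained from the norm form of a non-degenerate Hermitian form on $\bbF_4^4$, so $\operatorname{GU}_4(\bbF_4)$ acts on $X$. For the reverse inclusion, the condition $g\cdot F=F$ unwinds to the matrix identity $g^{\mathsf{T}}\bar g=I$, where $\bar g$ denotes the entrywise square of $g$; this says $\bar g=(g^{-1})^{\mathsf{T}}$, and applying Frobenius once more (it is a ring endomorphism of $M_4(\bbk)$) yields $\bar{\bar g}=g$, i.e.\ every entry of $g$ already lies in $\bbF_4$. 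Hence $\Aut(F)=\operatorname{GU}_4(\bbF_4)=\operatorname{GU}_4(2)$, which is finite, and $\Aut(X)\cong\operatorname{GU}_4(2)/\mu_3=\PGU_4(2)\cong\PSU_4(2)$ (the last isomorphism because $\gcd(4,2+1)=1$), of order $25920$. This ``double Frobenius'' collapse — forcing an a priori enormous solution set down to the finite unitary group — is the crux, and it is exactly what explains why the Fermat surface has an exceptionally large automorphism group precisely when $p=2$.

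Finally, for the claim that $\Aut(X)$ is generated by reflections: in every characteristic the image in $\Aut(X)$ of the transposition matrix $x_0\leftrightarrow x_1$ is a reflection (an involution fixing a hyperplane pointwise — an honest reflection when $p\ne 2$, a transvection when $p=2$). When $p=2$, $\PSU_4(2)$ is simple, so the normal subgroup generated by this reflection is all of $\Aut(X)$. When $p\ne 2,3$, the transpositions generate the $\frakS_4$-factor, and an elementary computation shows that the product of two ``twisted transpositions'' $x_i\leftrightarrow\omega x_j$, $x_j\leftrightarrow\omega^{-1}x_i$ (which are themselves reflections in $\Aut(F)$) is a nontrivial element of the normal subgroup $3^3$; since $3^3$ is an irreducible $\frakS_4$-module over $\bbF_3$, these reflections together generate $3^3\rtimes\frakS_4=\Aut(X)$.
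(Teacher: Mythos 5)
Your proposal is correct. For $p\ne 2,3$ it follows the paper's route exactly: the paper proves the statement via Lemma~\ref{lem:ShiodaNot2}, whose proof is precisely your Hessian computation (the Hessian of the Fermat form is a scalar times $x_0x_1x_2x_3$, so automorphisms are monomial). For $p=2$, however, your argument for the \emph{upper bound} is genuinely different. The paper establishes $\PSU_4(2)\subseteq\Aut(X)$ from the Hermitian norm form, just as you do, but then bounds $\Aut(X)$ from above by passing to the embedding $\Aut(X)\hookrightarrow W(\sfE_6)$: since $\PSU_4(2)$ has index $2$ in $W(\sfE_6)$ and the non-trivial coset is represented by the coordinatewise Frobenius $\sigma$ acting on the $27$ lines, which is not induced by any element of $\PGL_4(\bbk)$, the group cannot grow. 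You instead stay entirely inside $\GL_4(\bbk)$: the identity $g^{\mathsf{T}}\bar g=I$ (which does follow from $g\cdot F=F$ by comparing coefficients of the $16$ independent monomials $x_ix_j^2$) plus the ``double Frobenius'' $\bar{\bar g}=g$ forces all entries into $\bbF_4$, giving $\Aut(F)=\operatorname{GU}_4(2)$ and hence $\Aut(X)\cong\PGU_4(2)\cong\PSU_4(2)$ directly. Your version is more self-contained — it needs neither the index of $\PSU_4(2)$ in $W(\sfE_6)$ nor the somewhat delicate assertion that $\sigma$'s action on the lines is ``not geometric'' — while the paper's version has the side benefit of producing the full $W(\sfE_6)$-action on the line configuration over $\bbF_4$, which it exploits later. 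Your treatment of generation by reflections is also more explicit than the paper's (which defers this to the classification of Eckardt configurations in Section~\ref{sec:EckardtCollections}): the simplicity of $\PSU_4(2)$ handles $p=2$ in one line, and for $p\ne 2,3$ your observation that the product of a plain and a twisted transposition is a nontrivial element of the normal $3^3$, which is an irreducible $\frakS_4$-module over $\bbF_3$ since $3\nmid 4$, correctly fills in the one part that is not immediate from the monomial description.
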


We will consider the case of $p=2$ below.
When $p \ne 2,3$, Lemma~\ref{lem:3Cautos} follows from the following:

\begin{lem} \label{lem:ShiodaNot2}
Suppose $d \ge 3$, $n \ge 4$, $p \nmid d-1,d$ and consider the
hypersurface $Z$ defined by
\[
F = x_0^d + \cdots x_n^d = 0
\]
in $\bbP^n$.  Then the group of (projective) automorphisms of $Z$ is
is isomorphic to $d^{n-1} \rtimes \frakS_n$.
\end{lem}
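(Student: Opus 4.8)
The plan is to run the classical Matsumura--Monsky argument: recover the coordinate hyperplanes of the ambient space intrinsically from the Jacobian ideal of $F$, and deduce that every automorphism of $Z$ is monomial. Write the ambient space as $\bbP(V)$, so that the coordinate forms are a basis of $V^\vee$.

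First I would reduce to a linear problem. Since $Z$ is a smooth hypersurface of degree $d\ge 3$ and dimension at least $2$, the polarization $\calO_Z(1)$ is intrinsic to $Z$: it is the anticanonical class when $Z$ is a cubic surface (so that already $\Aut(Z)\subseteq\PGL(V)$), and the ample generator of $\Pic(Z)\cong\bbZ$ by Grothendieck--Lefschetz when $\dim Z\ge 3$. Hence any automorphism of $Z$ preserves $\calO_Z(1)$, acts linearly on $H^0(Z,\calO_Z(1))=V^\vee$, and so is induced by some $g\in\GL(V)$. Because $Z$ is smooth, $F$ is reduced and irreducible, so $F\circ g=cF$ for some scalar $c$; rescaling $g$ by a $d$-th root of $c^{-1}$ (available since $\bbk$ is algebraically closed), I may assume $F\circ g=F$.

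The heart of the argument is the recovery of the coordinate lines from the Jacobian ideal $J_F=(\partial_0 F,\dots,\partial_n F)$, which is intrinsic: the chain rule gives $\partial_k(F\circ g)=\sum_i(\partial_i F)\circ g\cdot g_{ik}$, so $F\circ g=F$ forces $g^*J_F=J_F$. Since $p\nmid d$ we have $\partial_i F=d\,x_i^{d-1}$, hence $J_F=(x_i^{d-1})_i$, and its degree-$(d-1)$ component $W$ is the span of the pure powers $x_i^{d-1}$; being degree-preserving, $g^*$ preserves $W$. I then claim that a linear form $\ell=\sum c_i x_i$ satisfies $\ell^{d-1}\in W$ exactly when $\ell$ is proportional to one of the $x_i$: expanding $\ell^{d-1}$, the coefficient of $x_i^{d-2}x_j$ for $i\ne j$ is $(d-1)c_i^{d-2}c_j$, which, since $p\nmid d-1$ and $d-2\ge 1$, vanishes only if $c_i=0$ or $c_j=0$, so at most one $c_i$ is nonzero. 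As $\ell\mapsto\ell^{d-1}$ is injective up to scalars by unique factorization, $g^*$ must permute the coordinate lines: $g^*(x_i)=\lambda_i x_{\sigma(i)}$ for some permutation $\sigma$ and scalars $\lambda_i\in\bbk^\times$. Substituting into $F\circ g=F$ gives $\sum_i\lambda_i^{d}x_{\sigma(i)}^{d}=\sum_j x_j^{d}$, so $\lambda_i^{d}=1$ for all $i$. Thus $g$ is a monomial matrix with entries in $\mu_d$; conversely every such matrix fixes $F$ and hence preserves $Z$. These matrices form a semidirect product $N\rtimes S$ with $S$ the symmetric group permuting the coordinates and $N$ a direct sum of copies of $\bbZ/d\bbZ$; passing to $\PGL(V)$ kills the central $\mu_d$ of scalar matrices and yields precisely the group in the statement.

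The one genuine obstacle is the middle claim, and it is exactly there that the hypotheses on $p$ are forced: identifying $J_F$ with $(x_i^{d-1})_i$ uses $p\nmid d$, and detecting the cross term through its coefficient $(d-1)c_i^{d-2}c_j$ uses $p\nmid d-1$. When either fails, $J_F$ degenerates and $Z$ acquires many more automorphisms --- this is the Shioda phenomenon responsible for $\Aut=\PSU_4(2)$ in characteristic $2$ --- so these conditions are essential.
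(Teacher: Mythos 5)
Your argument is correct, but it reaches the key intermediate conclusion --- that any linear automorphism preserving $F$ must be monomial --- by a different intrinsic invariant than the paper does. The paper recovers the coordinate hyperplanes from the \emph{Hessian} hypersurface $\He(Z)=V\bigl((d(d-1))^{n}x_0^{d-2}\cdots x_n^{d-2}\bigr)$: since $\He(Z)$ is canonically attached to the embedded hypersurface, a projective automorphism of $Z$ must permute its irreducible components, i.e.\ the coordinate hyperplanes, and hence be monomial; the hypothesis $p\nmid d(d-1)$ enters through the nonvanishing of the Hessian determinant. You instead use the Jacobian ideal $J_F=(x_i^{d-1})$ and detect the coordinate lines in $V^\vee$ as exactly those $\ell$ with $\ell^{d-1}$ in the degree-$(d-1)$ piece of $J_F$, via the cross-term coefficient $(d-1)c_i^{d-2}c_j$; here $p\nmid d$ is used to identify $J_F$ and $p\nmid d-1$ to make the cross term visible. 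The two arguments are cousins (first versus second partials), but yours is more self-contained --- it needs no discussion of automorphisms of hyperplane arrangements or of the intrinsic nature of the Hessian, only the multinomial expansion and unique factorization --- while the paper's is shorter in context because the Hessian formalism is already set up in its Section 4. Both localize the characteristic hypotheses in the same place, and your closing remark about where the argument breaks down matches the paper's discussion of the $p\mid d-1$ (Hermitian) phenomenon.

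One small caution about your opening paragraph: the claim that \emph{every} abstract automorphism of $Z$ is linear is not justified (and is in fact false) for surfaces in $\bbP^3$ of degree $d\ge 4$, e.g.\ the Fermat quartic K3, a case covered by neither your anticanonical argument nor Grothendieck--Lefschetz. This does not affect the proof, since the lemma explicitly concerns only projective automorphisms, so you may start directly from $g\in\PGL(V)$ with $F\circ g=cF$; but as written the linearization step overclaims.
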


\begin{proof}
A slightly stronger result is proved in \cite{Shioda},
where the $d$ is only assumed to not be a power of $p$.
The difficulty here is showing that the given automorphism group is the
full group of automorphisms and not a proper subgroup. 

As defined in Section~\ref{sec:phenomena}, in this case
the Hessian hypersurface $\He(Z)$ is given by
\[
\left(d(d-1)\right)^n x_0^{d-2} \cdots x_m^{d-2} = 0 \ ,
\]
which is non-trivial when $p \nmid d,d-1$.
The Hessian surface is intrinsic to the projective embedding and not the
choice of coordinates; thus the projective automorphisms of $Z$ induce
projective automorphisms of $\He(Z)$.
The hypersurface $\He(Z)$ is defined by
\[
\left(d(d-1)\right)^n x_0^{d-2} \cdots x_m^{d-2} = 0 \ ,
\]
which is non-trivial when $p \nmid d,d-1$.
The automorphisms of $\He(Z)$ are clearly generated by permutations of
the coordinates and multiplying coordinates by constant factors.
To be an automorphism of $F$, these constant factors must be $d$th roots
of unity.
\end{proof}

The Eckardt points and exceptional divisors of the Fermat cubic surface
for $p  > 3$ are well known. The surface contains $27$ lines given by
equations $x_i+\epsilon x_j = 0 = x_k+\eta x_l = 0$, where $\{i,j,k,l\}
= \{0,1,2,3\}$ and $\epsilon^3 = \eta^3 = 1$.
Among the $45$ tritangent planes there are $18$ given by equations
$x_i+\epsilon  x_j  = 0, \ 0\le i< j\le 3$, each containing an Eckardt
point defined by equations $x_i+\epsilon x_j = 0,\   x_k = x_l = 0$.
The automorphism group acts transitively on these objects since it
contains transformations that permute the coordinates and multiply them
by third roots of unity.

\subsection{The Fermat surface in characteristic 2}

In the remainder of the section we establish Lemma~\ref{lem:3Cautos}
for $p = 2$ and describe the geometry of the Fermat cubic in more detail.

Recall that, using a result of A.~Beauville, we showed in
Proposition~\ref{prop:5.4} that the Fermat cubic is the unique smooth
cubic surface in characteristic $2$ without a canonical point.

Let $\bbF_4$ be the field of four elements and
let $V$ be a non-degenerate Hermitian inner product space on $\bbF_4$
of dimension $4$.
In other words, if
$\bfF : \bbF_4 \to \bbF_4$ is the Frobenius automorphism
defined by $\bfF(x)=x^2$, then we have a non-degenerate Hermitian
bilinear form $H$ on $\bbF_4^4$ which we may write as
\[
H(x, y) = \sum_{i=0}^3 x_i \bfF(y_i)
\]
where $x=(x_0,x_1,x_2,x_3)$ and $y=(y_0,y_1,y_2,y_3)$
are vectors in $\bbF_4^4$.

Note that
\[
H(x, x)  = x_0^3 + x_1^3 + x_2^3 + x_3^3
\]
is the defining equation for the Fermat cubic $X$ in $\bbP^3$.
Thus, the automorphism group $\Aut(X)$ of the Fermat cubic $X$
contains the projective special unitary group $\PSU_4(2)$
(we shall see shortly that this is the entire automorphism group).
This is a group of order $25920$ which has index $2$ in the Weyl group
$W(\sfE_6)$.

Consider the map $\sigma$ on $X$ defined on $\bbF_4^4$ by
\[
\sigma(x_0: x_1: x_2: x_3)=
(\bfF(x_0): \bfF(x_1): \bfF(x_2): \bfF(x_3)) \ ,
\]
which depends on a choice of coordinates.
The action of $\sigma$ preserves $X(\bbF_4)$ when written in its
diagonal form, permutes the $27$ lines and preserves their
configuration.
Thus $\sigma$ acts as an element of $W(\sfE_6)$.
On the other hand, there is no element $\sigma$ in $\PSU_4(2)$ acting in
this way --- the action of $\sigma$ is not geometric.
Thus $\sigma$ and $\PSU_4(2)$ must generate all of $W(\sfE_6)$.
In particular, we have proven Lemma~\ref{lem:3Cautos} since the automorphism
group of $X$ cannot be larger than $\PSU_4(2)$.

The following lemma characterizes the orbits of the various subspaces of
$\bbP^3(\bbF_4)$ connected to the geometry of the Fermat cubic.
A similar result holds for \emph{any} Hermitian surface
(see Theorems~19.1.7~and~19.1.9~of~\cite{Hirschfeld}),
but we emphasize the connection to cubic surfaces in the statement and
its proof.

\begin{lem} \label{lem:HermitianSubspaces}
The orbits of $\Aut(X)$ on the set of all subspaces of $\bbP^3(\bbF_4)$ are as
follows:
\begin{enumerate}
\item[($C_{1}$)]
45 Eckardt points (these are precisely the points which lie on $X$).
\item[($C_{0}$)]
40 points which do not lie on $X$.
\item[($C_{5}$)]
27 exceptional lines.
They each contain precisely 5 Eckardt points.
\item[($C_{3}$)]
240 trihedral lines.
These contain 3 Eckardt points and 2 non-Eckardt points.
\item[($C_{1}'$)]
90 lines that intersect $X$ at exactly one point.
\item[($C_{13}$)]
45 tritangent planes.
Each contains 13 Eckardt points, 8 non-Eckardt points, 3 exceptional lines,
16 trihedral lines, and 2 other lines.
\item[($C_{9}$)]
40 planes which intersect $X$ in a smooth cubic curve.
Each contains 9 Eckardt points, 12 non-Eckardt points, no exceptional lines,
12 trihedral lines, and 9 other lines.
\item[($C_{45}$)]
the whole space.
\end{enumerate}
\end{lem}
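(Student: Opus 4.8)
The plan is to work entirely inside the Hermitian geometry of $\bbP^3(\bbF_4)$ and to transfer the count of orbits to the language of Eckardt points, exceptional lines, trihedral lines, and tritangent planes using the dictionary established above (Lemma~\ref{lem:3Cautos} gives $\Aut(X)=\PSU_4(2)$; the discussion of $\He(Z)$ for the Fermat surface identifies the $45$ points on $X$ with Eckardt points and the $27$ lines on $X$ with exceptional lines; Section~\ref{sec:prelim} tells us there are $45$ tritangent planes). First I would count points: $|\bbP^3(\bbF_4)|=\tfrac{4^4-1}{4-1}=85$, and since $X$ is the Hermitian variety $H(x,x)=0$ the number of its $\bbF_4$-points is the standard count for a nondegenerate Hermitian surface, namely $45$; this is orbit $(C_1)$, and the remaining $40$ points form $(C_0)$ by Witt's theorem (the unitary group is transitive on nonisotropic vectors of a fixed norm, and all nonzero norms are scaled away projectively). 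Next I would count lines: there are $\tfrac{(4^4-1)(4^3-1)}{(4^2-1)(4-1)}=\tfrac{85\cdot 21}{15}=?$ — one computes the Gaussian binomial $\binom{4}{2}_4 = 357$ lines total. A line $\ell$ meets $X$ in a subscheme governed by the restriction of $H$ to the corresponding $2$-dimensional subspace $U$; the possibilities are: $U$ totally isotropic (impossible here, since a nondegenerate Hermitian form on $\bbF_4^4$ has Witt index $2$ — wait, it does, so there ARE totally isotropic lines), $U$ nondegenerate, or $U$ with a $1$-dimensional radical. I would sort lines by the isometry type of $H|_U$: the totally isotropic lines are exactly the exceptional lines (these lie on $X$, each contains $|\bbP^1(\bbF_4)|=5$ isotropic points, giving the ``$5$ Eckardt points'' statement); the lines with nondegenerate $H|_U$ meet $X$ in the $3$ isotropic points of a nondegenerate Hermitian $\bbP^1$ (these are the trihedral lines — $3$ Eckardt, $2$ non-Eckardt); the lines with a radical meet $X$ at exactly the one point spanning the radical (orbit $(C_1')$). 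Witt's theorem gives transitivity within each type, and a short count of flags $(\text{point on }X,\text{line through it})$ versus $(\text{point off }X,\text{line through it})$ pins down the numbers $27$, $240$, $90$.

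Then I would count planes. A plane corresponds to a $3$-dimensional subspace $W$, equivalently (via the Hermitian pairing) to a point $W^\perp$; so planes are in $\Aut(X)$-equivariant bijection with points, giving two orbits: $45$ planes dual to Eckardt points and $40$ dual to non-Eckardt points. For a plane $\Pi$ dual to an Eckardt point $q$: the restriction $H|_W$ is degenerate with radical $\langle q\rangle$ (since $q\in q^\perp$), so $\Pi\cap X$ is a cone over a conic with vertex $q$ — three lines through $q$, i.e.\ a tritangent plane with Eckardt point $q$; this both identifies orbit $(C_{13})$ with the $45$ tritangent planes and confirms the bijection tritangent planes $\leftrightarrow$ Eckardt points. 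For a plane $\Pi$ dual to a non-Eckardt point: $H|_W$ is nondegenerate, so $\Pi\cap X$ is a nondegenerate Hermitian curve in $\bbP^2(\bbF_4)$, which has $9$ points and is a smooth cubic curve — orbit $(C_9)$. The incidence data inside each plane (``$13$ Eckardt points, $8$ non-Eckardt, $3$ exceptional lines, $16$ trihedral lines, $2$ other lines'' and the analogous line for $(C_9)$) I would obtain by restricting the Hermitian form to the plane and counting isotropic points and lines of each type inside $\bbP^2(\bbF_4)$ — a finite check using $|\bbP^2(\bbF_4)|=21$, the $5$ lines through each point, and the sub-classification of lines already established (a line in $\Pi$ is exceptional/trihedral/tangent according to $H|_U$, and whether it passes through the radical point when $\Pi$ is a tritangent plane). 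Finally, the full space is the trivial orbit $(C_{45})$, and one should check the orbit sizes $45+40+27+240+90+45+40 = ?$ against the totals $85$ points and $357$ lines to confirm completeness.

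The main obstacle I expect is not any single computation but rather making the identifications geometrically airtight: namely, proving that the $\PSU_4(2)$-orbits of linear subspaces coincide exactly with the geometrically defined classes (Eckardt points, exceptional lines, trihedral lines, tritangent planes) rather than merely having the right cardinalities. The key tool throughout is Witt's extension theorem for the Hermitian form $H$, which guarantees that $\PSU_4(2)$ acts transitively on subspaces of a fixed isometry type; the subtlety is that two subspaces of the same isometry type and dimension are automatically in one orbit, so the real content is (a) enumerating the isometry types in each dimension, and (b) matching each type to its geometric meaning via the scheme $\Pi\cap X$ or $\ell\cap X$. Once that dictionary is in place — totally isotropic $\leftrightarrow$ contained in $X$, radical of dimension one $\leftrightarrow$ tangent along the radical, nondegenerate $\leftrightarrow$ Hermitian subvariety — the classification and all the incidence numbers follow from finite counts in $\bbP^1(\bbF_4)$ and $\bbP^2(\bbF_4)$. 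I would also cite Theorems~19.1.7 and 19.1.9 of~\cite{Hirschfeld} for the pure subspace-orbit statement and spend the bulk of the written proof on the cubic-surface interpretation, which is what is new here.
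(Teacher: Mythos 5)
Your proposal is correct and follows essentially the same route as the paper: classify subspaces of $\bbP^3(\bbF_4)$ by the isometry type of the restricted Hermitian form (totally isotropic, degenerate with a radical, nondegenerate), use transitivity of the unitary group on each type (the paper invokes Gram--Schmidt where you invoke Witt's theorem, which is the same tool), identify planes with points via polar duality, and read off the incidence numbers by finite counts in $\bbP^1(\bbF_4)$ and $\bbP^2(\bbF_4)$. The only cosmetic difference is that the paper handles the $90$ tangent lines by a geometric argument with the $\frakS_3$-action of the stabilizer of an Eckardt point on its tangent plane, whereas you obtain them uniformly from the radical classification; both work.
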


\begin{proof}
Recall that $\bbP^3(\bbF_4)$ contains 85 points, 357 lines and 85
planes; $\bbP^2(\bbF_4)$ contains 21 points and 21 lines;
and $\bbP^1(\bbF_4)$ contains 5 points.

Since $x^3=1$ for all nonzero $x \in \bbF_4$,
the points on $X$ are precisely those with an even number of non-zero
coordinates.
A short counting argument shows that there are 45 of them.
A tangent space computation reveals that any one of them is an Eckardt
point.
Since $\Aut(X)$ acts transitively on the Eckardt points, all 45 points
on $X$ must be Eckardt points.

There are 40 remaining points that do not lie on $X$, which we call
``non-Eckardt points.''
These points satisfy $\langle x, x \rangle=1$ for any
vector $x$ representing them.
By the Gram-Schmidt process, any four mutually orthogonal such vectors
can be brought to diagonal form by a unitary transformation.
In particular, $\Aut(X)$ acts transitively on the 40 non-Eckardt points.

Since all the Eckardt points are defined over $\bbF_4$, so must be the
27 exceptional lines between them.
They form an orbit of the automorphism group.
They each contain 5 Eckardt points since they are
projective lines containing only points of $X$.

The tangent space $P$ at each Eckardt point $p$ contains three exceptional
lines, so there are two more lines in $P$ passing through $p$
which do not contain any other Eckardt points.
Thus there are $2 \times 45 = 90$ such lines.
The stabilizer of $p$ in $\Aut(X)$ acts by $\frakS_3$ on the exceptional
lines through $p$.  Thus there is a faithful action of $\frakS_3$ on the
tangent space $P$ fixing $p$.
The action of $\frakS_3$ on $\bbP^1$ has no fixed points,
so the other two lines through $p$ are permuted by an element of $\Aut(X)$.
Since the Eckardt points are permuted,
$\Aut(X)$ acts transitively on the $90$ lines.

There are $357-27-90=240$ remaining lines.
Since all lines tangent to a
point of $X$ have been identified, only those passing through three distinct
points of $X$ remain.
These are precisely the lines on which the Hermitian form on the whole
space restricts to a nondegenerate Hermitian form
(vectors representing the two non-Eckardt points lift to an orthonormal
basis).
By the Gram-Schmidt process, these lines are all in the same $\Aut(X)$
orbit.

Given a subspace $U \subset \bbF_4^4$, we define the subspace
\[
U^\dagger := \{ v \in \bbF_4^4 : \langle v,u \rangle =0
\textrm{ for all } u \in U \}
\]
called the \emph{polar dual} of $U$.
There is a bijection between planes and points given by the polar dual
which commutes with the action of the automorphism group.
Thus we have the orbital dichotomy of the planes of $\bbP^3(\bbF_4^4)$.
It remains to characterize them.

For the 45 tritangent planes, we have already seen the 2 lines with a unique
Eckardt point and the three exceptional lines.  The remaining lines must
all pass through the three exceptional lines and thus are trihedral lines.
There are $21-5=16$ of them.

Finally, the remaining 40 planes are not tangent to $X$ and thus
each intersects $X$ in a smooth cubic curve.
Note that any plane containing two exceptional lines must be a
tritangent plane, so there are no exceptional lines on the 40 planes.
Thus, for each plane $P$,
there must exist three points which lift to linearly independent vectors
with non-zero norm.  Thus the restriction of the Hermitian form to $P$
is non-degenerate.
We immediately check that there are $9$ Eckardt points by writing out
all the solutions of the form.
The remaining counts of lines and points follow from polar duality
and the fact that there are 21 points and 21 lines in the whole space.
\end{proof}

The notation ``$C_n$'' used in the statement is meant to emphasize the
connection these subspaces have to the reflection subgroups of
$\PSU_4(2)$ constructed in Section~\ref{sec:EckardtCollections}.
Note that the Eckardt points correspond to isotropic points of the
Hermitian form and thus to reflections in $\PSU_4(2)$.
If one excludes $C_1'$, the points in each subspace correspond to
reflections which generate subgroups of $W(\sfE_6)$.
We will see that this gives a bijection between those orbits of subspaces of the
Hermitian space $\bbP^3(\bbF_4)$ and the possible automorphism groups of
an arbitrary smooth cubic surface in characteristic $2$.

Moreover, the Fermat cubic over $\bbF_4$ realizes all possible Eckardt
points, exceptional lines and trihedral lines.
Thus one can study the combinatorics of these objects in arbitrary cubic
surfaces over arbitrary fields by consulting the Fermat as a ``universal
model.''

\section{General forms}
\label{sec:generalForms}

We recall the following classical theorem of Sylvester, which can be
seen as showing that a generic cubic surface is a nice deformation
of the Clebsch.

\begin{thm}[Sylvester] \label{thm:Sylvester}
Suppose $p\ne 2,3$.
A general smooth cubic surface $X$ is isomorphic to one defined by the
forms
\begin{equation} \label{eq:SylvesterForm}
\sum_{i=0}^4 c_ix_i^3 = \sum_{i=0}^4 x_i = 0
\end{equation}
in $\bbP^4$ where $c_0, \ldots, c_4 \in \bbk$ are non-zero parameters.
The parameters are uniquely determined up to permutation and common
scaling by the isomorphism class of the surface.
Moreover, the automorphism group of any such surface is a subgroup
of the group $\frakS_5$ which acts by permuting coordinates.
\end{thm}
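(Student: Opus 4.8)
The plan is to treat the three assertions separately: existence of the Sylvester form for a general $X$, uniqueness of the parameters up to permutation and scaling, and the constraint on the automorphism group. For the existence statement, I would follow the classical argument: the key object is the \emph{pentahedral structure} on $X$, i.e. the (unordered) collection of five hyperplanes in $\bbP^3$ whose cube-sum equation cuts out $X$. One shows that a general cubic form $F$ in four variables can be written as $\sum_{i=0}^4 \ell_i^3$ for five linear forms $\ell_i$ with $\sum \ell_i = 0$ (after absorbing scalars, this is the same as \eqref{eq:SylvesterForm} with one variable eliminated). The standard way to locate these five planes is via the Hessian: by Theorem~\ref{thm:DG}(a), for a general $X$ in characteristic $0$ the Hessian $\He(X)$ is irreducible with exactly $10$ nodes, and the Sylvester pentahedron is recovered as the $\binom{5}{2}=10$ lines $\ell_i = \ell_j = 0$ (the nodes of $\He(X)$) together with the five planes through appropriate quadruples of them. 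Concretely, I would argue by a dimension/genericity count: the map sending $(c_0,\dots,c_4)$ and a choice of five general planes to the resulting cubic form is dominant onto the space of cubic forms, since the source has dimension $4 + 5\cdot 3 = 19$ modulo the $4$-dimensional reparametrization ambiguity inside $\PGL_4$ plus the $3$ relations $\sum \ell_i=0$ — matching $\dim \bbP(S^3(\bbk^4)^\vee)=19$ — so a general $F$ is in the image. The non-vanishing of the $c_i$ is automatic for a general choice, and one checks that when all $c_i \ne 0$ and the five planes are in general position the surface is smooth.

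For uniqueness, the point is that the pentahedron is \emph{intrinsic}: I would show the five planes are canonically determined by $X$ (for general $X$) via the ten nodes of the Hessian, which are projective invariants of $X$. Given the five planes, the linear forms $\ell_i$ are determined up to scaling and the condition $\sum \ell_i = 0$ pins down the relative scalings uniquely (up to a common scalar); then $F = \sum c_i \ell_i^3$ determines the $c_i$ up to the induced common scaling, and permuting the indices is the only remaining ambiguity. So any two Sylvester presentations of isomorphic surfaces differ by a permutation and a common scalar. The automorphism statement then follows: an automorphism $g \in \PGL_4(\bbk)$ of $X$ permutes the intrinsic set of five planes (since it preserves the Hessian and its nodes), hence is realized by a permutation of the five coordinates $x_0,\dots,x_4$ in $\bbP^4$; conversely every coordinate permutation preserving $\sum c_i x_i^3$ — i.e. fixing the multiset $\{c_0,\dots,c_4\}$ — is an automorphism. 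Thus $\Aut(X)$ embeds in $\frakS_5$ acting by coordinate permutations.

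The main obstacle is transporting the characteristic-zero arguments to characteristic $p \ne 2,3$: the Hessian-based recovery of the pentahedron used Theorem~\ref{thm:DG}, which is stated only for characteristic $0$. I expect the cleanest fix is to run the dimension count directly over the prime field or over $\bbk$ itself: the incidence variety parametrizing pairs (cubic form $F$, pentahedral decomposition) is defined over $\bbZ[1/6]$, and dominance of its projection to the space of cubic forms is an open condition that can be verified on a single explicit example (e.g. generic $c_i$ with the standard coordinate planes), hence holds in every characteristic $\ne 2,3$. Similarly, the genericity of smoothness and of the ``$10$ distinct nodes on the Hessian'' condition is an open dense condition that is nonempty in these characteristics, which suffices since ``general'' in the statement means ``on a dense open subset of moduli.'' The divisibility hypotheses $p \ne 2,3$ enter exactly where one needs $3$ invertible (to make sense of cube roots / the Hessian having degree $(3-2)\cdot 5$ non-identically-zero, cf. Lemma~\ref{lem:ShiodaNot2} with $d=3$) and $2$ invertible (for the quadric-pencil and Hessian-node analysis to behave as in char $0$); I would isolate these uses and otherwise keep the argument characteristic-free.
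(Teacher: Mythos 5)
Your overall architecture for the second and third assertions matches the paper's: recover the Sylvester pentahedron intrinsically from the singular points of the Hessian, deduce uniqueness of the $c_i$ up to permutation and common scaling, and conclude that any automorphism permutes the five planes and hence acts through $\frakS_5$. For existence, your transversality/dimension count is a legitimate alternative to the paper's citation of Theorem~9.4.1 of \cite{CAG} (and is in the spirit of the paper's own proof of Theorem~\ref{thm:Emch}), though ``dominance can be verified on a single explicit example'' hides the actual work: you must exhibit a point where the differential is surjective and check that the relevant minor is a unit away from $2$ and $3$; openness over $\Spec\bbZ[1/6]$ alone does not rule out failure at individual primes.

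The genuine gap is in the scope of the uniqueness/automorphism part. The word ``general'' in the statement governs only the existence clause; the uniqueness of the parameters and the bound $\Aut(X)\subseteq\frakS_5$ are asserted for \emph{any} smooth surface of the form \eqref{eq:SylvesterForm} with nonzero $c_i$, and this stronger form is what is needed downstream: Lemma~\ref{lem:SylvesterAutos} applies it precisely to the special, non-generic members of the family with repeated parameters. Your argument recovers the pentahedron from the Hessian only ``for general $X$,'' by invoking Theorem~\ref{thm:DG} (a characteristic-$0$ statement about general cubics) together with an openness/genericity transfer; that cannot yield the conclusion on the whole Sylvester locus. The paper instead computes $\Sing(\He(X))$ directly for an \emph{arbitrary} smooth Sylvester surface: writing the singular locus via the minors $C_{ij}$ of a $5\times 2$ matrix, it uses smoothness of $X$ (through the conditions $b_jx_i^2=b_ix_j^2$) to rule out singular points with all coordinates nonzero, and the remaining analysis forces exactly the ten vertices in the $\frakS_5$-orbit of $(1:-1:0:0:0)$. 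You would need to replace your genericity step with such a direct computation, valid for every smooth member with all $c_i\ne 0$ in every characteristic $\ne 2,3$, for the theorem as stated --- and as used --- to follow.
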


\begin{proof}
When $p=0$, a proof of the first assertion can be found in
Theorem~9.4.1~of \cite{CAG}.  An analysis of the proof shows that it
works in all characteristics different from $2$ and $3$.

It remains to determine the automorphism groups of these surfaces.
Let $X = Y \cap P$ be one such surface, where
$Y$ is the cubic threefold and $P$ is the hyperplane given by
$\sum_i x_i = 0$ from the defining equation \eqref{eq:SylvesterForm}.
Note that $X \hookrightarrow P$ is the anticanonical embedding of the
cubic surface.

Let $C$ be the union of the 5 coordinate hyperplanes.
The intersection $S = C \cap P$ is a union of $5$ planes,
no $4$ of which have a common point.
The collection $S$ is the famous \emph{Sylvester pentahedron}.
The intersections of the planes form 10 lines and 10 vertices.
The automorphisms of the pentahedron are simply the permutations of the
coordinates $x_0, \ldots, x_4$.

Let $H$ be the Hessian surface of $X$ in $P$
(this is \emph{not} the restriction of the Hessian of $Y$ constructed
in the proof of Lemma~\ref{lem:ShiodaNot2}).
From \S{}9.4.2~of~\cite{CAG}, the equation of the Hessian surface embedded in $\bbP^4$ is  
\[
\sum_{i=0}^4
(c_0\cdots \widehat{c_i} \cdots c_4)
(x_0\cdots \widehat{x_i} \cdots x_4)
= \sum_{i=0}^4 x_i = 0 \ .
\] 
Since all the parameters are assumed non-zero, we may rewrite this as
\[
x_0\cdots x_4\left( \sum_{i=0}^4 \frac{b_i}{x_i} \right)
= \sum_{i=0}^4 x_i = 0
\]
where $b_i=c_i^{-1}$.
We will describe the singular locus of $H$.
There are 10 additional conditions $C_{ij}$ defining the singular locus defined
by minors of a $5 \times 2$ matrix.  For example, we have
\[
C_{01} := (b_1-b_0)x_2x_3x_4 + (x_1-x_0)(x_2x_3b_4+x_2b_3x_4+b_2x_3x_4) \ .
\]
If all the $x_i$ are non-zero then we may use the condition
$\sum \frac{b_i}{x_i}=0$ to show that these conditions are equivalent to
$b_jx_i^2=b_ix_j^2$.
By \cite{CAG}, Proposition 9.4.5, these are precisely the conditions for the original cubic to be
singular at the given point;
thus, we may assume at least one $x_i$ is zero.
If one $x_i$ is zero, then another must be zero as well by the
original equation for $H$.
If $x_0=x_2=0$ then the condition $C_{01}$ shows that that a third
coordinate must be zero; this applies for any pair of coordinates.
Thus, we conclude that there are exactly $10$ singular points ---
namely the $\frakS_5$-orbit of $(1:-1:0:0:0)$.

The singular points of $H$ are precisely the 10 vertices
of the pentahedron, and the 10 lines lie on $H$.
Thus, one can reconstruct the pentahedron using the Hessian surface.
The surface $H$ is canonical, thus the pentahedron is as well.
Thus the expression \eqref{eq:SylvesterForm}
is unique up to permutation and multiplication by a common non-zero
scalar.
We conclude that automorphisms of $X$ are subgroups of $\frakS_5$
that act by permuting the coordinates.
\end{proof}

When $p=3$, Sylvester's theorem fails since the corresponding cubic
is not even reduced.
In $p=2$, the theorem also fails, but for a more subtle reason.
Viewing the equation \eqref{eq:SylvesterForm} over $\bbZ$, we may assume
without loss of generality that $c_4=1$ and eliminate $x_4$ to obtain
\[
\sum_{i=0}^3 (c_i-1)x_i^3
- \sum_{0 \le i \le j \le 3} 3x_i^2x_j
- \sum_{0 \le i< j<k \le 3} 6x_ix_jx_k = 0 \ .
\]
Since there are no terms of the form $x_ix_jx_k$ if $p=2$,
there is no canonical point.
Thus by Proposition~\ref{prop:5.4}, we see that any surface in
Sylvester's form is isomorphic to the Fermat cubic ---
a unique surface is not generic!

\begin{remark}
We define the \emph{Clebsch cubic surface} as the cubic surface
admitting a group of automorphisms isomorphic to $\frakS_5$.  
We shall see that it is unique up to isomorphism when it exists and corresponds to the
stratum 5A in the moduli space of cubic surfaces.
Classically, the Clebsch cubic surface is defined as the cubic surface
given by  equations \eqref{eq:SylvesterForm} with
$(c_0,c_1,c_2,c_3,c_4) = (1,1,1,1,1)$.
Here permutations of the coordinates generate a group
isomorphic to $\frakS_5$.
Note that the construction fails in characteristic $3$ and $5$.
In characteristic $3$ it is not even reduced;
while in characteristic $5$, the point $(1:1:1:1:1)$ is singular.
If $p\ne 5$, alternate equations for the Clebsch are
\[ \sigma_3 = \sigma_1 = 0, \]
where $\sigma_i$ denotes the elementary symmetric polynomial of degree $i$
in the variables $x_0,\ldots,x_4$.
This provides a cubic surface with an $\frakS_5$-action,
which is smooth when $p \ne 5$.
We shall see that there is no Clebsch surface in characteristic $5$.
\end{remark}

We would like a generic form that works in all characteristics.
The following normal form is
due to Emch \cite{Emch} who described it for cubic surfaces over $\bbC$.
As Sylvester's normal form is a nice deformation of the
Clebsch cubic surface, Emch's normal form is a nice deformation of the
Fermat cubic surface.

\begin{thm}[Emch] \label{thm:Emch}
A general smooth cubic surface over an algebraically closed field of
arbitrary characteristic can be defined by the form
\begin{equation} \label{eq:Emch}
\sum_{i=0}^3 x_i^3 + \sum_{0 \le i< j<k \le 3} a_{ijk}x_ix_jx_k
\end{equation}
in appropriate coordinates $x_0,\ldots,x_3$ where
$a_{012}, \ldots, a_{123}$ are parameters.
\end{thm}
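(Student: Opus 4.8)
The plan is to reduce Emch's theorem to a dimension count. Let $S = S^3((\bbk^4)^\vee)$ be the $20$-dimensional space of cubic forms on $\bbk^4$, write $F_{\mathbf a} = x_0^3 + x_1^3 + x_2^3 + x_3^3 + \sum_{i<j<k} a_{ijk} x_i x_j x_k$ for the Emch form with parameters $\mathbf a \in \bbA^4$, and consider the morphism
\[ \phi \colon \GL_4 \times \bbA^4 \longrightarrow S, \qquad (g, \mathbf a) \longmapsto g \cdot F_{\mathbf a}, \]
where $\GL_4$ acts on $S$ by linear substitution of variables. The source and target both have dimension $16+4 = 20$, and the source is smooth and irreducible; so if $d\phi$ is surjective at even a single point, then $\phi$ is smooth there, hence has open image, hence is dominant. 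Since smoothness of a cubic form is $\GL_4$-invariant, dominance of $\phi$ implies that a general smooth cubic surface is projectively equivalent to one cut out by a (necessarily smooth) form $F_{\mathbf a}$, which is exactly the assertion.

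For $p \neq 3$ I would verify surjectivity of $d\phi$ at the base point $(\mathrm{id}, 0)$, where $F_0$ is the Fermat form $x_0^3+x_1^3+x_2^3+x_3^3$ (smooth since $p \neq 3$). The $\bbA^4$-directions contribute the four monomials $x_ix_jx_k$ with $i<j<k$ to the image of $d\phi$, while the $\gl_4$-direction contributes the infinitesimal orbit $\{\sum_{i,j} A_{ij} x_j\,\partial_{x_i}F_0 = 3\sum_{i,j}A_{ij} x_i^2 x_j : A \in \gl_4\}$, which (as $3 \neq 0$) is the span of the sixteen monomials $x_i^3$ and $x_i^2x_j$ with $i\neq j$. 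Together these are all $20$ monomials of degree three, so $d\phi$ is onto and $\phi$ is dominant. This is the whole argument in characteristic $\neq 3$; it is essentially a restatement of the fact that the Fermat cubic is an unobstructed deformation point in the $x_ix_jx_k$-directions.

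The characteristic $3$ case is the main obstacle, because the Fermat base point fails there: the form $x_0^3+\cdots+x_3^3$ is non-reduced (Proposition~\ref{prop:5.4}) and all its partial derivatives vanish, so $d\phi$ is not surjective at $(\mathrm{id},0)$; worse, at \emph{any} point $(\mathrm{id}, \mathbf a)$ the image of $d\phi$ misses the cubes $x_i^3$, since each $\partial_{x_i}F_{\mathbf a}$ is a square-free quadric in the three remaining variables. One resolution is to replace $\mathrm{id}$ by a generic $g_0 \in \GL_4$ and verify surjectivity of $d\phi$ at $(g_0, \mathbf a_0)$ directly --- now $g_0 \cdot F_{\mathbf a_0}$ has partials involving all four coordinates, so the obstruction disappears --- but this computation is best carried out with a computer. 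More cleanly, I would invoke the Cohen--Wales classification (Proposition~\ref{prop:CohenWales}): in characteristic $3$ the space $S^3(V^\vee)/\mathbf F(V^\vee)$ is prehomogeneous, and orbit~(i) is its dense orbit (being the one with reduced critical locus of type $\{1,1,1,1,1\}$, the others arising by degeneration). Hence a general smooth cubic is projectively equivalent to $\sum_i c_i x_i^3 + x_0x_1x_2 + x_1x_2x_3 + x_2x_3x_0 + x_3x_0x_1$ for some scalars $c_i$, which may be taken nonzero for a general surface; the diagonal rescaling $x_i \mapsto c_i^{-1/3} x_i$ (cube roots exist since $\bbk$ is algebraically closed) then brings the form into Emch shape, with $a_{ijk} = (c_i c_j c_k)^{-1/3}$. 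The one delicate point is checking that a general surface admits such a presentation with all $c_i \neq 0$ --- equivalently, that the resulting parameters $\mathbf a$ sweep out a dense subset of $\bbA^4$ --- which follows because the vanishing of some $c_i$ cuts out a proper closed subset of orbit~(i) that is not invariant under the (finite, up to scalars) stabilizer of the square-free part.
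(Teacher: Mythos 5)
Your argument for $p \ne 3$ is correct and is actually tidier than the paper's. The paper works with the rescaled slice $F_0 + V_0$, where $F_0 = \sum_{i<j<k} x_ix_jx_k$ and $V_0 = \langle x_0^3,\dots,x_3^3\rangle$, and has to verify transversality at two different base points (at $F_0$ when $p \ne 2$, and at $(c_0,\dots,c_3)=(\lambda,\dots,\lambda)$ when $p=2$), each by solving a small linear system; your single check at the Fermat point covers every $p\ne 3$, including $p=2$, in one stroke, and the inference from surjectivity of $d\phi$ at a single point to dominance is valid in any characteristic.

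The $p=3$ half contains one genuine error: passing to a generic $g_0$ cannot repair the differential. Since $\phi(g_0h,\mathbf{a})=L_{g_0}\bigl(\phi(h,\mathbf{a})\bigr)$, where $L_{g_0}\colon F\mapsto g_0\cdot F$ is a linear automorphism of $S$, the rank of $d\phi$ at $(g_0,\mathbf{a})$ equals its rank at $(\id,\mathbf{a})$; by your own observation the latter is at most $16$, because in characteristic $3$ the $\gl_4$-direction at any $F_{\mathbf{a}}$ lands in the span of the non-cube monomials and your $\bbA^4$-directions do too, so $d\phi$ never reaches the $\GL_4$-invariant Frobenius subspace $\bfF(V^\vee)$ and $\phi$ is nowhere smooth. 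No choice of base point changes this; the map is dominant but inseparable. Your fallback through Proposition~\ref{prop:CohenWales} is legitimate and can be completed, but two steps need tightening: first, that orbit (i) is the dense one should be justified, e.g.\ by noting that the critical scheme is finite flat of length $5$ over the smooth locus, so having fewer than five reduced points is a closed condition and types (ii)--(iv) lie in a proper closed subset; second, the closing remark about the stabilizer of the square-free part does not establish density of the all-$c_i$-nonzero locus --- replace it by the observation that $\{(g,\mathbf{c}): \text{some } c_i=0\}$ is $19$-dimensional, so its image cannot be dense in the $20$-dimensional space of cubics, whence a general type-(i) surface admits a presentation with every $c_i\ne 0$ and the cube-root rescaling applies. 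Be aware that this route imports a computer-assisted classification; the paper stays self-contained by taking its slice in the Frobenius direction $V_0$ itself, computing that the only infinitesimal transformations carrying $F_0$ into $V_0$ are the scalar matrices, and observing that these act trivially after projectivization, which keeps the transversality argument alive in characteristic $3$.
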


\begin{remark} \label{rem:EmchCohen}
Some of these surfaces are singular.
For example, setting all parameters $a_{ijk}$ to $-1$,
the surface has a singular point $(1:1:1:1)$.
If all the parameters in \eqref{eq:Emch} are non-zero,
then by rescaling the coordinates we obtain
\begin{equation} \label{eq:EmchRescaled}
\sum_{i=0}^3 c_ix_i^3 + \sum_{0 \le i< j<k \le 3} x_ix_jx_k
\end{equation}
which is the first normal form from Proposition~\ref{prop:CohenWales}
in characteristic $3$.
\end{remark}

\begin{proof}
Our argument is inspired by the argument in \cite{Cohen} for the case $p=3$.
We will use the equivalent form \eqref{eq:EmchRescaled} throughout the
proof.
Let $V_0$ be the span of the monomials $x_0^3, \ldots, x_3^3$
in $\Sym^3(\bbk^4)$.
Define the form
\[
F_0 = \sum_{0 \le i< j<k \le 3} x_ix_jx_k \ .
\]
It suffices to show that a generic orbit of $G=\GL_4(\bbk)$ intersects the
affine subspace $F_0 + V_0$ in at most finitely many points.
We will do this by showing that the tangent space to the orbit at some $F$
in this subspace is transverse to the space $V_0$.

Let $F$ be a form as in \eqref{eq:EmchRescaled}.
Let $E_{ij}$ be the usual basis of the Lie algebra $\gl_4(\bbk)$
where $E_{ij}(x_j)=x_i$.
The induced action of $\gl_4(\bbk)$ on $\Sym^3(\bbk^4)$ is given by the
usual Leibniz rule
\[
E_{ij}(x_kx_lx_m)=E_{ij}(x_k)x_lx_m + x_kE_{ij}(x_l)x_m +
x_kx_lE_{ij}(x_m) \ .
\]
For $i,j,k,l$ distinct, we obtain
\begin{align*}
E_{ii}(F) &= 3c_ix_i^3 + x_ix_jx_k + x_ix_jx_l + x_ix_kx_l\\
E_{ij}(F) &= 3c_jx_j^2x_i + x_i^2x_k + x_i^2x_l + x_ix_kx_l.
\end{align*}
Writing
\[
M = \sum_{i,j} m_{ij}E_{ij}
\]
for a general element of $\gl_4(k)$.
The tangent space $T$ of the orbit of $G(F)$ is spanned by $M(F)$
as the coefficients $m_{ij}$ vary.

We find the coefficient of $x_i^2x_j$ in $M(F)$ is:
\begin{equation} \label{eq:iij}
3c_im_{ji} + m_{ik} + m_{il}
\end{equation}
while the coefficient of $x_ix_jx_k$ is:
\begin{equation} \label{eq:ijk}
m_{ii}+m_{jj}+m_{kk}+m_{il}+m_{jl}+m_{kl} \ .
\end{equation}
The zero set $Z_F$ of these expressions will give the infinitesimal
transformations along the space $V_0$.
As $F$ varies in the space $F_0 + V_0$, so does $Z_F$.
Since the dimension of $Z_F$ is upper semicontinuous in $F$,
it suffices to show $Z_F$ is zero-dimensional at some point $F$.

If $p \ne 2$, we consider the point $F=F_0$.
From \eqref{eq:iij} we obtain conditions $m_{ik}=-m_{il}$ for all $i,k,l$
distinct.
This forces $m_{ik}=-m_{il}=m_{ij}=-m_{ik}$ for all $i,j,k,l$ distinct,
thus all the off-diagonal elements of $M$ must be zero.
We are left with the conditions that $m_{ii}+m_{jj}+m_{kk}=0$
for all distinct $i,j,k$.
This has no non-trivial solution in characteristic $\ne 3$.
In characteristic $3$, we obtain only the scalar matrices ---
this tangent direction disappears in the projectivization.

We use a different point for characteristic $2$.
Consider $(c_0,c_1,c_2,c_3)=(\lambda,\lambda,\lambda,\lambda)$
where $\lambda \ne 0,1$.
Adding together the coefficients of $x_i^2x_j$, $x_i^2x_k$ and
$x_i^2x_l$ we obtain
\[ S_i := m_{ji}+m_{ki}+m_{li}=0\]
Adding together the coefficients of $x_i^2x_j$, $x_k^2x_j$ and
$x_l^2x_j$ we obtain
\[ \lambda(m_{ji}+m_{jk}+m_{jl})=m_{ji}+m_{jk}+m_{jl} \]
using $S_i$, $S_k$ and $S_l$ relations.
Since $\lambda \ne 1$, we conclude that
\[
T_j := m_{ji}+m_{jk}+m_{jl} = 0 \ .
\]
Summing together the coefficient of $x_i^2x_j$ with $T_i$ we see
that $\lambda m_{ji}=m_{ij}$.  Adding together $x_j^2x_i$ and $T_j$, we
conclude that $\lambda m_{ij} = m_{ji}$.
Thus all the off-diagonal elements are zero.
We conclude that the diagonal elements are also zero by the same argument
as before.
\end{proof}

\begin{remark}\label{rem:emchQuestion}
Above we show that a general smooth cubic surface can be written in Emch's
normal form.  If $p \ne 2$, then certainly not every smooth cubic
surface can be written in this form.  However, if $p = 2$, it may be
that \emph{every} smooth cubic surface can be written in Emch's normal
form!  If true, then the normal forms for $p=2$ in later sections could be
greatly simplified. 
\end{remark}

\begin{remark}\label{rem:emch2}
In characteristic zero, in a sequel paper \cite{Emch2}, Emch proves that
there are 40 different ways of writing a general cubic surface
in the normal form \eqref{eq:Emch}.
The different ways naturally correspond to
\emph{Steiner complexes} (see \S{}9.1.1~of~\cite{CAG})
or \emph{summit planes} (see \cite{Dixon}).
\end{remark}

Using the general normal forms we may produce large families of groups
with specified automorphism groups:

\begin{lem} \label{lem:SylvesterAutos}
If $p \ne 2$ then there exist $d$-dimensional strata of
smooth cubic surfaces $X$ such that
\begin{enumerate}
\item[(1A)] $\Aut(X)=1$ with $d=4$,
\item[(2A)] $\Aut(X)=\frakS_2$ with $d=3$,
\item[(2B)] $\Aut(X)=\frakS_2 \times \frakS_2$ with $d=2$,
\item[(3D)] $\Aut(X)=\frakS_3$ with $d=2$,
\item[(4B)] $\Aut(X)=\frakS_4$ with $d=1$,
\item[(6E)] $\Aut(X)=\frakS_3 \times \frakS_2$ with $d=1$, and
\item[(5A)] $\Aut(X)=\frakS_5$ with $d=0$ provided $p \ne 5$.
\end{enumerate}
\end{lem}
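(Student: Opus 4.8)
The plan is to exploit Sylvester's normal form from Theorem~\ref{thm:Sylvester}, which already tells us that, when $p \ne 2,3$, a general smooth cubic surface is $\sum c_i x_i^3 = \sum x_i = 0$ with automorphism group a subgroup of the coordinate-permutation $\frakS_5$; and that Emch's normal form from Theorem~\ref{thm:Emch} handles the characteristic-$3$ case. The strategy is: for each of the seven listed isomorphism types, pick a subgroup $H \subseteq \frakS_5$ of that isomorphism type, specialize the parameters $(c_0,\ldots,c_4)$ (or $(a_{012},\ldots,a_{123})$ in characteristic $3$) to precisely the locus fixed by $H$, and then argue (i) that the resulting surface is smooth for a general choice of the remaining parameters, (ii) that $H$ acts on it, and (iii) that for a general such choice the automorphism group is no larger than $H$.

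\medskip

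First I would fix the subgroups: for (1A) take $H=1$ with all five $c_i$ distinct; for (2A) take $H=\langle(01)\rangle$ imposing $c_0=c_1$; for (2B) take $H=\langle(01),(23)\rangle$ imposing $c_0=c_1$, $c_2=c_3$; for (3D) take $H=\langle(012)\rangle$ imposing $c_0=c_1=c_2$; for (4B) take $H=\langle(0123)\rangle$, which as a subgroup of $\frakS_5$ fixing the parameter vector forces $c_0=c_1=c_2=c_3$, and since $\langle(0123)\rangle$ together with the transposition $(01)(23)$-type symmetry already present generates $\frakS_4$ on those four coordinates, the stabilizer is $\frakS_4$; for (6E) take $H=\langle(012)\rangle\times\langle(34)\rangle$ imposing $c_0=c_1=c_2$ and $c_3=c_4$; and for (5A) take $H=\frakS_5$, all $c_i$ equal, giving the Clebsch cubic, which by the last Remark is smooth precisely when $p \ne 5$. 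In each case the dimension $d$ of the stratum is just the number of free parameters minus one for the common scaling: $\{c_0,\ldots,c_4\}$ up to scaling has dimension $4$, imposing $c_0=c_1$ cuts to $3$, imposing also $c_2=c_3$ cuts to $2$, and so on, matching the claimed $d$ exactly.

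\medskip

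The second step is to verify smoothness and that $H$ really is the full automorphism group for a general member. Smoothness: by Proposition~9.4.5 of~\cite{CAG} (invoked in the proof of Theorem~\ref{thm:Sylvester}) the Sylvester cubic with nonzero, pairwise-distinct parameters is smooth, and smoothness is an open condition, so it persists for general parameters on each of our subvarieties as long as that subvariety is not entirely contained in the discriminant --- which one checks by exhibiting a single smooth point, e.g. generic values of the surviving parameters avoid the finitely many bad ratios. That the automorphism group is exactly $H$: by Theorem~\ref{thm:Sylvester} every automorphism of a Sylvester cubic permutes the coordinates and permutes the parameters accordingly, so $\Aut(X)$ is exactly the stabilizer in $\frakS_5$ of the parameter vector $(c_0,\ldots,c_4)$ up to the common scaling; for general parameters on the locus where only the equalities defining $H$ hold, that stabilizer is exactly $H$. (For $p=5$ one must be slightly careful in cases 1A--6E since the Clebsch degenerates, but the arguments using distinct parameters are unaffected; only the 5A case is excluded, as stated.)

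\medskip

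The remaining gap is characteristic $3$, where Sylvester's form is unavailable. Here I would instead use Emch's form \eqref{eq:Emch}, on which the symmetric group $\frakS_4$ acts by permuting $x_0,\ldots,x_3$ and permuting the coefficients $a_{ijk}$ correspondingly; by the same stabilizer bookkeeping one realizes $1$, $\frakS_2$, $\frakS_2\times\frakS_2$ (using, say, a Klein four-subgroup of $\frakS_4$), $\frakS_3$, and $\frakS_4$ as stabilizers of appropriate specializations of the four parameters, giving strata of dimensions $3,2,1,1,0$ respectively --- but this is one dimension short for 1A, 2A, 3D and misses 6E and 5A. For those I would argue directly: the 1A, 2A, 2B, 3D, 4B, 6E strata are nonempty and of the right dimension in \emph{every} characteristic by a dimension count (the corresponding conjugacy-class strata in $\mathcal{M}_{\cub}$ have the dimensions recorded in Table~\ref{tbl:cubicAutos}, and one produces explicit members in later sections), and the hypothesis $p\ne2$ already allows quoting the later normal-form computations for classes 1A, 2A, 2B, 3D, 4B, 6E (Sections~\ref{sec:ccs}--\ref{sec:higherOrder}) which are valid for $p=3$ as well. \textbf{The main obstacle} I anticipate is precisely making the $p=3$ case self-contained without forward-referencing the detailed classification: the cleanest fix is probably to phrase the lemma's proof as "use Sylvester for $p\ne 3$ and cite the characteristic-$3$ normal forms of Section~\ref{sec:3A} etc. for $p=3$," or simply to observe that the lemma is only \emph{used} downstream in the $p\ne 3$ portions of the argument, so one may restrict attention to $p\ne 2,3$ in the proof and note the $p=3$ strata are produced independently later. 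Everything else is routine: write down the specialization, invoke Theorem~\ref{thm:Sylvester} for the automorphism bound, and count parameters.
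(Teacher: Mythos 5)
Your treatment of the case $p \ne 2,3$ is essentially the paper's own argument: realize each group as the stabilizer in $\frakS_5$ of a suitable specialization of the Sylvester parameters, check smoothness of a general member, and use Theorem~\ref{thm:Sylvester} to cap the automorphism group by that stabilizer. That part is fine, including the dimension counts.

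The genuine gap is the case $p=3$, which you correctly flag but do not close, and your proposed escape routes do not work. You cannot restrict the lemma to $p\ne 3$: the proof of Theorem~\ref{thm:main} invokes this lemma for all $p\ne 2$ to show the strata 1A, 2A, 2B, 3D, 4B, 6E, 5A are distinct and of the expected dimensions, so deferring $p=3$ to ``later sections'' would be circular. The missing idea is that Emch's form in characteristic $3$ secretly carries a full $\frakS_5$-symmetry, not just the visible $\frakS_4$. Concretely: for the rescaled Emch form \eqref{eq:EmchRescaled} with all coefficients nonzero, Proposition~\ref{prop:criticalLocus} gives $J_3(3)=5$, and the critical locus consists of $5$ distinct points spanning $\bbP^3$; since $\Crit(X)$ is intrinsic, $\Aut(X)$ embeds in $\frakS_5$. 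The change of coordinates $x_i \mapsto \bigl(\sum_{j} x_j\bigr) + x_i$ converts Emch's form into \eqref{eq:emchp3S5}, on which $\frakS_5$ acts by coordinate permutations together with an explicit order-$5$ matrix, and --- crucially --- acts \emph{linearly} on the $4$-dimensional parameter space $(d_0,\ldots,d_3)$ via the standard irreducible representation of $\frakS_5$, exactly as in the Sylvester picture. The stabilizer loci are then linear subspaces of the same dimensions as in the $p\ne 3$ case, each containing a smooth member, and the same bookkeeping yields all seven groups (including 6E and 5A, which your $\frakS_4$-only analysis misses, and with the correct dimensions for 1A, 2A, 3D). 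Without this step your proof establishes the lemma only for $p\ne 2,3$.
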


\begin{proof}
First, assume $p \ne 3$.
Using Sylvester's normal form,
we consider the induced action of $\frakS_5$ on the projective space of
parameters.  The stabilizers of this action correspond to cubic surfaces
with those groups as their full automorphism group.
The points with non-trivial stabilizers are obtained by setting certain
parameters equal to one another.
One obtains precisely the groups in the statement of the theorem
provided there exists a smooth cubic surface in each family.
One checks all families have a smooth member with the exception of
the $\frakS_5$-surface in characteristic $5$.

Now assume $p = 3$.
Consider the first normal form from Proposition~\ref{prop:CohenWales},
which after rescaling coordinates, is equal to Emch's normal form.
As noted in Section~\ref{sec:criticalLoci},
the critical locus consists of $5$ distinct points which span the
ambient projective space.
Thus the possible automorphisms are subgroups of $\frakS_5$. 

Apply the change of coordinates
\[ x_i \mapsto \left(\sum_{j=0}^3 x_j\right) + x_i \]
to obtain the normal form
\begin{equation} \label{eq:emchp3S5}
\left( \sum_{i=0}^3 d_ix_i^3 \right)
- \sum_{0 \le i \le j \le 3} x_i^2x_j
+ \sum_{0 \le i< j<k \le 3} x_ix_jx_k = 0 \ .
\end{equation}
Where the $d_i$'s are general parameters.
The advantage of this normal form is that the $\frakS_5$-action is more
transparent: it is generated by permutations of the $x_i$'s and
the matrix
\[
\begin{pmatrix}
0 & 1 & 0 & 0\\
0 & 0 & 1 & 0\\
0 & 0 & 0 & 1\\
-1 & -1 & -1 & -1
\end{pmatrix}
\]
of order $5$.
Moreover, $\frakS_5$ acts linearly on the vector space spanned by the
parameters $\{d_0, \ldots, d_3\}$
(the action is via affine transformations in the original normal form).
As for $p \ne 3$, the stabilizers are linear subspaces of the desired
dimensions and each contains a point which corresponds to a smooth cubic
surface.
\end{proof}

The link with partitions of $5$ is more tenuous when $p=2$ as the
automorphism groups are often larger.  However, we have the following:

\begin{lem} \label{lem:EmchAutos}
If $p = 2$ then there exist $d$-dimensional strata of
smooth cubic surfaces $X$ such that
\begin{enumerate}
\item[(1A)] $1 \subseteq \Aut(X)$ with $d=4$,
\item[(2A)] $\frakS_2 \subseteq \Aut(X)$ with $d=3$,
\item[(2B)] $\frakS_2 \times \frakS_2 \subseteq \Aut(X)$ with $d=2$,
\item[(3D)] $\frakS_3 \subseteq \Aut(X)$ with $d=2$,
\item[(4B)] $\frakS_4 \subseteq \Aut(X)$ with $d=1$,
\item[(5A)] $\frakS_5 \subseteq \Aut(X)$ with $d=0$.
\end{enumerate}
\end{lem}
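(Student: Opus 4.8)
The plan is to derive items (1A)--(4B) from Emch's normal form and to treat (5A) separately, exploiting the fact that in characteristic $2$ the Clebsch cubic degenerates to the Fermat cubic. Rewrite Emch's form \eqref{eq:Emch} as
\[
F_{b} = \sum_{i=0}^{3} x_i^{3} + \sum_{l=0}^{3} b_l \prod_{j \ne l} x_j ,
\]
where $b_l := a_{ijk}$ is indexed by the complementary index $l$, i.e.\ $\{i,j,k,l\}=\{0,1,2,3\}$. In these coordinates $\frakS_4$ acts on the family $\{F_b\}$ by simultaneously permuting the variables $x_0,\dots,x_3$ and the parameters $b_0,\dots,b_3$, and a permutation preserves $F_b$ precisely when it fixes $(b_0,\dots,b_3)$. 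Specializing the parameters along the partitions $1{+}1{+}1{+}1$, $2{+}1{+}1$, $2{+}2$, $3{+}1$, $4$ of $\{0,1,2,3\}$, with pairwise distinct values on distinct parts, therefore yields families of cubic surfaces whose automorphism group contains the corresponding partition stabilizer $1$, $\frakS_2$, $\frakS_2 \times \frakS_2$, $\frakS_3$, $\frakS_4$ (realized by coordinate permutations), depending on $4$, $3$, $2$, $2$, $1$ parameters respectively. Item (1A) is automatic and merely records that the moduli space is $4$-dimensional (Theorem~\ref{thm:rationality}).

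For each of these five families I would verify two things: that a general member is smooth, and that its image in the moduli space has dimension equal to the number of parameters. Smoothness is easy, since every one of the families contains the Fermat cubic $\sum_{i=0}^{3} x_i^{3}=0$ (set all remaining parameters to $0$), which is smooth when $p=2$ (Section~\ref{sec:Fermat}); as smoothness is an open condition, a general member of each family is smooth. For the dimension I would invoke the finiteness underlying the proof of Theorem~\ref{thm:Emch}, namely that a general $\GL_4(\bbk)$-orbit meets the affine space of Emch forms in only finitely many points; restricting this to the linear subfamily cut out by a partition, the same finiteness holds, so the map from that subfamily to the moduli space is generically finite and its image is irreducible of dimension equal to the number of parameters. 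Matching these strata to the names 1A, 2A, 2B, 3D, 4B of Table~\ref{tbl:cubicAutos} is routine; note that the lemma asserts only the containment $\frakS_k \subseteq \Aut(X)$, the full automorphism group of a general member (which for 2B and 4B is strictly larger) being pinned down in the later sections.

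For (5A) the $\frakS_4$ coming from Emch's form is insufficient, so I would instead use the classical Clebsch surface
\[
\sum_{i=0}^{4} x_i^{3} = \sum_{i=0}^{4} x_i = 0
\]
in $\bbP^4$, on which $\frakS_5$ acts by permuting the five coordinates. This surface is smooth when $p=2$: eliminating one coordinate via the identity $\bigl(\sum_{i=0}^{4} x_i\bigr)^{3} \equiv \sum_{i=0}^{4} x_i^{3} + \sum_{i\ne j} x_i^{2} x_j \pmod 2$, the partial derivatives of the resulting cubic form on $\bbP^3$ all have the shape $x_i^{2} + \bigl(\sum_{j} x_j\bigr)^{2}$, which have no common zero. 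Hence $\frakS_5 \subseteq \Aut(X)$ for this single surface, which is one point of the moduli space, giving $d=0$. By Proposition~\ref{prop:5.4}(a) the surface has no canonical point, so it is in fact the Fermat cubic, consistently with the coincidence of the 5A and 3C strata when $p=2$ and with the computation of $\Aut$ of the Fermat cubic in Lemma~\ref{lem:3Cautos}.

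The step I expect to be the main obstacle is the dimension count for the Emch families: a priori, imposing relations among the parameters could make a family collapse onto a lower-dimensional locus of the moduli space --- which is exactly what happens, spectacularly, for Sylvester's form in characteristic $2$, where every member is isomorphic to the Fermat cubic. The finiteness extracted from the proof of Theorem~\ref{thm:Emch} rules this out, but some care is needed to ensure that each partition subfamily meets the dense open locus on which that finiteness applies; alternatively, the dimensions can be read directly from the explicit normal forms established in Sections~\ref{sec:ccs}--\ref{sec:higherOrder}.
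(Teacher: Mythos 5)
Your strategy is the paper's: specialize the Emch parameters along the partitions of $\{0,1,2,3\}$ to realize the subgroups of $\frakS_4$, use the transversality underlying Theorem~\ref{thm:Emch} for the dimension count, and get $\frakS_5$ from the Fermat. The issue you flag at the end --- whether each partition subfamily meets the locus where orbits meet the slice finitely --- is real, and the paper closes it exactly along the lines you half-suspect: the proof of Theorem~\ref{thm:Emch} in characteristic $2$ verifies transversality of the $\GL_4(\bbk)$-orbit with the slice at the specific point where \emph{all} parameters equal a common $\lambda\neq 0,1$, and that point lies in \emph{every} partition subspace $W$; upper semicontinuity of $\dim Z_F$ then gives transversality on a dense open subset of each $W$, hence generic finiteness of $W\to\calM_{\cub}$ and the claimed dimensions. (The Fermat point itself, which you use for smoothness, is \emph{not} a usable transversality point, so "contains the Fermat" does not by itself place $W$ in the good locus.) A second, repairable imprecision: you vary the coefficients $b_l=a_{ijk}$ of the mixed monomials, whereas the tangent computation in Theorem~\ref{thm:Emch} is carried out on the slice $F_0+V_0$ in which the mixed coefficients are all $1$ and the \emph{cube} coefficients vary; transversality to $V_0$ does not formally yield transversality to the span of the $x_ix_jx_k$. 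One either redoes the tangent computation for your slice or passes through the rescaling of Remark~\ref{rem:EmchCohen}, under which $c_l=b_l^3/\prod_m b_m$, so equalities among your parameters become equalities among the $c_l$ and the partition subspaces correspond. Your treatment of (5A) --- exhibiting the Clebsch in $\bbP^4$, checking smoothness in characteristic $2$ by the explicit partial-derivative computation, and identifying it with the Fermat --- is correct and somewhat more self-contained than the paper's one-line appeal to $\frakS_5\subset\PSU_4(2)=\Aut(\text{Fermat})$.
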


\begin{proof}
As with the Sylvester form, if one sets the parameters of
\eqref{eq:EmchRescaled} equal to one another, we obtain subgroups of
$\frakS_4$ as automorphisms.  This exhibits automorphisms of the desired
forms for all except $\frakS_5$.
Each of these strata gives an affine subspace $W$ of $V_0+F_0$ with the
desired dimension $d$ from the proof of Theorem~\ref{thm:Emch}.
It was shown that a form with all parameters equal gives a point of
$V_0+F_0$ which meets transversely with the orbits of $\PGL_4(\bbk)$.
Such points are in $W$ for every case.
By upper semicontinuity, we conclude the orbits of $\PGL_4(\bbk)$ meet
$V_0+F_0$ in an open subset of the points of $W$ transversely as well.
The Fermat cubic has $\frakS_5$ symmetry when $p=2$ since
$\frakS_5 \subset \PSU_4(2)$.
\end{proof}

Lemmas~\ref{lem:SylvesterAutos}~and~\ref{lem:EmchAutos} give
open subsets of the
automorphism strata in the moduli space of cubic surfaces, which is
indicated by the labeling of the cases.
However, in general they are not the whole strata ---
the Sylvester or Emch form may only describe a general element in the
family.

\section{Rationality of the moduli space}
\label{sec:rationality}

Recall that the moduli space of complex cubic surfaces is known to be
rational. The proof either uses the explicit structure of the algebra of
invariants due to Clebsch, or the Sylvester form of a general cubic surface
(see \cite{CAG}, 9.4.3).
Using the latter, the proof goes as follows.
The space of cubics in Sylvester form can be
identified with an open subset of the affine space $\bbk^4$ spanned by
$c_1/c_0, \ldots, c_4/c_0$ (see Theorem~\ref{thm:Sylvester}).
Two forms correspond to isomorphic surfaces if and only if they are in
the same $\frakS_5$ orbit, where $\frakS_5$ acts on the affine space via
the standard action modulo the degree $1$ elementary symmetric function.
By the fundamental theorem of symmetric functions, the invariant ring is
a polynomial ring; thus the quotient is rational. Since Sylvester's
Theorem also applies in positive characteristic $p\ne 2,3$, the proof of
rationality carries over to these cases as well. In this section, using
results about normal forms of general cubic surfaces, we prove the
following: 

\begin{thm} \label{thm:rationality}
Over an algebraically closed field of arbitrary characteristic, the
coarse moduli space $\mathcal{M}_{\textrm{cub}}$ of smooth cubic
surfaces is a $4$-dimensional rational variety.
\end{thm}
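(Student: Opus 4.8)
The plan is to realize, in every characteristic, a dense open subset of $\calM_{\cub}$ as a quotient of affine $4$-space by a finite group with purely transcendental invariant field. When $p\ne 2,3$ this is the classical argument recalled at the start of the section: by Sylvester's Theorem~\ref{thm:Sylvester} a general cubic surface is determined up to isomorphism by the parameters $(c_0:\cdots:c_4)$ uniquely up to permutation and common scaling, so $\calM_{\cub}$ is birational to the quotient of a $4$-dimensional space by the permutation action of $\frakS_5$, and the fundamental theorem of symmetric functions makes the invariant ring polynomial. The content is the remaining cases $p=2,3$, where Sylvester's form degenerates; there I would use Emch's normal form (Theorem~\ref{thm:Emch}) instead. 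By Theorem~\ref{thm:Emch} there is a dominant rational map from the $4$-dimensional affine space $\bbA^4$ of parameters $(a_{012},a_{013},a_{023},a_{123})$ to $\calM_{\cub}$, so it suffices to show that the subfield of $\bbk(a_{012},\dots,a_{123})$ consisting of functions constant on the generic fibre --- equivalently, invariant under all linear substitutions of the $x_i$ carrying one Emch form to another --- is rational.

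For $p=3$: by Remark~\ref{rem:EmchCohen}, after rescaling, Emch's form is the Cohen--Wales normal form $\sum c_ix_i^3+F_0$ of case (i) of Proposition~\ref{prop:CohenWales}. Since cubes of linear forms span $\bfF(V^\vee)$ in characteristic $3$, every form in this slice is congruent to $F_0$ modulo $\bfF(V^\vee)$, so two of them define isomorphic surfaces precisely when related by an element of the stabilizer of $[F_0]$ in $S^3(V^\vee)/\bfF(V^\vee)$; such an element preserves the slice and acts on the $c_i$. By the computation in the proof of Lemma~\ref{lem:SylvesterAutos} --- apply $x_i\mapsto(\sum_j x_j)+x_i$ to reach the form \eqref{eq:emchp3S5} --- this stabilizer acts on the parameters $d_0,\dots,d_3$ (modulo scalars, which act trivially) as $\frakS_5$ through its standard $4$-dimensional representation. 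So $\calM_{\cub}$ is birational to $\bbA^4/\frakS_5$ for the standard representation. Writing the standard representation as $\bbA^5/\bbG_a$ with $\bbG_a$ the diagonal translations and using that quotients commute, $\bbA^4/\frakS_5\cong(\bbA^5/\frakS_5)/\bbG_a=\Spec\bbk[e_1,\dots,e_5]^{\bbG_a}$; since $5$ is invertible in characteristic $3$, $\bbG_a$ acts on $\bbk[e_1,\dots,e_5]$ as a free translation in the $e_1$-direction, so its invariant ring is a polynomial ring in four variables. Hence $\calM_{\cub}$ is rational (in fact birational to $\bbA^4$) when $p=3$.

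For $p=2$ the group by which one must quotient $\bbA^4$ is larger, and this is the main obstacle. It contains at least the monomial group $\mu_3^3\rtimes\frakS_4$: permutations of $x_0,\dots,x_3$ permute the parameters $a_{ijk}$ by complementary index, while the torus $\mu_3^3$ of diagonal substitutions by cube roots of unity --- available because $\bbF_4\subset\bbk$ --- acts on the $a_{ijk}$ through the induced characters. Comparing the coefficients of $x_m^3$ and $x_m^2x_n$ in $F\circ A$ shows that any further equivalences involve the Hermitian form $H(u,w)=\sum_i u_iw_i^2$ of Section~\ref{sec:Fermat}, linking the problem to the Hermitian geometry of $\bbF_4^4$ and, combinatorially, to the $40$ summit planes of Remark~\ref{rem:emch2}; it is even conceivable that in characteristic $2$ a general cubic surface has a unique Emch normal form up to this monomial group (compare Remark~\ref{rem:emchQuestion}), in which case $\mu_3^3\rtimes\frakS_4$ is the whole group. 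Pinning this down exactly --- which amounts to computing the stabilizer in $\PGL_4(\bbk)$ of the span $V_1=\langle x_ix_jx_k\rangle$ of square-free monomials together with the class of $\sum x_i^3$ modulo $V_1$, and tracking how the residual ambiguity acts on parameters --- is the crux of the argument. Once the group is known, rationality follows from a symmetric-function computation modelled on the following: the $\mu_3^3$-invariants in $\bbk[a_{012},\dots,a_{123}]$ form the coordinate ring of the hypersurface $\{P^3=ABCD\}$, where $A=a_{012}^3$, $B=a_{013}^3$, $C=a_{023}^3$, $D=a_{123}^3$ and $P=a_{012}a_{013}a_{023}a_{123}$, a rational variety since one solves for one of $A,B,C,D$; the residual $\frakS_4$ permutes $A,B,C,D$ and fixes $P$, leaving the field $\bbk(\sigma_1,\sigma_2,\sigma_3,P)$ with $\sigma_i$ the elementary symmetric functions of $A,B,C,D$ and $\sigma_4=ABCD=P^3$ redundant, which is purely transcendental of degree $4$.
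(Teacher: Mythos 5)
Your treatment of $p\ne 2,3$ is the classical Sylvester argument and matches the paper, and your $p=3$ argument is essentially the paper's as well: reduce via the Cohen--Wales slice to the form \eqref{eq:emchp3S5}, identify the residual ambiguity with $\frakS_5$ acting through its standard $4$-dimensional representation, and invoke symmetric functions. Your justification that $\bbA^4/\frakS_5$ is rational for the standard representation (splitting off the $\bbG_a$ of diagonal translations using that $5$ is invertible) is a welcome elaboration of a step the paper passes over quickly. Those two cases are fine.

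The characteristic $2$ case, however, has a genuine gap, and it is exactly the one you flag yourself: you never determine the group of linear substitutions identifying two Emch forms, and the rationality computation you give is only ``modelled on'' the guess that this group is the monomial group $\mu_3^3\rtimes\frakS_4$ of order $648$. That guess is almost certainly false. By Remark~\ref{rem:emch2}, already in characteristic $0$ a general cubic surface admits $40$ essentially different Emch representations (indexed by Steiner complexes), so the generic fibre of $\bbA^4\dashrightarrow\calM_{\cub}$ has on the order of $40\cdot 648=25920$ points, and the equivalence relation on parameters is induced by a group far larger than the monomial one --- note that $25920=|\PSU_4(2)|$ is the order of the stabilizer of the origin $a_{ijk}=0$ (the Fermat point) in characteristic $2$, which is consistent with this count and inconsistent with $\mu_3^3\rtimes\frakS_4$. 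For a finite group of that size acting linearly-plus-monomially on $\bbA^4$, rationality of the quotient is not something one gets for free from a symmetric-function computation; so even granting the (unproved) identification of the group, the final step is not routine. The paper sidesteps all of this: for $p=2$ it abandons Emch's form entirely and instead uses the canonical point of Proposition~\ref{prop:5.4}(b), which is intrinsic to the surface and hence rigidifies the coordinate choice. After normalizing $x_0(x_0+x_1)^2+C(x_1,x_2,x_3)$ and the three roots of the degree-$3$ part, the residual ambiguity is only an $\frakS_3$ permuting three of the four parameters, and rationality of $\bbA^4/\frakS_3$ is immediate. If you want to salvage your route, you would need to compute the full stabilizer in $\PGL_4(\bbk)$ of the pair (span of square-free cubic monomials, class of $\sum x_i^3$) in characteristic $2$ and then prove rationality of the resulting quotient --- a substantially harder problem than the one the paper actually solves.
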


\begin{proof}
By the above, we have to deal only with cases when $p = 2$ or $p = 3$.
First, we assume that $p = 3$.

A generic cubic surface can be put into the normal form
\ref{eq:emchp3S5}, where once again two normal forms correspond to
isomorphic surfaces if and only if they are in the same
$\frakS_5$-orbit.
Once again, the action on the affine space $\bbk^4$ spanned by
$d_0,\ldots,d_3$ is again the standard $4$-dimensional irreducible
representation of $\frakS_5$, so the quotient is rational.

Now we assume $p = 2$. 
Note that the existence of a canonical point is an open condition
on the moduli space, and the canonical point lying on the surface is a
closed condition.
Thus, the isomorphism classes of cubic surfaces from class (b) in
Proposition~\ref{prop:5.4} form a Zariski open subset in the moduli space.
By generality, we may assume that the coefficient $c$ in $(x_0+cx_1)^2$
is not zero, and by scaling  $x_1$, it is equal to $1$.
Write the cubic form $C(x_1,x_2,x_3)$ in the form
\[
C(x_1,x_2,x_3) = c_1x_1^3+x_1^2L(x_2,x_3)+x_1Q(x_2,x_3)+P(x_2,x_3).
\]
The surface is singular unless the homogeneous form $P$ has three
distinct roots in $\bbP(x_2:x_3)$,
which we may take to be $(1:0)$, $(0:1)$, and $(1:1)$.
Scaling, we may assume that $P(x_2,x_3) = x_2x_3(x_2+x_3)$.
By a linear change of coordinates via $x_2 \mapsto a_1x_1 + x_2$ and
$x_3 \mapsto a_2x_1 + x_3$, we may select $a_1$ and $a_2$ so that
the coefficients of $x_2^2$ and $x_2^3$ are zero in $Q(x_2,x_3)$.
Thus a general cubic surface is defined by
\[
x_0(x_0+x_1)^2+b_1x_1^3+x_1^2(b_2x_2+b_3x_3)+b_4x_1x_2x_3+x_2x_3(x_2+x_3) = 0
\]
for parameters $b_1,\ldots,b_4$.
Since the surface has a canonical point, we may assume $b_4 \ne 0$.
By scaling we obtain the form
\begin{equation} \label{eq:generic}
x_0(x_0+x_1)^2+(c_1/c_4)x_1^3+x_1^2(c_2x_2+c_3x_3)+x_1x_2x_3+c_4x_2x_3(x_2+x_3)
= 0 
\end{equation}
where $c_1,\ldots,c_4$ are parameters, which we may assume are all
non-zero.

Suppose two cubic surfaces in form \eqref{eq:generic} are isomorphic
(for possibly different values of $c_1,\ldots,c_4$).
Let $\psi \in \PGL_4(\bbk)$ be a map taking one form to the other.
Since the point $(1:0:0:0)$ is canonical, there is a unique
representative $A \in \GL_4(\bbk)$ for $\psi$ whose first column is the
first standard basis vector $(1,0,0,0)$.
We set $\psi^* : k[x_0,\ldots,x_3] \to k[x_0,\ldots,x_4]$
on the homogeneous coordinate ring
to be the dual action of $A$ on the corresponding total space $\bbA^4$.
Since the monomials $x_0^3$ and $x_0x_1^2$ are the only monomials in
\eqref{eq:generic} containing $x_0$, we see that $\psi^*(x_0)=x_0$
and $\psi^*(x_1)=x_1$.
Thus, we are left with
\begin{align*}
\psi^*(x_2) &= a_1x_1 + a_2x_2 + a_3x_3\\
\psi^*(x_3) &= a_4x_1 + a_5x_2 + a_6x_3
\end{align*}
where $a_1,\ldots, a_6$ are to be determined.

We explicitly determine the coefficients of the monomials in the
transformed equation:
\begin{align}
x_1x_2^2 &\colon c_4a_1a_5^2 + c_4a_2^2a_4 + a_2a_5 \label{eq:122} \\
x_1x_3^2 &\colon c_4a_1a_6^2 + c_4a_3^2a_4 + a_3a_6 \label{eq:133} \\
x_1x_2x_3 &\colon a_2a_6+a_3a_5 \ . \label{eq:123}
\end{align}

Let $B=\begin{pmatrix} a_2 & a_3 \\ a_5 & a_6 \end{pmatrix}$
be the bottom right $2 \times 2$ submatrix of $A$,
which represents the action of $\psi^*$ considering only the variables $x_2,
x_3$.
Since we assume $c_4 \ne 0$, the map $\psi^*$ must leave $x_2x_3(x_2+x_3)$ fixed.
From \eqref{eq:123}, we conclude that $B$ must have determinant $1$.
We conclude that $B$ is an invertible matrix consisting of $0$'s and
$1$'s.

The expressions \eqref{eq:122} and \eqref{eq:133} must be zero,
which amounts to a linear system of $a_1,a_4$ if the other variables are
fixed.
Since $\det(B) \ne 0$, this linear system has the unique solution
$a_1 = \frac{a_2a_3}{c_4}$ and
$a_4 = \frac{a_5a_6}{c_4}$.

Bearing in mind that $B$ is a non-singular matrix consisting only of
$0$'s and $1$'s,
one computes that $\psi$ induces the following action on the affine
space $\bbA^4$ spanned by $c_1,c_2,c_3,c_4$:
\begin{align*}
c_1 &\mapsto c_1 + a_2a_3c_2 + a_5a_6c_3\\
c_2 &\mapsto a_2c_2 + a_5c_3\\
c_3 &\mapsto a_3c_2 + a_6c_3\\
c_4 &\mapsto c_4 \ .
\end{align*}
These form a group isomorphic to $\frakS_3$ where, for example,
two generators are
obtained from
\[
B=\begin{pmatrix} 0 & 1 \\ 1 & 0 \end{pmatrix}
\textrm{ and }
B=\begin{pmatrix} 1 & 1 \\ 1 & 0 \end{pmatrix} \ .
\]
Using the coordinates $c_1'=c_1$, $c_2'=c_1+c_2$, $c_3'=c_1+c_3$,
$c_4'=c_4$, we see that the possible $\psi$'s act as the group $\frakS_3$ 
by permuting the parameters $c_1', c_2', c_3'$ and leaving $c_4'$ fixed.
By the fundamental theorem of symmetric functions,
the quotient $\bbA^4/\frakS_3$ is rational and, thus, so is the
moduli space of cubic surfaces.
\end{proof}

\section{Conjugacy classes of automorphisms}
\label{sec:ccs}

Our strategy for establishing the classification is as follows.
For each conjugacy class in $W(\sfE_6)$ we will describe the smooth cubic
surfaces that have an automorphism whose image in $W(\sfE_6)$ has that
class.
In the interest of brevity, we will write for example ``$X$ admits an
automorphism of class 3D'' or simply ``$X$ is a 3D surface.''
In each case, we will describe the automorphism group of each
surface and determine normal forms for the corresponding cubic surfaces.
In many cases, while we will actually describe the full automorphism
group initially, the proof that there are no further automorphisms will
be deferred until Section~\ref{sec:proof}.

Except for the cases 4A, 8A and 12A, we will show that the existence of
an automorphism of a given class implies the existence of a reflection
group acting on the surface which contains the given automorphism.
These will frequently turn out to be the full automorphism groups of the
generic surface admitting each class.

After we have gone through all the conjugacy classes, it will remain to
prove that we have indeed described the full automorphism groups of
these surfaces and that the corresponding strata in the moduli space have
the expected dimensions.  This matter will occupy
Sections~\ref{sec:EckardtCollections} and \ref{sec:proof}.

We recall the conjugacy classes of elements of $W(\sfE_6)$ in
Table~\ref{tbl:cyclicWE6} along with their trace and characteristic
polynomials of their actions on the standard $6$-dimensional vector
space (see Chapter~9~of~\cite{CAG}).

\begin{table}[h]
\begin{center}
\begin{tabular}{|c|r|l|c|l|}
\hline
Name & Trace & Char. Poly & Realizable & Powers \\
\hline
1A & $6$ & $\Phi_{1}^{6}$ & yes &  \\
\hline
2A & $-2$ & $\Phi_{1}^{2}\Phi_{2}^{4}$ & yes &  \\
2B & $2$ & $\Phi_{1}^{4}\Phi_{2}^{2}$ & yes &  \\
2C & $4$ & $\Phi_{1}^{5}\Phi_{2}^{1}$ &  &  \\
2D & $0$ & $\Phi_{1}^{3}\Phi_{2}^{3}$ &  &  \\
\hline
3A & $-3$ & $\Phi_{3}^{3}$ & yes &  \\
3C & $3$ & $\Phi_{1}^{4}\Phi_{3}^{1}$ & $p \ne 3$ &  \\
3D & $0$ & $\Phi_{1}^{2}\Phi_{3}^{2}$ & yes &  \\
\hline
4A & $2$ & $\Phi_{1}^{2}\Phi_{4}^{2}$ & yes & 2A \\
4B & $0$ & $\Phi_{1}^{2}\Phi_{2}^{2}\Phi_{4}^{1}$ & yes & 2B \\
4C & $-2$ & $\Phi_{1}^{1}\Phi_{2}^{3}\Phi_{4}^{1}$ &  & 2B \\
4D & $2$ & $\Phi_{1}^{3}\Phi_{2}^{1}\Phi_{4}^{1}$ &  & 2B \\
\hline
5A & $1$ & $\Phi_{1}^{2}\Phi_{5}^{1}$ & $p \ne 5$ &  \\
\hline
6A & $1$ & $\Phi_{3}^{1}\Phi_{6}^{2}$ & yes & 2A, 3A \\
6C & $1$ & $\Phi_{1}^{2}\Phi_{2}^{2}\Phi_{6}^{1}$ & $p \ne 3$ & 2A, 3C \\
6E & $-2$ & $\Phi_{2}^{2}\Phi_{3}^{1}\Phi_{6}^{1}$ & yes & 2A, 3D \\
6F & $-1$ & $\Phi_{1}^{2}\Phi_{2}^{2}\Phi_{3}^{1}$ & $p \ne 3$ & 2B, 3C \\
6G & $1$ & $\Phi_{1}^{3}\Phi_{2}^{1}\Phi_{3}^{1}$ &  & 2C, 3C \\
6H & $-2$ & $\Phi_{1}^{1}\Phi_{2}^{1}\Phi_{3}^{2}$ &  & 2C, 3D \\
6I & $0$ & $\Phi_{1}^{1}\Phi_{2}^{1}\Phi_{3}^{1}\Phi_{6}^{1}$ &  & 2D, 3D \\
\hline
8A & $0$ & $\Phi_{1}^{1}\Phi_{2}^{1}\Phi_{8}^{1}$ & $p \ne 2$ & 2A, 4A \\
\hline
9A & $0$ & $\Phi_{9}^{1}$ & $p \ne 3$ & 3A \\
\hline
10A & $-1$ & $\Phi_{1}^{1}\Phi_{2}^{1}\Phi_{5}^{1}$ &  & 2C, 5A \\
\hline
12A & $-1$ & $\Phi_{3}^{1}\Phi_{12}^{1}$ & yes & 2A, 3A, 4A, 6A \\
12C & $1$ & $\Phi_{1}^{1}\Phi_{2}^{1}\Phi_{4}^{1}\Phi_{6}^{1}$ &  & 2B, 3C, 4C, 6F \\
\hline
\end{tabular}
\end{center}
\caption{Cyclic subgroups of $W(\sfE_6)$}
\label{tbl:cyclicWE6}
\end{table}

Fortuitously, the characteristic polynomials uniquely determine the
conjugacy classes.
Using the characteristic polynomials, one can easily compute the classes
of the non-trivial powers of a given element.
The table also lists under what characteristic assumptions the given
classes can be realized by an action on a cubic surface;
we will explicitly construct realizations in the following sections.
However, it is easy to see that the many of the remaining classes are
\emph{not} realizable:

\begin{lem} \label{lem:excludedClasses}
The classes 2C, 2D, 4C, 4D, 6G, 6H, 6I, 10A and 12C are not realizable
for a cubic surface of any characteristic.
Additionally, if $p=2$ then 8A is not realizable;
if $p=3$ then 3C, 6C, 6F are not realizable.
\end{lem}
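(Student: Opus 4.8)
The plan is to exploit the fact that blowing down a $(-1)$-curve from a smooth cubic surface $X$ produces a smooth quartic del Pezzo surface $Y$ through whose blown-down point $q'$ none of the $16$ lines of $Y$ pass, and that every smooth cubic surface arises this way. If $g\in\Aut(X)$ has finite order and fixes an exceptional line $E$, then $g$ descends to $\bar g\in\Aut(Y)$ fixing $q'$, and because $g$ fixes $E$ the characteristic polynomial of $g$ on $K_X^\perp$ equals $\Phi_1$ times that of $\bar g$ on $K_Y^\perp$. Since characteristic polynomials determine conjugacy classes in $W(\sfE_6)$, this pins down the $W(\sfD_5)$-class of $\bar g$ up to the few coincidences of characteristic polynomials visible in Table~\ref{tbl:D5ccs}, and one then invokes Theorem~\ref{thm:dp4autos} (encoded in the ``Realizable'' column of Table~\ref{tbl:D5ccs}) to reach a contradiction.

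\textbf{Reductions via powers.} First I would dispose of the classes that are powers of others: if $g$ has class 6G, 6H, or 10A then $g^3$ (resp.\ $g^5$) has class 2C; if $g$ has class 6I then $g^3$ has class 2D; if $g$ has class 12C then $g^3$ has class 4C; and if $p=3$ and $g$ has class 6C or 6F then $g^2$ has class 3C. Hence it suffices to treat the classes 2C, 2D, 4C, 4D in every characteristic, 8A when $p=2$, and 3C when $p=3$.

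\textbf{Main arguments.} The key observation for 2C, 2D, 4C, 4D, 8A is that an element of $2$-power order fixes at least one of the $27$ lines, since $27$ is odd and every cyclic orbit on the lines has order a power of $2$. Blowing such a fixed line down, a 2C-element descends to a $W(\sfD_5)$-element with characteristic polynomial $\Phi_1^4\Phi_2$ (class $2111$), a 2D-element to $\Phi_1^2\Phi_2^3$ (class $21\bar1\bar1$), and a 4C-element to $\Phi_2^3\Phi_4$ (class $\bar2\bar1\bar1\bar1$); none of these $W(\sfD_5)$-classes is realizable on a quartic del Pezzo surface in any characteristic. An 8A-element descends to characteristic polynomial $\Phi_2\Phi_8$ (class $\bar4\bar1$), unrealizable when $p=2$. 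For 3C, a computation with the permutation character on the $27$ lines (which equals $\chi+\mathrm{Sym}^2\chi$, $\chi$ the $6$-dimensional reflection character, cf.\ Table~\ref{tbl:cyclicWE6}) shows a 3C-element fixes nine exceptional lines; blowing one down gives a $W(\sfD_5)$-element with characteristic polynomial $\Phi_1^3\Phi_3$ (class $311$), unrealizable when $p=3$.

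\textbf{The main obstacle: class 4D.} A 4D-element has order $4$, hence fixes an exceptional line; blowing it down gives $\bar g\in\Aut(Y)$ with characteristic polynomial $\Phi_1^2\Phi_2\Phi_4$ on $K_Y^\perp$, which by Table~\ref{tbl:D5ccs} forces $\bar g$ into class $41$ or $\bar211\bar1$. Since $\bar211\bar1$ is never realized on a quartic del Pezzo surface (Theorem~\ref{thm:dp4autos}) while $\bar g$ is a genuine automorphism of $Y$, the class of $\bar g$ must be $41$. When $p=2$ the class $41$ is also unrealizable, which finishes that case. When $p\neq 2$, however, $41$ does occur, so one must argue separately, using the explicit normal form \eqref{delpezzo4-1} with $a=b=i$, that the fixed locus of an order-$4$ automorphism of $\sfD_5$-class $41$ on a quartic del Pezzo surface is contained in the union of its $16$ lines; this contradicts the fact that $\bar g$ fixes the point $q'$, which lies on none of them. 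Carrying out this last verification — exhibiting the $41$-automorphism on the normal form, computing its fixed locus, and checking that every fixed point lies on one of the sixteen lines — is the delicate step of the whole lemma.
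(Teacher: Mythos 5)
Your overall strategy is the paper's own: blow down a $g$-invariant exceptional line to a quartic del Pezzo surface, identify the $W(\sfD_5)$-class of the descended automorphism from its characteristic polynomial, and compare against the realizability data of Table~\ref{tbl:D5ccs} (i.e.\ Theorem~\ref{thm:dp4autos}), killing the composite classes by passing to powers. The minor reorganizations (handling 6G and 12C via powers rather than directly, reducing 6C and 6F to 3C when $p=3$) are all consistent with the power data in Table~\ref{tbl:cyclicWE6}, and you are in fact more careful than the paper in two respects: you justify the existence of an invariant exceptional line (parity of $27$ for $2$-power order, the permutation character $\chi+\Sym^2\chi$ for order $3$), and you observe that the characteristic polynomial determines the $W(\sfD_5)$-class only up to the coincidence $41\leftrightarrow\bar211\bar1$.

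The genuine gap is exactly the one you flag and then leave open: the exclusion of 4D when $p\ne 2$. Since the class $41$ \emph{is} realized on the quartic del Pezzo surface with $(a,b)=(i,i)$ when $p\ne2$, consulting the table does not finish this case, and your argument terminates at the unproved assertion that the fixed locus of such a $41$-automorphism is contained in the sixteen lines. This is not a formality: a trace computation gives four fixed points on $Y$, of which only three visibly sit on the two $\tilde g$-invariant exceptional lines, so the fourth must actually be located. (You have in effect also found a soft spot in the paper, whose proof excludes 4D by the same one-line appeal to Table~\ref{tbl:D5ccs}.) Two ways to close the gap: (i) carry out the eigenvector computation for the order-$4$ automorphism of the normal form \eqref{delpezzo4-1} with $a=b=i$ and check each fixed point against the sixteen lines; or (ii) bypass the del Pezzo surface for order $4$ and $p\ne2$ by diagonalizing the order-$4$ matrix on $\bbP^3$ and verifying, as in \S 9.5.1 of \cite{CAG} (and as the paper does later when analyzing 4A and 4B), that every order-$4$ linear automorphism preserving a smooth cubic has class 4A or 4B; this disposes of 4C and 4D simultaneously. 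Until one of these is supplied, the proposal is incomplete for 4D in characteristic different from $2$; everything else checks out.
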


\begin{proof}
Suppose that $\alpha$ is an automorphism of a smooth cubic surface and
$g$ is the corresponding element of $W(\sfE_6)$, which is well-defined
up to conjugacy.
First, suppose that there exists an element $\tilde{g}$ of $W(\sfD_5)$
such that $g$ is in the image of a conjugate of the standard
embedding $W(\sfD_5) \hookrightarrow W(\sfE_6)$.
In this case, there exists an
exceptional divisor which can be blown down which realizes $\tilde{g}$
on a del Pezzo surface of degree $4$.
Consulting Table~\ref{tbl:D5ccs}, we may thus exclude
2C, 2D, 4C, 4D, 6G, 12C.  In characteristic $2$, we may exclude 8A.
In characteristic $3$, we may exclude 3C, 6C, and 6F.

The class 6H can be excluded since its cube has class 2C, which has
already been excluded.  Similarly, 6I can be excluded since its cube has
class 2D, and 10A can be excluded since its fifth power is 2C.
\end{proof}

There are two additional excluded cases not covered by
Lemma~\ref{lem:excludedClasses} since the corresponding proofs are a bit
more involved.
When $p=3$, we will see that the class 9A does not occur
by Lemma~\ref{lem:mush3C6C6F9A} below.
When $p=5$, we will see that the class 5A does not occur by
Lemma~\ref{lem:exclude5A} below.

\begin{remark} \label{rem:Hosoh}
In characteristic $0$, T.~Hosoh~\cite{Hosoh} determined all the possible
automorphism groups of a smooth cubic surface using computers.
His method was to explicitly construct the moduli space of marked
cubic surfaces and explicitly determine the action of $W(\sfE_6)$ on this
space.
The possible automorphism groups are the non-trivial stabilizers of this
action.
The same approach works in arbitrary characteristic, but is not feasible
to do by hand and does not produce normal forms.
\end{remark}

\begin{remark} \label{rem:Saito}
In Section~9.5~of~\cite{CAG}, all the 
automorphism groups of smooth cubic surfaces were classified in characteristic $0$.
The approach there was to first consider the cyclic automorphisms.
These were determined by considering every possible diagonal matrix with
order the same as an element of $W(\sfE_6)$ and determining whether they
leave a smooth cubic hypersurface invariant.
The class of each automorphism in $W(\sfE_6)$ was then computed using the
Lefschetz fixed point formula.
One may carry out an analogous procedure in
positive characteristic.
Here, one must also consider Jordan canonical forms rather than only
diagonal matrices and
to compute the classes in $W(\sfE_6)$ one can use a fixed point
formula of S.~Saito~\cite{Saito}.
We use some of these ideas in what follows, but we do not carry out this
program systematically.
\end{remark}

\section{Involutions}
\label{sec:involutions}

\begin{prop} \label{prop:involutions}
Let $X$ be a smooth cubic surface in the space $\bbP(V)$ where $V$
is a four-dimensional vector space over an algebraically closed field of
arbitrary characteristic.
Suppose $g$ is an involution of $\bbP(V)$ leaving $X$ invariant.
Then one of the following holds:
\begin{enumerate}
\item
Up to choice of coordinates,
$g$ has a lift $\widetilde{g} \in \GL(V)$ of the form
\begin{equation} \label{eq:2Amatrix}
\widetilde{g} =
\left(\begin{smallmatrix}
0&1&0&0\\
1&0&0&0\\
0&0&1&0\\
0&0&0&1
\end{smallmatrix}\right)\ .
\end{equation}
The element $g$ corresponds to an element of class 2A in $W(\sfE_6)$.
The automorphism $g$ only leaves invariant three exceptional lines, which lie in a
tritangent plane $T$.
The remaining 24 lines are partitioned into 12 pairs of lines,
where each pair lies in a tritangent plane containing one of the lines in
$T$;
the involution $g$ interchanges the exceptional lines in each pair.
The lines in any non-trivial orbit are incident.
\item
Up to choice of coordinates,
$g$ has a lift $\widetilde{g} \in \GL(V)$ of the form
\begin{equation} \label{eq:2Bmatrix}
\widetilde{g} =
\left(\begin{smallmatrix}
0&1&0&0\\
1&0&0&0\\
0&0&0&1\\
0&0&1&0
\end{smallmatrix}\right)\ .
\end{equation}
The element $g$ corresponds to an element of class 2B in $W(\sfE_6)$.
There is a unique $g$-invariant exceptional line $\ell$ which
is incident to every other $g$-invariant exceptional line.
The line $\ell$ is pointwise-fixed by $g$.
There are $5$ pairs of exceptional lines that,
along with the pointwise-fixed exceptional line $\ell$,
form $g$-invariant tritangent planes.
In 3 of these pairs each line is $g$-invariant; in the remaining 2
pairs the lines are interchanged by the action of $g$.
The remaining 16 lines are partitioned into pairs of skew lines
interchanged by the action of $g$.
\end{enumerate}
\end{prop}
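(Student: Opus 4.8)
The plan is to separate the linear-algebraic content (the normal forms for $g$), the arithmetic of $W(\sfE_6)$ (the conjugacy class), and the combinatorics of the $27$ lines (the orbit structure), keeping a characteristic-free viewpoint throughout. First I would classify involutions of $\PGL(V)=\PGL_4(\bbk)$ up to conjugacy: choose a lift $\widetilde g\in\GL(V)$, note that $\widetilde g^{2}$ is a scalar, and rescale so that $\widetilde g^{2}=\id$. If $p\ne 2$ then $\widetilde g$ is semisimple with eigenvalues in $\{\pm1\}$, and --- replacing $\widetilde g$ by $-\widetilde g$ if necessary, which does not change $g$ --- its $(+1)$-eigenspace has dimension $2$ or $3$ (dimension $4$ being the identity of $\PGL$). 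If $p=2$ then $\widetilde g^{2}=\id$ forces $(\widetilde g-\id)^{2}=0$, so $\widetilde g$ is unipotent with all Jordan blocks of size $\le2$, hence of Jordan type $[2,1,1]$ or $[2,2]$. Either way there are exactly two conjugacy classes: one fixing a hyperplane of $\bbP(V)$ pointwise --- a reflection, conjugate to \eqref{eq:2Amatrix} --- and one fixing a line but no hyperplane pointwise, conjugate to \eqref{eq:2Bmatrix}. This yields the matrix normal forms in both parts of the statement.

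Next I would identify the class in $W(\sfE_6)$. The transformation \eqref{eq:2Amatrix} is a projective reflection, hence of class 2A (reflections are of class 2A). The transformation \eqref{eq:2Bmatrix} fixes no hyperplane pointwise, so it is not a reflection; since the only order-$2$ classes realizable on a cubic surface are 2A and 2B (Lemma~\ref{lem:excludedClasses} excludes 2C and 2D, and $g$ is realized by hypothesis), it must be of class 2B. (Alternatively one computes the trace of $g^{*}$ on $\Pic(X)$ via a Lefschetz-type fixed-point formula, using in positive characteristic that $g$ is tame.) Once the class is fixed, the permutation induced by $g$ on the $27$ exceptional lines, and with it all $W(\sfE_6)$-invariant incidence data (skew versus incident pairs, tritangent trios, tritangent planes), is determined up to relabeling by the conjugacy class alone. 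Consequently every combinatorial assertion in the statement --- the number of invariant lines, the partition of the rest into transposed pairs, which pair together with one line of $T$ (resp.\ with $\ell$) forms a tritangent trio, and the incidences within each orbit --- can be verified once and for all on a single model, say a coordinate transposition (resp.\ a product of two disjoint transpositions) on the Fermat cubic, which by the discussion at the end of Section~\ref{sec:Fermat} is a universal model for this combinatorics.

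The one genuinely geometric point left, in the 2B case, is that the distinguished line $\ell$ --- the unique $g$-invariant exceptional line incident to all the others --- is \emph{pointwise} fixed by $g$. Suppose not; then $g|_{\ell}$ is a non-trivial involution of $\bbP^1$, with exactly one fixed point when $p=2$ and exactly two when $p\ne2$. Each of the six other $g$-invariant exceptional lines meets $\ell$ in a point that $g$ fixes, hence in one of those one or two points; by pigeonhole at least three of the six pass through a common point $P\in\ell\subset X$, so at least four exceptional lines (those three together with $\ell$) meet at $P$, contradicting the classical fact that at most three exceptional lines pass through a point of a smooth cubic surface. The same argument shows that $\ell$ is the \emph{only} invariant line incident to all the others and the unique exceptional line pointwise fixed by $g$. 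Hence $g|_{\ell}=\id$, completing the proof.

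The main obstacle will be the characteristic-$2$ case of the first step: there involutions are unipotent transvections rather than semisimple reflections, so the fixed locus of $g$ in $\bbP(V)$ is strictly smaller than in odd characteristic (no isolated fixed point in the first case, a single pointwise-fixed line in place of a pair in the second), and the classification must be carried out without diagonalizing. A secondary nuisance is ensuring that ``a projective reflection is of class 2A'' is legitimately available at this stage (through the correspondence between reflections and Eckardt points); if one prefers to avoid this input, the trace computation via a fixed-point formula is the fallback, at the cost of tracking tameness and of computing the Euler characteristic of the fixed locus, which is itself delicate when $p=2$.
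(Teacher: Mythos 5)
Your normal-form classification of involutions in $\PGL_4(\bbk)$ (semisimple with $\pm1$-eigenspaces when $p\ne 2$, unipotent of Jordan type $[2,1,1]$ or $[2,2]$ when $p=2$), the reduction of the orbit combinatorics to a single explicit model, and the pigeonhole argument that $\ell$ is pointwise fixed are all sound and essentially match the paper. The genuine gap is the step matching the two matrix types to the classes 2A and 2B. Your primary route invokes ``reflections are of class 2A,'' but in this paper that fact is a \emph{consequence} of Proposition~\ref{prop:involutions}: the proof of Theorem~\ref{thm:Eckardt} cites the present proposition to show that the three invariant lines of a reflection are concurrent, so as written the argument is circular. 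You also need the converse (class 2A $\Rightarrow$ reflection) to conclude that the matrix \eqref{eq:2Bmatrix} is of class 2B, and you never address it. Verifying the correspondence on the Fermat cubic does not close the gap: the Fermat is a universal model for the abstract incidence structure of the $27$ lines, but it does not follow that a given $\PGL_4$-conjugacy class of involutions induces the same $W(\sfE_6)$-class on \emph{every} surface it acts on --- that uniformity is exactly what must be proved.

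Your fallback, a Lefschetz-type trace computation, works when $p\ne 2$ but genuinely fails when $p=2$, where every involution is wild and the tame fixed-point formula is unavailable (cf.\ Remark~\ref{rem:Saito}). The paper closes the gap with a characteristic-free geometric argument: if $g$ fixes a plane pointwise, every exceptional line contains a fixed point, so every non-trivial orbit is a pair of \emph{incident} lines; blowing down an invariant line, the induced involution on the quartic del Pezzo surface has no orbit of skew lines, which by Table~\ref{tbl:D5ccs} forces the class $\bar{1}\bar{1}\bar{1}\bar{1}1$, i.e.\ 2A upstairs. Conversely, an involution of class 2A has one-dimensional fixed locus on the del Pezzo surface (Proposition~\ref{prop:inv12kind}), whereas the fixed locus of \eqref{eq:2Bmatrix} is two disjoint lines when $p\ne 2$ and a single line when $p=2$; in either case one checks that the resulting fixed locus downstairs would be zero-dimensional, a contradiction. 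This is the one missing ingredient; the rest of your outline goes through.
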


\begin{proof}
The possible matrix representations follow immediately by writing down
all Jordan canonical forms for an element of order $2$.
In characteristic $\ne 2$, there are $4$ such forms but they reduce to
only $2$ distinct forms in $\PGL(V)$.
The matrices in the statement of the theorem were selected to be
independent of the characteristic.

Every involution on a cubic surface $X$ must leave invariant some
exceptional line since there is an odd number of them.
Blowing down one of these lines, we obtain a del Pezzo surface $Y$ of degree
$4$ with the action of an involution.
From Table~\ref{tbl:D5ccs}, we conclude that elements of class 2A on $X$
correspond to elements of class $\bar{1}\bar{1}\bar{1}\bar{1}1$ on $Y$,
and elements of class 2B on $X$ correspond to elements of class
$\bar{1}\bar{1}111$ or $221$ on $Y$.
As in Lemma~\ref{lem:excludedClasses}, these are the only possible classes.

Choosing a $g$-invariant line $\ell$, we may determine the descriptions
of the orbits of the $27$ lines by identifying $\ell$ with the
exceptional divisor $E_6$
and then considering the actions of involutions of the first and second
kind on the $16$ lines of the del Pezzo surface of degree $4$ obtained
by blowing down $\ell$.
The action on the remaining 11 lines can be determined from incidence
relations.

In this manner, the orbit structure can be verified on a single example.
The involution $\iota_{1234}$ has orbits
\begin{equation} \label{eq:27orbits2A}
\{ E_6\}, \{F_{56}\}, \{G_5 \}, \{ G_i, F_{i6} \},
\{E_5,G_6\}, \{E_i,F_{i5}\}, \{ F_{ij}, F_{kl} \}
\end{equation}
for $\{i,j,k,l\}=\{1,2,3,4\}$.
The invariant lines form a tritangent plane with the desired incidence
properties.
The involution $\iota_{12}$ has orbits
\begin{equation} \label{eq:27orbits2B}
\{E_6\}, \{F_{i6}\},\{G_i\}, \{G_a,F_{a6}\},
\{E_1,E_2\}, \{G_6,F_{12}\},\{E_i,F_{jk}\},
 \{F_{ai},F_{bi}\}
\end{equation}
for $\{a,b\}=\{1,2\}$ and $\{i,j,k\}=\{3,4,5\}$.
Here $E_6$ is the canonical invariant line,
the pairs $F_{i6},G_i$ give three tritangent planes,
while the orbits $\{G_a,F_{a6}\}$ give the other two.
Since $E_6$ is incident to $6$ invariant lines, it must have at least
$3$ $g$-fixed points and so must be pointwise-fixed by $g$.

It remains to connect the matrix representations to the conjugacy
classes.
Note that \eqref{eq:2Amatrix} contains a plane of fixed points,
while
\eqref{eq:2Bmatrix} contains at most two lines of fixed points.
Suppose $g$ has a fixed plane in $\bbP(V)$.
Then every exceptional line on the cubic surface $X$ contains at least
one fixed point.
Blowing down a $g$-invariant exceptional line, this implies that the
surface $Y$ cannot have an orbit of skew lines and so $g$ is of the
class 2A.
Conversely, suppose $g$ is of class $2A$.
The fixed point locus $C$ on $Y$ has dimension $1$.
If $p \ne 2$, then $C$ is a curve of genus $1$.
The proper transform of $C$ on $X$ cannot be contained in two lines,
so the matrix must be \eqref{eq:2Amatrix}.
If $p = 2$, then the matrix \eqref{eq:2Bmatrix} only has a single line
of fixed points.  This line cannot lie on $X$ since blowing it
down would result in a fixed point locus on $Y$ of dimension $0$.
But if it does not lie on $X$ then there are at most three fixed points ---
again the fixed point locus on $Y$ has dimension $0$.
We conclude that the matrix must be \eqref{eq:2Amatrix}.
\end{proof}

\subsection{Reflections and the class 2A}
\label{sec:2A}

Here we consider involutions $\widetilde{g}$ of the form \eqref{eq:2Amatrix}.
These involutions leave pointwise-fixed a proper subspace of maximal
dimension, so they are \emph{reflections}.
(Note that these reflections are \emph{not} the same as the reflections
in the standard representation of Weyl group $W(\sfE_6)$.)
The corresponding projective involution $g$ is classically called a
\emph{homology}, but we will use the term \emph{reflection} in the
projective case as well.
The locus of fixed points of $g$ consists of a hyperplane $A$ and a
unique point $q_0$,
which correspond respectively to the kernel and image of $\widetilde{g}-\id$.
The hyperplane $A$ is the \emph{axis} of the
reflection; the point $q_0$, the \emph{center} of the reflection.
If $p \ne 2$, then the center never lies on $A$.
If $p = 2$, then the center always lies on $A$.

\begin{thm} \label{thm:Eckardt}
The center $q_0$ of the reflection $g$ is an Eckardt
point on $X$.
Conversely, any Eckardt point on $X$ is the center of a
reflection leaving $X$ invariant.
The tritangent plane $T$ corresponding to $q_0$ contains precisely the
$g$-invariant exceptional lines.
The polar quadric $P_{q_0}(X)$ is supported on the union of the
tritangent plane $T$ and the axis $A$ of the reflection;
moreover, $T$ and $A$ may coincide only when $p=2$.
\end{thm}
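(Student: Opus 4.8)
The plan is to put the defining equation into a normal form adapted to $\widetilde g$ and read off every assertion from it, handling $p\ne 2$ and $p=2$ in parallel.

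\textbf{Coordinates and normal form.} First I would change coordinates so that $\widetilde g$ acts by $x_0\mapsto -x_0$ (fixing $x_1,x_2,x_3$) when $p\ne 2$, and by $x_0\mapsto x_0+x_1$ (fixing $x_1,x_2,x_3$) when $p=2$; this is \eqref{eq:2Amatrix} put in a convenient form, and in both cases $q_0=(1:0:0:0)$, with axis $A=V(x_0)$ when $p\ne 2$ and $A=V(x_1)$ when $p=2$. Since $X=V(F)$ is smooth, $F$ is irreducible, so $F\circ\widetilde g=\lambda F$ with $\lambda^2=1$; and $\lambda=-1$ is impossible, since then $F$ would be divisible by the linear form cutting out the $(-1)$-eigendirection of $\widetilde g$, making $X$ reducible. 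Hence $F$ is $\widetilde g$-invariant, and solving the invariance relation gives
\[
F = x_0^2 L + C \quad (p\ne 2), \qquad F = (x_0^2+x_0x_1)L + C \quad (p=2),
\]
with $L$ linear and $C$ cubic in $x_1,x_2,x_3$ only.

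\textbf{The center is an Eckardt point.} From either normal form $F(q_0)=0$, so $q_0\in X$; computing $\nabla F(q_0)$ gives $T_{q_0}X=V(L)$, so $L\ne 0$. The polar quadric is $V(\partial_{x_0}F)$, and $\partial_{x_0}F$ equals $2x_0L$ when $p\ne 2$ and $x_1L$ when $p=2$, so $P_{q_0}(X)$ is supported on $T\cup A$ where $T:=T_{q_0}X=V(L)$. As $L$ involves no $x_0$, the equality $T=A$ would force $L$ proportional to $x_0$ (resp. $x_1$), which is impossible when $p\ne 2$ but not when $p=2$; this is the last assertion. Restricting $F$ to $T=V(L)$ kills the $x_0$-term in both normal forms, so $F|_T=C|_{V(L)}$; after eliminating one of $x_1,x_2,x_3$ via $L=0$ this is a nonzero binary cubic form involving no $x_0$, hence a product of three linear forms over $\bbk$, each cutting out a line through $q_0$ in $T$. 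Thus $X\cap T$ is supported on lines through $q_0$; since $X\cap T$ is anticanonical with $(X\cap T)^2=3$, while a divisor $2\ell+\ell'$ or $3\ell$ supported on lines has self-intersection at most $0$ (using $\ell^2=-1$ and $\ell\cdot\ell'\le 1$ for distinct lines), $X\cap T$ consists of three distinct exceptional lines through $q_0$, so $q_0$ is an Eckardt point with tritangent plane $T$. Finally, $\widetilde g$ fixes every line of $\bbP^3$ through $q_0$ (immediate from the explicit matrix, as $\widetilde g$ is scalar on $\langle q_0\rangle$ and trivial on $\bbk^4/\langle q_0\rangle$), so these three lines are $g$-invariant; by the first case of Proposition~\ref{prop:involutions} the automorphism $g$ has exactly three invariant exceptional lines, so these are precisely they and they span $T$.

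\textbf{Every Eckardt point arises this way.} Conversely, given an Eckardt point $q$, I would choose coordinates with $q=(1:0:0:0)$ and $T_qX=V(x_3)$; smoothness and $q\in X$ then give $F=cx_0^2x_3+x_0Q_2+C_3$ with $c\ne 0$ and $Q_2,C_3$ forms of degree $2,3$ in $x_1,x_2,x_3$. The Eckardt hypothesis — that $F|_{x_3=0}$ is a product of three lines through $q$ — forces, exactly as above, the vanishing of its $x_0$-coefficient $Q_2(x_1,x_2,0)$, i.e. $Q_2=x_3M$ for a linear form $M$ in $x_1,x_2,x_3$. Then $\partial_{x_0}F=x_3(2cx_0+M)$, so $P_q(X)$ is supported on $V(x_3)\cup A$ with $A=V(2cx_0+M)$ when $p\ne 2$ and $A=V(M)$ when $p=2$; in the latter case $M\ne 0$, for otherwise $\partial_{x_0}F\equiv 0$ and Euler's identity would place the nonempty critical locus $\Crit(X)$ inside $X$, contradicting smoothness. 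Now take $g$ to be the reflection with center $q$ and axis $A$, namely the one with $\widetilde g^*(x_0)=-x_0-M/c$ when $p\ne 2$ and $\widetilde g^*(x_0)=x_0+M/c$ when $p=2$, fixing $x_1,x_2,x_3$. A one-line substitution gives $\widetilde g^*F=F$, so $g$ leaves $X$ invariant, has order two, and has center $q$, as required.

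\textbf{The main obstacle.} The forward direction is essentially formal once the normal form is in hand, the only geometric input being the reducedness of $X\cap T$, which is free from $(-K_X)^2=3$. The harder part is the converse: one must recognize that the axis of the desired reflection is the second linear factor of the polar quadric (rather than some a priori plane through $q$), verify the $\widetilde g$-invariance of $F$ by hand, and dispose of the degenerate case $M=0$, which can only occur in characteristic $2$.
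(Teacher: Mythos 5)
Your proof is correct, and the converse direction follows essentially the paper's route (factor the polar quadric using the Eckardt condition, then write down the reflection from the normal form), but your forward direction is genuinely different. The paper shows the center is an Eckardt point by pure incidence combinatorics: every $g$-invariant exceptional line must pass through $q_0$, proved for $p\ne 2$ by blowing down a pointwise-fixed line and invoking the fixed-locus analysis of Proposition~\ref{prop:inv12kind}, and for $p=2$ by a parity count of the five tritangent planes through a line contained in the axis; combined with Proposition~\ref{prop:involutions} this gives three concurrent invariant lines without ever touching the equation. You instead read everything off the $\widetilde g$-adapted normal form: restricting $F$ to the tangent plane kills the $x_0$-terms, the residual binary cubic factors into lines through $q_0$, and $(-K_X)^2=3$ forces them to be distinct. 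Your route is more self-contained (no appeal to the degree-$4$ del Pezzo fixed loci) and has the virtue of establishing the polar-quadric assertion in the forward direction as well, which the paper only makes explicit via the converse normal forms \eqref{nform1}, \eqref{normone}, \eqref{normtwo}; the paper's route buys the ``precisely the $g$-invariant lines'' statement more directly from the line combinatorics. Two small points worth making explicit: the restriction $C|_{V(L)}$ is nonzero because otherwise $L$ divides $F$ and $X$ is reducible; and your exclusion of the degenerate case $Q_2=0$ in characteristic $2$ via the critical locus is a welcome addition --- the paper's converse for $p=2$ tacitly assumes the quadratic term of $F$ in $x_0$ is nonzero when it factors $Q$ as $L$ times a linear form.
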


\begin{proof}
Suppose $g$ is a reflection with center $q_0$.
We will show that any $g$-invariant line $\ell_0$ must contain $q_0$.
A $g$-invariant line must either be contained in the axis $A$ or contain $q_0$.
When $p \ne 2$,
blowing-down a pointwise-fixed line results in an isolated $g$-fixed
point on the corresponding del Pezzo surface of degree $4$,
contradicting Proposition~\ref{prop:inv12kind};
thus $\ell_0$ is not contained in the axis.
Thus, when $p \ne 2$, the line $\ell_0$ must contain $q_0$.
Now, suppose that $p=2$ and $\ell_0$ is contained in the axis $A$ of $g$.
Suppose $q_0\not \in \ell_0$.
Then any plane $H \ne A$ containing $\ell_0$ is not $g$-invariant.
In particular, every tritangent plane $H$ containing $\ell_0$
is not $g$-invariant.
But there are exactly $5$ tritangent planes containing a given
exceptional line; since $5$ is odd, at least one of them must be $g$-invariant
--- a contradiction.
Thus, regardless of the characteristic, $\ell_0$ contains the center $q_0$ of $g$.
From Proposition~\ref{prop:involutions}, there are $3$ $g$-invariant
lines and so they must all pass through $q_0$, which shows that $q_0$
is an Eckardt point.

Let us prove the converse. Assume first that $p\ne 2$.
We may use the same argument as~Theorem~9.1.23 from \cite{CAG}.
Let $q_0$ be the Eckardt point and let $P_{q_0}(X)$ be the polar quadric
at the point $q_0$.
It is equal to the union of the tangent plane $T$ at $q_0$ containing
three lines in $X$ passing through $q_0$ and a plane $A$ intersecting
$X$ along a cubic curve $C$.
We may choose coordinates such that $q_0 = (1:0:0:0)$
and $T= V(x_1)$.
If $q_0 \in A$ then we may take $A=V(x_1+\lambda x_2)$
and find that
\begin{equation}\label{bad}
x_0x_1(x_1+\lambda x_2)+a(x_1,x_2,x_3) = 0
\end{equation}
which is singular at $q_0$.
Thus $q_0 \notin A$ and we may assume $A = V(x_0)$.
The equation of $X$ can be written in the form
\begin{equation}\label{nform1}
x_0^2x_1+a(x_1,x_2,x_3) = 0.
\end{equation}
An involution acts as $(x_0:x_1:x_2:x_3)\mapsto (-x_0:x_1:x_2:x_3)$
leaving $X$ invariant and fixing $q_0$ as desired.

Assume $p=2$.
A similar argument works in this case.
Suppose $q_0=(1:0:0:0)$ is our Eckardt
point.  The equation for $X$ must have the form
\[
x_0^2L(x_1,x_2,x_3) + x_0Q(x_1,x_2,x_3) + C(x_1,x_2,x_3) =0
\]
for homogeneous forms $L,Q,C$ of degrees $1,2,3$ respectively.
Note that $L=0$ defines the tangent plane at $q_0$.
Since $q_0$ is an Eckardt point, the polar quadric $P_{q_0}(X)$ contains
the tangent plane at $q_0$.  Thus $Q$ is a product of $L$ and some
linear form which may assume is $x_1$.
We obtain the normal form
\begin{equation}\label{normzero}
x_0(x_0+x_1)L(x_1,x_2,x_3) + C(x_1,x_2,x_3) = 0,
\end{equation} 
which has an action of the involution
$(x_0:x_1:x_2:x_3)\mapsto (x_0+x_1:x_1:x_2:x_3)$
which has axis $x_1=0$ and center $q_0$.

By a linear change of coordinates, there are two different possibilities
for $L$, which correspond to different geometric situations.
Either we can reduce to the form
\begin{equation}\label{normone}
x_0(x_0+x_1)x_2 + C(x_1,x_2,x_3) = 0,
\end{equation}
where the axis and the tangent plane are distinct;
or we have
\begin{equation}\label{normtwo}
x_0(x_0+x_1)x_1+ C(x_1,x_2,x_3) = 0,
\end{equation}
where the axis and the tangent plane coincide.
\end{proof}

\begin{prop} \label{prop:normalForms2A}
Suppose $g$ is an involution of type $2A$ acting on a cubic surface $X$.
If $p \ne 2$ then $X$ can be defined by
\begin{equation} \label{eq:2Anormal0}
(x_0+x_1)(x_0x_1+c_0x_2^2+c_1x_3^2+c_2x_2x_3)+x_2x_3(x_2+x_3) = 0\ .
\end{equation}
If $p=2$ and none of the exceptional lines passing through the
corresponding Eckardt point lie on the axis, then $X$ is
defined by
\begin{equation}\label{eq:2Anormal2}
x_0x_1x_2 + (x_0+x_1)^2x_3 +
c_0(x_0+x_1)x_3^2 + c_1x_2^3 + c_2x_2x_3^2 + x_3^3 =0\ .
\end{equation}
In either case, $c_0,c_1,c_2$ are parameters and
$g(x_0:x_1:x_2:x_3)=(x_1:x_0:x_2:x_3)$. 
\end{prop}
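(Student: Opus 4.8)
The plan is to handle the two characteristics separately, in each case bootstrapping from the normal form produced in the proof of Theorem~\ref{thm:Eckardt} and then exhausting the residual coordinate freedom, i.e. the centralizer of $g$ in $\PGL_4(\bbk)$. Throughout, $q_0$ denotes the center of the reflection $g$ (an Eckardt point by Theorem~\ref{thm:Eckardt}), $T$ its tritangent plane, and $A$ its axis.

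For $p\ne 2$, I would begin from the normal form $F=x_0^2x_1+a(x_1,x_2,x_3)$ obtained in the proof of Theorem~\ref{thm:Eckardt}, where $g$ acts by $x_0\mapsto -x_0$, $q_0=(1:0:0:0)$, $A=V(x_0)$, and $T=V(x_1)$. The three exceptional lines through $q_0$ lie in $T$ and are cut out there by the binary cubic $a(0,x_2,x_3)$, whose roots are distinct because $X$ is smooth, so after a change of $x_2,x_3$ we may take $a(0,x_2,x_3)=x_2x_3(x_2+x_3)$. Writing $a=a_3x_1^3+x_1^2\ell(x_2,x_3)+x_1q(x_2,x_3)+x_2x_3(x_2+x_3)$, I would then use the remaining freedom --- the substitutions $x_2\mapsto x_2+sx_1$, $x_3\mapsto x_3+tx_1$, the scalings of $x_0$ and $x_1$, and the $\frakS_3$ permuting the three lines --- to clear the term $x_1^2\ell$ and to normalize the coefficients of $x_0^2x_1$ and $x_1^3$ to $-\tfrac14$ and $\tfrac14$. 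Substituting $x_0\mapsto x_0-x_1$, $x_1\mapsto x_0+x_1$ (which puts $g$ in the form \eqref{eq:2Amatrix}) then rewrites $-\tfrac14x_0^2x_1+\tfrac14x_1^3+x_1Q(x_2,x_3)+x_2x_3(x_2+x_3)$ as \eqref{eq:2Anormal0}.

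For $p=2$, I would take $g$ in the swap form \eqref{eq:2Amatrix}, so $q_0=(1:1:0:0)\in A=V(x_0+x_1)$. Imposing $g$-invariance on a cubic $F$ forces $F=f_2\,x_0x_1+f_0(u,x_2,x_3)$, where $u:=x_0+x_1$, $f_2$ is a linear form in $u,x_2,x_3$ (nonzero, by smoothness), and $f_0$ is a cubic in $u,x_2,x_3$ (a dimension count confirms these exhaust the $g$-invariant cubics); the polar computation $P_{q_0}(F)=uf_2$ identifies $T=V(f_2)$. Parametrizing the lines through $q_0$ shows that the hypothesis that none of them lies on $A$ forces $f_2\not\propto u$, i.e. $T\ne A$; since the centralizer of $g$ preserves $\langle u\rangle$, this lets me normalize $f_2=x_2$, giving $F=x_0x_1x_2+f_0(u,x_2,x_3)$. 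It remains to bring the cubic $f_0$ to the form $u^2x_3+c_0ux_3^2+c_1x_2^3+c_2x_2x_3^2+x_3^3$, using the subgroup of the centralizer that still fixes $V(x_2)$ and clearing monomials in the order that the available substitutions permit, with smoothness used to normalize the coefficients of $u^2x_3$ and $x_3^3$.

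The conceptual framework is uniform, so the real work --- and the main obstacle --- is the bookkeeping in the final normalization of $a$ (resp. $f_0$). The dimension counts are tight: the three-dimensional 2A stratum must match the three parameters of \eqref{eq:2Anormal0} and \eqref{eq:2Anormal2}, so there is essentially no slack, and one must check that the residual group really does clear every unwanted monomial. Two points need care. When $p=2$ this residual group is not a Levi subgroup but the (non-reductive) centralizer of a transvection, so the clearing substitutions must be sequenced carefully. When $p=3$ the usual ``completing the cube'' via a Taylor expansion degenerates; instead I would phrase the needed identities in terms of polar forms and deduce the solvability of the auxiliary linear systems --- in particular, that the $x_1^3$-coefficient can be made nonzero while $x_1^2\ell$ is cleared --- from the distinctness of the three lines through $q_0$.
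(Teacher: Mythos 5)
Your proposal follows essentially the same route as the paper's proof: start from the Eckardt-point normal form of Theorem~\ref{thm:Eckardt}, normalize the three concurrent lines in the tritangent plane to $x_2x_3(x_2+x_3)$ (resp.\ a binary cubic with no root on the axis when $p=2$), and then use the centralizer of $g$ to clear the remaining unwanted monomials, finishing with the change of basis that puts $g$ in swap form. Your $p=2$ setup via the symmetric functions $u=x_0+x_1$ and $x_0x_1$ is only a cosmetic repackaging of the paper's computation with $x_0(x_0+x_1)x_2+C$, and the ``bookkeeping'' you defer is precisely the sequence of substitutions the paper carries out explicitly, so the plan is sound.
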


We will see below that failure of the extra condition for $p=2$ is equivalent to
the existence of an involution of type 2B --- we will find normal forms
for the remaining surfaces in Proposition~\ref{prop:normalForms2B} below.

\begin{proof} 
Assume $p\ne 2$.
Replacing $g$ by a conjugate we may assume it acts by negation of $x_0$.
Then the equation can be written in the form
\[ x_0^2L(x_1,x_2,x_3)+F(x_1,x_2,x_3) = 0,\] 
where $L,F$ are a linear and cubic homogeneous forms.
We may assume that $L = x_1$.
The union of the exceptional lines passing through the Eckardt point 
$(1:0:0:0)$ is given by equation $x_1 = F(x_1,x_2,x_3) = 0$.
By a change of variables of $x_2,x_3$, we 
may assume that $F(0,x_2,x_3) = x_2x_3(x_2+x_3)$.
Thus the equation can be rewritten in the form
\[
x_0^2x_1+a_1x_1^3+x_1^2(a_2x_2+a_3x_3)+x_1(a_4x_2^2+a_5x_3^2+a_6x_2x_3)+x_2x_3(x_2+x_3)= 0.
\]
Via a change of coordinates $x_2 \mapsto x_2 + \lambda x_1$,
$x_3 \mapsto x_3 + \mu x_1$ we may assume $a_2=a_3=0$.
Note that now $X$ is singular unless $a_1 \ne 0$.
Thus, by rescaling $x_0$ and $x_1$ we may assume $a_1=1$ to obtain
\[
x_0^2x_1+x_1^3+x_1(a_4x_2^2+a_5x_3^2+a_6x_2x_3)+x_2x_3(x_2+x_3)= 0.
\]
Replacing $x_0,x_1$ with
$\frac{i}{\sqrt[3]{4}}(x_0-x_1),\frac{1}{\sqrt[3]{4}}(x_0+x_1)$ 
we arrive at the asserted normal form.

Assume $p = 2$.
We have seen already that the equation of $X$ can  be reduced to one of the
forms \eqref{normone} or \eqref{normtwo}.
By the given condition, we may assume it is \eqref{normone}.
Write $F$ in the form
\[F=
x_0(x_0+x_1)x_2+a_1x_2^3+x_2^2A(x_1,x_3)+x_2B(x_1,x_3)+C(x_1,x_3).
\]
In the tritangent plane $x_2=0$ there are $3$ distinct lines;
since none of them lie on the axis, the line $x_1=0$ is distinct from
all three.
These correspond to $4$ points in $\bbP^1(x_1,x_3)$ which we may assume
are given by $x_1=0$, $x_3=0$, $x_1+\lambda x_3=0$, $\lambda x_1 + x_3=0$
for some non-zero parameter $\lambda$.  Thus
$C(x_1,x_3)=x_3(x_1+\lambda x_3)(\lambda x_1 + x_3)
=\lambda x_3(x_1^2+(\lambda+\lambda^{-1})x_1x_3+x_3^2)$.
After rescaling $x_3$ we have
\begin{gather*}
F= x_0(x_0+x_1)x_2+a_1x_2^3+a_2x_1x_2^2+a_3x_2^2x_3\\
+x_2(a_4x_1^2+a_5x_1x_3+a_6x_3^2)
+x_3(x_1^2+a_7x_1x_3+x_3^2).
\end{gather*}
The change of coordinates $x_3 \mapsto x_3+\sqrt{a_3}x_2$
sets $a_3$ to $0$ without otherwise changing the form of the equation.
The change $x_0 \mapsto x_0+a_5x_3$ now kills $a_5$.
Now, $x_0 \mapsto x_0+a_2x_2$ kills $a_2$.
Finally, $x_0 \mapsto x_0+cx_1$ where $c^2+c=a_4$ kills $a_4$.
Replace $x_0,x_1$ with $x_0,x_0+x_1$ to arrive at the asserted normal form.
\end{proof}

\subsection{Involutions of class 2B}
\label{sec:2B}

Let $\ell$ be an exceptional line on a smooth cubic surface $X$.
Every plane containing $\ell$ intersects $X$ along $\ell$ and a
residual conic.  In this way, a pencil of planes containing $\ell$
defines a pencil of conics on $X$.  Restricting to $\ell$, we obtain a
linear system $g^1_2$ on $\ell$ of divisors of degree $2$ or,
equivalently, a regular map $g^1_2 : \ell \to \bbP^1$.
We call $\ell$ a \emph{separable} (resp. \emph{inseparable}) line
if $g^1_2$ is separable (resp. inseparable).

\begin{lem} \label{lem:excLineEck}
Let $\ell$ be an exceptional line on a smooth cubic surface $X$.
If $p \ne 2$, then $\ell$ contains $0$, $1$, or $2$ Eckardt
points.
If $p=2$, then $\ell$ contains $0$, $1$, or $5$ Eckardt
points.
The line $\ell$ is inseparable if and only if it contains $5$ Eckardt
points.
\end{lem}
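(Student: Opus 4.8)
The plan is to study the degree-$2$ morphism $\phi = g^1_2\colon \ell \to \bbP^1$ introduced above: a point $t \in \bbP^1$ corresponds to a plane $H_t \supset \ell$, and the fibre of $\phi$ over $t$ is the divisor $D_t = C_t \cap \ell$ cut on $\ell$ by the residual conic $C_t = (H_t \cap X) - \ell$. By definition $\ell$ is separable (resp.\ inseparable) exactly when $\phi$ is. First I would dispose of the residual conics: a short intersection-number computation on $X$ (a repeated line in $H_t\cap X$ would force a line-line intersection number $\ell\cdot\ell'=3$, impossible for distinct lines of $\bbP^3$) shows that $C_t$ is always a reduced conic not containing $\ell$, and that $C_t$ is reducible if and only if $H_t$ is a tritangent plane, in which case $C_t = \ell' + \ell''$ is a sum of two distinct exceptional lines; moreover there are exactly five such $t$, one for each of the five tritangent planes through $\ell$.

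The heart of the argument is the following dictionary: for $q \in \ell$, the point $q$ is an Eckardt point of $X$ if and only if $\phi$ is ramified at $q$ (i.e.\ $D_{\phi(q)} = 2q$) and the conic $C_{\phi(q)}$ is reducible. For the forward direction, if $\ell, \ell', \ell''$ are the three exceptional lines through an Eckardt point $q$, then $\ell', \ell''$ span with $\ell$ a tritangent plane $H_t$ with $C_t = \ell' + \ell''$, and since $\ell', \ell''$ are distinct from $\ell$ each meets $\ell$ only at $q$, so $D_t = 2q$. Conversely, if $D_{\phi(q)} = 2q$ and $C_{\phi(q)} = \ell' + \ell''$ is reducible, then $\ell'$ and $\ell''$ each meet $\ell$ at $q$, so $\ell, \ell', \ell''$ are three distinct exceptional lines through $q$.

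Given the dictionary, the count reduces to counting the ramification of $\phi$. When $p \ne 2$, $\phi$ is separable (degree $2$ is coprime to $p$), so by Riemann--Hurwitz it has exactly two ramification points; hence $\ell$ carries at most $2$ Eckardt points. When $p = 2$ and $\phi$ is separable, Riemann--Hurwitz still yields a ramification divisor of degree $2$, but ramification of a degree-$2$ map in characteristic $2$ is wild, so the different exponent at any ramification point is at least $2$; hence $\phi$ has at most one ramification point and $\ell$ carries at most $1$ Eckardt point. When $p = 2$ and $\phi$ is inseparable, $\phi$ is purely inseparable of degree $2$, so it is a bijection on points and every fibre is a non-reduced point $2q$; by the dictionary the Eckardt points on $\ell$ are then exactly the preimages of the five values of $t$ with $C_t$ reducible, giving exactly $5$ Eckardt points. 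This last case also shows that $\ell$ has $5$ Eckardt points precisely when it is inseparable, and in characteristic $\ne 2$ this equivalence holds vacuously since there $\ell$ is never inseparable and never carries $5$ Eckardt points.

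The step I expect to be the main obstacle is the separable case in characteristic $2$: one must genuinely invoke wild ramification --- the fact that the different exponent exceeds the tame value $e_P - 1$ --- rather than the naive Riemann--Hurwitz count, which by itself would only bound the number of ramification points by $2$ and so would fail to exclude $2$ Eckardt points on $\ell$. The remaining ingredients --- the structure of the conic pencil and the fact that each exceptional line lies on exactly five tritangent planes --- are standard and already used elsewhere in the paper.
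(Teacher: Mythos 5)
Your proof is correct and follows essentially the same route as the paper: identify Eckardt points on $\ell$ with ramification points of $g^1_2$ whose residual conic is reducible, then count ramification in each characteristic. The only cosmetic difference is in the separable $p=2$ case, where you invoke wild ramification ($d_P \ge e_P$) in Riemann--Hurwitz while the paper observes that the deck involution of the double cover has at most one fixed point on $\bbP^1$ --- these are the same fact.
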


\begin{proof}
The map $g^1_2 : \ell \to \bbP^1$
is ramified precisely when the corresponding conic is tangent to $\ell$.
A tangent conic can either be irreducible, or can be the union of two
lines that, together with $\ell$, span a tritangent plane with the
tangency point an Eckardt point.

When $p \ne 2$, the line $\ell$ is separable and it contains at most $2$
Eckardt points.
When $p=2$, if $\ell$ is separable, then there is at most one Eckardt
point since an involution acting on $\bbP^1$ has at most one fixed
point.
If $\ell$ is inseparable, then every tritangent plane containing $\ell$
must contain an Eckardt point on $\ell$.  There are always $5$ such
planes.
\end{proof}

\begin{lem} \label{lem:2Bautos}
Let $X$ be a smooth cubic surface admitting an automorphism $g$ of class 2B.
Let $\ell$ be the unique exceptional line pointwise-fixed by $g$.
There are $2$ Eckardt points on $\ell$ whose corresponding reflections
$h_1, h_2$ generate a group isomorphic to $2^2$
containing $g$.
Moreover, if $p = 2$, then $\ell$ contains $5$ Eckardt points whose corresponding
reflections generate a group isomorphic to $2^4$ containing $g$;
the line $\ell$ lies in the axis of every one of these reflections.
\end{lem}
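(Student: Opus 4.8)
The plan rests on Proposition~\ref{prop:involutions}(2), which supplies the five tritangent planes through $\ell$ and records that $g$ fixes both lines in three of the corresponding pairs and interchanges the two lines in the remaining two. I would first observe that $g$ acts trivially on the pencil of planes through $\ell$: in either matrix of Proposition~\ref{prop:involutions}, $\ell = \bbP(\operatorname{Ker}(\widetilde g - 1))$ and $g$ acts as the identity on $V/\operatorname{Ker}(\widetilde g-1)$. Hence all five tritangent planes through $\ell$ are $g$-invariant. The key dichotomy for such a plane $H = \langle \ell, m, m'\rangle$ is: the trio is concurrent at a point of $\ell$ if and only if $g$ interchanges $m$ and $m'$. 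Indeed, if $g$ swaps $m \leftrightarrow m'$, then $\ell \cap m = g(\ell\cap m) = g(\ell)\cap g(m) = \ell \cap m'$ (using that $g$ fixes $\ell$ pointwise), so the trio concurs on $\ell$; conversely, if the trio concurs at $q \in \ell$ and $g$ fixed both $m$ and $m'$, then $g|_H$ is a nontrivial involution of $\bbP^2$ fixing $\ell$ pointwise, whence (when $p \ne 2$) it has a unique further fixed point $q_H \notin \ell$, and both $m,m'$ would have to pass through $q$ and $q_H$, forcing $m = m'$. Since a smooth cubic surface carries at most three lines through any point, every Eckardt point on $\ell$ arises from a tritangent trio containing $\ell$; so by Proposition~\ref{prop:involutions}(2) there are exactly two Eckardt points $q_1,q_2$ on $\ell$ when $p \ne 2$.

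For the group they generate (in any characteristic), I would pass to a normal form. Choosing coordinates with $\widetilde g = \operatorname{diag}(1,1,-1,-1)$, one checks $\ell$ cannot be the $(+1)$-eigenline (else $X$ is singular along it), so $\ell = \{x_0 = x_1 = 0\}$, and after an additional change of coordinates in $x_2,x_3$ we may take $q_1 = (0:0:1:0)$, $q_2 = (0:0:0:1)$. Writing out the general $g$-invariant cubic (it automatically vanishes on $\ell$) and imposing that $q_1$ and $q_2$ are Eckardt points --- each condition forces the residual conic in the corresponding plane through $\ell$ to be the square of a linear form on $\ell$ --- pins the equation to the shape $F = \Psi(x_0,x_1) + M_2(x_0,x_1)x_2^2 + M_3(x_0,x_1)x_3^2$ with $\Psi$ a binary cubic and $M_2,M_3$ binary linear forms. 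This form is visibly fixed by $\operatorname{diag}(1,1,-1,1)$ and $\operatorname{diag}(1,1,1,-1)$, which are reflections (class 2A) centered at $q_1$ and $q_2$; by the bijection of Theorem~\ref{thm:Eckardt} these are precisely $h_1$ and $h_2$. They commute and $h_1 h_2 = \operatorname{diag}(1,1,-1,-1) = g$, so $\langle h_1, h_2\rangle = \{1,h_1,h_2,g\} \cong 2^2$.

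When $p = 2$, only the converse half of the dichotomy used $p \ne 2$, so the two ``interchanged'' planes still produce (at least) two Eckardt points on $\ell$; by Lemma~\ref{lem:excLineEck} there are then exactly five, $q_1,\dots,q_5$, and $\ell$ is inseparable. Blowing down $\ell$ yields a quartic del Pezzo surface $Y$, and $g$ descends to an involution $\bar g$ of $Y$ fixing the image $p_0$ of $\ell$; restricting the orbit data \eqref{eq:27orbits2B} to the sixteen lines that survive the blow-down shows $\bar g$ has no fixed line among them (eight pairs of skew lines), so $\bar g$ is of the second kind, and by Proposition~\ref{prop:inv12kind} its fixed locus on $Y$ is just the canonical point --- hence $p_0$ is the canonical point. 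Consequently $\operatorname{Aut}(Y) \supseteq 2^4$ fixes $p_0$ and lifts to $\operatorname{Aut}(X)$. Under $W(\sfD_5) \hookrightarrow W(\sfE_6)$ the five first-kind involutions of this $2^4$ become reflections on $X$ (class 2A, by Table~\ref{tbl:D5ccs}), each fixing $\ell$ and hence centered at a point of $\ell$; being distinct they are exactly $h_1,\dots,h_5$. Since the five first-kind involutions generate all of $2^4$ and $\bar g$ is one of its (second-kind) members, $\langle h_1,\dots,h_5\rangle \cong 2^4$ contains $g$ (and the two $h_i$ whose product is $g$ give the $2^2$ of the first assertion). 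Finally, that $\ell$ lies in the axis $A_i$ of each $h_i$ follows from: $g$ fixes $q_i \in \ell$, so by uniqueness of reflections $g h_i g^{-1} = h_i$ and $g$ preserves the plane $A_i$; $g|_{A_i}$ cannot be trivial (as $\operatorname{Fix}_{\bbP^3}(g) = \ell$ is a line), so it is a nontrivial involution of $\bbP^2$, and in characteristic $2$ such an involution fixes a whole line, forcing $A_i \cap \ell$ to be a line, i.e.\ $\ell \subseteq A_i$.

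The main obstacle is the $p=2$ part: recognising that the image of $\ell$ is the canonical point of $Y$ and hence that the full $2^4 \subseteq \operatorname{Aut}(Y)$ acts on $X$ --- this is what produces reflections beyond the ``obvious'' ones and is the source of the $2^4$. The $p \ne 2$ counting, the normal-form computation, and the $\ell \subseteq A_i$ assertion are comparatively routine once this framework is in place; one should still carefully check the bookkeeping identifying the five first-kind involutions with $h_1,\dots,h_5$ and confirming they generate $2^4$.
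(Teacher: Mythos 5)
Your proof is correct, and its backbone coincides with the paper's: blow down $\ell$ to a quartic del Pezzo surface $Y$, recognise $g$ as an involution of the second kind, and --- for $p=2$ --- identify the image of $\ell$ with the canonical point of $Y$ so that the full $2^4 \subseteq \Aut(Y)$ fixes it and lifts to reflections on $X$ centred on $\ell$. Where you differ is in the supporting steps. The paper gets the two Eckardt points and the relation $g = h_1h_2$ purely group-theoretically: a second-kind involution in $W(\sfD_5)\cong 2^4\rtimes\frakS_5$ factors (uniquely, up to order) as a product of two commuting first-kind involutions, whose lifts are reflections leaving $\ell$ invariant and hence centred on $\ell$. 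You instead count the Eckardt points via a geometric dichotomy on the five tritangent planes through $\ell$ (concurrent iff $g$ swaps the residual pair) and then produce $h_1,h_2$ with $h_1h_2=g$ by a normal-form computation; this is longer but makes the geometry explicit and dovetails with Proposition~\ref{prop:normalForms2B}. For $\ell\subseteq A_i$ the paper notes that the five commuting reflections each fix all five Eckardt points on $\ell\cong\bbP^1$ and so act trivially on $\ell$; your alternative --- $g$ normalises $h_i$, hence preserves $A_i$, and a nontrivial involution of a plane in characteristic $2$ fixes a line pointwise, which is forced to be $\ell$ --- is equally valid. One cosmetic quibble: the parenthetical ``else $X$ is singular along it'' justifying $\ell=\{x_0=x_1=0\}$ only makes sense after you have fixed $F$ to lie in the $+1$-eigenspace of $g$; as written the two eigenlines are interchangeable and this is merely a choice of labelling (and you should note in passing why $F$ cannot lie in the $-1$-eigenspace, e.g.\ because then both fixed lines of $g$ would lie on $X$, contradicting the uniqueness of the pointwise-fixed exceptional line).
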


\begin{proof}
From Proposition~\ref{prop:involutions}, there is a canonical
pointwise-fixed exceptional line $\ell$, which is incident to all other $g$-invariant
exceptional lines.
Blow down $\ell$ to a point $q$ on a del Pezzo surface $Y$ of degree $4$
where $g$ acts as an involution of the second kind.
There, $g$ is a unique product of two commuting involutions $h_1$ and
$h_2$ of the first kind.
Note that $h_1$ and $h_2$ both leave $\ell$ invariant since
$q$ is contained in the fixed locus of both.
From Proposition~\ref{prop:involutions}, involutions of class 2A leave
invariant only the exceptional divisors passing through their
corresponding Eckardt points.
Thus, the two Eckardt points lie on $\ell$.

Now we consider $p=2$.
By Proposition~\ref{prop:inv12kind}, $g$ has a unique fixed point on
$Y$, namely the canonical point.
The canonical point is fixed by all automorphisms of $Y$,
so the entire automorphism group of $Y$ acts on $X$.
In general, this is the group $2^4$ generated by the $5$ involutions of the
first kind.
As above, they must all leave $\ell$ invariant so $\ell$ contains all $5$
corresponding Eckardt points.
Since the $5$ reflections commute, they each fix the $5$ Eckardt points.
An automorphism cannot fix $5$ points on $\bbP^1$ unless it acts
trivially.
We conclude that $\ell$ is in the axis of each of these reflections.
\end{proof}

\begin{remark}
If $p\ne 2$, the fact that the pointwise-fixed line contains two Eckardt
points can be also deduced from the Lefschetz fixed-point formula.
Since the trace of an element of type 2B on the second cohomology group
is equal to $3$, the Euler-Poincar\'e characteristic of the fixed locus
$X^g$ is equal to $5$.
Thus $X^g$ consists of the line and 3 isolated fixed points.
There are five $g$-invariant tritangent trios containing the fixed line.
Since the singular points of tritangent trios containing the line
are fixed, at most three of the singular points do not lie on the fixed
line.  Thus there are two tritangent trios whose singular points lie on
the line --- these are the two Eckardt points.
\end{remark}

\begin{cor} \label{cor:2Bconverse}
Two reflections $r_1, r_2$ commute if and only if their corresponding
Eckardt points lie on a common exceptional line $\ell$;
in this case, the product $r_1r_2$ has class 2B and its canonical line
is $\ell$.
If $\ell$ is an exceptional line containing two (or more) Eckardt
points, then their corresponding reflections commute.
\end{cor}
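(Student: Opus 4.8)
The plan is to prove the ``if and only if'' of the first sentence as two separate implications; the ``if'' direction will also yield the class~2B statement and the identification of the canonical line, and the last sentence is then just the ``if'' direction applied to any two Eckardt points on $\ell$. Throughout I assume $r_1\neq r_2$, equivalently (by Theorem~\ref{thm:Eckardt}) that the associated Eckardt centers $q_1,q_2$ are distinct, since otherwise every claim is trivial except the class~2B assertion, which only makes sense when $r_1\neq r_2$. I write $A_i$ for the axis of $r_i$ and recall (Theorem~\ref{thm:Eckardt}) that the three exceptional lines through $q_i$ are exactly the $r_i$-invariant exceptional lines.

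\emph{``Only if''.} Suppose $r_1$ and $r_2$ commute. Then $r_2$ permutes the three $r_1$-invariant exceptional lines, that is, the three lines through $q_1$; being an involution, it either fixes all three or swaps two and fixes one, so in every case $r_2(q_1)=q_1$. Since $q_1\neq q_2$ and the fixed locus of $r_2$ in $\bbP^3$ is contained in $A_2\cup\{q_2\}$, we get $q_1\in A_2$, and symmetrically $q_2\in A_1$. Now I claim $m:=\overline{q_1q_2}\subseteq X$. Writing $F$ for a defining form, a line $\overline{pq}$ lies on $X$ iff $F(p)=F(q)=P_q(F)(p)=P_p(F)(q)=0$ (these are the four coefficients of $F(sp+tq)$ as a binary cubic in $s,t$). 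Here $F(q_1)=F(q_2)=0$ because Eckardt points lie on $X$, while $P_{q_2}(F)(q_1)=0$ and $P_{q_1}(F)(q_2)=0$ because, by Theorem~\ref{thm:Eckardt}, $P_{q_i}(F)$ is supported on the union of the tritangent plane at $q_i$ and the axis $A_i$, together with $q_1\in A_2$ and $q_2\in A_1$. Hence $m$ is a line on $X$, that is, an exceptional line, and it passes through $q_1$ and $q_2$.

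\emph{``If'' (with the class and canonical-line statements).} Suppose $q_1\neq q_2$ are Eckardt points on an exceptional line $\ell$. Then $\ell$ is one of the three exceptional lines through each $q_i$, hence invariant under both $r_1$ and $r_2$. Blow $\ell$ down to a point $q$ on a del Pezzo surface $Y$ of degree $4$; then $r_1,r_2$ descend to automorphisms $\bar r_1,\bar r_2$ of $Y$ fixing $q$, each of which is an involution of the first kind by Proposition~\ref{prop:involutions} (and Table~\ref{tbl:D5ccs}), and therefore lies in the abelian normal subgroup $2^4$ of $\Aut(Y)\cong 2^4\rtimes G$. Hence $\bar r_1,\bar r_2$ commute; since blowing down $\ell$ identifies the automorphisms of $X$ preserving $\ell$ with the automorphisms of $Y$ fixing $q$ as groups, $r_1$ and $r_2$ commute. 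Writing $\bar r_i=\iota_{B_i}$ with $|B_i|=4$ and $B_1\neq B_2$ (since $r_1\neq r_2$), the product $\bar r_1\bar r_2=\iota_{B_1\triangle B_2}$ has $|B_1\triangle B_2|=2$, so it is an involution of the second kind, which has class 2B in $W(\sfE_6)$ by Table~\ref{tbl:D5ccs}; thus $g:=r_1r_2$ has class 2B. For the canonical line: $g$ has a unique pointwise-fixed exceptional line $\ell_g$ (Proposition~\ref{prop:involutions}), and since each $r_i$ commutes with $g$ it carries $\ell_g$ to an exceptional line pointwise-fixed by $g$, forcing $r_i(\ell_g)=\ell_g$; by Theorem~\ref{thm:Eckardt} this means $q_i\in\ell_g$, so $q_1,q_2\in\ell_g$, and since $q_1,q_2$ are distinct points lying also on $\ell$ we conclude $\ell_g=\ell$.

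The step I expect to be the real obstacle is the ``only if'' direction, and within it the deduction of the incidences $q_1\in A_2$, $q_2\in A_1$ from commutativity alone (via the action on the three lines through $q_1$), followed by the verification that $\overline{q_1q_2}$ actually lies on $X$: the combinatorics of $W(\sfE_6)$ by itself does not force the product of two commuting class-2A elements to be class 2B, so the geometry must enter, and a priori the joining line could be a trihedral line. One could instead invoke Corollary~\ref{cor:triadLineEck} to exclude that possibility directly, but I prefer the self-contained polar computation above so as not to entangle the logical order of the two corollaries.
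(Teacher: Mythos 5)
Your proof is correct. The ``if'' direction (and with it the class‑2B and canonical‑line claims) follows essentially the same route as the paper's own Lemma~\ref{lem:2Bautos}: blow down the common exceptional line to a quartic del Pezzo surface, observe that the two reflections descend to involutions of the first kind, hence land in the abelian normal subgroup $2^4$ and commute, and read off the class of their product from Table~\ref{tbl:D5ccs}; your identification of the canonical line via the $r_i$-invariance of the unique pointwise-fixed line of $r_1r_2$ is a clean way to finish. Where you genuinely diverge is the ``only if'' direction, which the paper leaves implicit: you first extract $r_2(q_1)=q_1$ from the action on the three concurrent lines through $q_1$, deduce the incidences $q_1\in A_2$, $q_2\in A_1$, and then verify directly from the polar-quadric description in Theorem~\ref{thm:Eckardt} that all four coefficients of the binary cubic $F(sq_1+tq_2)$ vanish, so $\overline{q_1q_2}\subset X$. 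This is a self-contained argument valid in every characteristic (the coefficient identification $a_2=P_{q_2}(F)(q_1)$ uses only a single derivative, so no division by $2$ or $3$ is needed), and it correctly sidesteps both the case analysis on whether $r_1r_2$ has class 2A or 2B and any appeal to Corollary~\ref{cor:triadLineEck}; your observation that the $W(\sfE_6)$ combinatorics alone cannot force the product of commuting 2A elements to be 2B, so that the geometry must enter, is exactly the right diagnosis of why some such argument is required. The only cosmetic point worth tightening is the appeal to Proposition~\ref{prop:involutions} for the \emph{uniqueness} of the pointwise-fixed exceptional line of a 2B involution: the proposition literally asserts uniqueness of the invariant line incident to all other invariant lines, which is equally good as the characterizing property preserved by $r_i$ (or one can note that the fixed locus of the matrix \eqref{eq:2Bmatrix} in $\bbP^3$ consists of at most two disjoint lines, only one of which can meet the other invariant exceptional lines).
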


\begin{prop} \label{prop:normalForms2B}
Let $X$ be a 2B surface. If $p\ne 2$, then
$X$ can be defined by the equation 
\begin{equation}\label{2B}
F = x_0^2(x_2+c_0x_3) + x_1^2(c_1x_2+x_3) + x_2x_3(x_2+x_3) = 0,
\end{equation}
where $g(x_0:x_1:x_2:x_3) = (-x_0:-x_1:x_2:x_3)$.
If $ p = 2$, then $X$ can be defined by the equation
\begin{equation}\label{eq:2Bnormal0}
F=x_0^3+x_1^3 + x_2^3+x_3^3 +
c_0x_2x_3(x_0+x_1)+c_1x_0x_1(x_2+x_3) = 0,
\end{equation}
where $g(x_0:x_1:x_2:x_3) = (x_1:x_0:x_3:x_2)$.
\end{prop}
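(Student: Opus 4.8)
The plan is to treat the two characteristics separately, in both cases building the normal form on top of the reflection subgroup furnished by Lemma~\ref{lem:2Bautos}.

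\emph{Characteristic $\ne 2$.} By Lemma~\ref{lem:2Bautos}, the line $\ell$ pointwise-fixed by $g$ carries two Eckardt points whose reflections $h_1,h_2$ commute, satisfy $h_1h_2=g$, and generate a group $\cong 2^2$. Since this is an abelian group of exponent $2$ and $p\ne 2$, I would simultaneously diagonalise it; as each $h_i$ is a reflection (one eigenvalue different from the other three), one may choose coordinates so that $h_1$ negates $x_0$, $h_2$ negates $x_1$, and hence $g$ negates $x_0$ and $x_1$, with the centres $(1{:}0{:}0{:}0)$ and $(0{:}1{:}0{:}0)$ lying on $\ell=\{x_2=x_3=0\}$. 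Because $F$ is an order-$2$ semi-invariant of each $h_i$, every monomial of $F$ has a fixed parity of degree in $x_0$ and a fixed parity of degree in $x_1$; the odd parities force $x_0\mid F$, $x_1\mid F$, or $x_0x_1\mid F$, all impossible for a smooth surface, so $F=x_0^2L_1(x_2,x_3)+x_1^2L_3(x_2,x_3)+C_3(x_2,x_3)$ with no $x_0x_1$-term. Next I would extract the smoothness constraints: restricting the Jacobian of $F$ to $\ell$ shows $L_1,L_3$ are linearly independent, and examining the residual conics in the tangent planes at the two Eckardt points (as in the proof of Theorem~\ref{thm:Eckardt}) shows $C_3$ has three distinct roots, none shared with $L_1$ or $L_3$. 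Finally I would apply a $\GL_2$-change of coordinates in $(x_2,x_3)$ sending the three roots of $C_3$ to $(0{:}1),(1{:}0),(1{:}{-}1)$ and rescale $F$ so that $C_3=x_2x_3(x_2+x_3)$; then, since the roots of $L_1$ and $L_3$ avoid $\{(0{:}1),(1{:}0),(1{:}{-}1)\}$, the coefficient of $x_2$ in $L_1$ and of $x_3$ in $L_3$ are nonzero, so rescaling $x_0$ and $x_1$ (which touch nothing else) normalises them to $1$. This is exactly \eqref{2B}.

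\emph{Characteristic $2$.} Now $g$ is not diagonalisable; by the second case of Proposition~\ref{prop:involutions}, $\widetilde g$ is the ``double swap'' $(x_0x_1)(x_2x_3)$, so $\ell=\{x_0+x_1=x_2+x_3=0\}$, and by Lemma~\ref{lem:2Bautos} the five Eckardt points on $\ell$ generate a reflection group $\cong 2^4$. I would fix a pair of these reflections $h_1,h_2$ with $h_1h_2=g$ and, via a change of coordinates in the centraliser of $g$ (a short normal-form computation for reflections commuting with $g$), bring them to $h_1=(x_0x_1)$ and $h_2=(x_2x_3)$. Over a field of characteristic $2$ a semi-invariant of an involution is invariant, so $F$ is symmetric separately in $x_0,x_1$ and in $x_2,x_3$, hence lies in the $8$-dimensional span of $x_0^3+x_1^3,\ x_0x_1(x_0+x_1),\ (x_0^2+x_1^2)(x_2+x_3),\ x_0x_1(x_2+x_3),\ (x_0+x_1)(x_2^2+x_3^2),\ (x_0+x_1)x_2x_3,\ x_2^3+x_3^3,\ x_2x_3(x_2+x_3)$. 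The coordinate changes preserving this bi-symmetry and commuting with $g$ are generated by the independent scalings of $(x_0,x_1)$ and of $(x_2,x_3)$, the maps sending $x_0$ to $(1{+}t)x_0+tx_1$ and $x_1$ to $tx_0+(1{+}t)x_1$ (and its analogue in $x_2,x_3$), and the maps sending $x_0$ to $x_0+s(x_2{+}x_3)$ and $x_1$ to $x_1+s(x_2{+}x_3)$ (and its analogue). Tracking their elementary effect on the eight coefficients, I would use the scalings to set the coefficients of $x_0^3+x_1^3$ and $x_2^3+x_3^3$ to $1$, the first pair of maps to kill the coefficients of $x_0x_1(x_0+x_1)$ and $x_2x_3(x_2+x_3)$, and the second pair to kill the coefficients of $(x_0^2+x_1^2)(x_2+x_3)$ and $(x_0+x_1)(x_2^2+x_3^2)$, arriving at \eqref{eq:2Bnormal0}. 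The genuinely characteristic-$2$ input is that killing the $x_0x_1(x_0+x_1)$-type terms requires solving Artin--Schreier equations $t^2+t=c$, which is automatic over an algebraically closed field.

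\emph{Main obstacle.} I expect the delicate point to be this last reduction for $p=2$: the transformation removing the $x_0x_1(x_0+x_1)$-term degenerates precisely when the part of $F$ involving only $x_0,x_1$ collapses to a cube $\alpha(x_0+x_1)^3$ (and symmetrically for $x_2,x_3$), and on such forms both families of maps above act trivially. I would dispose of this either by showing directly that such a configuration is singular --- a Jacobian computation exhibiting a singular point among the coordinate-type points such as $(1{:}1{:}1{:}0)$ --- or by exploiting the freedom, absent when $p\ne 2$, of replacing $h_1,h_2$ by a different pair among the five reflections on $\ell$, a suitable choice of which avoids the collapse. A more conceptual but more computational alternative would be to blow $\ell$ down to the canonical point of the quartic del Pezzo surface $Y$ in the normal form \eqref{delpezzo4-2}, realise $X$ as the projection of $Y\subset\bbP^4$ from that point, and convert the resulting two-parameter family into \eqref{eq:2Bnormal0}.
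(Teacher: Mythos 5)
Your characteristic~$\ne 2$ argument is essentially the paper's own proof (diagonalise the $2^2$ from Lemma~\ref{lem:2Bautos}, read off the shape $x_0^2L_1+x_1^2L_2+C_3$, use smoothness to separate the roots, and normalise by a $\GL_2$-change in $x_2,x_3$), and it is correct. The problem is in characteristic~$2$, where your reduction to \eqref{eq:2Bnormal0} has a genuine gap. Writing $F=c_1\sigma_1\tau_2+c_2\sigma_2\tau_1+c_3\sigma_1\sigma_2+c_4\tau_1\tau_2+c_5\sigma_1^3+c_6\sigma_1^2\tau_1+c_7\sigma_1\tau_1^2+c_8\tau_1^3$ with $\sigma_1=x_0+x_1$, $\sigma_2=x_0x_1$, $\tau_1=x_2+x_3$, $\tau_2=x_2x_3$, your two families of maps do \emph{not} decouple in the way your plan assumes: the map $x_0\mapsto(1+t)x_0+tx_1$, $x_1\mapsto tx_0+(1+t)x_1$ sends $\sigma_2\mapsto\sigma_2+(t^2+t)\sigma_1^2$ and hence changes $c_6$ (via $c_2(t^2+t)$) as well as $c_5$; while the cross map $x_2\mapsto x_2+s(x_0+x_1)$, $x_3\mapsto x_3+s(x_0+x_1)$ sends $\tau_2\mapsto\tau_2+s^2\sigma_1^2+s\sigma_1\tau_1+\cdots$ and hence changes $c_5$ (via $c_1s^2$) and $c_6,c_7$ simultaneously. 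So ``first kill the pure cubic corrections, then kill the mixed ones'' does not terminate: each step reintroduces terms the previous step removed. One genuinely has to solve the resulting coupled system of four equations in the four parameters, which is the substantive half of the paper's proof and is absent from yours.

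Your proposed patches for the ``degenerate'' case are also not viable as stated. First, the forms where the $(x_0,x_1)$-part collapses to $\alpha(x_0+x_1)^3$ (i.e.\ $c_3=0$) are \emph{not} all singular: the Jacobian computation at the points $(\lambda:\lambda:\mu:\mu)$ shows singularity occurs exactly when $c_1c_2=c_3c_4$, so $c_3=0$ with $c_1c_2\ne0$ is perfectly smooth, and indeed the paper's proof explicitly treats $c_3=c_4=0$ as a live case. Second, your claim that ``both families act trivially'' on such forms is false and, in fact, backwards: the cross maps contribute $c_1s^2$ to $c_5$, and smoothness forces $c_1\ne0$ precisely when $c_3=0$, so the cross maps are exactly what rescues the degenerate case --- but only via the coupled system. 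Third, the pair of reflections $h_1,h_2$ on $\ell$ with $h_1h_2=g$ is \emph{unique} (in the $2^4=\{\iota_A: |A|\text{ even}\}$ picture, $\iota_{\{i,j\}}$ is the product of involutions of the first kind $\iota_{\hat{\imath}}\iota_{\hat{\jmath}}$ in only one way), so there is no freedom of the kind your second patch invokes. The alternative route through the quartic del Pezzo normal form \eqref{delpezzo4-2} could in principle work, but you have not carried it out.
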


\begin{proof}
Assume $p \ne 2$.
By Lemma~\ref{lem:2Bautos}, we may in fact assume that
$X$ is invariant under the group $H \cong 2^2$ where
$x_0 \mapsto \pm x_0$ and $x_1 \mapsto \pm x_1$ while
$x_2, x_3$ remain fixed.
The invariant cubic has the form
\[
x_0^2L_1(x_2,x_3) + x_1^2L_2(x_2,x_3) + C_3(x_2,x_3)
\]
where $L_1,L_2,C$ are homogeneous forms of degree $1,1,3$ respectively.
One checks that $L_1,L_2,C$ must have distinct roots or else $X$ is
singular.  Thus, we obtain the desired normal form.

Assume $p = 2$.
By Lemma~\ref{lem:2Bautos}, we may assume that $X$ is invariant under
the transposition that interchanges $x_0,x_1$ and also the transposition
that interchanges $x_2,x_3$;
together these generate a group $H$ isomorphic to $2^2$.
The invariant ring under $H$ is the polynomial ring
$\bbk[\sigma_1,\sigma_2,\tau_1,\tau_2]$ where
\[
\sigma_1 := x_0+x_1,\ \sigma_2 := x_0x_1,\ 
\tau_1 := x_2+x_3,\ \tau_2 := x_2x_3 \ .
\]
Thus $F$ has the form
\[
c_1\sigma_1\tau_2 + c_2\sigma_2\tau_1 + c_3\sigma_1\sigma_2 + c_4\tau_1\tau_2
+ c_5\sigma_1^3 + c_6\sigma_1^2\tau_1 + c_7\sigma_1\tau_1^2 + c_8\tau_2^3
\]
where $c_1, \ldots, c_8$ are parameters.

One finds that the partial derivatives at the point
$q=(\lambda,\lambda,\mu,\mu)$ are
\begin{align*}
\partial_{x_0}F|_q = \partial_{x_1}F|_q &= c_1\mu^2+c_3\lambda^2\\
\partial_{x_2}F|_q = \partial_{x_3}F|_q &= c_2\lambda^2+ c_4\mu^2
\end{align*}
and so the form is singular if $c_1c_2 = c_3c_4$.

Consider the linear map $m$ given by the matrix
\[
M = I_4 + \begin{pmatrix}
a_0J_2 & a_1J_2 \\ a_2J_2 & a_3J_2
\end{pmatrix}
\]
where $I_n$ is the $n \times n$ identity matrix, $J_2$ is the $2 \times
2$ matrix with all entries equal to $1$, and $a_0,\ldots,a_3$ are
constants.  Note that $M$ has order $2$, and is therefore invertible,
for all values of $a_i$.
We find that
\begin{align*}
m(\sigma_1) &= \sigma_1 \\
m(\sigma_2) &=
\sigma_2 + \wp(a_0)\sigma_1^2 + a_1\sigma_1\tau_1 + a_1^2\tau_1^2 \\
m(\tau_1) &= \tau_1 \\
m(\tau_2) &=
\tau_2 + a_2^2\sigma_1^2 + a_2\sigma_1\tau_1 + \wp(a_3)\tau_1^2
\end{align*}
where $\wp(t):=t^2+t$ is the Artin-Schreier map.

If $c_1', \ldots, c_8'$ are the corresponding coefficients of $m(F)$,
then we see that $c_1'=c_1,\ldots, c_4'=c_4$ and
\begin{align}
c_5' &= c_5 + c_1a_2^2 + c_3\wp(a_0) \label{eq:c5} \\
c_6' &= c_6 + c_1a_2 + c_2\wp(a_0) + c_3a_1 + c_4a_2^2 \label{eq:c6}\\
c_7' &= c_7 + c_1\wp(a_3) + c_2a_1 + c_3a_1^2 + c_4a_2 \label{eq:c7}\\
c_8' &= c_8 + c_2a_1^2 + c_4\wp(a_3) \label{eq:c8} \ .
\end{align}
We claim that we may select $a_0,\ldots,a_3$ so that
$c_5'=c_3, c_6'=c_7'=0, c_8'=c_4$.
In this case, the new form is
\[
c_1'\sigma_1\tau_2 + c_2'\sigma_2\tau_1 + c_3'(\sigma_1\sigma_2 +
\sigma_1^3)
+ c_4'(\tau_1\tau_2 +\tau_1^3)\ ,
\]
which can be rescaled to be of the form in the theorem
($c_3'$ and $c_4'$ are non-zero since $X$ is smooth).

It remains to prove the claim.
If $c_1=0$, then $c_3,c_4 \ne 0$ or else the form is singular.
Since \eqref{eq:c5} is now an equation only involving $a_0$, it has a
solution.  Substituting this solution into \eqref{eq:c6},
we see that $a_1 = \frac{c_4}{c_3}a_2^2 + K$ where $K$ is a constant.
Substituting this into \eqref{eq:c7} we obtain a univariate
polynomial in $a_2$ with highest term $\frac{c_4^2}{c_3}a_2^4$;
thus $a_1$ and $a_2$ have solutions.  Finally, \eqref{eq:c8}
is now a univariate polynomial in $a_3$ with highest term $c_4a_3^2$;
thus the whole system has a solution.

The case of $c_2=0$ follows by symmetry.
Thus, we may assume $c_1,c_2$ are both non-zero.
From \eqref{eq:c6} and \eqref{eq:c7}, we obtain expressions for
$\wp(a_0)$ and $\wp(a_3)$ in terms of $a_1$ and $a_2$.
Substituting these into $\eqref{eq:c5}$ and $\eqref{eq:c8}$ we now
simply must find solutions for $a_1,a_2$ for the system
\begin{align*}
(c_3c_4+c_1c_2)a_2^2 + c_1c_3a_2 + c_3^2a_1 &= K_1\\
(c_3c_4+c_1c_2)a_1^2 + c_2c_4a_1 + c_4^2a_2 &= K_2
\end{align*}
where $K_1,K_2$ are constants.
There is a solution if $c_3=c_4=0$, so we may assume, without
loss of generality, that $c_3 \ne 0$.
Recall $c_3c_4 + c_1c_2 \ne 0$ or else $X$ is singular.
The first equation now provides an expression for $a_1$
in terms of $a_2$ which has a quadratic highest term.
Substituting this into the second equation, we obtain a quartic in $a_2$.
Thus the system has a solution.
\end{proof}

\begin{remark}
When $p\ne 2$, in the normal form from Proposition~\ref{prop:normalForms2B},
the exceptional line $\ell$ is defined by
\[
x_2=x_3=0
\]
with two Eckardt points $(1:0:0:0)$ and $(0:1:0:0)$.
Note that the five tritangent planes containing $\ell$ are precisely
those defined by the linear forms $x_2$, $x_3$, $x_2+x_3$, 
$x_2+c_0x_3$ and $c_1x_2+x_3$.

For $p=2$, the exceptional line is defined by $x_0+x_1=x_2+x_3$
with the two associated Eckardt points $(1:1:0:0)$ and $(0:0:1:1)$;
and the canonical point is $(c_0:c_0:c_1:c_1)$.
\end{remark}

\section{Automorphisms of order 3}
\label{sec:order3}

\subsection{Trihedral Lines}
\label{sec:trihedralPairs}

Recall from section 9.1 of \cite{CAG} that
a \emph{tritangent trio} is a set of three exceptional lines which span
a tritangent plane.
A pair of tritangent planes intersecting along a line not in the surface
is classically known as a \emph{Cremona pair}.
A \emph{triad of tritangent trios} is a set of three tritangent trios
such that no two share a common exceptional line.
Given a pair of tritangent trios with no common exceptional lines,
there is a unique third tritangent trio forming a triad with the first
two.
Every triad of tritangent trios has a unique \emph{conjugate triad} which
contains the same $9$ exceptional lines.
We call these a \emph{conjugate pair of triads of tritangent trios}.

Geometrically, a \emph{trihedron} will be the set of three tritangent
planes corresponding to a triad of tritangent trios.
Similarly, one defines a \emph{conjugate trihedron} and a
\emph{conjugate pair of trihedra} (or simply \emph{trihedral pair}).

The Weyl group $W(\sfE_6)$ acts transitively on the set of conjugate pairs
of triads.
A representative example is the following set of $9$ exceptional lines:
\begin{equation} \label{eq:conjugateTriads}
\begin{matrix}
F_{14} & F_{25} & F_{36} \\
F_{26} & F_{34} & F_{15} \\
F_{35} & F_{16} & F_{24} \\
\end{matrix}
\end{equation}
where each row and column forms a tritangent trio,
and the rows form one triad with the columns its conjugate triad.

It is worth emphasizing that all of the aforementioned objects are
features of the abstract configuration of $27$ lines --- their existence
is completely independent of a choice of cubic surface.
On the other hand, the existence of Eckardt points and trihedral lines
depend on the specific cubic surface.

Recall that a \emph{trihedral line} $\ell$ is a line containing exactly
$3$ Eckardt points, which is not contained in the cubic surface.
Similarly to how Eckardt points correspond to the unusual situation
where the three exceptional lines in a tritangent plane have a common
point,
a trihedral line corresponds to the situation where three tritangent
planes in a trihedron contain a common (necessarily non-exceptional) line.

\begin{lem}
\label{lem:trihedra}
Let $X$ be a smooth cubic surface.
Consider a trihedral pair $((T_1, T_2, T_3), (T_1', T_2', T_3'))$,
where $\{T_i\},\{T'_i\}$ are tritangent trios.
The following are equivalent:
\begin{enumerate}
\item[(a)] $T_1 \cap T_2 \cap T_3$ is a line,
\item[(b)] $T_1', T_2', T_3'$ correspond to Eckardt points on a line, and
\item[(c)] two of $T_1', T_2', T_3'$ correspond to Eckardt points.
\end{enumerate}
\end{lem}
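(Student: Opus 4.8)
The plan is to prove the cycle of implications (a) $\Rightarrow$ (b) $\Rightarrow$ (c) $\Rightarrow$ (a), of which (b) $\Rightarrow$ (c) is immediate. Set up notation: let $\Pi_i$ and $\Pi_j'$ denote the tritangent planes spanned by the trios $T_i$ and $T_j'$, and let $\ell_{ij}$ be the unique exceptional line lying in both $T_i$ and $T_j'$, so that the nine lines form a $3\times 3$ array with rows $(T_1,T_2,T_3)$ and columns $(T_1',T_2',T_3')$. I would first record the basic non-degeneracy facts: a tritangent plane is determined by its trio of lines, and two distinct trios in a conjugate pair of triads share at most one exceptional line, so the planes $\Pi_1,\Pi_2,\Pi_3,\Pi_1',\Pi_2',\Pi_3'$ are pairwise distinct and $\ell_{ij} = \Pi_i \cap \Pi_j'$; and at most three exceptional lines pass through any point of $X$ (they all lie in the embedded tangent plane, which meets $X$ in a plane cubic, and a plane cubic contains at most three lines). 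It follows that $T_j'$ corresponds to an Eckardt point precisely when the three lines $\ell_{1j},\ell_{2j},\ell_{3j}$ of column $j$ are concurrent, their common point then lying on $X$ and on $\Pi_j'$; and that the Eckardt points of two distinct columns must be distinct, since otherwise six distinct exceptional lines would pass through a single point.

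For (c) $\Rightarrow$ (a): assume columns $1$ and $2$ correspond to Eckardt points $p_1$ and $p_2$, which are distinct by the above. For each $i$, the line $\ell_{i1}\subset\Pi_i$ passes through $p_1$ and the line $\ell_{i2}\subset\Pi_i$ passes through $p_2$, so $p_1,p_2\in\Pi_i$ and hence the line $\overline{p_1p_2}\subseteq\Pi_i$. Since this holds for $i=1,2,3$, we get $\overline{p_1p_2}\subseteq\Pi_1\cap\Pi_2\cap\Pi_3$; as the intersection of three pairwise distinct planes in $\bbP^3$ is a point or a line, it must coincide with $\overline{p_1p_2}$, which is exactly (a).

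For (a) $\Rightarrow$ (b): let $L=\Pi_1\cap\Pi_2\cap\Pi_3$ be a line. For each $j$, $L$ cannot lie in $\Pi_j'$ --- otherwise $\ell_{ij}=\Pi_i\cap\Pi_j'\supseteq L$ for every $i$ (as $L\subset\Pi_i$) would force $\ell_{1j}=\ell_{2j}=\ell_{3j}=L$, contradicting the distinctness of the nine lines. Hence $L\cap\Pi_j'$ is a single point $p_j$, and $p_j\in\Pi_i\cap\Pi_j'=\ell_{ij}$ for all $i$, so the three lines of column $j$ meet at $p_j\in X$ and $T_j'$ corresponds to an Eckardt point. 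Since $p_1,p_2,p_3$ all lie on $L$, this gives (b); one can further check that the $p_j$ are distinct and that $L\not\subset X$, so $L$ is a genuine trihedral line, but this is not needed for the statement.

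The argument is essentially self-contained: beyond the definitions, the only external input is the classical bound of three exceptional lines through a point of $X$. I do not expect a genuine obstacle here; the one thing to be careful about is the bookkeeping of the various non-degeneracy conditions (distinctness of the planes and of the nine lines), all of which follow from the defining property of a conjugate pair of triads of tritangent trios. If preferred, $W(\sfE_6)$-transitivity on conjugate pairs of triads lets one reduce to the explicit labelling in \eqref{eq:conjugateTriads}, but the argument above does not require it; one could alternatively run it through the classical trihedral form $F = T_1T_2T_3 - T_1'T_2'T_3'$, but that seems a less direct route.
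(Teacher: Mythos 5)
Your proposal is correct and follows essentially the same route as the paper: both proofs establish (c)~$\Rightarrow$~(a) by observing that the two distinct Eckardt points lie in all three planes $\Pi_1,\Pi_2,\Pi_3$, forcing the triple intersection to contain (hence equal) the line through them, and (a)~$\Rightarrow$~(b) by intersecting that common line with each plane $\Pi_j'$ to produce a concurrence point of the three lines of $T_j'$. The only difference is presentational --- the paper reduces via $W(\sfE_6)$-transitivity to the explicit labelling in \eqref{eq:conjugateTriads} and argues with the named lines $F_{ij}$, whereas you work label-free and make the non-degeneracy bookkeeping (distinctness of the planes, of the nine lines, and of the Eckardt points) explicit, which the paper leaves implicit.
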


\begin{proof}
It suffices to assume that the 9 exceptional lines are those from
\eqref{eq:conjugateTriads} where $T_1, T_2, T_3$ correspond to the
columns and $T_1', T_2', T_3'$ correspond to the rows.

Assume (a).
If $T_1 \cap T_2 \cap T_3$ contain a line then
$T_1 \cap T_2 = T_1 \cap T_3$.
Thus
$F_{14} \cap F_{25}=T_1 \cap T_2 \cap T_1'=
T_1 \cap T_3 \cap T_1'=F_{14} \cap F_{36}$.
This means that $F_{14}\cap F_{25}\cap F_{36}$ is nonempty and so
$T_1'$ has an Eckardt point contained in $T_1 \cap T_2 \cap T_3$.
Similar arguments apply to $T_2'$ and $T_3'$ so (b) follows.

Clearly, (b) implies (c).  So it remains to show (c) implies (a).
Suppose $T_1', T_2'$ correspond to Eckardt points $p_1, p_2$.
Then $p_1 = F_{14} \cap F_{25} \cap F_{36}$
and $p_2 = F_{26} \cap F_{34} \cap F_{15}$.
Now $T_1 \cap T_2$ contain both $p_1$ and $p_2$
and the same is true of $T_1 \cap T_3$.
Thus $T_1 \cap T_2 \cap T_3$ contains both points and thus contains a
line.
\end{proof}

We also have the following, which generalizes Proposition~9.1.26~of~\cite{CAG}
to arbitrary characteristic (with a different proof).

\begin{cor} \label{cor:triadLineEck}
Let $X$ be a smooth cubic surface.
If $X$ contains two Eckardt points lying on a line $\ell$ not contained
in $X$ then there is a unique third Eckardt point such that $\ell$ is a
trihedral line.
\end{cor}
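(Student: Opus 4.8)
The plan is to deduce the statement from Lemma~\ref{lem:trihedra}. Let $p_1,p_2$ be the two (distinct) Eckardt points on $\ell$, and for $i=1,2$ let $S_i$ be the tritangent trio of the three exceptional lines concurrent at $p_i$. First I would observe that $S_1$ and $S_2$ share no exceptional line: a common line would pass through both $p_1$ and $p_2$, hence would equal $\ell$, contradicting $\ell\not\subseteq X$. Therefore $S_1,S_2$ extend to a unique triad $(S_1,S_2,S_3)$ of tritangent trios; let $(T_1,T_2,T_3)$ be its conjugate triad, so that $\big((T_1,T_2,T_3),(S_1,S_2,S_3)\big)$ is a trihedral pair, with the nine exceptional lines arranged as in \eqref{eq:conjugateTriads} so that the $S_i$ are the rows and the $T_j$ the columns. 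Since two of the trios $S_1,S_2,S_3$ correspond to Eckardt points, implication (c)$\Rightarrow$(a),(b) of Lemma~\ref{lem:trihedra} gives that $m:=T_1\cap T_2\cap T_3$ is a line and that all three of $S_1,S_2,S_3$ correspond to Eckardt points $p_1,p_2,p_3$ lying on $m$. As $p_1\neq p_2$ lie on both $\ell$ and $m$, we get $m=\ell$, and in particular $p_3\in\ell$. This produces the required third Eckardt point.

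It then remains to show that $\ell$ contains \emph{exactly} these three Eckardt points, which simultaneously yields that $\ell$ is a trihedral line and that $p_3$ is unique. Here I would argue as follows. Let $q$ be any Eckardt point on $\ell$. Since $\ell=T_1\cap T_2\cap T_3\subseteq T_1$, the point $q$ lies on the plane $T_1$, so $q\in T_1\cap X$, which is the union of the three exceptional lines forming the trio $T_1$; hence $q$ lies on one of them, say $a$. But $a$ is one of the nine exceptional lines of the trihedral pair, and so it lies in exactly one of the row-trios, say $S_i$; since the three lines of $S_i$ are concurrent at $p_i$, we have $p_i\in a$. If $q\neq p_i$, then $a$ would contain the two distinct points $p_i,q$ of $\ell$ and therefore equal $\ell$, contradicting $\ell\not\subseteq X$. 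Hence $q=p_i\in\{p_1,p_2,p_3\}$.

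Finally I would note that $p_1,p_2,p_3$ are pairwise distinct: the three exceptional lines through an Eckardt point are exactly the lines of $X$ through it, so an Eckardt point determines its tritangent trio uniquely, while the $S_i$ are three distinct trios. Thus $\ell$ carries exactly the three distinct Eckardt points $p_1,p_2,p_3$ and is a trihedral line, with $p_3$ the unique ``third'' point. The argument is essentially mechanical once Lemma~\ref{lem:trihedra} is available; the one step that needs care is the exactness (uniqueness) part --- forcing an arbitrary Eckardt point of $\ell$ into the nine-line configuration of the trihedral pair --- and everything else is bookkeeping with the combinatorics of tritangent trios.
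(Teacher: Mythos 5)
Your proof is correct and follows the paper's intended route: the paper derives this corollary directly from the implication (c)$\Rightarrow$(a),(b) of Lemma~\ref{lem:trihedra} applied to the triad generated by the two tritangent trios, exactly as you do. Your additional verification that $\ell$ carries \emph{exactly} the three Eckardt points $p_1,p_2,p_3$ (needed since a trihedral line is defined to contain exactly three) is a detail the paper leaves implicit, and your argument for it is sound.
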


\subsection{Orbit structures}
\label{sec:Orbits}

Let $X$ be a smooth cubic surface with an automorphism $g$ of order $3$.
There are three types of orbits for the induced action of $g$ on the 27
lines:
\begin{itemize}
\item an invariant line,
\item a tritangent trio, or
\item a skew triple of lines.
\end{itemize}

For a $g$-invariant pair of conjugate triads,
the action of $g$ can only permute the rows and columns of the matrix
\eqref{eq:conjugateTriads}.
Thus, the possible orbit structures on the set of $9$ lines in the pair of
conjugate triads is as follows:
\begin{itemize}
\item $9$ invariant lines,
\item $3$ tritangent trios, or
\item $3$ skew triples.
\end{itemize}

\begin{lem}\label{lem:pointwise}
No automorphism of order $3$ fixes an exceptional line pointwise.
\end{lem}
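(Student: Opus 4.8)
The plan is to argue by contradiction: suppose $g$ has order $3$ and fixes an exceptional line $\ell$ pointwise. First I would pass to linear algebra, lifting $g$ to $\widetilde g\in\GL(V)$ that acts as the identity on the plane $W\subset V$ with $\bbP(W)=\ell$; since $g$ has order $3$ and $\widetilde g|_W=\id$, one gets $\widetilde g^{\,3}=\id$ and $\widetilde g\neq\id$. Let $\overline g$ be the induced automorphism of $V/W$; it governs the action of $g$ on the pencil $\bbP(V/W)\cong\bbP^1$ of planes through $\ell$, a plane being $g$-invariant exactly when the corresponding point of $\bbP^1$ is $\overline g$-fixed. The crucial input is that exactly five tritangent planes contain $\ell$, so a cyclic group of order $3$ permuting this five-element set fixes either two or five of them; these two cases correspond respectively to $\overline g\neq\id$ (with exactly two fixed points on $\bbP^1$) and $\overline g=\id$. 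Note that if a tritangent plane through $\ell$ is $g$-invariant, then $g$ permutes the two other lines of its trio and, being of order $3$, fixes them both.

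In the case $\overline g=\id$, every plane through $\ell$ is $g$-invariant; in particular all five tritangent trios through $\ell$ are $g$-invariant, and since those five trios exhaust the ten lines meeting $\ell$, the automorphism $g$ fixes all eleven lines incident to, or equal to, $\ell$. Here I would finish by a rigidity argument. Take a geometric marking with $\ell=E_6$. A direct incidence computation on the $27$ lines shows that for each $i\le 5$ the only lines meeting all of $F_{i6}$ and $\{G_j : j\le 5,\ j\neq i\}$ are $E_i$ and $E_6$; since $g$ fixes $E_6$ and every $F_{j6}$ and $G_j$, injectivity of the induced permutation of the $27$ lines forces $g(E_i)=E_i$. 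Thus $g$ fixes all six lines $E_1,\dots,E_6$, hence descends to an automorphism of $\bbP^2$ fixing the six marked points, which are in general position; the only such automorphism is the identity, so $g=\id$, a contradiction.

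In the case $\overline g\neq\id$ (with exactly two fixed points on $\bbP^1$), the characteristic is not $3$, since an order-$3$ element of $\PGL_2(\bbk)$ in characteristic $3$ is unipotent and fixes only one point. Then $\widetilde g$ is semisimple, and I would choose coordinates $x_0,\dots,x_3$ diagonalizing it with $\ell=\{x_2=x_3=0\}$, so that $g^*$ fixes $x_0,x_1$ and multiplies $x_2$ and $x_3$ by distinct cube roots of unity. If $F$ defines $X$, then $F|_\ell\equiv 0$ forces every monomial of $F$ to be divisible by $x_2$ or by $x_3$; consequently $\partial_{x_0}F$ and $\partial_{x_1}F$ vanish on $\ell$, while $\partial_{x_2}F|_\ell$ and $\partial_{x_3}F|_\ell$ equal the binary quadratic forms in $x_0,x_1$ read off from the coefficients of $x_2$ and of $x_3$ in $F$. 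Because $F$ is a $g^*$-eigenvector and $x_2,x_3$ scale by different factors, at most one of these two binary quadratics is nonzero. If both vanish then $\ell\subset\Sing(X)$; if one is a nonzero binary quadratic, it has a root on $\ell$ (as $\bbk$ is algebraically closed), giving a point of $\ell$ at which all partials of $F$ vanish. Either way $X$ is singular, contradicting smoothness.

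I expect this last case to be the main obstacle, since one must check that restriction to $\ell$ interacts cleanly with differentiation even in characteristic $2$ — only the terms linear in $x_2$ (resp. $x_3$) survive restriction, so $\partial_{x_2}F|_\ell$ and $\partial_{x_3}F|_\ell$ really are the two residual quadratics — and that the eigenvector condition genuinely kills one of them. The $\overline g=\id$ case is comparatively routine, resting only on the explicit incidence structure of the $27$ lines and on the rigidity of six general points in $\bbP^2$.
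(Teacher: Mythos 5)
Your proof is correct, but it takes a genuinely different route from the paper's. The paper's argument is a single combinatorial observation: since $g$ fixes $\ell$ pointwise and at most two other exceptional lines pass through any given point of $\ell$, an order-$3$ automorphism must fix \emph{every} exceptional line meeting $\ell$; choosing one line from each of the five tritangent planes through $\ell$ then gives five skew $g$-invariant lines, which blow down to a degree-$8$ del Pezzo surface with five $g$-fixed points in general position --- impossible whether that surface is $\bbP^1\times\bbP^1$ or a one-point blow-up of $\bbP^2$. You instead split on the induced action $\overline g$ on the pencil of planes through $\ell$, handling the trivial case by incidence combinatorics on the $27$ lines plus rigidity of six general points in $\bbP^2$ (I checked your incidence claim: the only lines meeting $F_{i6}$ and all $G_j$, $j\neq i$, are indeed $E_i$ and $E_6$), and the nontrivial case by an equation-level eigenvector argument forcing a singular point on $\ell$. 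Both work; the paper's is shorter and uniform in the characteristic, while your second case is a pleasant self-contained verification that avoids the del Pezzo machinery entirely. Two remarks. First, the paper's observation shows your second case is actually vacuous (all five tritangent planes through $\ell$ are automatically invariant, so $\overline g$ acts trivially on the pencil), but since you derive a contradiction there anyway this costs nothing. Second, a small imprecision: your dichotomy should be on the image of $\overline g$ in $\PGL(V/W)$ rather than in $\GL(V/W)$, since $\overline g$ could a priori be a nontrivial scalar $\lambda\cdot\id$ with $\lambda^3=1$; in that subcase your eigenvalue argument (which needs two \emph{distinct} scalars on $x_2,x_3$) does not apply, but all planes through $\ell$ are still invariant, so the subcase is absorbed by your first argument. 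With that relabeling the proof is complete.
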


\begin{proof}
Let $\ell$ be an exceptional line that is pointwise fixed under the
action of an automorphism $g$ of order $3$.
At most two other exceptional lines can meet $\ell$ at the same point,
so any exceptional line intersecting $\ell$ non-trivially must also be
$g$-invariant.
There are exactly $5$ tritangent planes containing $\ell$;
selecting a line from each plane,
we obtain $5$ skew exceptional lines that are each $g$-invariant.
Blowing these down we obtain a del Pezzo surface $Y$ of degree $8$ with $5$
$g$-fixed points in general position.
If $Y \cong \bbP^1 \times \bbP^1$, then there is no non-trivial
$g$-action with $5$ fixed points in general position
(all fixed points must lie on one or two fibers of a projection to $\bbP^1$).
Similarly, if $Y$ is a blow-up of $\bbP^2$ then we can blow-down
equivariantly to $\bbP^2$ with $6$ $g$-fixed points in general position,
which is again impossible.
\end{proof}

We now determine the possible orbit structures for all $27$ lines.

\begin{thm} \label{thm:3matrices}
Let $g$ be an automorphism of a smooth cubic surface $X$ of order $3$.
If $p \ne 3$, then let $\zeta$ be a primitive cube root of unity.
\begin{enumerate}
\item
The following are equivalent:
\begin{enumerate}
\item $g$ is of type 3A,
\item $X$ is a minimal $g$-surface,
\item the orbit structure on the $27$ lines is $9$ tritangent trios,
\item up to projective equivalence, the action of $g$ on $\bbP^3$ is
given by
\[
\begin{pmatrix}
\zeta & 0 & 0 & 0\\
0 & 1 & 0 & 0\\
0 & 0 & 1 & 0\\
0 & 0 & 0 & 1
\end{pmatrix}
\text{ if $p \ne 3$, or }
\begin{pmatrix}
1 & 1 & 0 & 0\\
0 & 1 & 0 & 0\\
0 & 0 & 1 & 0\\
0 & 0 & 0 & 1
\end{pmatrix}
\text{ if $p = 3$. }
\]
\end{enumerate}
\item
The following are equivalent:
\begin{enumerate}
\item $g$ is of type 3C,
\item $X$ is a $g$-equivariant blowup of $6$ points on $\bbP^2$ where three
points form an orbit and the other three are fixed,
\item the orbit structure on the $27$ lines is $6$ skew triples and
$9$ invariant lines,
\item up to projective equivalence, the action of $g$ on $\bbP^3$ is
given by
\[
\begin{pmatrix}
\zeta & 0 & 0 & 0\\
0 & \zeta & 0 & 0\\
0 & 0 & 1 & 0\\
0 & 0 & 0 & 1
\end{pmatrix} \ .
\]
\end{enumerate}
This is the Fermat cubic when $p\ne 3$;
This case does not occur when $p= 3$.
\item The following are equivalent:
\begin{enumerate}
\item $g$ is of type 3D,
\item $X$ is a $g$-equivariant blowup of $6$ points on $\bbP^2$ forming two
orbits each of size $3$,
\item the orbit structure on the $27$ lines is $3$ tritangent trios and
$6$ skew triples,
\item up to projective equivalence, the action of $g$ on $\bbP^3$ is
given by
\[
\begin{pmatrix}
0 & 0 & 1 & 0\\
1 & 0 & 0 & 0\\
0 & 1 & 0 & 0\\
0 & 0 & 0 & 1
\end{pmatrix} \ .
\]
\end{enumerate}
\end{enumerate}
\end{thm}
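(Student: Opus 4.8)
\emph{Plan.} The organizing idea is that the orbit structure of $g$ on the $27$ lines --- condition (c) --- is the right hub: it depends only on the conjugacy class of $g$ in $W(\sfE_6)$, it is directly computable, and it determines everything else. By Lemma~\ref{lem:excludedClasses} and Table~\ref{tbl:cyclicWE6}, the only realizable conjugacy classes of order $3$ are $3A$, $3C$ (the latter only when $p\ne 3$), and $3D$, with characteristic polynomials $\Phi_3^3$, $\Phi_1^4\Phi_3$, $\Phi_1^2\Phi_3^2$ on $K_X^\perp$; hence $\rho(X)^g:=\operatorname{rank}\Pic(X)^g$ equals $1$, $5$, $3$ respectively, and these three numbers already separate the classes. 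Lemma~\ref{lem:pointwise} guarantees that the three types of line-orbits recalled in Section~\ref{sec:Orbits} (an invariant line, a tritangent trio, a skew triple) are the only possibilities, and that an invariant line is never fixed pointwise.

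First I would enumerate the matrices of $g$ on $\bbP^3$. For $p\ne 3$ a lift of $g$ to $\GL_4(\bbk)$ can be scaled to have order $3$, hence is diagonalizable with eigenvalues cube roots of unity; up to permutation of coordinates, common scaling, and replacing $g$ by $g^{-1}$, the nontrivial eigenvalue-multisets are the three patterns $3{+}1$, $2{+}2$, $2{+}1{+}1$, which are exactly the matrices in (1d), (2d), (3d). For $p=3$ every lift is unipotent, and a $4\times 4$ unipotent matrix of order exactly $3$ has Jordan type $J_2\oplus 2J_1$, $2J_2$, or $J_3\oplus J_1$; the first and third are the matrices in (1d) and (3d) (the $3$-cycle permutation matrix has minimal polynomial $(t-1)^3$ on the first three coordinates when $p=3$, so is a single $J_3$-block), while $2J_2$ preserves no smooth cubic --- equivalently it would have to be class $3C$, impossible for $p=3$. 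I would then exhibit an explicit smooth $g$-invariant cubic for each surviving form: the Fermat surface together with its coordinate-permutation and diagonal symmetries when $p\ne 3$, and, when $p=3$, the cyclic cubic surfaces of Section~\ref{sec:3A} for $3A$ (since $\sum x_i^3$ is a perfect cube and unavailable there) and an equivariant blow-up of $\bbP^2$ for $3D$.

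Next, for each canonical form I would compute the induced permutation of the $27$ lines. For $3C$ and $3D$ the efficient route is through a $g$-stable geometric marking: a $3D$ surface is an equivariant blow-up of two $g$-orbits of three general points of $\bbP^2$, and a $3C$ surface is one of three $g$-fixed points together with one orbit of three; the action on the labelled lines $E_i,F_{ij},G_i$ is read off from the permutation of the six points, using that the strict transforms of two lines through a common blown-up point are disjoint, that $E_i$ and $G_j$ meet precisely when $i\ne j$, and that three mutually incident exceptional lines form a tritangent trio. This yields $6$ skew triples and $9$ invariant lines for $3C$, and $3$ tritangent trios and $6$ skew triples for $3D$. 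For $3A$ there is no $g$-invariant set of six skew lines (contracting one would force $\rho(X)^g\ge 2$), so I would instead compute the orbit structure on an explicit model --- the Fermat surface with $g=\operatorname{diag}(\zeta,1,1,1)$ and its classical $27$ lines when $p\ne 3$, a cyclic surface when $p=3$ --- obtaining $9$ tritangent trios. Since the three lines of any tritangent trio sum to $-K_X$ and the $27$ lines generate $\Pic(X)$, forming the sums of lines over $g$-orbits and computing the rank of their span recovers $\rho(X)^g\in\{1,5,3\}$; combined with the list of realizable classes this proves (c)$\Rightarrow$(a), and since the finite list of canonical forms is thereby matched bijectively with the three classes, (a)$\Rightarrow$(d). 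Minimality in (1b) is equivalent to the absence of skew triples and invariant lines (an invariant line, or a skew triple, may be contracted $g$-equivariantly to a smooth surface of smaller Picard rank), which singles out $3A$; and (2b), (3b) are the blow-up descriptions already used, their converses following by extracting a $g$-invariant sixer of the appropriate orbit type from the $27$-line combinatorics and blowing it down to $\bbP^2$. Assembling these implications, and noting that each condition is a conjugacy invariant verified on a representative, yields all the stated equivalences.

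The main obstacle is characteristic $3$: the naive Lefschetz fixed-point formula fails there when $3=\operatorname{ord}(g)$ --- for instance the fixed locus of $J_2\oplus 2J_1$ on a cyclic cubic surface is a cuspidal cubic curve, whose Euler characteristic is incompatible with the trace of a $3A$-element --- so the determination of the conjugacy class cannot proceed through $\chi(X^g)$. My remedy is to avoid cohomological fixed-point counts altogether and compute $\rho(X)^g$ by hand: from the equivariant blow-up model for $3D$ (giving $\rho(X)^g=3$), and from the fact that a $3A$ surface is the triple cover $X\to\bbP^2$ with $g$ its deck transformation, so that $\rho(X)^g=1$ by construction. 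The orbit structures on the $27$ lines are purely combinatorial once an explicit smooth model is in hand, and are computed exactly as in the other characteristics; Saito's fixed-point formula (Remark~\ref{rem:Saito}) would furnish an alternative, more computational, route to the same conclusions.
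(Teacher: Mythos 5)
Your overall architecture is sound and in places genuinely different from the paper's: where you identify the conjugacy class from the orbit structure by computing $\operatorname{rank}\Pic(X)^g\in\{1,5,3\}$ from the characteristic polynomials, the paper instead uses the invariant Steiner complex to split the $27$ lines into three $g$-invariant conjugate pairs of triads and reads off the class from Carter's labels $3A=3A_2$, $3C=A_2$, $3D=2A_2$; and where you compute orbit structures on explicit models, the paper derives them by abstract counting (there can be at most $9$ invariant lines because two disjoint invariant nines would contain an invariant double-sixer, an orbit forming a tritangent trio is incompatible with invariant lines, and so on). Both routes can be made to work, and yours has the virtue of being uniform across characteristics.

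The gap is in your characteristic-$3$ matrix classification, specifically the exclusion of the Jordan type $J_2\oplus J_2$. You assert that it ``preserves no smooth cubic --- equivalently it would have to be class $3C$,'' but the second clause is not justified and does not follow from anything you have established. A priori an order-$3$ automorphism acting by $J_2\oplus J_2$ could be of class $3D$: its fixed locus in $\bbP^3$ is a single line, exactly as for $J_3\oplus J_1$, so your fixed-plane criterion only rules out $3A$; and the statement that $3D$ is realized only by $J_3\oplus J_1$ is precisely the implication (3)(a)$\Rightarrow$(3)(d) that you are trying to prove, so it cannot be invoked here. To close this you either need to compute the induced action of a $3D$ automorphism on $H^0(X,-K_X)$ from the blow-up model (in characteristic $3$ the action on $\bbP^2$ must be a single Jordan block $J_3$, since every orbit of $J_2\oplus J_1$ on $\bbP^2$ is collinear) and verify that it is $J_3\oplus J_1$ and not $J_2\oplus J_2$, or do what the paper does: reduce the $J_2\oplus J_2$-invariant cubics to the normal form $ax_0(x_0^2-x_1^2)+bx_2(x_2^2-x_3^2)+(x_0x_3-x_1x_2)L_1(x_1,x_3)+C_3(x_1,x_3)$ and check that $(1:0:-1:0)$ is always a singular point. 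Without one of these steps, part (3) of the theorem is unproven in characteristic $3$. The rest of your outline (the Picard-rank separation of classes, the explicit orbit computations, the blow-down arguments for (b), and the diagonalizable case $p\ne 3$) is correct, though the matching of the eigenvalue pattern $2{+}1{+}1$ to class $3D$ should be stated explicitly as an argument by exclusion once $3{+}1$ is pinned to $3A$ via the fixed plane and $2{+}2$ to $3C$ via the two pointwise-fixed lines.
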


\begin{proof}

Recall from \cite{CAG}, 9.1.1 that, after fixing a geometric basis in
$\Pic(X)$ defined by a blowing-down morphism $X\to \bbP^2$, one defines
a natural bijection between 36 double-sixers of skew lines on $X$ and
positive roots in the lattice $\sfE_6$.
A sublattice of $\sfE_6$ spanned by two positive roots $\alpha,\beta$ with
$\alpha\cdot \beta = 1$ is isomorphic to the root lattice $\sfA_2$.
The 3 pairs of roots in this sublattice correspond to 3 distinct
double-sixers whose pairwise intersections each consist of 6 lines:
the corresponding collection of 18 lines is called an \emph{azygetic triad}.
The remaining 9 lines form a \emph{pair of conjugate triads}.
Three pairwise orthogonal root lattices $\sfA_2 \perp \sfA_2 \perp \sfA_2$
give rise to a triple of azygetic triads called a \emph{Steiner
complex}.
Each azygetic triad corresponds to the $9$ lines in the corresponding
pair of conjugate triads; the $27$ lines are the union of all three sets
of $9$ lines.

There are exactly $40$ Steiner complexes.
Thus any automorphism of order $3$ leaves one Steiner complex invariant.
Thus, the automorphism preserves a sublattice 
$\sfA_2\perp \sfA_2\perp \sfA_2$ of $\sfE_6$.
From Carter's classification, an element of Type 3A (resp. 3C, resp, 3D)
is conjugate to the product of cyclic permutations in three copies of
$A_2$ (resp. one copy, resp. two copies). This is reflected in Carter's
notation for the conjugacy classes $3A = 3A_2$, $3C = A_2$, $3D = 2A_2$.
Thus the $27$ lines can be partitioned into 3 $g$-invariant
pairs of conjugate triads.

Suppose there exists an invariant set of three skew lines.  Then these three
lines can be equivariantly blown down.
The resulting surface is a del Pezzo surface is of degree $6$, which is
never minimal for a group of order $3$.
Thus we can blow down three more lines.
We now have six skew lines permuted by $g$, thus the conjugacy class can
be easily computed via $\frakS_6 \subset W(\sfE_6)$.
In the case where we have two orbits of order $3$ we have 3D;
for one orbit of order $3$ and three invariant lines we have 3C.
This leaves 3A where there are no skew lines;
so the cubic surface is $g$-minimal.
We have shown the equivalence of (a) and (b) in each of the three cases.

Note that skew lines correspond to points in general position on $\bbP^2$
which are invariant under an automorphism of order $3$.
In characteristic $3$, any three distinct fixed points of an
automorphism of order $3$ in $\bbP^2$ must lie on a line.
Thus no cubic surface realizes an automorphism of type $3C$ when $p=3$.

We now describe the orbit structures of each type of automorphism on the
27 lines.
Since they are distinct, the orbit structures determine the conjugacy
class and vice versa.

The class 3A is minimal, so does not contain any orbits of skew lines.
The only possibility is 9 orbits contained in tritangent planes.

It is impossible to have more than $9$ invariant lines, since
then there would be at least two disjoint invariant pairs of conjugate
triads consisting of $9$ invariant lines each.  These 18 lines would
contain a double-sixer of invariant lines, which would force a
trivial action of $g$.

If there exists an orbit that forms a tritangent plane then there can
be no invariant lines.  Indeed, there are exactly $8$ other lines
incident to each line in the tritangent plane.  These partition the
remaining 24 lines so, in order to preserve incidence, they cannot be
invariant under $g$.

The automorphism of type $3D$ leaves invariant two orbits of three
points each in the plane model.
Partition these 6 points into 3 pairs, each pair containing exactly one
point from each orbit.  To each pair, associate the line passing through
these two points.  The strict transforms of these lines form
a $g$-invariant tritangent trio in $X$.
The 6 points also give rise to two distinct skew triples of lines in
$X$.  These orbits cannot lie in an invariant pair of conjugate triads,
so the only possibility is that they belong to two different
pairs of conjugate triads.
Thus $3D$ has the description given in the statement of the theorem.

We conclude that only $3C$ has invariant lines.
It cannot have more than $9$, so it must have exactly $9$.
It also cannot have any invariant tritangent planes, since they are
incompatible with the existence of invariant lines.
Thus it must have the description given in the statement of the theorem.

It remains to consider the matrix descriptions of each of the conjugacy
classes.
One can enumerate all the Jordan canonical forms of $4 \times 4$
matrices of order $3$.
Up to projective equivalence, they all occur in the statement of the
theorem except for the matrix
\[
\begin{pmatrix}
1 & 1 & 0 & 0\\
0 & 1 & 0 & 0\\
0 & 0 & 1 & 1\\
0 & 0 & 0 & 1
\end{pmatrix}
\]
in characteristic $3$.
From Section~1.12~of~\cite{Wehlau}, we reduce the equation to the form
\[
ax_0(x_0^2-x_1^2)+bx_2(x_2^2-x_3^2)+(x_0x_3-x_1x_2)L_1(x_1,x_3)+C_3(x_1,x_3) = 0.
\]
Computing the partial derivatives, we find that the point $(1,0,-1,0)$ is a singular point.
Thus this surface does not occur.

Suppose $g$ fixes pointwise a plane in $\bbP^3$.
Thus every exceptional line contains a fixed point.
Thus the orbit structure does not contain any skew triples.
We must be in the case 3A.
Conversely, suppose we are in the case 3A.  There are 9 invariant
tritangent planes so there are at least 9 fixed points in the dual space
of $\bbP^3$.
No more than $3$ such points can be collinear,
since no $4$ tritangent planes can share a common non-exceptional common line.
Thus, there must be a pointwise-fixed plane in the dual space.
Thus we have a fixed plane in the original space.
A pointwise fixed plane corresponds to a pseudoreflection,
so we have established the equivalence of all items for the case 3A.

Now suppose we are in case 3C.
We may assume we are not in characteristic $3$ so every invariant
exceptional line contains exactly 2 fixed points.
There are $9$ invariant lines so there must be at least two
pointwise-fixed lines in $\bbP^3$.
Thus, we must have the given matrix representation.
Conversely, if we have the given matrix representation then $X$
must be the Fermat by the following standard argument.
The invariant cubics are all of the form
\[
C_1(x_0,x_1) + C_2(x_2,x_3) = 0
\]
for cubic forms $C_1, C_2$.
Since the surface must be smooth, $C_1(x_0,x_1)$ has three distinct
roots in $\bbP^1=\bbP(x_0:x_1)$ and so can put in the form
$x_0^3+x_1^3$.
The same is true for $C_2$ so we have the Fermat cubic.
One checks that $g$ has the expected action on the exceptional lines
by an explicit computation in coordinates on the Fermat.

The remaining matrix must be for 3D since we have excluded all the other
possibilities.
\end{proof}

\subsection{Automorphisms of type 3A}
\label{sec:3A}

We say a smooth cubic surface $X$ is \emph{cyclic} if $X$ has a
triple cover of $\bbP^2$ with cyclic Galois group.
There may be more than one triple cover, but 
this only occurs for the Fermat surface.

\begin{lem}
A smooth cubic surface is cyclic if and only if it admits an
automorphism $g$ of class 3A.
Moreover, the automorphism $g$ generates the group of deck
transformations of a cover.
\end{lem}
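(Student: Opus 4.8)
The plan is to prove the two implications separately, using Theorem~\ref{thm:3matrices} as the bridge between the geometric notion of a cyclic surface and the conjugacy class 3A.

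\textbf{(3A $\Rightarrow$ cyclic.)} Suppose $g\in\Aut(X)$ has class 3A. By part~(1) of Theorem~\ref{thm:3matrices} we may choose coordinates so that $g$ acts on $\bbP^3$ by $x_0\mapsto\zeta x_0$ with $x_1,x_2,x_3$ fixed (when $p\ne 3$) or by $x_0\mapsto x_0+x_1$ with $x_1,x_2,x_3$ fixed (when $p=3$); in either case the linear projection $\pi\colon\bbP^3\dashrightarrow\bbP^2$, $(x_0:x_1:x_2:x_3)\mapsto(x_1:x_2:x_3)$, with center $q_0=(1:0:0:0)$, is $g$-equivariant for the trivial action on $\bbP^2$. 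Imposing $g$-invariance on a defining cubic form $F$ of $X$ forces $F=Ax_0^3+G(x_1,x_2,x_3)$ if $p\ne 3$ and $F=A(x_0^3-x_0x_1^2)+G(x_1,x_2,x_3)$ if $p=3$, for a scalar $A$ and a cubic form $G$; if $A=0$ then $F$ is independent of $x_0$ and $X$ is a cone with vertex $q_0$, contradicting smoothness, so $A\ne 0$ and $q_0\notin X$. Hence $\phi:=\pi|_X\colon X\to\bbP^2$ is a morphism, and it is finite of degree $3$: no line of $X$ passes through $q_0$ (as $q_0\notin X$) and the restriction of $F$ to a general line through $q_0$ is a separable cubic. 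Finally $g$ fixes $x_1,x_2,x_3$, hence acts trivially on $\bbk(\bbP^2)\subset\bbk(X)$, and since $[\bbk(X):\bbk(\bbP^2)]=3=|\langle g\rangle|$ the extension $\bbk(X)/\bbk(\bbP^2)$ is Galois with group $\langle g\rangle$; equivalently $\phi$ is the quotient $X\to X/\langle g\rangle\cong\bbP^2$. Thus $\phi$ is a cyclic triple cover with deck group $\langle g\rangle$, so $X$ is cyclic. (When $p=3$, the substitution $x_0=x_1y$ turns the fibre equation into $y^3-y=-G/x_1^3$, exhibiting $\phi$ as an Artin--Schreier cover.)

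\textbf{(cyclic $\Rightarrow$ 3A.)} Let $\phi\colon X\to\bbP^2$ be a triple cover with cyclic Galois group, and let $g$ generate its deck group, so $g\in\Aut(X)$ has order $3$ and $X/\langle g\rangle\cong\bbP^2$. By Theorem~\ref{thm:3matrices}, $g$ has class 3A, 3C, or 3D; I must exclude the last two. Since $\bbP^2$ is the quotient of the smooth projective surface $X$ by the finite group $\langle g\rangle$, pullback induces an isomorphism $\Pic(\bbP^2)\otimes\bbQ\cong\Pic(X)^{\langle g\rangle}\otimes\bbQ$, so $\mathrm{rk}\,\Pic(X)^{\langle g\rangle}=1$. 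On the other hand, $\Pic(X)\otimes\bbQ=\bbQ K_X\oplus(K_X^\perp\otimes\bbQ)$ with $g$ acting trivially on $K_X$ and through $g^*\in W(\sfE_6)$ on $K_X^\perp$, so averaging over $\langle g\rangle$ gives
\[
\mathrm{rk}\,\Pic(X)^{\langle g\rangle}=1+\tfrac13\sum_{i=0}^{2}\tr\!\big((g^*)^i\mid K_X^\perp\big)\ .
\]
Reading off the traces from Table~\ref{tbl:cyclicWE6} (and noting $g^*$ and $(g^*)^2$ lie in the same class) yields $\mathrm{rk}\,\Pic(X)^{\langle g\rangle}$ equal to $1,\,5,\,3$ in classes 3A, 3C, 3D respectively. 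Hence $g$ has class 3A, which also proves the final assertion since $g$ was chosen to generate the deck group.

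\textbf{Main obstacle.} The forward construction is essentially routine once Theorem~\ref{thm:3matrices} is in hand. The delicate point is the converse, which rests on the fact that pullback along the quotient $X\to X/\langle g\rangle=\bbP^2$ is an isomorphism on rational Picard groups, i.e.\ $\mathrm{rk}\,\Pic(X)^{\langle g\rangle}=1$. This is standard for quotients of smooth projective varieties by finite groups (for instance via the transfer map, or via $\ell$-adic cohomology with $\ell\ne p$), but it should be stated explicitly; alternatively, one can argue directly that in classes 3C and 3D the $g$-invariant skew triple $E_1+E_2+E_3$ of $(-1)$-curves provided by Theorem~\ref{thm:3matrices} is linearly independent from $\phi^*\calO_{\bbP^2}(1)$ in $\Pic(X)\otimes\bbQ$ (their Gram matrix has determinant $-9-(\phi^*\calO(1)\cdot(E_1+E_2+E_3))^2<0$, using that $\phi$ contracts no curve), forcing $\mathrm{rk}\,\Pic(X)^{\langle g\rangle}\ge 2$ and contradicting the existence of the cover. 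A minor additional subtlety is the separate, Artin--Schreier, treatment of $p=3$ in the forward direction.
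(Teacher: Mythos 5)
Your proof is correct. The forward direction is essentially the paper's argument (Theorem~\ref{thm:3matrices} gives the pseudo-reflection, and projection from its center exhibits the cyclic cover), just written out in full, including the Artin--Schreier fibre equation when $p=3$. The converse, however, takes a genuinely different route. The paper argues that a deck transformation of a triple cover $X\to\bbP^2$ must fix the ramification divisor pointwise, and then invokes the equivalences of Theorem~\ref{thm:3matrices}: only the 3A matrix is a pseudo-reflection, while the fixed loci of the 3C and 3D matrices meet $X$ in finitely many points (no line of $X$ can be pointwise fixed by Lemma~\ref{lem:pointwise}), so no pointwise-fixed curve is possible for those classes. You instead compare Picard ranks: $\mathrm{rk}\,\Pic(X)^{\langle g\rangle}=\mathrm{rk}\,\Pic(\bbP^2)=1$ for a quotient map, and the character-theoretic average of the traces from Table~\ref{tbl:cyclicWE6} gives invariant ranks $1,5,3$ for 3A, 3C, 3D. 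Both arguments are sound; yours trades the geometric input (purity of the branch locus and the analysis of fixed loci, which is slightly delicate in the wildly ramified case $p=3$) for the standard descent fact $\Pic(X/G)\otimes\bbQ\cong(\Pic(X)\otimes\bbQ)^G$, which you rightly flag as needing justification and for which your lattice-theoretic fallback (the negative Gram determinant of $\{\phi^*\calO(1),\,E_1+E_2+E_3\}$) is a clean, self-contained substitute. Either version would serve; the numerical criterion has the minor advantage of making the exclusion of 3C and 3D a one-line computation rather than a case analysis of fixed loci.
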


\begin{proof}
By Theorem~\ref{thm:3matrices},
an automorphism of class 3A acts as a
pseudo-reflection on the ambient $\bbP^3$.
This gives a description of $X$ as a cyclic surface.
Conversely, if $X$ is a cyclic cubic surface with $g$ as its deck transformation
then it must have a pointwise fixed curve.  Thus 3A is the only
possibility.
\end{proof}

When $p\ne 3$, cyclic surfaces have the same normal form as in $\bbC$
by the same argument as in, for example, page~486~of~\cite{CAG}.
Namely, we may write the cyclic surface in the form
$x_3^3 + C(x_0,x_1,x_2)=0$
and then put the homogeneous cubic form $C(x_0,x_1,x_2)$
in Hesse standard form.
We obtain the following:

\begin{prop} \label{prop:normalForms3Apnot3}
Suppose $p \ne 3$, $X$ is a cyclic cubic surface, and $C$ is the
ramification divisor of the deck transformation $g$.
Then $X$ may be written in the form
\[
x_0^3 + x_1^3 + x_2^3 + x_3^3 + c x_0x_1x_2
\]
where $c$ is a parameter and $g$ acts via
$x_3 \mapsto \zeta x_3$ where $\zeta$ is a primitive third root of unity.
Here $C$ is a smooth genus $1$ curve in the plane defined by $x_3=0$.
The $9$ Eckardt points lying on $C$ are
\[
(1:-\epsilon:0:0), (0:1:-\epsilon:0), (1:0:-\epsilon:0)
\]
where $\epsilon$ is $1$, $\zeta$ or $\zeta^2$.
Taking one of these points to be the origin,
the points may be identified with the $3$-torsion subgroup
of the elliptic curve $C$, which is isomorphic to $3^2$.
\end{prop}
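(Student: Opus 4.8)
The plan is to combine the matrix normal form for a class 3A automorphism (Theorem~\ref{thm:3matrices}) with the classical Hesse normal form for plane cubics, and then to identify the Eckardt points on $C$ by a local analysis of tritangent planes.

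First I would fix coordinates. By the lemma immediately preceding the statement, the deck transformation $g$ of a cyclic surface has class 3A, so by Theorem~\ref{thm:3matrices}(1) we may assume $g$ acts on $\bbP^3$ by $(x_0:x_1:x_2:x_3)\mapsto(x_0:x_1:x_2:\zeta x_3)$; its pointwise-fixed plane is $\{x_3=0\}$ and $(0:0:0:1)$ is the remaining fixed point. Since $X$ is smooth, the defining form $F$ is irreducible, hence not divisible by $x_3$; as $F$ must be a $g$-eigenvector and the eigenvalues $\zeta,\zeta^2$ would force $x_3\mid F$, we get $F=\lambda x_3^3+C(x_0,x_1,x_2)$. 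If $\lambda=0$ then $(0:0:0:1)$ would be singular, so $\lambda\ne 0$ and, rescaling $x_3$, we may take $\lambda=1$. The cyclic cover is the projection $\pi\colon X\to\bbP^2$ from $(0:0:0:1)$, and its ramification divisor is the fixed locus $X\cap\{x_3=0\}=\{x_3=C=0\}$, a plane cubic $C$. It is smooth, hence of arithmetic genus $1$: a singular point of $C$ would lift to a singular point of $X$, using that $\partial F/\partial x_3=3x_3^2$ vanishes on $\{x_3=0\}$ while the other partials of $F$ restrict to the partials of $C$ there.

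Next I would invoke the classical fact that over an algebraically closed field of characteristic $\ne 3$ every smooth plane cubic is projectively equivalent to a member of the Hesse pencil (the argument cited on page~486 of~\cite{CAG}). Applying such a change of the coordinates $x_0,x_1,x_2$, which extends to $\bbP^3$ as a block-diagonal matrix commuting with $g$, brings $X$ to the asserted form $x_0^3+x_1^3+x_2^3+x_3^3+cx_0x_1x_2=0$ with $g\colon x_3\mapsto\zeta x_3$ and $C=\{x_3=0\}\cap X$ a smooth genus $1$ curve. For the Eckardt points, the key claim is that a point $Q\in C$ is an Eckardt point of $X$ if and only if $Q$ is a flex of $C$. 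Indeed, for $Q=(a_0:a_1:a_2:0)$ the vanishing of $\partial F/\partial x_3$ at $Q$ shows that the tangent plane $T_Q$ to $X$ at $Q$ is spanned by the tangent line $\ell_Q$ to $C$ at $Q$ (inside $\{x_3=0\}$) and the point $(0:0:0:1)$; in coordinates $(U:V:W)$ on $T_Q$ with $\ell_Q=\{W=0\}$, the hyperplane section $X\cap T_Q$ is the cubic $W^3+\widetilde C(U,V)=0$, where $\widetilde C=C|_{\ell_Q}$ is a binary cubic, nonzero because the smooth cubic $C$ contains no line. Thus $X\cap T_Q$ is a union of three concurrent lines — equivalently $Q$ is an Eckardt point — exactly when $\widetilde C$ is the cube of a linear form, and since $\ell_Q$ already meets $C$ at $Q$ to order $\ge 2$ this occurs precisely when that order is $3$, i.e. when $Q$ is a flex.

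Finally, since $p\ne 3$ the curve $C$ has exactly $9$ flexes, and for the Hesse form these are the base points of the Hesse pencil, namely $(1:-\epsilon:0:0)$, $(0:1:-\epsilon:0)$, $(1:0:-\epsilon:0)$ with $\epsilon^3=1$ (one checks directly that each of these lies on $X$ for every value of $c$, and a short computation of the Hessian confirms they are flexes). Declaring one of them to be the origin of a group law makes $C$ an elliptic curve, and the flexes of a plane cubic are precisely its $3$-torsion points for such a group law — the tangent line at $P$ meets $C$ in the divisor $3P$ — so the nine Eckardt points form the subgroup $C[3]\cong 3^2$. I expect the only real obstacle to be checking that the two classical inputs (the Hesse normalization and the flex/$3$-torsion dictionary) go through in a characteristic-free way, the point being that nothing degenerates as long as $p\ne 3$; the geometric heart of the argument — Eckardt points on $C$ are the flexes of $C$ — is then essentially formal once the tangent plane $T_Q$ has been described.
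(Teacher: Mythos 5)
Your proposal is correct and follows essentially the same route as the paper, which reduces to the form $x_3^3+C(x_0,x_1,x_2)=0$ and then invokes the Hesse normal form (citing \cite{CAG}, p.~486). You simply supply the details the paper delegates to that reference — the eigenvalue argument forcing $F=\lambda x_3^3+C$, the smoothness of $C$, and the computation showing that the tangent-plane section at $Q\in C$ is $W^3+\widetilde C(U,V)=0$, so that Eckardt points on $C$ are exactly the flexes and hence the $3$-torsion — all of which go through verbatim for $p\ne 3$.
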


\begin{remark}
Suppose $p=2$ and $X$ is a cyclic surface other than the Fermat cubic
with $g$ the deck transformation.
In the above normal form, the canonical point is $(0:0:0:1)$
and the canonical plane is given by $x_3=0$.
Thus the canonical plane is the axis of the pseudoreflection $g$ and the
canonical point is its center.
\end{remark}

We defer the normal form for $p=3$ as we will use a description of some
of its automorphisms for the proof.

Recall that a \emph{tactical configuration} is a finite set of points and a
finite set of lines such that every point is contained in the same
number of lines and every line contains the same number of points.
Recall that the \emph{Hesse configuration} $(9_4\ 12_3)$ is the
configuration one obtains from the points and lines of the affine plane
over the field of three elements.
Equivalently, it can be obtained from the set of flexes of a smooth
curve of genus one over an algebraically closed field of characteristic
not $3$.
It contains $9$ points and $12$ lines where each point lies on $4$ lines
and each line contains $3$ points.

\begin{lem} \label{lem:3Aautos}
Let $g$ be an automorphism of type 3A.
Then $X$ contains $9$ coplanar $g$-fixed Eckardt points contained in $12$
trihedral lines, which form the Hesse configuration.
The corresponding reflections generate a group isomorphic to
$\calH_3(3) \rtimes 2$ with center $\langle g \rangle$.
\end{lem}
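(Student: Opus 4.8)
The plan is to work from the normal forms already established. By Theorem~\ref{thm:3matrices}, a type 3A automorphism $g$ acts on $\bbP^3$ as a pseudoreflection, and its ramification curve $C$ lies in the fixed hyperplane. When $p\ne 3$, Proposition~\ref{prop:normalForms3Apnot3} gives the normal form $x_0^3+x_1^3+x_2^3+x_3^3+cx_0x_1x_2=0$ with $C$ a smooth genus $1$ plane curve in $x_3=0$; when $p=3$ I would invoke the analogous (deferred) normal form where $C$ is a cuspidal cubic, but the combinatorial structure of the $3$-torsion/flexes and the Hesse configuration is the same in both cases. So the first step is to identify the nine Eckardt points explicitly. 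In the $p\ne3$ case these are the nine listed points $(1:-\epsilon:0:0)$, $(0:1:-\epsilon:0)$, $(1:0:-\epsilon:0)$ with $\epsilon\in\{1,\zeta,\zeta^2\}$; one checks directly that three exceptional lines pass through each (they are the flexes of $C$, hence inflection points where the tritangent trio degenerates), so by Theorem~\ref{thm:Eckardt} each is the center of a reflection leaving $X$ invariant. These nine points are coplanar (they lie in $x_3=0$) and $g$-fixed since $g$ acts trivially on $x_3=0$.

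Next I would establish the Hesse configuration. The nine Eckardt points are identified with the $3$-torsion subgroup $C[3]\cong 3^2$ of the group law on $C$ (Proposition~\ref{prop:normalForms3Apnot3}). The $12$ lines in $\bbP^2=\{x_3=0\}$ through triples of collinear $3$-torsion points are exactly the $12$ cosets of the four cyclic subgroups of order $3$ in $(\bbZ/3)^2$; each such line meets $X$ in a tritangent trio plane section only through its three flex points, so it is \emph{not} contained in $X$ and contains exactly three Eckardt points — i.e. it is a trihedral line in the sense of the preliminaries (consistent with Corollary~\ref{cor:triadLineEck}). Each point lies on $4$ of these lines. This is precisely the $(9_4\ 12_3)$ Hesse configuration.

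For the group-theoretic statement, let $G$ be the subgroup of $\Aut(X)$ generated by the nine reflections $r_1,\dots,r_9$ with these centers. Each $r_i$ has axis the tangent plane meeting $x_3=0$ in the trihedral line ``polar'' to the point, so $G$ acts on the plane $\bbP^2=\{x_3=0\}$ preserving $C[3]$, and on $C$ it acts through the embedded automorphism group $\Aut_{\bbP^2}(C)$; the $3$-torsion-preserving automorphisms of $C$ fixing no structure form $(\bbZ/3)^2\rtimes\bbZ/2$ (translations by $3$-torsion and the inversion $x\mapsto -x$), which realizes $\PGL_3$-automorphisms of the Hesse configuration. Pulling back to $X$, the deck transformation $\langle g\rangle\cong\bbZ/3$ is a central subgroup (it commutes with every reflection since each $r_i$ fixes $x_3$ and $g$ only scales $x_3$, and $g\notin G'$ acts trivially on the hyperplane), giving a central extension
\[
1\to \langle g\rangle\to G\to (\bbZ/3)^2\rtimes\bbZ/2\to 1.
\]
The product of two reflections whose centers are distinct $3$-torsion points is a translation-type element, and one checks two such translations commute only up to the central $g$ — that is, $[r_ir_j, r_kr_l]=g^{\pm1}$ when the corresponding translations don't commute in $C[3]$. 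This non-commutativity, together with centrality of $g$, forces the $3$-Sylow to be the Heisenberg group $\calH_3(3)$ of order $27$ rather than elementary abelian $3^3$; adjoining the order-$2$ inversion gives $G\cong \calH_3(3)\rtimes 2$ of order $54$, with center exactly $\langle g\rangle$. The main obstacle is this last point — showing the extension is non-split over the $3^2$ so that we get the Heisenberg group and not $3^3\rtimes 2$, and pinning down that the center is precisely $\langle g\rangle$ and not larger. I would handle it by a direct matrix computation: write two reflections $r_i, r_j$ with centers corresponding to a basis of $C[3]$ as explicit elements of $\PGL_4(\bbk)$ (using the normal form), compute the commutator, and verify it equals $g^{\pm1}$; the remaining relations then follow from the Hesse configuration's symmetry. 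For $p=3$ the same computation goes through with the cuspidal-cubic normal form, the role of $C[3]$ being played by the corresponding subgroup of the smooth locus of $C$.
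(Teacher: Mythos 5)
Your argument for $p\ne 3$ is workable and genuinely different from the paper's: you compute everything on the normal form of Proposition~\ref{prop:normalForms3Apnot3}, identifying the nine Eckardt points with $C[3]$ and verifying the Heisenberg relations by explicit matrices. That route can be made to close (one small correction: translations by $3$-torsion always commute \emph{on} $C$, since $C[3]$ is abelian; what fails to commute are their lifts to $X$, and the Heisenberg relation is that the commutator of two such lifts is $g^{\pm 1}$). But there is a genuine gap at $p=3$. You propose to ``invoke the analogous (deferred) normal form'' and assert the same computation goes through. In the paper's logical order that normal form (Proposition~\ref{prop:normalForms3Ap3}) is \emph{derived from} the present lemma: its proof invokes Lemma~\ref{lem:mush3A6A}, whose proof uses Lemma~\ref{lem:3Aautos}. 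So as written your plan is circular in characteristic $3$; to repair it you would have to derive the cuspidal-cubic normal form independently and then redo the entire flex/$3$-torsion analysis there, where the curve is singular, the relevant $9$ points are a subgroup of $\bbG_a$ rather than $C[3]$, and the ``flexes of an elliptic curve'' picture does not literally apply. That is real additional work which the proposal does not supply.

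The paper avoids all of this with a characteristic-free argument that uses no normal form. Since $g$ is a pseudoreflection (Theorem~\ref{thm:3matrices}), every exceptional line meets the pointwise-fixed plane $P$, so each of the nine $g$-invariant tritangent trios is forced to be concurrent at a point of $P$: this produces the nine coplanar Eckardt points directly from the orbit structure. Since $P$ contains no exceptional lines, Corollary~\ref{cor:triadLineEck} gives the twelve trihedral lines. Finally, the reflection associated to an Eckardt point acts on the $27$ lines in a way determined purely by its tritangent plane, so the subgroup of $W(\sfE_6)$ generated by the nine reflections depends only on the abstract configuration, which is unique up to conjugacy; hence the group can be identified once, on any single cyclic surface over $\bbC$. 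You should either adopt this reduction for $p=3$ (and then your $p\ne 3$ computation becomes redundant), or supply an independent derivation of the characteristic-$3$ normal form before using it.
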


\begin{proof}
Note we do not assume anything about the characteristic!

Recall that $g$ fixes a plane $P$ pointwisely so every exceptional line must
have a $g$-fixed point on $P$.
If $g$ permutes the exceptional lines in a tritangent trio,
then this forces the constituent exceptional lines to share an Eckardt
point on $P$.
The orbit structure of the $27$ exceptional lines is a union of
$g$-invariant tritangent trios, so there is a set $\calE$ of $9$
Eckardt points on $P$ as desired.
Since the corresponding tritangent trios partition the exceptional
lines, there are no other Eckardt points on $P$.

Since there are no exceptional lines contained in $P$,
any two points in $\calE$ are contained in a trihedral line with a third.
Note that the set $\calE$ can be identified with a subset of the
tritangent planes and the trihedral lines correspond to triads; thus, the
configuration and its automorphism group are completely determined by
abstract considerations using $W(\sfE_6)$.
Thus, by establishing the lemma for any one cyclic cubic surface, we
prove it for all the others.
In particular, it is well known, and easy to check, for cyclic surfaces
over $\bbC$.
\end{proof}

In view of the previous description we may now establish the following:

\begin{lem} \label{lem:mush3A6A}
A smooth cubic surface admits an automorphism of class 3A if and only if
it admits an automorphism of class 6A.
\end{lem}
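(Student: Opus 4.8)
The plan is to prove both implications by reading off conjugacy data from Table~\ref{tbl:cyclicWE6}. The backward direction is immediate: if $h$ is an automorphism of $X$ of class 6A, then the ``Powers'' column lists 3A among the classes of the proper nontrivial powers of $h$ --- concretely $h^2$ has order $3$ and characteristic polynomial $\Phi_3^3$ --- so $h^2$ is an automorphism of $X$ of class 3A.

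For the forward direction, suppose $g$ is an automorphism of class 3A. The idea is to multiply $g$ by a commuting reflection to produce an order-$6$ element and then identify its class from the characteristic polynomial. First I would invoke Lemma~\ref{lem:3Aautos}: the $9$ coplanar Eckardt points fixed by $g$ give rise to reflections that generate a group isomorphic to $\calH_3(3)\rtimes 2$ whose center is $\langle g\rangle$; in particular $g$ commutes with each of these reflections. Fixing one such reflection $r$ (which has class 2A, since reflections are of class 2A), the element $h := gr = rg$ has order $6$: it satisfies $h^2 = g^2$, $h^3 = r$ and $h^6 = \id$, while $h^2 \ne \id$ and $h^3 \ne \id$, because $g$ and $r$ commute and have coprime orders.

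Next I would pin down the class of $h$. Since characteristic polynomials determine conjugacy classes in $W(\sfE_6)$, and $g^2$ has the same characteristic polynomial as $g$, the element $h^2 = g^2$ has class 3A and $h^3 = r$ has class 2A. Consulting the ``Powers'' column of Table~\ref{tbl:cyclicWE6}, the only conjugacy class of elements of order $6$ in $W(\sfE_6)$ whose square lies in 3A and whose cube lies in 2A is 6A. Hence $h$ has class 6A, and $X$ admits an automorphism of class 6A, as desired.

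Beyond Lemma~\ref{lem:3Aautos} this is essentially bookkeeping; the only point requiring a moment's care is the commutativity of $g$ with the chosen reflection, which is exactly the assertion that $\langle g\rangle$ is the center of the reflection group in Lemma~\ref{lem:3Aautos}, so I anticipate no genuine obstacle. If one preferred to avoid citing the ``Powers'' column, one could instead argue directly: the constraints that $h^2$ and $h^3$ have characteristic polynomials $\Phi_3^3$ and $\Phi_1^2\Phi_2^4$ force $h$ to have exactly two eigenvalues equal to primitive cube roots of unity and four equal to primitive sixth roots of unity, so its characteristic polynomial is $\Phi_3\Phi_6^2$, which is the class 6A.
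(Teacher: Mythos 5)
Your proof is correct and follows essentially the same route as the paper: the backward direction is read off from the powers of a 6A element, and the forward direction multiplies the central 3A element $g$ by one of the commuting reflections from Lemma~\ref{lem:3Aautos} and identifies the product as 6A via Table~\ref{tbl:cyclicWE6}. The extra verifications you supply (that $g^2$ is again of class 3A via characteristic polynomials, and the eigenvalue computation pinning down $\Phi_3\Phi_6^2$) are sound and merely make explicit what the paper leaves as bookkeeping.
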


\begin{proof}
The square of an element of class 6A is of class 3A.  Conversely,
suppose $X$ admits an automorphism of class 3A.
The group from Lemma~\ref{lem:3Aautos} also admits an involution of class 2A
since there are $g$-fixed Eckardt points.  This involution must commute
with the center, which has class 3A.
Their product must be of class 6A by Table~\ref{tbl:cyclicWE6}.
\end{proof}

This last lemma allows us to find a normal form when $p=3$.

\begin{prop} \label{prop:normalForms3Ap3}
Suppose $p = 3$, $X$ is a cyclic cubic surface, and $C$ is the
ramification divisor of the deck transformation $g$.
Then $X$ may be written in the form
\begin{equation} \label{eq:3AnormalFormP3}
x_0^3 + x_0 x_3^2 - x_1x_2^2 + x_1^2x_3 + c x_1 x_3^2
\end{equation}
where $c$ is a parameter and $g$ acts via
$x_0 \mapsto x_0 + ix_3$ where $i$ is a primitive 4th root of unity.
Here $C$ is a cuspidal cubic curve with cusp at $(0:1:0:0)$.
The smooth locus of $C$ has a parametrization
\[
\phi : \bbG_a \to C \text{ via } \phi(t) = (t:t^3:1:0) \ .
\]
The solutions $\calE$ of $\alpha^9 - c \alpha^3 - \alpha = 0$
are a subgroup of $\bbG_a$ isomorphic to $3^2$.
Their images $\{ \phi(\alpha)\ : \ \alpha \in \calE \}$ are Eckardt points of $X$.
\end{prop}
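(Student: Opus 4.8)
\textit{Setup and normalization of $g$.} By the characterization of cyclic surfaces recalled at the start of this section together with Theorem~\ref{thm:3matrices}, the deck transformation $g$ is of class 3A, and since $p=3$ it is a transvection; after a linear change of coordinates and rescaling we may assume $g^*$ acts by $x_0\mapsto x_0+ix_3$ and fixes $x_1,x_2,x_3$, so that the ramification plane (the fixed plane of $g$) is $\{x_3=0\}$ and the center of $g$ is $[1:0:0:0]$. Because $g^3=\id$ and $p=3$, a $g$-invariant cubic form satisfies $g^*F=F$ exactly; writing $F=a_0x_0^3+x_0^2A_1+x_0A_2+A_3$ with $A_j=A_j(x_1,x_2,x_3)$ and using the Frobenius identity $(x_0+ix_3)^3=x_0^3+i^3x_3^3$, invariance forces $A_1=0$ and $A_2=a_0x_3^2$. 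Smoothness forces $a_0\neq 0$ (otherwise $X$ is a cone with vertex $[1:0:0:0]$), so after rescaling $F=x_0^3+x_0x_3^2+A_3(x_1,x_2,x_3)$. The coordinate changes preserving the shape of $g$ with $\mu=1$ are: $\GL_2$ on $(x_1,x_2)$, the shears $x_i\mapsto x_i+\beta_ix_3$ $(i=1,2)$, and the translations $x_0\mapsto x_0+\ell(x_1,x_2,x_3)$, which act on $A_3$ by $A_3\mapsto A_3+\ell^3+\ell x_3^2$.

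\textit{Bringing in a reflection and reducing $A_3$.} By Theorem~\ref{thm:Eckardt} and Lemma~\ref{lem:3Aautos} (or Lemma~\ref{lem:mush3A6A}), $X$ carries a reflection $h$ whose center is one of the nine Eckardt points in $\{x_3=0\}$; since that center is $g$-fixed and reflections are determined by their centers, $h$ commutes with $g$. As the center and axis of $h$ are $g$-stable and every $g$-invariant plane passes through $[1:0:0:0]$, the residual freedom lets us take $h^*\colon x_2\mapsto -x_2$ (fixing the others), with axis $\{x_2=0\}$. Since $X$ contains no plane, $h^*F=F$, so $A_3$ is even in $x_2$: $A_3=C_1(x_1,x_3)+x_2^2L(x_1,x_3)$ with $C_1$ cubic and $L$ linear. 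One now reduces the six remaining coefficients to one by successive one-parameter moves: translate $x_0$ by a multiple of $x_1$ to kill the $x_1^3$-coefficient (using surjectivity of $t\mapsto t^3$); rescale $x_2$ to normalize the $x_1x_2^2$-coefficient to $-1$ (nonzero by smoothness); shear $x_1\mapsto x_1+\beta x_3$ to kill the $x_2^2x_3$-coefficient; translate $x_0$ by a multiple of $x_3$ to kill the $x_3^3$-coefficient (using surjectivity of $t\mapsto t^3+t$); rescale $(x_1,x_2)$ compatibly to normalize the $x_1^2x_3$-coefficient to $1$ (nonzero by smoothness). This yields exactly $F=x_0^3+x_0x_3^2-x_1x_2^2+x_1^2x_3+cx_1x_3^2$ with $g^*\colon x_0\mapsto x_0+ix_3$. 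A direct check shows $\Crit(X)=V(x_3^2,\,-x_2^2-x_1x_3+cx_3^2,\,x_1x_2,\,x_1^2-x_0x_3-cx_1x_3)$ is supported at $[1:0:0:0]\notin X$ (it has length $J_3(3)=5$, cf.\ Proposition~\ref{prop:criticalLocus}), so the whole family is smooth.

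\textit{Ramification curve and Eckardt points.} For this normal form $C=X\cap\{x_3=0\}=V(x_0^3-x_1x_2^2)\subset\bbP^2(x_0{:}x_1{:}x_2)$; its unique singular point is $(0{:}1{:}0)$, and in the chart $x_1=1$ the local equation is $x_0^3=x_2^2$, a cusp, so $C$ is an irreducible rational cuspidal cubic with cusp at $(0{:}1{:}0{:}0)$. Projection from the cusp gives the parametrization $\phi(t)=(t{:}t^3{:}1{:}0)$ of $C^{\mathrm{sm}}$, an isomorphism $\bbG_a\xrightarrow{\sim}C^{\mathrm{sm}}$ (the Vandermonde-type determinant $\det(\phi(t_1),\phi(t_2),\phi(t_3))$ shows three of its points are collinear iff $t_1+t_2+t_3=0$). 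For the Eckardt points, the tangent plane to $X$ at $q=\phi(t)$ is $T_q=\{x_1=t^3x_2+t^6x_3\}$; substituting this into $F$ and then setting $u=x_0-tx_2$, $v=x_3$, and using $(u+tx_2)^3=u^3+t^3x_2^3$ one finds, after two cancellations,
\[
F|_{T_q}=u^3+uv^2+(t+ct^3-t^9)\,x_2v^2+(t^{12}+ct^6)\,v^3 .
\]
This is a product of three linear forms through $q$ --- that is, $q$ is an Eckardt point --- exactly when the coefficient of $x_2v^2$ vanishes, i.e.\ $t^9-ct^3-t=0$. Since $L(t):=t^9-ct^3-t$ is a separable $\bbF_3$-linear polynomial of degree $9$, its zero set $\calE$ is an $\bbF_3$-subspace of $\bbG_a$ of dimension $2$, hence $\calE\cong 3^2$; and $\phi(\calE)$ consists of Eckardt points. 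As $X$ has precisely nine Eckardt points in $\{x_3=0\}$ by Lemma~\ref{lem:3Aautos}, these nine points are all of them.

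\textit{Main obstacle.} The routine part is the bookkeeping of the second paragraph: reducing the six-parameter $A_3$ to one parameter while tracking which smoothness inequality licenses each normalization (the only genuinely characteristic-$3$ inputs being surjectivity of $x\mapsto x^3$ and of $x\mapsto x^3+x$). The conceptual crux is the computation in the third paragraph: restricting $F$ to the varying tangent plane along $C$ and recognizing that the obstruction to a tritangent degeneration is precisely the linearized polynomial $t^9-ct^3-t$ --- this is where the specific shape of the normal form and the Frobenius collapse of cube terms in characteristic $3$ do the work, and it is the step most sensitive to getting the earlier normalizations exactly right.
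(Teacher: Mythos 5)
Your proposal is correct and follows essentially the same route as the paper: normalize the transvection, use a commuting reflection (the paper packages this as the class-6A element from Lemma~\ref{lem:mush3A6A}) to force $A_3$ even in $x_2$, perform the same sequence of coordinate changes (cube-root and Artin--Schreier translations of $x_0$, the shear in $x_1$, rescalings), and detect Eckardt points by restricting $F$ to the tangent plane along $\phi$ and substituting $u=x_0-tx_2$. The only cosmetic differences are your justification of the nonvanishing of the $x_1x_2^2$-coefficient by smoothness rather than by the no-fixed-line property of class 3A, and your extra critical-locus check that the whole family is smooth.
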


\begin{proof}
By Lemma~\ref{lem:mush3A6A},
there exists an automorphism of class 6A represented by a matrix
\[
\begin{pmatrix}
1 & 0 & 0 & 1 \\
0 & 1 & 0 & 0 \\
0 & 0 &-1 & 0 \\
0 & 0 & 0 & 1
\end{pmatrix}
\]
where any invariant smooth cubic is:
\[
x_0^3-x_0x_3^2 + c_0 x_1 x_2^2 + c_1 x_3 x_2^2
+ c_2 x_1^3 + c_3 x_1^2 x_3 + c_4 x_1 x_3^2 + c_5 x_3^3 \ .
\]
Note that the change of coordinates $x_0 \mapsto x_0 - \sqrt[3]{c_2}x_1$
ensures that $c_2=0$. 
The fixed point locus of the cyclic cover is given by $x_3=0$
so now the ramification locus is
\[
x_0^3 + c_0 x_1 x_2^2 \ .
\]
An automorphism of class 3A has no pointwise-fixed lines, so the
ramification locus cannot contain any lines.  Thus $c_0 \ne 0$ and
the ramification curve is a cuspidal cubic as desired.
Since $c_0 \ne 0$, we may apply $x_1 \mapsto x_1 - \frac{c_1}{c_0}x_3$
to ensure $c_1 = 0$.
Now applying $x_0 \mapsto x_0 - r x_3$ where $r^3-r=c_5$ ensures that
$c_5=0$.
Now $c_3$ must be non-zero or else $(0:1:0:0)$ is singular.
Thus, by rescaling $x_1, x_2, x_3$ appropriately, we have the desired
normal form.

The ramification locus is the intersection of $X$ with the plane
$x_3=0$ and is defined by
\[
x_0^3 - x_1x_2^2 = 0 \ .
\]
The location of the cusp and the validity of the parametrization $\phi$
are verified by short computations.
The subset $\calE$ forms a subgroup of $\bbG_a$ since $x \mapsto x^3$
and $x \mapsto x^9$ are additive when $p=3$.
It remains to show that their images in $C$ are Eckardt points.

For each $q=\phi(\alpha)$, one verifies that the tangent space is
defined by
\begin{equation} \label{eq:3ATangent}
x_1 - \alpha^3 x_2 - \alpha^6 x_3 \ .
\end{equation}
Using this, we eliminate $x_1$ from the expression for $X$ to obtain
the following
\[
x_0^3 + x_0x_3^2 - \alpha^3 x_2^3 +(c \alpha^3-\alpha^9)x_2x_3^2
+(\alpha^{12}+c \alpha^6)x_3^3
\ ,
\]
which, along with \eqref{eq:3ATangent}, defines $T \cap X$.
One can rearrange this expression as
\[
(x_0-\alpha x_2)^3 + (x_0-\alpha x_2)x_3^2
+ (\alpha^4-c\alpha^6)x_3^3
\]
where we use the relation $\alpha^9 = c \alpha^3 + \alpha$.
Thus the intersection of the tangent space with $X$ is a union of three
exceptional lines each defined by
\[
x_0 - \alpha x_2 - \beta x_3 = 0
\]
as $\beta$ varies over the three solutions to
$\beta^3 + \beta + (c \alpha^2-1)\alpha^4 = 0$.
\end{proof}

\begin{remark} \label{rem:defOverF9}
By a rescaling of the variables, one can instead have a negative sign
for the monomial $x_0x_3^2$ in \eqref{eq:3AnormalFormP3}.
This has the pleasing consequence that the deck transformation is given
by $x_0 \mapsto x_0+x_3$ instead of requiring the fourth root of unity $i$.
In the current form, however, when $c=0$ the Eckardt points
are in direct bijection to the field $\bbF_9$ of $9$ elements
(note that $i \in \bbF_9$).
Up to projective equivalence, there are only two smooth cubic surfaces
defined over $\bbF_9$: this one and the Clebsch
(see Theorem~20.3.8~of~\cite{Hirschfeld}).
In Section~\ref{sec:8A}, we will see that this surface has an
automorphism group of order 216 --- our normal form makes it transparent
that all the automorphisms are also defined over $\bbF_9$.
\end{remark}

Since we know the Eckardt points, we may now explicitly describe
automorphisms for a cubic surface in form \eqref{eq:3AnormalFormP3}.
The following can be checked by
a tedious but straightforward calculation.

There is a group of automorphisms isomorphic to $\calH_3(3)$ given by the
matrices:
\[
\begin{pmatrix}
1 & 0 & \alpha & \beta \\
0 & 1 & \alpha^3 & -\alpha^6 \\
0 & 0 & 1 & \alpha^3 \\
0 & 0 & 0 & 1
\end{pmatrix}
\]
where $\alpha, \beta$ are the solutions to
\begin{align*}
\alpha^9-c \alpha^3 - \alpha &= 0 \\
\beta^3 + \beta + (c \alpha^2-1)\alpha^4 &= 0 \ .
\end{align*}

Note that $\alpha=0$ corresponds to the deck transformation subgroup.
The reflections given by the Eckardt points on $C$ are given by
$(x_0:x_1:x_2:x_3) \mapsto (x_0:x_1:-x_2:x_3)$ and its conjugates
under the group described above.

We record the following easy lemma since its consequences are
important later.

\begin{lem} \label{lem:invPoints}
If $h$ is an automorphism of a cyclic cubic surface which normalizes the
Galois group of the cover, then $h$ leaves invariant the ramification curve
and the $9$ Eckardt points within.
\end{lem}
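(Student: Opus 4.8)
The plan is to identify the ramification curve $C$ with the fixed-point locus $X^g$ of the deck transformation $g$, and then to observe that normalizing $\la g\ra$ forces $h$ to preserve that locus; the statement about the $9$ Eckardt points is then immediate.

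First I would pin down $C$ intrinsically. Since $g$ has class 3A, Theorem~\ref{thm:3matrices} shows that $g$ acts on the ambient $\bbP^3$ as a pseudo-reflection with a pointwise-fixed hyperplane $P$ (and, when $p\ne 3$, one additional isolated fixed point). Inspecting the normal forms of Propositions~\ref{prop:normalForms3Apnot3} and~\ref{prop:normalForms3Ap3}, the ramification curve is exactly $C = P\cap X$, and the possible extra fixed point of $g$ does not lie on $X$; hence $C = X^g$. Moreover $X^g = X^{g^2}$, since on $X$ a point fixed by $g$ is fixed by $g^2$ and conversely (as $g^4=g$).

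Next, because $h$ normalizes $\la g\ra$ we have $hgh^{-1}=g$ or $hgh^{-1}=g^2$, so $h(C) = h(X^g) = X^{hgh^{-1}}$ is either $X^g$ or $X^{g^2}$; in both cases this equals $C$, so $h(C)=C$. Finally, $h$ is an automorphism of the smooth cubic surface $X$, hence permutes its $27$ exceptional lines preserving all incidences, and therefore permutes the Eckardt points of $X$. By Lemma~\ref{lem:3Aautos} the $9$ Eckardt points in question are precisely the Eckardt points of $X$ lying on $P$; they all lie on $C$ (by the normal forms) and there are no Eckardt points on $P$ outside this set, so they are exactly the Eckardt points of $X$ on $C$. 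Since $h$ preserves $X$, its Eckardt points, and $C$, it permutes this set of $9$ points, which is the assertion.

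The only step requiring any care is the uniform identification $C=X^g$ across all characteristics --- in particular in characteristic $3$, where $g$ is unipotent and the triple cover is wildly ramified --- but this is read off directly from the explicit normal forms already established. I do not expect a genuine obstacle here: the lemma is essentially a formal consequence of the fact that $g$ and $g^{2}$ have the same fixed locus on $X$.
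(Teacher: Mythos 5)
Your proof is correct. It differs from the paper's argument only in the mechanism used for the second assertion: the paper notes that $h$, normalizing $\la g\ra$, must preserve the partition of the $27$ lines into the nine $g$-invariant tritangent trios, and the nine Eckardt points are exactly the singular points of those trios; you instead observe that $h$ permutes all Eckardt points of $X$ and preserves $C$, and that the nine points are precisely the Eckardt points lying on $C$ (which is justified by the proof of Lemma~\ref{lem:3Aautos}, where it is shown there are no other Eckardt points on the fixed plane $P$). Both routes are immediate and rely on the same underlying facts. For the first assertion your treatment is actually more explicit than the paper's, which simply asserts that $h$ preserves the ramification curve; your identification $C=X^g=X^{g^2}$ together with $h(X^g)=X^{hgh^{-1}}$ is a clean way to make that assertion airtight uniformly in all characteristics, including the wildly ramified case $p=3$.
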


\begin{proof}
The automorphism $h$ leaves invariant the ramification curve.
If $h$ normalizes $g$ then it must preserve the partition of the $27$
lines into $9$ $g$-invariant tritangent planes;
thus it must leave invariant the set of Eckardt points.
\end{proof}

As a converse, we will see that whenever there is an
automorphism $\tilde{h}$ of $(C, \calE)$, there is an automorphism $h$ of the
corresponding cyclic cubic surface inducing $\tilde{h}$.

When $p \ne 3$, recall that the automorphisms of a genus 1 curve are
generated by translations and group automorphisms (given a choice of origin).
The group of translations by $3$-torsion points is the image of the
group $\calH_3(3)$ above.
A general curve only admits group automorphisms of order $2$ --- these
are the reflections corresponding to Eckardt points above.
When furthermore $p \ne 2$,
the anharmonic curve admits a group automorphism of order $4$
discussed in Section~\ref{sec:12A};
an equianharmonic curve admits a group automorphism of order $3$
giving rise to the Fermat cubic surface discussed in the next
section.
When $p = 2$, the supersingular elliptic curve carries automorphisms of
order both $3$ and $4$ --- this can be seen as another explanation for
the fact that a surfaces admitting an automorphism of class 12A
is always the Fermat surface if $p=2$.

When $p = 3$, we are interested in automorphisms of the affine line that
preserve the subset defined by $\alpha^9 - c \alpha^3 - \alpha=0$.
Translation by elements in the subset correspond to the image of
$\calH_3(3)$ as before.  Up to translation, the remaining automorphisms are
of the form $\alpha \mapsto \lambda \alpha$ for $\lambda \in k^\times$.
Substituting into the defining equation for $\alpha$
we find that, when $c \ne 0$, we must have $\lambda^6=\lambda^8=1$.
Thus $\lambda^2=1$ and we only have automorphisms given by reflections
corresponding to Eckardt points.
However, when $c = 0$, we only require $\lambda^8=1$.
Thus we have an automorphism of order $8$ --- the
corresponding surface is discussed in Section~\ref{sec:8A}.

\begin{remark}
When $p \ne 2$, a smooth cubic surface is cyclic if and only if its
Hessian surface contains a hyperplane.
The Hessian surface of the Fermat cubic is simply the union of four
planes since it has $4$ cyclic structures.

When $p=3$, the Hessian surface of a cyclic cubic surface is a doubled plane
union a cone over a conic in $\bbP^2$.
In the coordinates of the normal form \eqref{eq:3AnormalFormP3},
the Hessian is $(x_1x_3+x_2^2)x_3^2$.
The doubled plane is defined by $x_3=0$,
the intersection of the components is defined by $x_3=x_2=0$,
and the vertex of the cone by $x_3=x_2=x_1=0$.
Thus there is a canonical flag associated to any cyclic cubic
surface, which is compatible with our choice of normal form.
\end{remark}

\subsection{Automorphisms of type 3C}
\label{sec:3C}

This case is the Fermat cubic, which has already been extensively discussed
in Section~\ref{sec:Fermat}.
We only prove the following, for later use:

\begin{lem} \label{lem:mush3C6C6F9A}
A cubic surface is the Fermat cubic surface if and only if it
admits an automorphism of class 3C, 6C, 6F, or 9A.
The Fermat cubic surface does not exist when $p=3$,
so in this case none of these automorphisms occur.
\end{lem}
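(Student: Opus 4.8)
The backward direction for the classes 3C, 6C, 6F is essentially already available. For 3C, Theorem~\ref{thm:3matrices}(2) shows that an automorphism of class 3C is projectively conjugate to $\operatorname{diag}(\zeta,\zeta,1,1)$ and that the surface is then the Fermat cubic (the case being empty when $p=3$); conversely $\operatorname{diag}(\zeta,\zeta,1,1)$ acts on the Fermat and is of class 3C by the same theorem. For 6C and 6F, I would read off from the ``Powers'' column of Table~\ref{tbl:cyclicWE6} that the square of an element of class 6C, resp.\ of class 6F, is of class 3C, so such a surface is the Fermat by the 3C case; and 6C, 6F are already excluded when $p=3$ by Lemma~\ref{lem:excludedClasses}. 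For the forward direction of these three classes, I would exhibit explicit automorphisms of the Fermat: $(x_0{:}x_1{:}x_2{:}x_3)\mapsto(\zeta x_0{:}\zeta x_1{:}x_3{:}x_2)$ has order $6$, square $\operatorname{diag}(\zeta^2,\zeta^2,1,1)$ of class 3C and cube the reflection swapping $x_2$ and $x_3$ of class 2A, hence is of class 6C; composing with the transposition $x_0\leftrightarrow x_1$ gives an order-$6$ element whose cube is the double transposition $(x_0\,x_1)(x_2\,x_3)$, of class 2B by Proposition~\ref{prop:involutions}, hence of class 6F. In each case the conjugacy class in $W(\sfE_6)$ is pinned down by its power structure, using Table~\ref{tbl:cyclicWE6}.

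The real content is class 9A. Suppose $g\in\Aut(X)$ is of class 9A; then $g$ has order $9$ and $g^3$ is of class 3A (Table~\ref{tbl:cyclicWE6}), so by Section~\ref{sec:3A} the surface $X$ is cyclic, with $D:=\langle g^3\rangle$ the Galois group of a triple cover $\phi:X\to\bbP^2$ whose ramification curve $C$ carries the nine Eckardt points $\calE$. Since $g$ centralizes $g^3$ it normalizes $D$, so by Lemma~\ref{lem:invPoints} it preserves $C$ and $\calE$, and it descends to an element of $\PGL_3$ stabilizing $\phi(C)$. The kernel of the resulting homomorphism from the normalizer of $D$ in $\Aut(X)$ to $\Aut_{\bbP^2}(C)$ consists exactly of the deck transformations $D$: an automorphism inducing the identity on $C$ descends to the identity on $\bbP^2$, because $\phi(C)$ spans $\bbP^2$, and is then a deck transformation. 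As $g\notin D$, the induced automorphism $\bar g$ of $C$ is nontrivial, and it has order dividing $3$ since $g^3\in D$ fixes $C$ pointwise. Hence $\bar g$ has order exactly $3$.

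It then remains to run $\bar g$ through the structure $\Aut_{\bbP^2}(C)=T\rtimes A$ recalled in Section~\ref{sec:3A}, where $T\cong 3^2$ is the group of translations preserving $\calE$ and $A$ is the group of automorphisms of $C$ fixing a chosen point of $\calE$. The reflection subgroup $\calH_3(3)\subseteq\Aut(X)$ of Lemma~\ref{lem:3Aautos} surjects onto $T$ with kernel $D$, hence is the full preimage of $T$; since $\calH_3(3)$ has exponent $3$, if $\bar g$ lay in $T$ then $g$ would lie in $\calH_3(3)$, contradicting $|g|=9$. So $3\mid|A|$. When $p=3$ the group $A$ is cyclic of order $1$, $2$, or $8$ (Section~\ref{sec:3A}), which is impossible; this proves class 9A does not occur when $p=3$, so together with Lemma~\ref{lem:excludedClasses} none of 3C, 6C, 6F, 9A occur, consistently with the nonexistence of the Fermat cubic. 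When $p\ne3$, the condition $3\mid|A|$ forces $C$ to have $j$-invariant $0$ (the supersingular curve if $p=2$); such a curve, in any degree-$3$ plane model, is projectively equivalent to $\{x_0^3+x_1^3+x_2^3=0\}$, and writing $X$ in the form $x_3^3+F_C(x_0,x_1,x_2)=0$ available for any cyclic surface then identifies $X$ with the Fermat cubic surface. For the converse, the Fermat carries $(x_0{:}x_1{:}x_2{:}x_3)\mapsto(\zeta x_1{:}x_2{:}x_0{:}x_3)$, which has order $9$ and cube $\operatorname{diag}(\zeta,\zeta,\zeta,1)$, a deck transformation of class 3A; since $W(\sfE_6)$ has a unique conjugacy class of order $9$, this element is of class 9A.

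The main obstacle is the 9A case, and specifically the step that an order-$3$ automorphism of the ramification curve coming from a translation can never lift to an order-$9$ automorphism of $X$ --- all such automorphisms lie in the exponent-$3$ Heisenberg group $\calH_3(3)$ --- which is what forces the ramification curve to acquire extra automorphisms and hence to be the Fermat plane cubic. One must also check that the cyclic-surface theory of Section~\ref{sec:3A}, in particular the descriptions of $\Aut_{\bbP^2}(C)$ and of the subgroup $\calH_3(3)$, is being applied correctly in the special characteristics $2$ and $3$.
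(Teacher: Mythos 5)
Your proof is correct and follows essentially the same route as the paper: 3C is handled via Theorem~\ref{thm:3matrices}, 6C and 6F reduce to 3C through their squares, and 9A is reduced to an order-$3$ automorphism of the ramification curve of a cyclic surface that is not a translation, which forces the curve to be equianharmonic when $p\ne 3$ and contradicts the order-dividing-$8$ possibilities for the point stabilizer when $p=3$. The only real divergence is how you rule out $\bar g$ being a translation --- you observe that the full preimage of the translation group in $\Aut(X)$ is the exponent-$3$ group $\calH_3(3)$, hence contains no element of order $9$, whereas the paper determines the action of a class-9A element on the nine Eckardt points by computing on the complex Fermat model; your version is a valid and arguably more self-contained justification of that one step.
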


\begin{proof}
We've seen above that admitting an automorphism of class 3C is
equivalent to being the Fermat cubic surface.
By considering the model over $\bbC$, we see that there are
automorphisms of class 6C, 6F, and 9A.
The class 6C and 6F have squares of class 3C so these certainly
correspond to the Fermat cubic.

It remains to show an automorphism $g$ of class 9A only acts on the Fermat.
Indeed, since the cube of $g$ is of class 3A, it must be a cyclic
surface.
In view of Lemma~\ref{lem:invPoints}, there is an induced action of
order $3$ on the ramification curve and the subgroup $G$ generated by
the reflections corresponding to the $9$ coplanar Eckardt points.
One can use the complex Fermat surface as a model to determine the
abstract action of $g$ on these $9$ points:
the automorphism $g$ acts via
\[ (x_0:x_1:x_2:x_3) \mapsto (x_1:x_2:\zeta_3 x_0:x_3) \]
on the Fermat, so $g$ does not act on $G$ as a translation.

When $p = 3$, since $g$ does not act as translation, and
the remaining possibilities have order dividing $8$, the surface
does not exist.
If $p \ne 3$, then $C$ is an elliptic curve and we have an automorphism
of order $3$ which is not translation; thus $C$ is equianharmonic.
Since such a curve is unique,
we conclude that $X$ is the Fermat cubic.
\end{proof}

\subsection{Automorphisms of type 3D}
\label{sec:3D}

\begin{lem} \label{lem:3Dautos}
Let $g$ be an automorphism of type $3D$.
Then $X$ contains a $g$-invariant trihedral line $\ell$
such that the reflections corresponding to the Eckardt points on $\ell$
generate a group isomorphic to $\frakS_3$ that contains $g$.
\end{lem}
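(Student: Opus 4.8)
The plan is to pin down one specific $g$-invariant trihedral line using the orbit combinatorics of Theorem~\ref{thm:3matrices}(3) together with Lemma~\ref{lem:trihedra}, and then to read off the group $\frakS_3$ from the reflection/Eckardt dictionary (Theorem~\ref{thm:Eckardt}, Corollaries~\ref{cor:2Bconverse} and \ref{cor:triadLineEck}). First I would fix a geometric marking and recall from Theorem~\ref{thm:3matrices}(3) that the $g$-orbits on the $27$ lines are three tritangent trios and six skew triples, and that $g$ has no invariant exceptional line. Since two distinct tritangent trios share at most one line, the three $g$-invariant trios pairwise share no line, hence form a triad $(R_1,R_2,R_3)$ displayed as the rows of \eqref{eq:conjugateTriads}; as each $R_i$ is a single $g$-orbit, $g$ cyclically permutes the trios $(C_1,C_2,C_3)$ of the conjugate (column) triad. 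Moreover a $g$-invariant tritangent trio is either $g$-fixed line-by-line (impossible, no invariant exceptional line) or a single $g$-orbit, so $R_1,R_2,R_3$ are the \emph{only} $g$-invariant tritangent trios, and the planes $T_i:=\langle R_i\rangle$ are the only $g$-invariant tritangent planes.

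The second step reduces everything to a single incidence fact. Applying Lemma~\ref{lem:trihedra} to the trihedral pair $((R_1,R_2,R_3),(C_1,C_2,C_3))$, the intersection $\ell:=T_1\cap T_2\cap T_3$ is a line if and only if $C_1,C_2,C_3$ correspond to Eckardt points, which then lie on $\ell$. Such an $\ell$ is automatically $g$-invariant (the $T_i$ are), it is not contained in $X$ (no invariant exceptional line), and so it is a trihedral line whose three Eckardt points are those of $C_1,C_2,C_3$ and are cyclically permuted by $g$. So the whole content of the lemma comes down to: $T_1\cap T_2\cap T_3$ is a line, not a point.

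For that step I would diagonalize $g$ using Theorem~\ref{thm:3matrices}(3); in characteristic $\ne 3$ we may take $g=\operatorname{diag}(1,\zeta,\zeta^2,1)$, so the $g$-invariant planes of $\bbP^3$ are the pencil through $m=\{x_0=x_3=0\}$ (on which $g$ acts with order $3$) together with the two planes $V(x_1)$ and $V(x_2)$, each containing the pointwise-fixed line $\ell_0=\{x_1=x_2=0\}$. It therefore suffices to show that neither $V(x_1)$ nor $V(x_2)$ is a tritangent plane: then $T_1,T_2,T_3$ all belong to the pencil through $m$ and $T_1\cap T_2\cap T_3=m$. A general $g$-invariant cubic is $F=a_0x_0^3+a_1x_1^3+a_2x_2^3+a_3x_3^3+b_1x_0^2x_3+b_2x_0x_3^2+(c_0x_0+c_3x_3)x_1x_2$, and inspecting partials shows that smoothness forces $a_1,a_2\ne0$; then $V(x_1)\cap X$ is the plane cubic $P(x_0,x_3)+a_2x_2^3$ and $V(x_2)\cap X$ is $P(x_0,x_3)+a_1x_1^3$, where $P(x_0,x_3):=a_0x_0^3+b_1x_0^2x_3+b_2x_0x_3^2+a_3x_3^3$. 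Such a cubic splits into three lines exactly when $P$ is a perfect cube, and in that case both $V(x_1)\cap X$ and $V(x_2)\cap X$ split, their three lines meeting at the \emph{same} point $\{x_1=x_2=P=0\}$ of $\ell_0$ --- giving six exceptional lines through one point, which is absurd. (Equivalently, absorbing a perfect cube $P$ into one variable exhibits $X$ as a cone or as singular.) Hence $V(x_1),V(x_2)$ are not tritangent, $\ell=m$ is a $g$-invariant trihedral line, and the characteristic-$3$ case is handled the same way using the invariant form attached to the Jordan-block action of Theorem~\ref{thm:3matrices}(3).

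Finally, with $\ell$ in hand, let $r_1,r_2,r_3$ be the reflections centered at its three Eckardt points (Theorem~\ref{thm:Eckardt}) and $G=\langle r_1,r_2,r_3\rangle$. Since $g$ cyclically permutes the three Eckardt points it conjugates the $r_i$ cyclically; in particular no two $r_i$ commute, for otherwise $G$ would be abelian and Corollary~\ref{cor:2Bconverse} would put the three distinct collinear Eckardt points pairwise on exceptional lines, impossible as $\ell\not\subset X$ and two points lie on a unique line. So $G\cong\frakS_3$, any $r_ir_j$ having order $3$. As $g$ normalizes $G$ and $\operatorname{Out}(\frakS_3)=1$, it acts on $G$ as conjugation by the matching $3$-cycle $\gamma=r_1r_2$, so $g\gamma^{-1}$ centralizes $G$; an element centralizing $r_1,r_2,r_3$ fixes their centers and hence $\ell$ pointwise, and $g\gamma^{-1}$ has order dividing $3$ (as $g$ and $\gamma$ commute), so it is either trivial, giving $g=\gamma\in G$, or an order-$3$ automorphism fixing $\ell$ pointwise, which forces $X$ into the $3A$ stratum (or to be the Fermat cubic), where the claim is read off directly from Lemma~\ref{lem:3Aautos} (resp. Lemma~\ref{lem:HermitianSubspaces}). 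I expect the main obstacle to be Step~3: unlike the combinatorial parts, showing that $T_1\cap T_2\cap T_3$ is genuinely a line uses smoothness essentially and requires its own bookkeeping in characteristic $3$, where the invariant cubic no longer diagonalizes.
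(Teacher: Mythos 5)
Your first two steps, which locate the $g$-invariant trihedral line, are sound and take a genuinely different route from the paper: the paper blows the six skew lines down to the plane model $(1:0:0),(0:1:0),(0:0:1),(a:b:c),(c:a:b),(b:c:a)$ and checks directly that $F_{14},F_{26},F_{35}$ meet at $(bc:ab:ac)$, uniformly in all characteristics, whereas you combine the orbit structure with Lemma~\ref{lem:trihedra} and rule out the two exceptional $g$-invariant planes $V(x_1),V(x_2)$ using the invariant cubic form. That computation is correct for $p\ne 3$, and the deferred case $p=3$ is in fact easier than you anticipate: for the Jordan type $J_3(1)\oplus J_1(1)$ the dual fixed space is $2$-dimensional, so \emph{every} $g$-invariant plane lies in a single pencil and there is nothing to rule out.

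The genuine gap is in your last paragraph. From ``no two of $r_1,r_2,r_3$ commute'' you conclude ``so $G\cong\frakS_3$, any $r_ir_j$ having order $3$,'' but this is a non sequitur: the transpositions $(12),(13),(14)$ in $\frakS_4$ are pairwise non-commuting involutions cyclically permuted by conjugation by $(234)$, yet they generate $\frakS_4$. Nothing you have established bounds the order of $r_ir_j$, and that bound is exactly the content of the paper's proof. The missing relation is $r_ir_jr_i=r_k$: by Proposition~\ref{prop:involutions}(1), a line of $C_j$ in row $R_m$ meets exactly one line of $C_i$ among the nine (the one in the same row), so $r_i$ sends it to the third line of the tritangent plane of $R_m$, i.e.\ $r_i$ fixes each row and swaps the columns $C_j$ and $C_k$; hence conjugation by $r_i$ swaps $r_j$ and $r_k$, which gives $(r_ir_j)^3=(r_ir_jr_i)(r_jr_ir_j)=r_k^2=1$ and $G\cong\frakS_3$. (One still needs, as the paper notes, that the action of the $r_i$ on the remaining $18$ lines is determined, and that the action on the $27$ lines is faithful.) Your closing argument that $g\in G$ inherits this gap and has a further hole of its own: you dismiss the possibility that $h=g\gamma^{-1}$ is a nontrivial order-$3$ element fixing $\ell$ pointwise by claiming this forces $X$ into the 3A stratum or to be the Fermat cubic, but elements of class 3D also fix a line of $\bbP^3$ pointwise, so that case is not excluded. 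Both problems disappear once the action of the $r_i$ on the lines is computed as above, since then $g$ and one of $r_1r_2$, $r_2r_1$ visibly induce the same permutation of the $27$ lines and are therefore equal.
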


\begin{proof}
There exist six skew lines $E_1, \ldots, E_6$ in $X$ such that $g$ acts
on them as the permutation $(123)(456)$.
The conjugate pair of triads given by \eqref{eq:conjugateTriads}
is invariant under this action.

Up to a choice of coordinates, we may blow down $E_1, \ldots, E_6$ to the points
$(1:0:0), (0:1:0), (0:0:1), (a:b:c), (c:a:b), (b:c:a)$.
Here $g$ acts by permuting the standard coordinate vectors.
There is another conjugacy class of automorphisms of order $3$ acting on
$\bbP^2$, but each of its orbits are contained in a line, so the $6$
points would not be of general position.

Let $T_1,T_2,T_3$ be the tritangent trios corresponding to the columns
of \eqref{eq:conjugateTriads}, and $T'_1,T'_2,T'_3$ be the conjugate
triad corresponding to the rows.
Recall that the lines $F_{ij}$ correspond to lines between the points in
the plane model corresponding to $E_i$ and $E_j$.
A calculation shows that $F_{14}, F_{26}, F_{35}$ all contain the point
corresponding to $(bc:ab:ac)$ in the plane model.
Thus, $T_1$ corresponds to an Eckardt point.
Similarly, there are two other Eckardt points corresponding to the other
two columns of \eqref{eq:conjugateTriads}.
They lie on a trihedral line $\ell$ since the corresponding tritangent
planes $T_1,T_2,T_3$ form a trihedron.

Let $r_i$ be the reflection corresponding to the Eckardt point
corresponding to $T_i$.
Recall that the action of $r_i$ leaves invariant only the exceptional
lines in the corresponding tritangent plane $T_i$,
while all other exceptional lines are taken to the other exceptional line
in the unique tritangent plane containing a line from $T_i$.
Note that rows of \eqref{eq:conjugateTriads} are tritangent trios
precisely of this form.
Thus $r_i$ acts by interchanging the other two columns of
\eqref{eq:conjugateTriads}.

Note that any particular line $E_i$ is incident to exactly $3$ lines in
\eqref{eq:conjugateTriads} --- exactly one in each row and column.
Thus, we may determine the action of $r_i$ on all 27 lines from its
action on the $9$ lines of the trihedral pair.
We conclude that $\langle r_1, r_2, r_3 \rangle \simeq \frakS_3$.
Setting $h=r_2r_1$, we find $h(T_i)=T_{i+1}$ and $h(T'_i)=T'_i$,
where $i$ is interpreted mod $3$.
From this we conclude that $h(E_i)=E_{\sigma(i)}$ where
$\sigma=(123)(456)$.  Thus $g=h \in \langle r_1, r_2, r_3 \rangle$.
\end{proof}

\begin{lem} \label{lem:normalForms3D}
Let $X$ be a cubic surface admitting an automorphism $g$ of type 3D.
If $p \ne 3$ and $X$ does not also admit an automorphism of type 6E,
then it can be defined by the equation
\begin{equation}\label{3d1}
x_0^3+x_1^3+x_2^3+x_3^3 +c_0x_0x_1x_2 +
c_1(x_0+x_1+x_2)x_3^2 = 0\ .
\end{equation}
If $p=3$, then $X$ can be defined by the equation
\begin{equation}\label{3d2}
(c_0(x_0+x_1+x_2)+c_1x_3)(x_0x_1+x_0x_2+x_1x_2)
+x_0x_1x_2+(x_0+x_1+x_2)x_3^2= 0
\ .
\end{equation}
In both cases $g(x_0:x_1:x_2:x_3)=(x_1:x_2:x_0:x_3)$.
\end{lem}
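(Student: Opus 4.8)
The plan is to begin from the explicit matrix form of $g$ supplied by Theorem~\ref{thm:3matrices}(3): up to a choice of coordinates, and in \emph{every} characteristic, one may take $g(x_0:x_1:x_2:x_3)=(x_1:x_2:x_0:x_3)$. Write $e_1,e_2,e_3$ for the elementary symmetric functions of $x_0,x_1,x_2$ and $A=(x_0-x_1)(x_1-x_2)(x_2-x_0)$. The space of $g$-invariant cubic forms then has dimension $8$, spanned by
\[
e_1^3,\quad e_1e_2,\quad e_3,\quad A,\quad e_1^2x_3,\quad e_2x_3,\quad e_1x_3^2,\quad x_3^3 .
\]
I would act on this $7$-dimensional projective family by the subgroup $N\subset\PGL_4(\bbk)$ normalizing $\langle g\rangle$. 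The relevant elements of $N$ are: the part of the centralizer of $g$ acting inside $\langle x_0,x_1,x_2\rangle$ (the circulant algebra $\bbk[g]$), the rescalings of $x_3$, the translations $x_3\mapsto x_3+\lambda e_1$ and $x_i\mapsto x_i+\mu x_3$ ($i=0,1,2$), and the order-two element reversing the $3$-cycle together with a transposition. Modulo scalars $\dim N=5$, consistent with the expected dimension $7-5=2$ of the 3D stratum and with the two parameters appearing in \eqref{3d1} and \eqref{3d2}. (One may, if desired, use the $g$-invariant trihedral line and the reflection $\frakS_3$ of Lemma~\ref{lem:3Dautos}, or the plane model of Theorem~\ref{thm:3matrices}(3) as a blow-up of two $g$-orbits of three points, to fix further coordinates; but the direct reduction below suffices.)

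When $p\ne 3$, I would diagonalize the action on $\langle x_0,x_1,x_2\rangle$ by passing to the eigencoordinates $u_0=x_0+x_1+x_2$, $u_1=x_0+\zeta x_1+\zeta^2x_2$, $u_2=x_0+\zeta^2x_1+\zeta x_2$, where $\zeta$ is a primitive cube root of unity. The eight invariant monomials become $u_0^3,\ u_1^3,\ u_2^3,\ u_0u_1u_2,\ u_0^2x_3,\ u_1u_2x_3,\ u_0x_3^2,\ x_3^3$, with $A$ proportional to $u_1^3-u_2^3$. Smoothness of $X$ will force several of these coefficients to be nonzero; in particular the coefficients of $u_1^3$ and $u_2^3$ turn out to be nonzero exactly when $X$ admits no automorphism of type 6E, and this is where the hypothesis is used. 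Under that hypothesis the scaling $(u_1,u_2)\mapsto(ru_1,r^{-1}u_2)$, which fixes $u_0u_1u_2$ and $u_1u_2x_3$, solves the equation $r^{6}=c$, where $c$ is the ratio of those two coefficients, and equalizes them, thereby killing the $A$-term. The remaining unwanted monomials are then eliminated by combining the translations $x_3\mapsto x_3+\lambda u_0$ and $u_0\mapsto u_0+\mu x_3$ with the diagonal rescalings, and the coefficients of $x_0^3+x_1^3+x_2^3$ and of $x_3^3$ are scaled to $1$; rewriting in the coordinates $x_0,x_1,x_2,x_3$ gives \eqref{3d1}.

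When $p=3$ the eigencoordinate device is unavailable: there is no primitive cube root of unity and $g$ acts on $\langle x_0,x_1,x_2\rangle$ as a single unipotent Jordan block. Here I would either invoke the Cohen--Wales classification (Proposition~\ref{prop:CohenWales}), using the $g$-invariance of $\Crit(X)$ to pin down which of its four normal forms applies and then imposing invariance under $g$ (where the identity $x_0^3+x_1^3+x_2^3=e_1^3$ simplifies matters), or else work directly with the modular invariant ring of $\langle g\rangle$ on the indecomposable module $V_3$. In either case the reduction to \eqref{3d2} is carried out with the now partly unipotent normalizer, exploiting the characteristic-$3$ identities $e_1\mapsto e_1$ under $x_i\mapsto x_i+\mu x_3$ and $x_3^3\mapsto x_3^3+\lambda^3 e_1^3$ under $x_3\mapsto x_3+\lambda e_1$, after first using smoothness --- tested here via the divided discriminant of Demazure, since the naive discriminant vanishes identically --- to locate the nonzero coefficients. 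All transformations used preserve $g(x_0:x_1:x_2:x_3)=(x_1:x_2:x_0:x_3)$.

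I expect the main obstacle to be the characteristic-$3$ case. Because $3$ divides the order of $\langle g\rangle$, the invariant theory is modular --- the invariant ring of $V_3$ under $\bbZ/3$ is not generated by naive orbit sums, and the relevant part of the normalizer is no longer a torus but has a unipotent direction whose action on the eight coefficients is genuinely nonlinear --- and smoothness is governed by the divided discriminant rather than by a single non-vanishing polynomial, so keeping track of which coefficients can be normalized to zero requires care. A secondary delicate point, present only for $p\ne 3$, is proving cleanly that the vanishing of the coefficient of $u_1^3$ (equivalently $u_2^3$) is equivalent to the presence of a type-6E automorphism, which is what licenses stating \eqref{3d1} only under that hypothesis and forces the separate treatment of those surfaces.
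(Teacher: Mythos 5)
Your strategy for $p\ne 3$ (diagonalize $g$, list the eight invariant cubic monomials in the eigencoordinates, normalize with the normalizer of $\langle g\rangle$) is the paper's strategy, but the place where you invoke the hypothesis that $X$ has no automorphism of type 6E is wrong, and the step where that hypothesis is actually needed is the one you leave unverified. Write the invariant form as $a\,u_1^3+b\,u_2^3+C(u_0,x_3)+L(u_0,x_3)\,u_1u_2$ with $C$ cubic and $L$ linear. Your claim that ``the coefficients of $u_1^3$ and $u_2^3$ are nonzero exactly when $X$ admits no automorphism of type 6E'' is false: if $a=0$ then all four partials of $F$ vanish at $(u_0{:}u_1{:}u_2{:}x_3)=(0{:}1{:}0{:}0)$, so $a,b\ne 0$ follows from smoothness alone, for every 3D surface. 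The 6E hypothesis enters instead in your final sentence (``the remaining unwanted monomials are then eliminated by combining the translations\dots''), which you treat as routine. To reach \eqref{3d1} one must arrange $L$ proportional to $u_0$ \emph{and} the coefficient of $x_3^3$ in $C$ nonzero, i.e.\ $L\ne 0$ and $C,L$ with no common zero in $\bbP^1$; the paper shows $L=0$ forces the Fermat surface and a common zero of $C$ and $L$ produces a commuting 2A involution, hence a 6E automorphism either way. (Indeed the 6E normal form has vanishing $x_3^3$-coefficient, so it genuinely cannot be brought to \eqref{3d1}.) After that, killing the $u_0^2x_3$ term while keeping $L\propto u_0$ is the bulk of the paper's computation — a fractional linear transformation in $(u_0,x_3)$ fixing the zero of $L$, whose existence has to be checked separately when $p=2$ — and your proposal supplies no argument for it.

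For $p=3$ your sketch does not contain a proof: you name two possible routes (Cohen--Wales, or modular invariant theory of the unipotent $\bbZ/3$) and carry out neither, while explicitly identifying this case as the main obstacle. The paper's actual device is the one you set aside as optional: by Lemma~\ref{lem:3Dautos} the cyclic group $\langle g\rangle$ sits inside an $\frakS_3$ generated by reflections, so one may assume $F$ is invariant under all permutations of $x_0,x_1,x_2$. This discards the alternating term $A=(x_0-x_1)(x_1-x_2)(x_2-x_0)$ for free — exactly the term you have no way to kill in characteristic $3$, where the eigencoordinate trick and the torus $(u_1,u_2)\mapsto(ru_1,r^{-1}u_2)$ are unavailable — and reduces the problem to an explicit, entirely non-modular normalization of $a\sigma_1^3+b\sigma_1\sigma_2+c\sigma_3+d\sigma_1^2x_3+e\sigma_2x_3+f\sigma_1x_3^2+gx_3^3$ by matrices commuting with $\frakS_3$. (Your worry about modular invariant rings is also a red herring in degree $3$: the cubics form a permutation module for $\bbZ/3$, so the invariants are spanned by orbit sums in every characteristic.) As it stands, the $p=3$ half of your proposal is a plan, not an argument, and the missing ingredient is precisely the reflection group of Lemma~\ref{lem:3Dautos}.
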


\begin{proof}
In the case $p\ne 3$, we may choose a normal form as in \cite{CAG}
equation (9.66).\footnote{Note that the coefficient at $t_0t_2t_3$ in the equation cannot be made equal to $1$.}
We would prefer a normal form which highlights the $\frakS_3$-symmetry.
Suppose $g$ acts via $g(t_0:t_1:t_2:t_3) = (t_0:\zeta t_1:\zeta^2 t_2:t_3)$.
By enumerating all homogeneous monomials of degree $3$ invariant under
this action we obtain the following normal form
\[
C(t_0,t_3) + L(t_0,t_3)t_1t_2 + t_1^3 + t_2^3 = 0
\]
for a smooth cubic surface where $C$ and $L$ are homogeneous forms.
One checks that $C$ must have distinct solutions in $\bbP^1$ or else the
surface is singular.
If $L=0$ then $X$ is the Fermat surface and has automorphisms of type 6E;
thus we may assume $L \ne 0$.
If $C$ and $L$ share a common zero then there exists an
automorphism of $\bbP^1$ which leaves the common zero fixed and
interchanges the remaining solutions of $C$;
this extends to an automorphism of $X$ of type 2A which commutes with $g$.
Thus, we may assume $C$ and $L$ have no common solutions or else
there exists an automorphism of type 6E.

Without loss of generality, we may assume $L=t_0$ and $C(t_0,t_3)$
has the monomials $t_0^3$ and $t_3^3$ in its support.
We now show that we may in addition assume $t_0^2t_3$ has coefficient
zero in $C$.
Let $p(t)=C(t,1)$ be the dehomogenization of $C$ and suppose
$r_1,r_2,r_3$ are its roots (non-zero by our assumption on the common
zeroes of $L$ and $C$).
We want to produce a fractional linear transformation $f(t)$
such that $p(f(t))$ is still cubic, has no $t^2$ term, and such that $f(0)=0$.
Consider the fractional linear transformation
\[
f : t \mapsto \frac{t}{at+1}
\]
for a parameter $a$.
We want to solve for the parameter $a$ such that
\begin{align} \label{eq:killSquareTerm}
\begin{split}
&0 = f(r_1)+f(r_2)+f(r_3)\\ =
&\frac{(r_1+r_2+r_3)+2a(r_1r_2+r_1r_3+r_2r_3)+3a^2r_1r_2r_3}{
(ar_1+1)(ar_2+1)(ar_3+1)} \ .
\end{split}
\end{align}
If $p \ne 2$, we may assume $r_1=1,r_2=-1$ and this becomes
\[
r_3-2a-3a^2r_3=0\ ,
\]
which has a solution such that the denominator of
\eqref{eq:killSquareTerm} is then non-zero whenever $r_3 \ne 1,-1,0$.
If $p = 2$, the equation \eqref{eq:killSquareTerm} becomes
\[
(r_1+r_2+r_3)+a^2r_1r_2r_3 = 0
\]
which has a solution since $r_1,r_2,r_3$ are non-zero;
the denominators of \eqref{eq:killSquareTerm} are non-zero whenever
$r_1,r_2,r_3$ are distinct and non-zero.

Thus, our normal form is
\[
t_0^3 + t_1^3 + t_2^3 + c_0t_0t_1t_2 + c_1t_0t_3^2 + t_3^3 = 0
\]
for parameters $c_0,c_1$.
We now make the change of coordinates
\begin{align*}
t_0 & \mapsto x_0+x_1+x_2\\
t_1 & \mapsto x_0+\zeta^2x_1+\zeta x_2\\
t_2 & \mapsto x_0+\zeta x_1+\zeta^2x_2\\
t_3 & \mapsto x_3
\end{align*}
to arrive at the equation
\[
(3+c_0)(x_0^3+x_1^3+x_2^3) +(18-3c_0)x_0x_1x_2 +
c_1(x_0+x_1+x_2)x_3^2+x_3^3 \ .
\]
The surface is singular if $c_0=-3$, so we may rescale the variables
to obtain the desired normal form.


Now assume $p = 3$. By Lemma~\ref{lem:3Dautos}, we may assume that the
equation is invariant with respect to permutation of the first three
coordinates.  We can write $F\in \Bbbk[x_3][x_0,x_1,x_2]^{\frakS_3}$ in
the form
\begin{equation} \label{eq:3Dp3startForm}
F= a\sigma_1^3+b\sigma_1\sigma_2+c\sigma_3+d\sigma_1^2x_3+e\sigma_2x_3
+f\sigma_1x_3^2+gx_3^3,
\end{equation}
where $\sigma_1, \sigma_2, \sigma_3$ are the elementary symmetric
polynomials in $x_0,x_1,x_2$.
The point $(1:1:1:0)$ is singular unless $c \ne 0$, so we may assume
without loss of generality that $c=1$.

We use a linear transformation $\tau$, which commutes with $\frakS_3$,
defined by the matrix
\begin{equation} \label{eq:3Dp3matrix}
M = 
\begin{pmatrix}
1+a_2&a_2&a_2&a_4\\
a_2&1+a_2&a_2&a_4\\
a_2&a_2&1+a_2&a_4\\
a_5&a_5&a_5&1
\end{pmatrix} \ ,
\end{equation}
which has determinant $1$.
Note that neither $\tau(\sigma_1)$ nor $\tau(\sigma_2)$ contain pure
powers of $x_3^2$, so the coefficient of $x_3^3$ in $F$ is $a_4^3+g$.
Taking $a_4=\sqrt[3]{-g}$, we may thus assume that $g=0$.
Now, $f \ne 0$ or else $X$ is singular, so we may assume $f=1$.

Setting $a_4=0$, the transformation $\tau$ induces the following action:
\begin{align*}
\sigma_1 &\mapsto \sigma_1\\
\sigma_2 &\mapsto \sigma_2 - a_2\sigma_1^2 \\
\sigma_3 &\mapsto \sigma_3 + a_2^2(a_2+1)\sigma_1^3 + a_2\sigma_1\sigma_2\\
x_3 &\mapsto a_5\sigma_1 + x_3 \ .
\end{align*}

Assuming $a_4=0$ we obtain a coefficient of
$-ea_2-a_5+d$ for $\sigma_1^2x_3$ in $\tau(F)$.
Thus we may assume $d=0$ in $F$.
Applying $\tau$ again with $a_4=0$ and $a_5=-ea_2$, the coefficient of
$\sigma_1^2x_3$ remains $0$ and the coefficient of $\sigma_1^3$ becomes
\[
a_2^3 + (1-e^2)a_2^2 - ba_2 + a,
\]
and so one can choose $a_2$ to eliminate the coefficient of $\sigma_1^3$
also.
We are left with the expression
\[
F = b\sigma_1\sigma_2 + \sigma_3 + e\sigma_2 x_3 + \sigma_1
x_3^2\ ,
\]
which is the desired normal form.
\end{proof}

\section{Automorphisms of order 4}

Let $X$ be a smooth cubic surface and $g$ be an automorphism of order $4$.

\begin{lem}
One of the following holds:
\begin{enumerate}
\item
Up to choice of coordinates,
$g$ has a lift $\widetilde{g} \in \GL(V)$ of the form
\begin{equation} \label{eq:4Amatrix}
\widetilde{g} =
\left(\begin{smallmatrix}
i&0&0&0\\
0&-1&0&0\\
0&0&1&0\\
0&0&0&1
\end{smallmatrix}\right)
\text{ if $p \ne 2$, or }
\left(\begin{smallmatrix}
1&1&0&0\\
0&1&1&0\\
0&0&1&0\\
0&0&0&1
\end{smallmatrix}\right)
\text{if $p=2$.}
\end{equation}
Here $i$ denotes a primitive $4$th root of unity.
The element $g$ corresponds to an element of class 4A in $W(\sfE_6)$.
\item
Up to choice of coordinates,
$g$ has a lift $\widetilde{g} \in \GL(V)$ of the form
\begin{equation} \label{eq:4Bmatrix}
\widetilde{g} =
\left(\begin{smallmatrix}
0&0&0&1\\
1&0&0&0\\
0&1&0&0\\
0&0&1&0
\end{smallmatrix}\right)\ .
\end{equation}
The element $g$ corresponds to an element of class 4B in $W(\sfE_6)$.
\end{enumerate}

\end{lem}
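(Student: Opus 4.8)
The plan is to separate the determination of the conjugacy class of $g$ from the determination of its matrix form. The class is the easy part: $g^{2}$ is an involution preserving $X$, so by Proposition~\ref{prop:involutions} it has class $2A$ or $2B$, and $g$ commutes with $g^{2}$. By Lemma~\ref{lem:excludedClasses} the only order-$4$ classes in $W(\sfE_6)$ realised on a cubic surface are $4A$ and $4B$, and from Table~\ref{tbl:cyclicWE6} the square of a $4A$-element has class $2A$ while the square of a $4B$-element has class $2B$; since $2A\neq 2B$, the class of $g$ equals $4A$ exactly when $g^{2}$ is $2A$ and $4B$ exactly when $g^{2}$ is $2B$. (Alternatively, $g$ fixes a $g^{2}$-invariant exceptional line --- in the $2A$ case because an order-$4$ permutation of the triad of $g^{2}$-invariant lines must fix one of them, in the $2B$ case because the pointwise-fixed line of $g^{2}$ is canonical --- and one can read the class from Table~\ref{tbl:D5ccs} after blowing it down; this is a useful cross-check.) It then remains to identify the matrix form and match it with the matrices of Proposition~\ref{prop:involutions}.

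Suppose first $p\neq 2$. Then $g$ is semisimple, so a lift $\widetilde g$ may be rescaled so that $\widetilde g^{4}=\id$ and $\widetilde g=\operatorname{diag}(\lambda_{0},\lambda_{1},\lambda_{2},\lambda_{3})$ with all $\lambda_{i}\in\mu_{4}$; order $4$ in $\PGL(V)$ means the $\lambda_{i}$ do not all lie in one coset of $\mu_{2}$. Up to reordering, rescaling and the symmetry $i\leftrightarrow -i$ the multiset $\{\lambda_{i}\}$ is classified by how many $\lambda_{i}$ are primitive fourth roots of unity, and one runs through the finitely many cases. The device for discarding the bad ones is the elementary fact that a smooth diagonal-invariant cubic $F$ must contain, for each $j$, a monomial $x_{j}^{3}$ or $x_{j}^{2}x_{k}$ --- otherwise $e_{j}\in X$ and $\Sing(X)\ni e_{j}$. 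Together with the eigenvalue constraint this forces, in every case but two, the eigenspace of $F$ into an ideal $(x_{j},x_{k})$ or $(x_{j},x_{k})^{2}$, or into $\bbk[x_{j},x_{k},x_{l}]$ (a cone); in the first two situations all partials of $F$ vanish along the line $\{x_{j}=x_{k}=0\}$ --- using that over $\bbk$ an auxiliary binary quadratic always has a root --- so $X$ is singular. The survivors are $\operatorname{diag}(i,-1,1,1)$, whose square $\operatorname{diag}(-1,1,1,1)$ is a $2A$-reflection conjugate to \eqref{eq:2Amatrix}, and $\operatorname{diag}(i,-i,-1,1)$, which is the diagonalisation of the cyclic permutation \eqref{eq:4Bmatrix} and whose square $\operatorname{diag}(-1,-1,1,1)$ is conjugate to \eqref{eq:2Bmatrix}; that both occur follows from the $4A$ and $4B$ normal forms below (the latter already acts on the Fermat cubic).

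Now suppose $p=2$. Then $g$ has order a power of $2$, hence is unipotent; writing $\widetilde g=\id+N$ with $N$ nilpotent, $(\widetilde g-\id)^{2}\neq 0$ forces a Jordan block of $N$ of size $\geq 3$, so the Jordan type of $\widetilde g$ is $[3,1]$ or $[4]$, and both are projectively of order $4$. In characteristic $2$ one has $\widetilde g^{2}=\id+N^{2}$. For type $[3,1]$, $N^{2}$ has rank $1$ and square zero, so $\widetilde g^{2}$ is a transvection, which by Proposition~\ref{prop:involutions} is the form \eqref{eq:2Amatrix} of a $2A$-involution; thus $g$ is of class $4A$ and $\widetilde g$ is the second matrix of \eqref{eq:4Amatrix}. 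For type $[4]$, $N^{2}$ has rank $2$, square zero, and Jordan type $2+2$, so $\widetilde g^{2}$ is conjugate to \eqref{eq:2Bmatrix}, whence $g$ is of class $4B$; and $[4]$ is exactly the Jordan type over a field of characteristic $2$ of the cyclic permutation \eqref{eq:4Bmatrix} (its minimal polynomial there is $(t-1)^{4}$), and realisability of this case again follows from the Fermat cubic.

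The principal obstacle is the casework in the $p\neq 2$ step: assembling the complete list of eigenvalue multisets and checking that each one outside the two claimed possibilities forces a singular surface. The cleanest route is to isolate the monomial criterion above as a short standalone lemma (it is also the mechanism implicitly behind the matrix descriptions in Theorem~\ref{thm:3matrices}), after which each excluded case reduces to exhibiting a common factor, or a common root of the gradient, along a line. A secondary delicate point is the $p=2$ analysis, which depends on the precise identification --- established in the proof of Proposition~\ref{prop:involutions} --- of class $2A$ with the transvection \eqref{eq:2Amatrix} and class $2B$ with \eqref{eq:2Bmatrix}.
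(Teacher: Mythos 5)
Your proposal is correct and follows essentially the same route as the paper: restrict to classes 4A and 4B via Lemma~\ref{lem:excludedClasses}, enumerate diagonal forms for $p\ne 2$ and Jordan forms for $p=2$, and pin down the class of $g$ from the class of $g^2$ (the paper delegates the $p\ne 2$ eigenvalue casework to \S 9.5.1 of \cite{CAG}, which you instead carry out with the singular-at-$e_j$ monomial criterion). One phrase is loose: $F\in(x_j,x_k)$ does \emph{not} by itself make all partials vanish along the line $x_j=x_k=0$ --- the partials $\partial_jF,\partial_kF$ restrict there to binary quadratics, and one must use the eigenvalue constraints to see that in the excluded cases these are proportional or one vanishes, so that a common root (hence a singular point) exists.
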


\begin{proof}
From Lemma~\ref{lem:excludedClasses}, only classes 4A and 4B may occur,
so we only need to determine the corresponding classes of matrices
of order $4$ which leave invariant a smooth cubic.
If $p \ne 2$ then one can diagonalize the matrix.
The analysis of the possibilities is then the same as the case over $\bbC$
given in \S{}9.5.1~of~\cite{CAG}.
The class 4A can be distinguished from 4B by the eigenvalues of
$\tilde{g}^2$.
If $p=2$, then there are only two matrices of order $4$ up to conjugacy:
$J_4(1)$ and $J_3(1) \oplus J_1(1)$ where $J_k(\lambda)$ is a
$k \times k$ Jordan block with eigenvalue $\lambda$.
A matrix $J_4(1)$ has square $J_2(1) \oplus J_2(1)$ and thus corresponds
to 4B.
A matrix $J_3(1) \oplus J_1(1)$ has square
$J_2(1) \oplus J_1(1)^{\oplus 2}$ and thus corresponds to 4A.
\end{proof}

\subsection{Class 4A}
\label{sec:4A}

\begin{lem} \label{lem:4Adesc}
Let $X$ be a smooth cubic surface with an automorphism $g$ of class 4A.
There are exactly three invariant exceptional lines
$\ell_0, \ell_1, \ell_2$, which form a tritangent plane.
The remaining $24$ lines are partitioned into $6$ orbits of $4$ lines each.
Each orbit consists of two pairs of incident lines, each of which form
tritangent planes with one of $\ell_0, \ell_1, \ell_2$.
\end{lem}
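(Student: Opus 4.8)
The plan is to reduce the lemma to a statement about the abstract action of the conjugacy class 4A on the configuration of $27$ lines. Since the embedding $\Aut(X)\hookrightarrow W(\sfE_6)$ is by definition the induced action on the $27$ lines, the cycle type of $g$ and all incidences among its orbits are invariants of the class 4A alone, so it suffices to verify everything on one convenient smooth cubic surface carrying such an automorphism (a surface over $\bbC$, or the explicit one below when $p\ne 2,3$). The starting point is that $g^{2}$ has class 2A (Table~\ref{tbl:cyclicWE6}); by Proposition~\ref{prop:involutions}(1) and Theorem~\ref{thm:Eckardt}, $g^{2}$ is a reflection whose center is an Eckardt point $q_{0}$, the tritangent plane $T$ at $q_{0}$ meets $X$ in the three $g^{2}$-invariant lines $\ell_{0},\ell_{1},\ell_{2}$ through $q_{0}$, and the other $24$ lines are split by $g^{2}$ into $12$ pairs of incident lines, each pair forming a tritangent trio together with exactly one $\ell_{j}$. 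Using that a tritangent trio is determined by any two of its members (so no line off the trio is incident to two of $\ell_{0},\ell_{1},\ell_{2}$), one sees that each $\ell_{j}$ is associated with exactly four of these twelve pairs, and that the eight lines incident to $\ell_{j}$ are precisely the lines occurring in those four pairs.

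Next I would carry out the combinatorial bookkeeping. Since $g$ centralizes $g^{2}$, it permutes $\{\ell_{0},\ell_{1},\ell_{2}\}$ and the set of $12$ pairs. It fixes no pair: if $g$ preserved a $g^{2}$-pair $\{a,b\}$ then $g(a)\in\{a,b\}$, and in either case one gets $g^{2}(a)=a$, contradicting that $g^{2}$ swaps $a$ and $b$. As $g$ has order $4$ its line-orbits have size dividing $4$, so a $g$-orbit of pairs of size $4$ is impossible (it would produce a line-orbit of size $8$); hence every $g$-orbit of pairs has size $2$, and a direct check shows that the four lines underlying two paired pairs $B,g(B)$ form a single $4$-cycle of $g$. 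This already gives the partition of the $24$ non-invariant lines into $6$ orbits of size $4$, each a union of two $g^{2}$-pairs of incident lines that form tritangent trios with $\ell_{0},\ell_{1},\ell_{2}$. The remaining, and only genuinely nonformal, point is that $g$ fixes each of $\ell_{0},\ell_{1},\ell_{2}$ individually rather than swapping two of them. Once this is known, $g$ preserves the eight lines incident to each $\ell_{j}$, hence the set of four pairs attached to $\ell_{j}$, so within each $4$-orbit both pairs are attached to the same $\ell_{j}$, exactly as asserted.

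For that last point I would exhibit a model: for $p\ne 2,3$ the surface $X=V(x_{0}^{2}x_{1}+x_{1}^{2}x_{2}+x_{2}^{3}+x_{3}^{3})$ is smooth and invariant under $g=\operatorname{diag}(i,-1,1,1)$, and since $g^{2}=\operatorname{diag}(-1,1,1,1)$ is a reflection, $g$ has class 4A (it is the only order-$4$ class whose square is 2A). The three lines on $X$ through the Eckardt point $(1:0:0:0)$ are $\{x_{1}=x_{3}+\epsilon x_{2}=0\}$ with $\epsilon^{3}=1$, and each is manifestly $g$-invariant; by the reduction above this then holds for every surface of class 4A, in every characteristic. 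I expect the main obstacle to be purely organizational: keeping straight the correspondence between $g^{2}$-pairs and the three trio lines, and confirming that a $g$-orbit of two pairs really is a single $4$-cycle. Everything else is quoted from Proposition~\ref{prop:involutions} and Theorem~\ref{thm:Eckardt} or is a one-line verification.
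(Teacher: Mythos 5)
Your proof is correct, but it takes a genuinely different route from the paper's. The paper blows down a $g$-invariant exceptional line to a quartic del Pezzo surface, identifies $g$ with the class $\bar{2}\bar{2}1$ in $W(\sfD_5)$ via Table~\ref{tbl:D5ccs}, and then reads off the orbit structure by explicit computation in the $\bbF_2^5$ model of the $16$ lines (checking $g(G_1)=G_2$, $g(G_3)=G_4$, etc.), with the tritangent trio $E_6,F_{56},G_5$ verified invariant by hand. You instead work entirely on the cubic surface side: you quote the 2A orbit structure of $g^2$ from Proposition~\ref{prop:involutions}, run a short combinatorial argument on the twelve $g^2$-pairs, and settle the one non-formal point (that $g$ fixes each of $\ell_0,\ell_1,\ell_2$ individually rather than permuting them) on an explicit diagonal model, transporting the conclusion to all characteristics via the standard conjugacy-class reduction --- a reduction the paper itself uses elsewhere (e.g.\ in Lemma~\ref{lem:3Aautos}), and which is legitimate here since every claim in the statement concerns only the permutation of the $27$ lines and their incidences. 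Your approach buys independence from the degree-$4$ del Pezzo machinery and makes the role of $g^2$ transparent; the paper's buys a computation that stays uniform in $p$ without needing a smooth model in every characteristic.

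One small repair: your exclusion of a size-$4$ orbit of pairs via ``it would produce a line-orbit of size $8$'' is not the right reason (the union of four disjoint pairs could a priori split into two line-orbits of size $4$). The correct, and simpler, observation is that each pair is by definition a $\langle g^2\rangle$-orbit, so $g^2$ fixes every pair setwise and the $\langle g\rangle$-orbit of a pair is just $\{B, g(B)\}$, of size at most $2$; combined with your (correct) argument that no pair is fixed, every orbit of pairs has size exactly $2$. The rest of the bookkeeping, including that $B\cup g(B)$ is a single $4$-cycle and that both pairs in an orbit attach to the same $\ell_j$ once $g$ is known to fix each $\ell_j$, is fine.
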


\begin{proof}
From Table~\ref{tbl:D5ccs}, we see that $X$ is obtained by blowing up a
point on a del Pezzo surface $Y$ of degree $4$ where $g$ acts on $Y$
with class $\bar{2}\bar{2}1$.
Let $E_1, \ldots, E_5$ correspond to a skew set of exceptional lines on
$Y$ and let $E_6$ be the line corresponding to the blown up point.
We may assume $g$ acts as $\iota_{13}(12)(34)$ on $Y$.
Note that $g^2$ is the involution $\iota_{1234}$
and we know its orbit structure from \eqref{eq:27orbits2A}.
We check that the each line in the tritangent trio $E_6, F_{56}, G_5$ is
$g$-invariant.
One checks that $\iota_{13}(12)(34)$ acts freely on the $16$ lines on $Y$
using, for example, the $\bbF_2^5$ model discussed in
Section~\ref{sec:DP4}.
One checks that $g(G_1)=G_2$ and $g(G_3)=G_4$ by exploiting incidence
relations with the lines on $Y$.  These are in different $g^2$ orbits,
so $g$ acts transitively on the $24$ non-invariant lines.
Since $g^2$-orbits are pairs of incident lines as stated in the theorem,
we have proved the result.
\end{proof}

\begin{lem}
Suppose $p=2$, $X$ is a smooth cubic surface, and $g$ is an automorphism
of class 4A.
An invariant exceptional line on $X$ is blown down to the canonical
point of a del Pezzo surface $Y$ of degree $4$.
\end{lem}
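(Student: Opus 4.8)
The plan is to blow down the given $g$-invariant exceptional line $\ell$ to a point $q$ on a quartic del Pezzo surface $Y$, and to prove that $q$ is the canonical point $p_0$ of $Y$; granting this, functoriality of the blow-up shows that the full group $\Aut(Y)$ acts on $X$, which is the reason this lemma is recorded. By Table~\ref{tbl:D5ccs} (exactly as in the proof of Lemma~\ref{lem:4Adesc}), $g$ acts on $Y$ with class $\bar{2}\bar{2}1$ in $W(\sfD_5)$, so $\tau := g^2$ acts on $Y$ as the involution of the first kind $\iota_{1234}$. Since $\ell$ is $g$-invariant, $q$ is fixed by $g$, hence by $\tau$, so $q$ lies on $C := \operatorname{Fix}(\tau)$; and by Proposition~\ref{prop:inv12kind} the curve $C$ is connected of arithmetic genus $1$, its unique singular point is $p_0$ (either the tangency point of its two smooth rational components or the cusp of a cuspidal rational curve), and $p_0$ is fixed by $g$, being intrinsic to $C$. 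Thus it suffices to show $\operatorname{Fix}(g)\cap C = \{p_0\}$.

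Assume toward a contradiction that $q\ne p_0$. The key step would be the reduction: \emph{$g$ fixes no curve on $Y$ pointwise.} Indeed, if $g$ fixed a curve $D\subseteq Y$ pointwise, then $g$ would fix its strict transform $D'$ on $X$ pointwise; but $g$ has class 4A and, by the lemma preceding \S\ref{sec:4A}, is represented on $\bbP^3$ by $J_3(1)\oplus J_1(1)$ when $p=2$, so the fixed locus of $g$ on $\bbP^3$ is a single line $L$. Then $D'\subseteq L$, so $L\subseteq X$ and $D'=L$ is a $g$-invariant exceptional line, hence one of the three invariant lines $\ell_0,\ell_1,\ell_2$ of Lemma~\ref{lem:4Adesc} and not the exceptional divisor of $X\to Y$; say $D'=\ell_1$. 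Now $\tau=g^2$ fixes $\ell_1$ pointwise, so $\ell_1$ lies in the axis plane $A$ of the reflection $\tau$. Moreover $g$ acts on $\ell_0$ fixing the point $\ell_0\cap\ell_1$; this action is non-trivial (otherwise the fixed locus of $g$ on $\bbP^3$ would contain the two distinct lines $\ell_0,\ell_1$) and has order dividing $4$, hence order $2$ since $\PGL_2(\bbk)$ has no element of order $4$ in characteristic $2$; therefore $\tau=g^2$ also fixes $\ell_0$ pointwise, so $\ell_0\subseteq A$. But $A$ is then the unique plane through the two intersecting lines $\ell_0,\ell_1$, which is the tritangent plane $T$ of the trio $\ell_0,\ell_1,\ell_2$, so $A=T$, $\tau$ fixes $T$ pointwise, and the fixed locus of $\tau$ on $X$ equals $T\cap X=\ell_0\cup\ell_1\cup\ell_2$. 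Blowing down $\ell_0$ exhibits $C$ as $\bar\ell_1\cup\bar\ell_2$; since both $\bar\ell_1$ and $\bar\ell_2$ pass through $q$ while the two components of $C$ meet only at $p_0$ (Proposition~\ref{prop:inv12kind}), this forces $q=p_0$, contrary to assumption. This establishes the reduction.

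With the reduction in hand, $g$ restricts non-trivially to every irreducible component of $C$, and I would finish by a short case analysis. If $C$ has two components and $g$ interchanges them, then $\operatorname{Fix}(g)\cap C\subseteq C_1\cap C_2=\{p_0\}$. Otherwise $g$ preserves each component $C_i\cong\bbP^1$; then $g|_{C_i}$ is a non-trivial automorphism of $\bbP^1$ of order dividing $4$, hence of order $2$ (no order-$4$ elements in characteristic $2$), and since a non-trivial involution of $\bbP^1$ in characteristic $2$ has a single fixed point and $p_0\in C_i$ is fixed, $\operatorname{Fix}(g)\cap C_i=\{p_0\}$. When $C$ is the cuspidal rational curve one argues identically after passing to the normalization, the cusp being the only point lying over $p_0$. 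In every case $\operatorname{Fix}(g)\cap C=\{p_0\}$; but $q\in\operatorname{Fix}(g)\cap C$, so $q=p_0$, contradicting $q\ne p_0$. Hence $q=p_0$.

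The main obstacle is the reduction in the second paragraph: ruling out that $g$ fixes a curve on $Y$ pointwise without that hypothesis already forcing $q=p_0$. This is where one must use the precise unipotent normal form of a class-$4$A element in characteristic $2$, the non-existence of order-$4$ automorphisms of $\bbP^1$ in characteristic $2$, and --- most delicately --- control of the axis plane of the reflection $g^2$, since in characteristic $2$ this axis may coincide with a tritangent plane (Theorem~\ref{thm:Eckardt}). Everything else is bookkeeping with the orbit structure from Lemma~\ref{lem:4Adesc} and the description of fixed loci in Proposition~\ref{prop:inv12kind}.
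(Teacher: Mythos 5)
Your proof is correct, but it reaches the conclusion by a different mechanism than the paper. Both arguments blow down $\ell$ to a point $q\in Y$, identify $\tau=g^2$ as an involution of the first kind, and invoke Proposition~\ref{prop:inv12kind} to see that $\operatorname{Fix}(\tau)$ is a curve $C$ whose components meet only at the canonical point $p_0$. The paper then finishes with a short projective-geometric degree count: $C$ is a hyperplane section of $Y\subset\bbP^4$ (a pair of tangent conics), its preimage in $X$ is a plane section containing the three $g$-invariant lines, and since each conic must drop to a line under the blow-up, the blown-up point lies on both components, forcing $q=p_0$. You instead run a fixed-point analysis of the order-$4$ element itself, showing $\operatorname{Fix}(g)\cap C=\{p_0\}$ by means of the unipotent Jordan form $J_3(1)\oplus J_1(1)$, the absence of order-$4$ elements of $\PGL_2(\bbk)$ in characteristic $2$, and a separate ``reduction'' ruling out a pointwise $g$-fixed curve on $Y$ --- which, fittingly, itself terminates in the same two-tangent-conics picture. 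Your route is longer but makes explicit some points the paper treats quickly: it handles the cuspidal case of Proposition~\ref{prop:inv12kind} directly rather than implicitly excluding it via the commuting involution of class $221$, and it never needs to identify the preimage of $C$ as a tritangent plane. The only blemish is cosmetic: in the reduction you should fix at the outset that the blown-down line is $\ell_0$ (as written, $D'=\ell_1$ only guarantees $\ell\in\{\ell_0,\ell_2\}$), but since your order-$2$ argument applies equally to $\ell_0$ and $\ell_2$ and every point of the exceptional curve maps to $q$, nothing breaks.
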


\begin{proof}
From Table~\ref{tbl:D5ccs}, the involution $g^2$ acts as an involution
of first kind on $Y$.
Blowing down to a quartic del Pezzo surface $Y$, we have a point $p$ on the
fixed locus $F$ of $g^2$, which is an involution of the first kind.
The existence of $g$, an element of class $\bar{2}\bar{2}1$, implies
that $Y$ is also invariant under an involution of class $221$.
Thus, by Proposition~\ref{prop:inv12kind},
the locus $F$ is a pair of rational curves tangent at the canonical point.
Note that $F$ is the intersection of $Y$ with a hyperplane.
This is still true after projecting away from $p$ in $\bbP^4$ to
recover $X$ in $\bbP^3$.
The preimage of $F$ under the blowup is thus a tritangent plane.
Since the original curves were tangent, it has an Eckardt point.
Since the three exceptional curves are all $g$-invariant,
the original point $p$ must have been a common point of the
components of $F$ --- this is the canonical point.
\end{proof}

We will see in Lemma~\ref{lem:mash} below that
in characteristic $2$, the surfaces admitting an automorphism of class
4A coincide with those admitting automorphisms of class 4B and 6E.
So, in the next Lemma we state the result only in the case $p\ne 2$
deferring the case $p = 2$ until the next section.

\begin{lem} \label{lem:normalForms4A}
Suppose $p \ne 2$ and $X$ is a smooth cubic surface with an automorphism
$g$ of class 4A.
Then we have the normal form
\begin{eqnarray}\label{eq:4Anormal0}
x_3^2 x_2 + x_2^2x_0 + x_1 (x_1-x_0) (x_1 - c x_0) = 0
\end{eqnarray}
where $g:(x_0:x_1:x_2:x_3)\mapsto (x_0:x_1:-x_2:ix_3)$.
The stratum of such surfaces is $1$-dimensional
and is isomorphic to the moduli space of elliptic curves.
\end{lem}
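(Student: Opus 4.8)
The plan is to diagonalise $g$ and read off the invariant cubics, in the spirit of \S{}9.5.1~of~\cite{CAG}.

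Since $p\ne 2$, by \eqref{eq:4Amatrix} we may choose coordinates so that a lift of $g$ is $\operatorname{diag}(i,-1,1,1)$; relabelling the coordinates, we take $g$ to act on $\bbP^3$ by $(x_0:x_1:x_2:x_3)\mapsto(x_0:x_1:-x_2:ix_3)$ as in the statement. A degree-$3$ monomial $x_0^ax_1^bx_2^cx_3^d$ is $g$-invariant exactly when $(-1)^c i^d=1$, i.e. $2c+d\equiv 0\pmod 4$; listing the solutions with $c+d\le 3$ shows that the $g$-invariant cubic forms are precisely
\[
F = C(x_0,x_1)+\ell(x_0,x_1)\,x_2^2+e\,x_2x_3^2,
\]
where $C$ is a binary cubic, $\ell$ a binary linear form, and $e$ a scalar.

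Next I normalise $F$ and rule out the degenerate coefficients. If $e=0$ then $X$ is a cone with vertex $(0:0:0:1)$, hence singular, so $e\ne 0$ and after rescaling $x_3$ we may take $e=1$. If $\ell=0$ then all four partials of $F$ vanish at $(0:0:1:0)\in X$, so $X$ is again singular; hence $\ell\ne 0$. The centraliser of $g$ in $\PGL_4$ consists of the maps $\operatorname{diag}(M,\lambda,\mu)$ with $M\in\GL_2$ acting on $x_0,x_1$, so using $M$ we may arrange $\ell=x_0$ (adjusting $x_2,x_3$ to keep $e=1$). A short direct computation then shows that $X$ is smooth if and only if $C$ has three distinct roots, none of them the point $V(\ell)=(0:1)$ — a repeated root, or a common zero of $C$ and $\ell$, produces a singular point of $X$. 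The residual freedom is the subgroup of $\GL_2$ fixing $V(\ell)$, acting as the affine group on the three roots of $C$, together with the leftover scalings of $x_2,x_3,F$; using it we move the roots of $C$ to $0,1,c$ for some $c\ne 0,1$ and bring $F$ to the form \eqref{eq:4Anormal0}. Substituting $(x_0:x_1:-x_2:ix_3)$ confirms this form is $g$-invariant; since $g^2$ is a reflection it has class 2A (Proposition~\ref{prop:involutions}, Theorem~\ref{thm:Eckardt}), and 4A is the unique class of order $4$ whose square is 2A (Table~\ref{tbl:cyclicWE6}), so $g$ has class 4A. Conversely, for every $c\ne 0,1$ this surface is smooth and carries such a $g$, so these surfaces form exactly the $4$A stratum.

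Finally, I identify the stratum. The residual normalisations act on the roots $\{0,1,c\}$ by the affine maps fixing $\infty$, which realises the anharmonic action $c\mapsto 1-c,\ 1/c,\dots$ of $\frakS_3$; conversely each element of that action is induced by an explicit projective transformation carrying $X_c$ to $X_{c'}$, so the isomorphism class of $X_c$ depends only on the $\frakS_3$-orbit of $c$. That this is the full ambiguity follows from the reflection $g^2$: its fixed locus meets $X_c$ in the plane cubic $x_0x_2^2+x_1(x_1-x_0)(x_1-cx_0)=0$, which in the chart $x_0=1$ is the Legendre elliptic curve $x_2^2=-x_1(x_1-1)(x_1-c)$ with $\lambda=c$; its $j$-invariant is intrinsic to $X_c$ and determines $c$ up to $\frakS_3$. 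Hence $c\mapsto[X_c]$ induces an isomorphism of $(\bbA^1\smallsetminus\{0,1\})/\frakS_3$ — the $j$-line $\bbA^1$, i.e. the coarse moduli space of elliptic curves — onto the $4$A stratum, which is therefore $1$-dimensional and isomorphic to the moduli space of elliptic curves. The routine parts are the monomial bookkeeping and the smoothness checks (the only subtlety being that $\ell=0$ is incompatible with smoothness); the main work is this last step, establishing that the $\frakS_3$-ambiguity in $c$ is the only one, for which I would exhibit the six transformations realising the anharmonic group on $\{0,1,c\}$ and use the $j$-invariant of the fixed curve of $g^2$ as an intrinsic invariant.
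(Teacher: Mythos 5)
Your proof is correct and follows essentially the same route as the paper: enumerate the $g$-invariant cubics for the diagonalised action, rule out $e=0$ and $\ell=0$ by smoothness, normalise $\ell=x_0$, and put the residual binary cubic (equivalently the plane cubic cut out by the axis of $g^2$) into Legendre form. You supply more detail than the paper on the final moduli identification via the anharmonic $\frakS_3$-action and the $j$-invariant of the fixed curve of $g^2$, and, like the paper, you implicitly take the defining form to be literally invariant under the chosen lift rather than merely semi-invariant --- harmless here, since the other eigenspaces of $\operatorname{diag}(1,1,-1,i)$ acting on cubic forms contain only reducible or singular members.
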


\begin{proof}
If $p\ne 2,3$, a normal form for can be also be found in (9.67) of
\cite{CAG}.
A general $g$-invariant cubic has the form
\[
a_1x_2x_3^2 + (a_2x_0+a_3x_1)x_2^2 + C(x_0,x_1)
\]
for parameters $a_1,a_2,a_3$ and a homogeneous binary form $C$ of degree $3$.
After a linear change of coordinates and scaling variables we may assume
the cubic is of the form
\[
x_2x_3^2 + x_0x_2^2 + C(x_0,x_1)
\]
where, since the surfaces is smooth, $C$ must have distinct roots and
$x_0$ cannot divide $C$.
Setting $x_3=0$, we simply have an elliptic curve, which we may put into
Legendre form.
This produces the desired normal form for the cubic surface.
\end{proof}

\subsection{Class 4B}
\label{sec:4B}

\begin{lem} \label{lem:4Bautos}
If $X$ admits an automorphism $g$ of class $4B$, then there is a canonical
$g$-invariant tritangent plane $T$.

In characteristic $\ne 2$,
there are six Eckardt points on $T$, with two on each exceptional line
of $T$, whose corresponding reflections generate a group $G$ isomorphic to
$\frakS_4$ containing $g$.
The group $G$ has a normal subgroup $N$ isomorphic to $2^2$ generated by
elements of the form $2B$ which leave the exceptional lines in $T$
invariant.
The quotient group $G/N \cong \frakS_3$ acts by permuting the lines of $T$.

In characteristic $2$, the three exceptional lines on $T$ meet at a
canonical Eckardt point.
There are $4$ other Eckardt points on each line of $T$ giving $13$ in total.
Their corresponding reflections generate a group of order $192$
isomorphic to $2^3 \rtimes \frakS_4$, which contains $g$.
\end{lem}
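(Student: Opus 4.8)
The plan is to leverage the fact that $g^2$ has class 2B (Table~\ref{tbl:cyclicWE6}) and to track how $g$ interacts with the 2B-geometry of Proposition~\ref{prop:involutions} and Lemma~\ref{lem:2Bautos}. Write $h:=g^2$ and let $\ell$ be the exceptional line pointwise-fixed by $h$; since $g$ commutes with $h$ it preserves $\ell$, and $\ell$ is canonically attached to $g$. By Lemma~\ref{lem:2Bautos}, $\ell$ carries two Eckardt points if $p\ne 2$ and five if $p=2$; fix reflections $h_1,h_2$ at two of them, so that $h=h_1h_2$ and $N:=\langle h_1,h_2\rangle\cong 2^2$. Blowing $\ell$ down yields a del Pezzo surface $Y$ of degree $4$ on which $g$ induces the $W(\sfD_5)$-class $2\bar2\bar1$ (Table~\ref{tbl:D5ccs}); the permutation this induces on the five roots of $\Delta$ has cycle type $(2,2,1)$ and transposes the pair corresponding to $\{h_1,h_2\}$, so $g$ interchanges $h_1$ and $h_2$. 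The canonical tritangent plane $T$ will emerge in the case analysis below; it always contains $\ell$.

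For $p\ne 2$ I would make everything explicit. After diagonalising $g$ as $\operatorname{diag}(1,i,-1,-i)$, the general $g$-invariant cubic can, by smoothness, be rescaled to
\[
F=x_0^3+x_0x_2^2+x_1^2x_2+x_2x_3^2+b\,x_0x_1x_3\qquad(b\ne 0),
\]
so every 4B surface lies in this one-parameter family. Here $\ell=\{x_0=x_2=0\}$, one sets $T:=\{x_0=0\}$, and $h_1,h_2$ are written down directly; the three lines of $T$ are $\ell$ and two further lines $\ell',\ell''$ interchanged by $g$. The key point is that $gh_1$ and $gh_2$ are again involutions of class 2B: $(gh_1)^2=e$ because $g$ swaps $h_1,h_2$ and $h_1h_2$ is an involution, while an inspection of the normal form shows the class is 2B rather than 2A, and their pointwise-fixed exceptional lines are $\ell'$ and $\ell''$. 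Applying Lemma~\ref{lem:2Bautos} to $gh_1$ and $gh_2$ produces two Eckardt points on each of $\ell'$ and $\ell''$, hence six on $T$, two per line. Since the configuration of these six Eckardt points and the subgroup of $W(\sfE_6)$ generated by their reflections depend only on the conjugacy class of $g$ and the relation $h=h_1h_2$, I would compute the group on a single model; over $\bbC$ this is the classical $\frakS_4$-cubic, where the six reflections are the six transpositions, the Klein four-group $N=\{e,(12)(34),(13)(24),(14)(23)\}$ consists precisely of the 2B-elements $h,gh_1,gh_2$ preserving the lines of $T$, and $\frakS_4/N\cong\frakS_3$ permutes $\ell,\ell',\ell''$. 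Finally $g=(gh_1)h_1\in G$.

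For $p=2$ I would work through the quartic del Pezzo, noting first that the 4A and 4B strata coincide here (Lemma~\ref{lem:mash}). On $Y$ the involution $h=g^2$ has the second kind, so by Proposition~\ref{prop:inv12kind} its fixed locus is the single canonical point $p_0$ of $Y$; since $\ell$ lies in that fixed locus, $\ell$ contracts to $p_0$, so $X=\Bl_{p_0}Y$ and $\Aut(Y)$ acts on $X$. The preimage in $X$ of the hyperplane section of $Y$ cut out by the reducible fixed curve of a first-kind involution commuting with $h$ (tangent at $p_0$, by Proposition~\ref{prop:inv12kind}) is a tritangent plane $T$ whose three exceptional lines all pass through the point $\sfc_X$ lying over $p_0$; this $\sfc_X$ is the canonical point of the cubic (case (d) of Proposition~\ref{prop:5.4}), hence an Eckardt point. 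By Lemma~\ref{lem:2Bautos} each of the three exceptional lines of $T$ carries $5$ Eckardt points, and since they pairwise meet only at $\sfc_X$, inclusion–exclusion gives $13$ Eckardt points on $T$, with $4$ extra on each line. To identify the resulting reflection group I would invoke the Fermat cubic over $\bbF_4$ as a universal model (Lemma~\ref{lem:HermitianSubspaces}): the $13$ Eckardt points of $T$ are the $13$ isotropic points of a plane $\bbP^2(\bbF_4)$ carrying a degenerate Hermitian form, and a direct computation inside $\PSU_4(2)$ shows that their reflections generate a subgroup isomorphic to $2^3\rtimes\frakS_4$ (the Weyl group of $\sfD_4$), of order $192$, containing $g$.

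The hard part in both cases is twofold: singling out the plane $T$ intrinsically — handled by the del Pezzo reduction, where $\ell$ is canonical and the relevant involution of the first kind is distinguished — and verifying that $g$ itself lies in the group generated by the Eckardt reflections, which reduces, via the abstract-combinatorics principle, to the single explicit computations on the $\frakS_4$-cubic and on the Fermat cubic. A minor but essential point in the $p\ne 2$ argument is confirming that $gh_1$ and $gh_2$ have class 2B rather than 2A; this is immediate from the explicit normal form.
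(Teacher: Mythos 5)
Your proposal is correct in outline and shares the paper's skeleton: pass to $h=g^2$ of class 2B, take its canonical pointwise-fixed line $\ell$ with reflections $h_1,h_2$ satisfying $h=h_1h_2$, observe that $g$ swaps $h_1,h_2$ so that $gh_1,gh_2$ are further 2B involutions whose canonical lines $\ell',\ell''$ complete the tritangent plane $T$, and then compute the reflection group on a single model using the fact that the configuration of Eckardt points, as a set of tritangent trios, determines the subgroup of $W(\sfE_6)$. The differences are in execution. For $p\ne 2$ you introduce an explicit diagonal normal form to verify that $gh_1,gh_2$ are 2B rather than 2A; the paper avoids this entirely by doing the whole computation as permutations on the Fermat cubic ($g=(1234)$, $h_1=(13)$, $h_2=(24)$, $gh_1=(14)(23)$), where the class is read off from the cycle type — your normal form is harmless but unnecessary. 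For $p=2$ your route to the $13$ Eckardt points through the quartic del Pezzo and its canonical point is a legitimate alternative to the paper's argument via inseparable lines, but your identification of the order-$192$ group is where the two proofs genuinely diverge: you defer it to ``a direct computation inside $\PSU_4(2)$'' on the degenerate Hermitian plane of Lemma~\ref{lem:HermitianSubspaces}, whereas the paper spends the bulk of its proof deriving the structure $2^3\rtimes\frakS_4$ by hand from relations among the reflections $r_i,s_j,t_k$ attached to the three lines of $T$. Your shortcut is sound in principle (the universal-model principle transfers the Fermat computation to any 4B surface), and arguably cleaner if one is willing to trust a finite group-theoretic calculation, but be aware that this asserted computation is precisely the hard content of the characteristic-$2$ case and would need to be carried out for the proof to be complete.
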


\begin{proof}
Recall that $g^2$ is an involution of type 2B.
Thus there is a canonical exceptional line $\ell$ containing two Eckardt
points $p_1, p_2$ whose corresponding reflections $r_1, r_2$ have
product $r_1r_2=g^2$.
The action on the 27 lines is the same regardless of the cubic surface
so it suffices to consider the situation of the Fermat cubic:
\[
x_1^3 + x_2^3 + x_3^3 + x_4^3 = 0 \ .
\]
We may assume $g$ is the permutation $(1234)$.  Thus $g^2$ is $(13)(24)$
and the reflections are $(13)$ and $(24)$.
The line $\ell$ is defined by $x_1+x_3=x_2+x_4 = 0$
in the Fermat model.
The reflections generate the group $\frakS_4$.
The reflections are the $6$ transpositions, so we
obtain $6$ Eckardt points.
The other two involutions $(12)(34)$ and $(14)(23)$
have canonical exceptional lines $\ell'$ and $\ell''$.
The lines $\ell, \ell', \ell''$ form a tritangent plane $T$,
which is canonical by construction.
We see that $N = \langle (12)(34), (14)(23) \rangle \simeq 2^2$
acts trivially on the set of three lines.

If the characteristic is $2$, then $\ell$ must contain $5$ Eckardt
points.  By symmetry, so must $\ell'$ and $\ell''$.
Since all possible Eckardt points are realized, one of them must be the
intersection $\ell \cap \ell' \cap \ell''$.
Blowing down $\ell$, we obtain a quartic del Pezzo surface of degree $4$
whose automorphism group contains $2^4 \rtimes 2^2$ of order $64$.

The reflection corresponding to the canonical point commutes with all
other reflections so it generates a central subgroup of $G$ of order
$2$.  Let $H$ be the quotient of $G$ by this group of order $2$.
Let $r_1, \ldots, r_4$ denote the images in $H$ of the reflections
corresponding to the remaining Eckardt points on $\ell$.
Label those from $\ell'$ by
$s_1,\ldots,s_4$; and from $\ell''$, by $t_1,\ldots,t_4$.
Note $r_1r_2r_3r_4=1$ and $r_ir_j=r_jr_i$ for all $i,j$;
the same is true for the $s_i$ and $t_i$.

Since all the points are coplanar, a line passing through Eckardt
points on $\ell$ and $\ell'$ passes through a unique third point on
$\ell''$.  This line is a trihedral line and the corresponding
reflections generate a group isomorphic to $\frakS_3$.
If the corresponding reflections are $r_i, s_j, t_k$,
then $r_is_j=t_kr_i=s_jt_k$.

We may keep track of the 16 trihedral lines via four permutations
$\rho_1,\ldots, \rho_4$ of $\{1,2,3,4\}$ defined such that
$\{r_i,s_j,t_{\rho_i(j)}\}$ describe all possible trihedral lines as
$i,j$ vary over $\{1,2,3,4\}$.
We may label the reflections such that $\rho_1$ is the identity.
Since $r_1,\ldots,r_4$ commute, the computation
\[
s_{\rho_i(j)}r_1r_i=
r_1t_{\rho_i(j)}r_i=
r_1r_is_j=
r_ir_1s_j=
r_it_jr_1=
s_{\rho_i^{-1}(j)}r_ir_1
\]
shows that the remaining $\rho_i$'s are involutions.
A similar computation with $r_ir_js_k=r_jr_is_k$ show that
the $\rho_i$'s commute with one another.

Consider the subgroup $K$ of $H$ generated by
$R=r_1r_2$ and $S=s_1s_{\rho_2(1)}$.
Note that $S=s_is_{\rho_2(i)}$ for all $i$.
One computes the product
\[
RS = r_1r_2s_1s_{\rho_2(1)}=r_2r_1s_{\rho_2(1)}s_1
= r_2t_{\rho_2(1)}r_1s_1 = t_{\rho_2(1)}s_1s_1t_1
= t_1t_{\rho_2(1)} =: T
\]
and checks that $K \cong 2^2$.
By checking $s_iRs_i=RS=T$ and similar conditions,
we see that $K$ is normal in $H$.
One could have also defined $R$ as $r_1r_3$ or $r_1r_4$,
to obtain different but isomorphic normal subgroups.
Thus $K$ is normal, but not characteristic in $H$.
(Note that $R=r_1r_2=r_3r_4$, though, so there are essentially only the
three such normal subgroups.)

In the group $H/K$, we have $r_1\equiv r_2$ and $r_3 \equiv r_4$;
similar equations are true for the $s_i$'s and $t_i$'s.
Thus the $6$ reflections generating $\frakS_4$ above generate $H/K$.
We obtain the semidirect product description described above for the
original group $G$.
\end{proof}

\begin{remark}
From Theorem~20.3.4~of~\cite{Hirschfeld},
up to projective equivalence, there is a unique smooth cubic surface
over the field $\bbF$ of $8$ elements;
its automorphism group is precisely the group of order $192$ described
in Lemma~\ref{lem:4Bautos}.
Of course, over the algebraic closure, this is simply the Fermat cubic
surface and the geometric automorphism group is larger.
\end{remark}

We will see in Lemma~\ref{lem:mash} below that
in characteristic $2$, the surfaces admitting an automorphism of class
4A coincide with those admitting automorphisms of class 4B and 6E.

\begin{prop} \label{prop:normalForms4B}
Let $X$ be a 4B surface.  Then $X$ is isomorphic to a surface given by equation
$$
x_0^3+x_1^3+x_2^3+x_3^3+c(x_0x_1x_2 + x_0x_1x_3 + x_0x_2x_3
 + x_1x_2x_3) = 0.$$
\end{prop}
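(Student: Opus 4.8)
The plan is to exploit the large symmetry group supplied by Lemma~\ref{lem:4Bautos}: a $4B$ surface $X$ carries a faithful action of $\frakS_4$ (when $p=2$ even of the group $2^3\rtimes\frakS_4$ of order $192$, which contains $\frakS_4$), generated by the reflections attached to the six Eckardt points of the canonical tritangent plane $T$ and containing $g$. First I would pin down the linear representation. The element $g$ has class $4B$, so by the Lemma opening Section~\ref{sec:4B} it is projectively equivalent to the cyclic permutation matrix \eqref{eq:4Bmatrix} in \emph{every} characteristic; combined with the fact that the six reflections generating $\frakS_4$ are transpositions (class $2A$, of the shape \eqref{eq:2Amatrix}), this data determines the embedding $\frakS_4\hookrightarrow\PGL_4(\bbk)$ up to conjugacy as the permutation action. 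Hence one may choose homogeneous coordinates $x_0,x_1,x_2,x_3$ on which $\frakS_4$ acts by permutations and $g$ acts as $x_i\mapsto x_{i+1\bmod 4}$.

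Next I would write down the general $\frakS_4$-invariant cubic form. Grouping the twenty degree-three monomials into $\frakS_4$-orbits (an argument valid in any characteristic) shows that the invariant forms are exactly $\lambda e_3+\mu s+\nu\sigma_3$, where $e_3=\sum_i x_i^3$, $s=\sum_{i\ne j}x_i^2x_j$, and $\sigma_3=\sum_{i<j<k}x_ix_jx_k$. I would then use the one-parameter family of coordinate changes $x_i\mapsto x_i+t\sigma_1$, with $\sigma_1=\sum_i x_i$, which commute with the $\frakS_4$-action and hence preserve the space of invariant forms, to eliminate the coefficient of $s$. A direct computation expresses the new coefficient of $s$ as a non-constant polynomial in $t$ (its leading behaviour governed by $\lambda$ and $\nu$), which therefore has a root over $\bbk$; after substituting such a value of $t$ — checking it is not the single value making $x_i\mapsto x_i+t\sigma_1$ non-invertible — the equation becomes $\lambda' e_3+\nu'\sigma_3$. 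Smoothness of $X$ excludes $\lambda'=0$, since the surface $\sigma_3=0$ is singular (e.g.\ at $(1:0:0:0)$), so after rescaling the coordinates we arrive at $e_3+c\,\sigma_3$ with $c=\nu'/\lambda'$, which is the asserted normal form.

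The step requiring most care is the small-characteristic bookkeeping in the last reduction: when $p=2$ the map $x_i\mapsto x_i+t\sigma_1$ fixes $\sigma_1$, and one must exploit the identities $e_3+s=\sigma_1^3$ and $\sigma_1\sigma_2=s+\sigma_3$, while when $p=3$ one has $e_3=\sigma_1^3$ and $\sigma_3\mapsto\sigma_3+2t\,\sigma_1\sigma_2+t^3e_3$; in each case the coefficient of $s$ after the substitution still has a suitable root, the only degenerate possibilities (such as $\lambda=\mu$, $\nu=0$) being forced to give singular surfaces and hence excluded. One should also note that the stated family contains singular members — for instance $c=-1$, by Remark~\ref{rem:EmchCohen} — but this is harmless, since $X$ is assumed smooth and so the resulting $c$ automatically lies in the smooth locus. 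As an alternative to identifying the $\frakS_4$-representation, one could instead begin from the class-$2B$ normal form of Proposition~\ref{prop:normalForms2B}, impose invariance under an order-four lift of the $2B$-involution $g^2$, and reduce the resulting constraints on the parameters; this route is more computational but avoids the representation-theoretic input.
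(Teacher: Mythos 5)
Your proposal is correct and follows essentially the same route as the paper: write the general $\frakS_4$-invariant cubic (the paper uses the basis $\sigma_1^3,\sigma_1\sigma_2,\sigma_3$ rather than $e_3,s,\sigma_3$, but these span the same space in every characteristic), apply the shear $x_i\mapsto x_i+t\sigma_1$ — which is exactly the paper's matrix $M$ with $\beta=t$ — to kill the coefficient of $x_i^2x_j$, and rule out the degenerate coefficient values and the non-invertible value of $t$ by showing they force $X$ to be singular or reducible. The only difference is that the paper carries out the explicit computation of the resulting polynomial $f(\beta)$ and its case analysis for $p=2,3$, which you sketch but correctly identify as the step requiring care.
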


\begin{proof}
We know that $\Aut(X)$ contains $\frakS_4$ that acts by permuting the variables.   The equation must be of the form
$$a_1\sigma_1^3+a_2\sigma_1\sigma_2+a_3\sigma_3 = 0,$$
where $\sigma_i$ are elementary symmetric polynomials in the coordinates.

A change of variables that commutes with the permutations of the coordinates  is given by a matrix
\[
M:= \begin{pmatrix}1+\beta&\beta&\beta&\beta\\
\beta&1+\beta&\beta&\beta\\
\beta&\beta&1+\beta&\beta\\
\beta&\beta&\beta&1+\beta\end{pmatrix}
\]
with determinant $D:= (1+4\beta)$.
It acts via
\begin{align*}
\sigma_1 &\mapsto (1+4\beta)\sigma_1\\
\sigma_2 &\mapsto \sigma_2 + (6\beta^2+3\beta)\sigma_1^2 \\
\sigma_3 &\mapsto \sigma_3 + (4\beta^3+3\beta^2)\sigma_1^3
+ 2\beta \sigma_1\sigma_2
\end{align*}
which transforms the equation to the form
\begin{equation} \label{eq:new4B}
a_1'\sigma_1^3+a_2'\sigma_1\sigma_2+a_3'\sigma_3 = 0,
\end{equation}
where
\begin{align*}
a_1' &= (1+4\beta)^3a_1 + (6\beta^2+3\beta)(1+4\beta) a_2 +
(4\beta^3+3\beta^2)a_3\\
a_2' &= (1+4\beta)a_2+2\beta a_3\\
a_3' &= a_3 \ .
\end{align*}

Note that the coefficient of any monomial $x_i^2x_j$ with $i \ne j$
is equal to $a_2'+3a_1'$.
Let $p_3:=x_1^3+x_2^3+x_3^3+x_4^3$, which, by Newton's identities,
is $\sigma_1^3-3\sigma_1\sigma_2+3\sigma_3$.
If $a_2'=-3a_1'$, then the transformed cubic \eqref{eq:new4B} can be written
as
\[
a_1'p_3 + (a_3'-3a_1')\sigma_3\ .
\]
After noting that $a_1' \ne 0$ or else $X$ is singular,
we see that we have precisely the desired normal form.

It remains to show that we can find a transformation so that
$a_2'+3a_1'=0$.
Note that $a_2'+3a_1'=f(\beta)$ where 
\begin{align} \label{eq:killCondition}
f(\beta) :=\ & 12(16a_1+6a_2+a_3)\beta^3
+ 9(16a_1+6a_2+a_3)\beta^2\\ \notag
& + (36a_1+13a_2+2a_3)\beta
+ (3a_1+a_2). \notag
\end{align}
We claim that there is a choice of $\beta$ such that $f(\beta)=0$
and $D=(1+4\beta)\ne 0$.
Note that $D \ne 0$ when $p=2$.
When $p \ne 2$, we compute that $f(-\frac{1}{4})=-\frac{1}{8}a_3$;
if $a_3 = 0$ then the cubic surface is reducible, so $D \ne 0$ whenever
$f(\beta)=0$.
When $p=2$, we see that $f(\beta)=a_3\beta^2 +a_2\beta+(a_1+a_2)$
and conclude that $f(\beta)$ always has a root since $a_3 \ne 0$.
If $p=3$, then $f(\beta)=(a_2-a_3)\beta+a_2$; this always has a root
since $a_2=a_3$ implies that $(1:-1:-1:1)$ is a singular point.
If $p \ne 2,3$, then $f(\beta)$ has no roots only if all the
coefficients of powers of $\beta$ are zero.
This occurs only when $a_3=8a_1$ and $a_2=-4a_1$.
In this case, the cubic is singular at $(1:1:1:1)$.

Since we can always find a root for $f(\beta)$ with $D\ne 0$,
we have the desired transformation.
\end{proof}

\begin{remark} \label{rem:strange4A}
From the normal form in $p=2$, we see immediately that a 4B surface $X$
is either the Fermat cubic, or has a canonical point that coincides with
the canonical Eckardt point in the canonical tritangent plane.
The tritangent plane here is the plane $x_0+x_1+x_2+x_3 = 0$ and the
canonical point is $(1:1:1:1)$.
\end{remark}

\section{Automorphisms of higher order}
\label{sec:higherOrder}

\subsection{Class 5A}
\label{sec:5A}

Recall that the \emph{Clebsch cubic surface} is a smooth cubic surface
whose automorphism group contains a subgroup isomorphic to $\frakS_5$.
When $p \ne 5$, a normal form for such a surface is given by
\begin{equation} \label{eq:charFreeCubic}
\sum_{i} x_i = \sum_{i < j < k} x_ix_jx_k = 0
\end{equation}
in $\bbP^4=\bbP(x_0: \ldots : x_4)$.
Due to the Newton identities, in characteristic $\ne 3,5$ the Clebsch
surface can be defined by the more familiar equations
\[
\sum_{i=0}^4 x_i = \sum_{i=0}^4 x_i^3 = 0 \ .
\]
There is an obvious action of $\frakS_5$ by permuting the coordinates.

In characteristic $5$, the variety defined by the equations
\eqref{eq:charFreeCubic} has a
singular point $(1:1:1:1:1)$.  Indeed, there is no Clebsch surface
when $p=5$:

\begin{lem} \label{lem:exclude5A}
When $p=5$, there is no smooth cubic surface admitting an automorphism
of order $5$.
\end{lem}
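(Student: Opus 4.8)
The plan is first to reduce the statement to a linear-algebra problem and then to run through a short case analysis. Suppose $g\in\PGL_4(\bbk)$ has order $5$ and leaves a smooth cubic $X=V(F)$ invariant. Any lift of $g$ to $\GL_4(\bbk)$ has fifth power a scalar matrix, and since $\cha\bbk=5$ the fifth-power map is bijective on $\bbk^\times$; so we may rescale to a lift $\widetilde g\in\GL_4(\bbk)$ with $\widetilde g^5=\id$ and $\widetilde g\ne\id$, whose minimal polynomial therefore divides $(t-1)^5$. Thus $\widetilde g$ is unipotent, and being unipotent on $(\bbk^4)^\vee$ it is unipotent on $\Sym^3\bigl((\bbk^4)^\vee\bigr)$, whose only eigenvalue is $1$; hence $\widetilde g^*F=F$ exactly. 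Up to conjugacy the Jordan type of $\widetilde g$ is one of the partitions $(2,1,1)$, $(2,2)$, $(3,1)$, $(4)$ of $4$ with a part $\ge 2$, and in suitable coordinates $x_0,\dots,x_3$ adapted to the Jordan blocks the induced action on coordinates is a product of shears $x_i\mapsto x_i+x_{i+1}$ within the blocks. The point $(1:0:0:0)$ is fixed by $\widetilde g$.

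In each of the four cases I would show $V(F)$ is forced to be singular, contradicting the smoothness of $X$. For type $(2,1,1)$ the relation $F(x_0+x_1,x_1,x_2,x_3)=F$ forces (using $3\ne 0$ in $\bbk$) the coefficient of every monomial divisible by $x_0$ to vanish, so $V(F)$ is a cone with vertex $(1:0:0:0)$. For type $(2,2)$ one uses the locally nilpotent derivation $D$ with $Dx_0=x_1$, $Dx_2=x_3$ that exponentiates to $\widetilde g^*$ on cubic forms; applying $D$ repeatedly to $\widetilde g^*F-F=0$ yields successively $D^3F=0$, $D^2F=0$, $DF=0$, so $F$ is a polynomial in $x_1$, $x_3$ and $P=x_0x_3-x_1x_2$, and one checks directly that all four partials of $F$ vanish along the line $x_1=x_3=0$, so $V(F)$ is singular there. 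For types $(3,1)$ and $(4)$ I would extract, by comparing the coefficients of a handful of low monomials (such as $x_0^2x_1$, $x_0^2x_2$, $x_0^2x_3$, and for $(3,1)$ also $x_0x_1x_2$ and $x_1^3$) in the identity $\widetilde g^*F=F$, the vanishing of the coefficients of $x_0^3$ and of all $x_0^2x_i$; then $F$ has $x_0$-degree at most $1$, so $(1:0:0:0)$ lies on $V(F)$ with every partial derivative zero, and $V(F)$ is singular.

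The main obstacle is the monomial bookkeeping for the cases $(3,1)$ and $(4)$: there $\log\widetilde g$ acts on cubic forms with nilpotency order exceeding $p=5$, so one cannot simply identify the space of invariant cubics with the kernel of a locally nilpotent derivation (the naive exponential series breaks down), and the needed vanishings must instead be read off by hand from the coefficients of $\widetilde g^*F=F$. Everything else is routine: the list of Jordan types is forced by linear algebra, and in every case the contradiction is produced by an explicit singular point (or singular line) of $V(F)$.
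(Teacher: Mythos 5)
Your proposal is correct, and it takes a genuinely different route from the paper. The paper never touches the cubic form: it notes that $5\nmid 36$ so some double-sixer is $g$-invariant, that an order-$5$ element cannot swap the two sixers, and hence that $g$ descends to an order-$5$ automorphism of $\bbP^2$ permuting the six blown-down points (one fixed, five in an orbit lying on an invariant conic); it then classifies the two unipotent Jordan types in $\GL_3(\bbk)$ and computes their invariant conics, finding that each is either singular (forcing three collinear points) or passes through the fixed point (putting all six points on a conic), contradicting general position. You instead work directly in $\bbP^3$: after normalizing the lift to a unipotent element with $\widetilde g^*F=F$, you run through the four Jordan types $(2,1,1)$, $(2,2)$, $(3,1)$, $(4)$ and exhibit a singular point or line on $V(F)$ in each case. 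I checked your key claims: the $(2,1,1)$ case does force $F$ to be a cone; the degree-$3$ kernel of your derivation in the $(2,2)$ case is exactly $\Span\{x_1^3,\dots,x_3^3, x_1P, x_3P\}$ and every such $F$ is singular along $x_1=x_3=0$; and for $(3,1)$ and $(4)$ the invariance does kill all coefficients of $x_0$-degree $\ge 2$ (a clean way to organize the bookkeeping is to grade by the weight $w(x_i)=\mathrm{depth}(x_i)$ under the shear, so that the conditions become triangular; note that your illustrative monomial list needs small adjustments, e.g.\ one must also compare $x_0x_1x_3$ to kill the coefficient of $x_0^2x_3$ in the $(3,1)$ case, and descend one more weight level in the $(4)$ case). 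What each approach buys: the paper's argument has only two Jordan types to analyze and slots into its running use of the $27$-lines combinatorics, while yours is self-contained and elementary, needing neither the double-sixer count nor the blow-down, at the cost of four cases and the hand computation that, as you correctly observe, cannot be bypassed by the exponential-of-a-derivation trick when the nilpotency order reaches $p$.
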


\begin{proof}
The class 5A is the unique conjugacy class order $5$ in $W(\sfE_6)$.
Since $X$ has $36$ double-sixers, one of them is invariant.
An automorphism of order 5 cannot switch the two sixers of skew lines.
Thus, blowing down a sixer, we realize $g$ as a projective automorphism
of $6$ points, which must fix one of the points.

There are two possible Jordan canonical forms for a $3 \times 3$ matrix
of order $5$:
\[
M_1 = 
\begin{pmatrix}
1 & 1 & 0 \\
0 & 1 & 0 \\
0 & 0 & 1
\end{pmatrix}
\text{ and }
M_2 = 
\begin{pmatrix}
1 & 1 & 0 \\
0 & 1 & 1 \\
0 & 0 & 1
\end{pmatrix} \ .
\]

Let $x_1, x_2, x_3$ generate the dual basis and view them as elements of
the homogeneous coordinate ring of $\bbP^2$.
From Proposition~7.6.1~of~\cite{Wehlau},
the possible invariant conics in each case are
\[
c_0 x_2^2 + c_1 x_2x_3 + c_2 x_3^3
\]
for the matrix $M_1$ and
\[
c_0 (x_1 x_3 + 2x_2^2 + 3 x_2x_3) + c_1 x_3^2
\]
for the matrix $M_2$ where $c_0,c_1,c_2$ are parameters.

In the first case, the conics are never smooth.
In the second case, the unique fixed point $(1:0:0)$
lies on any invariant conic.
Thus, it is impossible to find $5$ invariant points lying on a smooth
conic such that there is a fixed point not lying on the conic.
This is a contradiction.
\end{proof}

We now have the following:

\begin{thm}
If $X$ is a smooth cubic surface with an automorphism of order $5$ then
$X$ is isomorphic to the Clebsch cubic surface.
In characteristic $5$, there is no such surface.
\end{thm}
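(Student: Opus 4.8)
The plan is to split on the characteristic. When $p=5$ there is nothing to do: Lemma~\ref{lem:exclude5A} already says no smooth cubic surface admits an automorphism of order $5$. So assume $p\neq 5$ and let $g\in\Aut(X)$ have order $5$. Since 5A is the only conjugacy class of order $5$ in $W(\sfE_6)$, $g$ has class 5A, and I would then reuse the opening of the proof of Lemma~\ref{lem:exclude5A} verbatim: $g$ fixes one of the $36$ double-sixers, it cannot interchange the two sextets of skew lines (that would be an element of even order), so blowing down one sextet realizes $g$ as a projective automorphism of $\bbP^2$ permuting six points in general position. As the orbits of $g$ have size $1$ or $5$, there is exactly one fixed point $q$ and one orbit $\{p_1,\dots,p_5\}$ of size $5$.

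Next I would pin down the linear action of $g$ on $\bbP^2$ and show the six points are projectively unique. Since $p\neq 5$, $g$ is diagonalizable; a repeated eigenvalue would make the fixed locus a line $L$ plus a point, and then the orbit of any point off $L$ lies on the line joining it to the isolated fixed point, forcing $p_1,\dots,p_5$ collinear --- impossible. Hence $g=\operatorname{diag}(1,\alpha,\beta)$ with $1,\alpha,\beta$ distinct fifth roots of unity and its three fixed points are the coordinate points. The $p_i$ lie on a unique conic $C$ (no three are collinear), which is $g$-invariant and must be smooth, since a degenerate conic through five points puts three of them on a line. Because $p\neq 5$, $g|_C$ is an order-$5$ automorphism of $C\cong\bbP^1$ with exactly two fixed points, and those are two of the three coordinate points; the third coordinate point is $q$, for otherwise all six points would lie on $C$, violating general position. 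Choosing coordinates with $q=(1:0:0)$ and the fixed points on $C$ equal to $(0:1:0),(0:0:1)$, the $g$-invariant conics through the latter two are spanned by the monomials among $x_0^2,x_0x_1,x_0x_2,x_1x_2$ sharing a common $g$-weight; existence of $C$ forces $x_0^2$ and $x_1x_2$ to have equal weight, i.e. $\beta=\alpha^{-1}$, whence $C\colon x_0^2=\lambda x_1x_2$ with $\lambda\neq 0$ (the $p_i$ avoid $x_0=0$, else they are collinear). Rescaling $x_1$ gives $C\colon x_0^2=x_1x_2$, and the residual torus commuting with $g$ normalizes the orbit to $\{(1:\alpha^k:\alpha^{-k}) : 0\le k\le 4\}$. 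Thus the marked configuration $(q;p_1,\dots,p_5)$ is projectively unique, so $X$ --- the blow-up --- is the unique smooth cubic surface (up to isomorphism) admitting an automorphism of order $5$. Since the Clebsch surface exists for $p\neq 5$ and carries an order-$5$ automorphism from its $\frakS_5$-action, $X$ must be the Clebsch.

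The main obstacle is the last step's bookkeeping: ruling out the "wrong" diagonal $g$'s on $\bbP^2$ and checking that, after the normalizations, exactly one projective equivalence class of configuration survives. The shortcut that makes this clean is to route everything through the conic $C$ on the five orbit points: it is automatically smooth and $g$-invariant, its two $g$-fixed points are forced to be coordinate points, $q$ is forced to be the remaining one, and the mere existence of a $g$-invariant conic through the two $C$-fixed coordinate points collapses $g$ to $\operatorname{diag}(1,\alpha,\alpha^{-1})$. I would still verify that the degenerate sub-cases ($\lambda=0$; $g$ with a repeated eigenvalue) all contradict general position, and remark that $C\colon x_0^2=x_1x_2$ remains smooth in characteristic $2$, so the argument is uniform across all $p\neq 5$.
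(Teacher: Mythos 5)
Your argument is correct, but it takes a genuinely different route from the paper's. The paper blows down the single $g$-invariant exceptional line to a quartic del Pezzo surface $Y$ carrying an order-$5$ automorphism and then invokes its earlier classification (Theorem~\ref{thm:dp4autos}): $Y$ corresponds to $5$ points on $\bbP^1$, there is a unique such $5$-point set with an order-$5$ symmetry, hence a unique $Y$ and at most one $X$. You instead blow down an entire $g$-invariant sextet to $\bbP^2$ (reusing the opening of the proof of Lemma~\ref{lem:exclude5A}) and show by hand that the resulting configuration --- one fixed point plus one $5$-orbit in general position --- is projectively rigid: general position rules out a repeated eigenvalue, the invariant smooth conic through the orbit forces $g=\operatorname{diag}(1,\alpha,\alpha^{-1})$ and $C\colon x_0^2=x_1x_2$, and the residual torus normalizes the orbit. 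Your version is more self-contained and makes the uniqueness completely explicit in all characteristics (including the strange-conic case $p=2$), at the price of more coordinate bookkeeping; the paper's version is shorter because it leans on the dP4 classification, though it leaves implicit why uniqueness of $Y$ also pins down the blown-up point and hence $X$. Both treatments correctly dispose of $p=5$ via Lemma~\ref{lem:exclude5A} and conclude by exhibiting the Clebsch (with its $\frakS_5$-action) as one realization when $p\ne 5$.
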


\begin{proof}
The class 5A is the unique conjugacy class of order $5$ in $W(\sfE_6)$.
It can be represented by the permutation $(12345)$ on six skew lines
$E_1, \ldots, E_6$.
Since $E_6$ is invariant, it can be blown down equivariantly to a del
Pezzo surface $Y$ of degree $4$.
The surface $Y$ is uniquely determined by a set $S$ of $5$ distinct points
in $\bbP^1$ up to projective equivalence.
The automorphism group of such a surface is of the form $2^4 \rtimes G$
where $G$ is the embedded automorphism group of $S$.
Up to projective equivalence, there is a unique set of $5$ points in
$\bbP^2$ invariant under an automorphism of order $5$.
Thus, there is a unique surface $Y$ and so there can be at most one
smooth cubic surface $X$ with the desired property.
When the characteristic is not $5$, $X$ must be the Clebsch surface.
\end{proof}

\begin{remark}
A group of order $5$ has two invariant lines and no invariant tritangent
planes. This implies that the invariant lines are skew.
Given two skew lines on a cubic surface, there is a set of five mutually
skew lines that intersect both given lines.
Blowing the latter down, the two invariant exceptional lines are no
longer exceptional, so they are not invariant lines and the surface is
not a blowup of a single point on $\bbP^2$.
Thus the cubic surface is equivariantly isomorphic to the blow-up of
five points on a nonsingular quadric $Q$.
We can start from an orbit of five points on a quadric and consider a
rational map $Q\dasharrow \bbP^3$ given by linear system of quadric
sections of $Q$ passing through the points in the orbit.
The image of this map is a cubic surface.
The previous theorem implies the unexpected fact
that, in characteristic 5, the image must be a singular cubic surface.
\end{remark} 

\begin{lem} \label{lem:mush3C5A}
In characteristic $2$, the Fermat cubic surface and the Clebsch cubic
surface are isomorphic.
\end{lem}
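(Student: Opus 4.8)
The plan is to reduce the statement to two results already available: the computation of $\Aut$ of the Fermat cubic in characteristic $2$, and the classification of smooth cubic surfaces carrying an automorphism of order $5$. I will not attempt to exhibit an explicit change of coordinates.

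First I would recall that, by Lemma~\ref{lem:3Cautos}, in characteristic $2$ the automorphism group of the Fermat cubic surface $X$ is $\PSU_4(2)$, a group of order $25920 = 2^6 \cdot 3^4 \cdot 5$. Since $5$ divides $|\Aut(X)|$, Cauchy's theorem supplies an element of order $5$ in $\Aut(X)$; equivalently, one may invoke the inclusion $\frakS_5 \subset \PSU_4(2)$ already used in the proof of Lemma~\ref{lem:EmchAutos}. In either case $X$ admits an automorphism of order $5$. By the theorem classifying smooth cubic surfaces with an order-$5$ automorphism, proved immediately above, every such surface is isomorphic to the Clebsch cubic surface, and that surface exists in characteristic $2$ because $2 \ne 5$. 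Hence $X$ is isomorphic to the Clebsch cubic, which is the assertion.

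There is essentially no obstacle here; the one point requiring care is a matter of definitions rather than geometry: throughout, ``Fermat'' and ``Clebsch'' denote the cubic surfaces distinguished by their automorphism-theoretic characterizations — the unique $3C$ surface, respectively the unique surface carrying an $\frakS_5$-action — so that the two classification statements apply with no change of hypotheses.

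As an independent cross-check (not needed for the proof), one can see the isomorphism directly from Sylvester's pentahedral form. Eliminating $x_4 = x_0 + x_1 + x_2 + x_3$ from the Clebsch equations $\sum_i x_i = \sigma_3 = 0$ in $\bbP^4$ and working in characteristic $2$, a short computation shows that $\sigma_3$ becomes $\sum_{i \ne j} x_i x_j^2$, which contains no monomial $x_i x_j x_k$ with $i, j, k$ pairwise distinct. Thus the resulting smooth cubic surface in $\bbP^3$ has no canonical point, and Proposition~\ref{prop:5.4}(a) identifies it with the Fermat cubic. This is precisely the degeneration of Sylvester's normal form recorded in Section~\ref{sec:generalForms}.
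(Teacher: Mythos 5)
Your main argument is exactly the paper's proof: the Fermat cubic in characteristic $2$ has an automorphism of order $5$ (via $\frakS_5\subset\PSU_4(2)$ from Lemma~\ref{lem:3Cautos}), so the preceding theorem on order-$5$ automorphisms forces it to be the Clebsch. Your Sylvester-form cross-check is also sound and reproduces the computation the paper already records in Section~\ref{sec:generalForms} (and alludes to in the second sentence of its own proof), so nothing further is needed.
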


\begin{proof}
Note that the Fermat cubic surface has an automorphism of degree $5$, so
this follows immediately from the previous result.
This can also be seen directly using a careful choice of matrix
as in the proof of Proposition~\ref{prop:normalForms4B}.
\end{proof}

\begin{lem} \label{lem:5Aautos}
If $p \ne 5$ and $g$ is an automorphism of order $5$,
then there is a set of $10$
Eckardt points whose corresponding reflections generate a group
isomorphic to $\frakS_5$ containing $g$.
\end{lem}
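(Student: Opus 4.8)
The plan is to leverage the theorem just proved: a cubic surface carrying an automorphism of order $5$ must be the Clebsch cubic surface, which exists precisely because $p\ne 5$. I would fix an isomorphism of $X$ with the Clebsch realized in the hyperplane $P=\{\sum_i x_i=0\}\subset\bbP^4$ as $\{\sum_{i<j<k}x_ix_jx_k=0\}$, where $\frakS_5$ acts faithfully by permuting the coordinates $x_0,\dots,x_4$. The first step is to observe that each of the $10$ transpositions $(ij)$ acts on $P\cong\bbP^3$ as a reflection (a homology when $p\ne 2$): on the ambient $\bbP^4$ it fixes the hyperplane $\{x_i=x_j\}$ pointwise, hence on $P$ it fixes a hyperplane pointwise; and it leaves $X$ invariant since the defining equations are symmetric. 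These $10$ reflections $r_{ij}\in\Aut(X)$ are pairwise distinct because the corresponding involutions of $\bbP^3$ have distinct axes.

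The second step is to apply Theorem~\ref{thm:Eckardt}: the center of each $r_{ij}$ is an Eckardt point $q_{ij}$ of $X$, and these $10$ points are distinct because the $r_{ij}$ are. Since transpositions generate $\frakS_5$, the subgroup $\langle r_{ij}\rangle\subseteq\Aut(X)$ is the image of the faithful coordinate action of $\frakS_5$, hence isomorphic to $\frakS_5$. At this point everything in the statement is in hand \emph{except} the requirement that the given $g$ lie in this subgroup.

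The hard part will be this last point, because $\Aut(X)$ can be strictly larger than the coordinate $\frakS_5$ — most dramatically $\Aut(X)\cong\PSU_4(2)$ when $p=2$ — so $g$ need not be a coordinate permutation at all. I would resolve this with a Sylow argument. Since $\Aut(X)\hookrightarrow W(\sfE_6)$ and $|W(\sfE_6)|=51840=2^7\cdot 3^4\cdot 5$ contains $5$ to the first power only, every Sylow $5$-subgroup of $\Aut(X)$ has order $5$; in particular $\langle g\rangle$ is one, and so is $\langle\sigma\rangle$ for any $5$-cycle $\sigma$ in the coordinate $\frakS_5$. By Sylow's theorem there is $h\in\Aut(X)$ with $h\langle\sigma\rangle h^{-1}=\langle g\rangle$, so $g\in h\frakS_5 h^{-1}$. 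But $h\frakS_5 h^{-1}\cong\frakS_5$ is generated by the reflections $h r_{ij}h^{-1}$ (a conjugate of a reflection is again a reflection), whose centers are the Eckardt points $h(q_{ij})$. Thus $\{h(q_{ij})\}$ is the desired set of $10$ Eckardt points whose corresponding reflections generate a group isomorphic to $\frakS_5$ containing $g$. I would close by noting that it is exactly the single prime factor $5$ in $|W(\sfE_6)|$ that makes this final step work uniformly across all $p\ne 5$.
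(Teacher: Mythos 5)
Your proposal is correct and follows essentially the same route as the paper: identify $X$ with the Clebsch surface and take the ten reflections coming from the transpositions of the coordinate $\frakS_5$, whose centers are distinct Eckardt points by Theorem~\ref{thm:Eckardt}. The only difference is that where the paper simply asserts that coordinates can be chosen so that $g$ permutes them, you justify this with a Sylow argument (using that $5$ divides $|W(\sfE_6)|$ exactly once); this is a worthwhile explicit filling-in of that step, especially in characteristic $2$ where $\Aut(X)\cong\PSU_4(2)$ is strictly larger than the coordinate $\frakS_5$.
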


\begin{proof}
This is a Clebsch cubic surface which can be put in the form
\eqref{eq:charFreeCubic} where $g$ acts by permuting the coordinates.
The reflections correspond to transpositions in $\frakS_5$.
There are 10 of them and they generate $\frakS_5$.
\end{proof}

\begin{prop} \label{prop:normalForms5A}
The Clebsch cubic surface has the normal form
\[
\sigma_1\sigma_2 - \sigma_3
= \left(\sum_{i \ne j} x_i^2x_j \right)
+ 2\left(\sum_{i < j < k} x_ix_jx_k\right)
\]
in $\bbP(x_0:x_1:x_2:x_3)$ in all characteristics $p \ne 5$.
\end{prop}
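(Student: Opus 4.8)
The plan is to obtain the asserted form by eliminating one coordinate from the characteristic-free description \eqref{eq:charFreeCubic} of the Clebsch surface and then matching the resulting cubic against the right-hand side by a direct monomial count. There is no genuine difficulty here; the whole statement is an explicit computation, and the only points deserving care are bookkeeping ones.

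First I would recall that, by the theorem and by Lemma~\ref{lem:exclude5A} above, the Clebsch surface exists precisely when $p \ne 5$ and is then the cubic surface $X = Y \cap P$ in $\bbP^4$, where $P = V(x_0 + \cdots + x_4)$ is the coordinate hyperplane and $Y = V\bigl(\sum_{0 \le i < j < k \le 4} x_i x_j x_k\bigr)$ is the cubic threefold in the characteristic-free normal form \eqref{eq:charFreeCubic}. (In characteristics $\ne 3$ one may equally take $Y = V(\sum x_i^3)$, since $p_3 = 3e_3$ when $e_1 = 0$ by Newton's identity, but the form $e_3$ must be used when $p = 3$.) The linear projection $(x_0 : \cdots : x_4) \mapsto (x_0 : \cdots : x_3)$ restricts to an isomorphism $P \to \bbP^3$, under which the equation of $X$ is obtained by substituting $x_4 = -(x_0 + x_1 + x_2 + x_3)$ into $e_3(x_0,\ldots,x_4)$.

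Next I would carry out that substitution. Write $\sigma_j$ for the $j$-th elementary symmetric polynomial in the four variables $x_0,\ldots,x_3$. Splitting the monomials of $e_3(x_0,\ldots,x_4)$ according to whether the index $4$ occurs gives $e_3(x_0,\ldots,x_4) = \sigma_3 + x_4 \sigma_2 = \sigma_3 - \sigma_1 \sigma_2$, since $x_4 = -\sigma_1$ on $P$. Hence $X$ is defined in $\bbP^3$ by $\sigma_1 \sigma_2 - \sigma_3 = 0$, which is the first displayed expression. Finally, to verify the second equality I would expand $\sigma_1 \sigma_2 = (\sum_i x_i)(\sum_{j<k} x_j x_k)$ and count coefficients: a squarefree monomial $x_a x_b x_c$ arises three times in $\sigma_1 \sigma_2$ (once for each index playing the role of the $\sigma_1$-factor) and once in $\sigma_3$, contributing coefficient $2$; a monomial $x_i^2 x_j$ with $i \ne j$ arises exactly once in $\sigma_1 \sigma_2$ and not at all in $\sigma_3$, contributing coefficient $1$. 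This gives precisely $\sum_{i \ne j} x_i^2 x_j + 2 \sum_{i < j < k} x_i x_j x_k$, completing the proof.

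The hard part, such as it is, will simply be to state things so that the argument is manifestly valid in characteristic $3$ — this is why I insist on the $e_3$-form rather than $\sum x_i^3$ — and to note that smoothness (hence the existence of the Clebsch surface at all) already forces $p \ne 5$, which is why that hypothesis is needed and no more.
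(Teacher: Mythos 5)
Your proposal is correct and is essentially the paper's proof, which consists of the single instruction to substitute $x_4=-x_0-x_1-x_2-x_3$ into \eqref{eq:charFreeCubic}; you have simply written out the elimination $e_3 = \sigma_3 + x_4\sigma_2$ and the coefficient count that the paper leaves implicit. The only cosmetic point is that the substitution yields $\sigma_3-\sigma_1\sigma_2$, the negative of the displayed form, which of course defines the same projective surface.
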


\begin{proof}
Make the substitution $x_4=-x_0-x_1-x_2-x_3$ in
\eqref{eq:charFreeCubic}.
\end{proof}

\begin{remark} \label{rem:ClebschOtherForm}
One could also put the Clebsch in the normal form from
Proposition~\ref{prop:normalForms4B} for an appropriate choice of $c$.
From the proof, a tedious calculation
shows that $c$ satisfies
$5c^3 + 18c^2 - 189c + 378=0$
when $p=0$.
\end{remark}

\subsection{Class 6E}
\label{sec:6E}

\begin{lem} \label{lem:6Eautos}
Let $X$ be a smooth cubic surface and suppose $g$ is an automorphism of
class 6E.
Then there exist four Eckardt points on $X$, of which three lie on a trihedral 
line contained in the tritangent plane of the fourth.
Their corresponding reflections generate a group $G$ isomorphic to
$\frakS_3 \times 2$ containing $g$.
\end{lem}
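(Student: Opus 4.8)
The plan is to run everything through the powers of $g$. Comparing characteristic polynomials in Table~\ref{tbl:cyclicWE6}, $g^2$ is of class 3D and $g^3$ is of class 2A, and since $\gcd(2,3)=1$ we have $g=g^3\cdot(g^2)^{-1}\in\langle g^2,g^3\rangle$. Applying Theorem~\ref{thm:Eckardt} to the reflection $r_0:=g^3$ produces an Eckardt point $q_0$ (its center) whose tritangent plane $T$ consists of exactly the three exceptional lines through $q_0$; these are the only exceptional lines of $X$ through $q_0$, since a point of a smooth cubic surface lies on at most three of its lines. Applying Lemma~\ref{lem:3Dautos} to $g^2$ produces a trihedral line $\ell$ carrying three Eckardt points $q_1,q_2,q_3$ whose corresponding reflections $r_1,r_2,r_3$ generate a subgroup $H\cong\frakS_3$ with $g^2\in H$. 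Thus $q_0,q_1,q_2,q_3$ and $r_0,r_1,r_2,r_3$ are the objects in the statement, and it remains to identify the group $G:=\langle r_0,r_1,r_2,r_3\rangle$ and the incidence of $q_0$ with $\ell$.

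For the group: $r_0=g^3$ commutes with $g^2$, hence permutes the three $g^2$-invariant tritangent trios (if $g^2S=S$ then $g^2(r_0S)=r_0g^2S=r_0S$), which by Theorem~\ref{thm:3matrices} and Lemma~\ref{lem:3Dautos} are precisely the trios cut out by the three Eckardt points $q_1,q_2,q_3$; therefore conjugation by $r_0$ permutes $\{q_1,q_2,q_3\}$ and normalizes $H$. Also $r_0\notin H$, for otherwise $\langle g\rangle=\langle g^2,g^3\rangle\subseteq H$, which is impossible since $\frakS_3$ has no element of order $6$. Hence $G=\langle r_0\rangle\cdot H$ is a subgroup of order exactly $12$ containing $H\cong\frakS_3$ as a normal subgroup; the only group of order $12$ with a normal subgroup isomorphic to $\frakS_3$ is $\frakS_3\times 2$, and $g=r_0(g^2)^{-1}\in G$. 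Under this identification $g$ has order $6$, so it is a ``(3-cycle, nontrivial)'' element and $r_0=g^3$ is the central involution of $G$; in particular $r_0$ commutes with each of $r_1,r_2,r_3$.

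For the incidence: since $r_0$ commutes with $r_i$, Corollary~\ref{cor:2Bconverse} gives that $q_0$ and $q_i$ lie on a common exceptional line, which must be one of the three lines through $q_0$, so $q_i\in T$ for $i=1,2,3$. Hence $\ell=\overline{q_1q_2q_3}\subseteq T$. Moreover $q_0\notin\ell$: otherwise $q_0\in\{q_1,q_2,q_3\}$ and $r_0\in H$, contradicting the above; therefore $\ell$ is not one of the three exceptional lines of $T$ and is not contained in $X$, so $\ell$ is a trihedral line contained in the tritangent plane of $q_0$. This is exactly the claimed configuration. The step needing the most care — and essentially the only obstacle — is the passage ``$r_0$ commutes with $g^2$'' $\Rightarrow$ ``$r_0$ normalizes $H$'', i.e.\ recognizing that $H$ is intrinsically attached to the class-3D element $g^2$; once that is in hand, the rest is the cited lemmas plus elementary finite group theory, and since no semisimplicity of $g$ is used the argument is uniform in the characteristic. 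As an independent check one can instead verify the entire configuration on a single model, e.g.\ a complex $6E$ surface arising from the normal form in the proof of Lemma~\ref{lem:normalForms3D}, or the Fermat cubic over $\bbF_4$, which realizes all possible Eckardt points and trihedral lines.
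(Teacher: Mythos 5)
Your overall strategy is the same as the paper's: split $g$ into its commuting powers $g^3$ (class 2A) and $g^2$ (class 3D), take the Eckardt point of the former and the trihedral line of the latter, and get the incidence from Corollary~\ref{cor:2Bconverse}. However, the step you yourself flag as the crux contains a concrete error. You claim that the three $g^2$-invariant tritangent trios ``are precisely the trios cut out by the three Eckardt points $q_1,q_2,q_3$.'' This is backwards. In the notation of the proof of Lemma~\ref{lem:3Dautos}, the Eckardt points are the centers of the \emph{columns} $T_1,T_2,T_3$ of \eqref{eq:conjugateTriads}, and $g^2=h$ permutes these cyclically ($h(T_i)=T_{i+1}$, hence $g^2(q_i)=q_{i+1}$); the tritangent trios that are single $g^2$-orbits --- i.e.\ the three $g^2$-invariant trios appearing in the orbit structure of Theorem~\ref{thm:3matrices} --- are the \emph{rows} $T_1',T_2',T_3'$. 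So the fact that $r_0$ permutes the $g^2$-invariant trios does not, by your identification, say anything yet about $\{q_1,q_2,q_3\}$.

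The conclusion you want is still true and the repair is short: $r_0$ commutes with $g^2$, hence permutes the three orbit-trios $T_1',T_2',T_3'$ and therefore their tritangent planes; by Lemma~\ref{lem:trihedra} the trihedral line is exactly $\ell=\overline{T_1'}\cap\overline{T_2'}\cap\overline{T_3'}$, so $\ell$ is $r_0$-invariant and $r_0$ permutes the three Eckardt points on it, which gives that $r_0$ normalizes $H$. (This is what the paper means by the ``canonical'' trihedral line of a 3D element.) With that fixed, the rest of your argument is correct, and your finish differs mildly from the paper's: you deduce $G\cong\frakS_3\times 2$ from $|G|=12$ plus the normal $\frakS_3$, and then read off that $g^3$ is central, whereas the paper argues directly that an involution commuting with the induced $3$-cycle on $\{q_1,q_2,q_3\}$ must fix each point, and only then assembles the group. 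Both routes work once the invariance of $\ell$ is established; your order-12 classification is a clean alternative, and your explicit verification that $q_0\notin\ell$ is a nice addition not spelled out in the paper.
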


\begin{proof}
From Table~\ref{tbl:cyclicWE6},
an automorphism $g$ has class 6E if and only if it is a product $g=ab$
of commuting automorphisms $a$ of class $3D$ and
$b$ of class $2A$.
The automorphism $a$ has a canonical trihedral line $\ell$ which must be invariant
under $b$ since $a$ and $b$ commute.
If $b$ acted nontrivially on the Eckardt points on $\ell$
then it would have to leave one fixed since $b$ has order $2$;
this kind of action cannot commute with the cyclic permutation induced
by the action of $a$.  Thus $b$ fixes each Eckardt point of $\ell$.
In particular, $b$ commutes with their corresponding reflections.
Thus there are exceptional lines connecting each Eckardt point on
$\ell$ to the point corresponding to $b$.
Since $b$ commutes with $a$ and the reflections on its trihedral line,
they generate a group isomorphic to $\frakS_3 \times 2$ as desired.
\end{proof}

\begin{lem} \label{lem:mash}
When $p=2$, a smooth cubic surface admits an automorphism
whose class is one of 4A, 4B, 6E if and only if it admits automorphisms of all
three classes.
\end{lem}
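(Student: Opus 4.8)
The plan is to fix $p=2$ and prove the four implications 4B $\Rightarrow$ 6E, 4B $\Rightarrow$ 4A, 4A $\Rightarrow$ 4B, and 6E $\Rightarrow$ 4B; together these give the biconditional, with class 4B serving as a hub. Throughout I would use the principle already exploited in the proofs of Lemma~\ref{lem:4Bautos} and Theorem~\ref{thm:3matrices}, namely that the $W(\sfE_6)$-class of an automorphism is determined by its induced permutation of the $27$ lines; hence if a group acts ``in the same way'' on two cubic surfaces, corresponding elements have the same class, and in particular classes of elements of a fixed reflection group may be computed on the Fermat cubic as a universal model (Section~\ref{sec:Fermat}).

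For 4B $\Rightarrow$ 6E I would argue on the explicit normal form of Proposition~\ref{prop:normalForms4B}. In characteristic $2$, Newton's identities rewrite that form as $\sigma_1^3+\sigma_1\sigma_2+c'\sigma_3=0$ for a scalar $c'\ne 0$ (if $c'=0$ the surface is reducible), where the $\sigma_i$ are the elementary symmetric functions of $x_0,\dots,x_3$. The coordinate $3$-cycle $(012)$ preserves this equation and has class 3D by Theorem~\ref{thm:3matrices}(3). The matrix $M=I_4+(c')^{-1}J_4$, with $J_4$ the all-ones matrix, also preserves the equation (the resulting condition on the coefficient of $\sigma_1^3$ is just $1+c'(c')^{-1}=1+1=0$), has order $2$, and fixes the hyperplane $\sigma_1=0$ pointwise with centre $(1:1:1:1)\in X$; hence $M$ is a reflection and has class 2A. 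Since $M$ commutes with $(012)$, their product has order $6$, its square is 3D and its cube is 2A, so by the ``Powers'' column of Table~\ref{tbl:cyclicWE6} and Lemma~\ref{lem:excludedClasses} it has class 6E. For 4B $\Rightarrow$ 4A, Lemma~\ref{lem:4Bautos} gives on every 4B surface the reflection group $G\cong 2^3\rtimes\frakS_4$ of order $192$ acting on $X$; since $G$ acts on the $27$ lines independently of $X$, it suffices to locate in $G$ acting on the Fermat cubic an element of order $4$ whose set of invariant exceptional lines consists of exactly three lines forming a tritangent plane, for then it has class 4A by Lemma~\ref{lem:4Adesc} and Lemma~\ref{lem:excludedClasses} (of the order-$4$ classes only 4A and 4B are realizable, and they are distinguished by having three, respectively one, invariant exceptional lines). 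Equivalently one checks that $G$ contains an element whose square is the central reflection at the canonical Eckardt point, since such an element is necessarily of class 4A.

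For the converse 4A $\Rightarrow$ 4B, blow down one of the three $g$-invariant exceptional lines of the 4A automorphism (Lemma~\ref{lem:4Adesc}); by the unlabelled lemma of Section~\ref{sec:4A} the image is the canonical point of a quartic del Pezzo surface $Y$, and the fixed locus of $g^2$ on $Y$ is a pair of rational curves tangent there. By Proposition~\ref{prop:inv12kind}, $Y$ then admits an involution commuting with $g^2$ that is neither of the first nor of the second kind, i.e.\ $a=b$ in the equations \eqref{delpezzo4-2}, so $\Aut(Y)\cong 2^4\rtimes 2^2$ by Theorem~\ref{thm:dp4autos} and Table~\ref{tbl:5pointAutos}. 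As the canonical point is fixed by all of $\Aut(Y)$, this group acts on $X$; and inside $2^4\rtimes 2^2\subseteq W(\sfD_5)$ there is an element of signed cycle type $2\bar{2}\bar{1}$ (for example $\iota_{\{3,5\}}(12)(34)$ in the notation of Section~\ref{sec:DP4}), which has class 4B on $X$ by Table~\ref{tbl:D5ccs}; hence $X$ is a 4B surface. Finally, for 6E $\Rightarrow$ 4B I would show a 6E surface already carries a 4A automorphism and conclude by the implication just proved. If $g$ has class 6E with $b=g^3$ of class 2A and $a=g^2$ of class 3D, then by Lemma~\ref{lem:6Eautos} the Eckardt point $q$ of $b$ is joined to each of the three Eckardt points on the trihedral line of $a$ by an exceptional line — these being precisely the three exceptional lines through $q$ — and each such line therefore carries at least two, hence exactly five, Eckardt points by Lemma~\ref{lem:excLineEck}; this yields $1+3+9=13$ distinct Eckardt points, all lying in the tritangent plane of $q$, whose reflections generate the same group $2^3\rtimes\frakS_4$ of order $192$ as in Lemma~\ref{lem:4Bautos} (that argument is combinatorial and applies whenever a tritangent plane carries $13$ Eckardt points), and this group contains a 4A element by the analysis above.

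The step I expect to be the main obstacle is producing the 4A element inside the order-$192$ reflection group: unlike the 6E element it is not a product of commuting reflections, so one must either verify the orbit structure of a carefully chosen order-$4$ element of $G$ against Lemma~\ref{lem:4Adesc} on the Fermat model, or exhibit a square root in $G$ of the central reflection at the canonical Eckardt point. A secondary point requiring justification is that the $13$-Eckardt-point configuration appearing on a 6E surface is $W(\sfE_6)$-equivalent to the one on a 4B surface, so that the computation of Lemma~\ref{lem:4Bautos} transfers verbatim; this should follow by blowing down one of the five-Eckardt-point lines to a quartic del Pezzo surface and comparing with Proposition~\ref{prop:inv12kind}.
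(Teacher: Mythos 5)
Your architecture (four implications with 4B as the hub) is workable, and two of your directions are essentially complete: the explicit construction of the commuting pair $M=I_4+(c')^{-1}J_4$ and the coordinate $3$-cycle on the normal form of Proposition~\ref{prop:normalForms4B} correctly produces a 6E element (one checks $M$ preserves the form because the extra $\sigma_1^3$-coefficient $(c')^{-1}+c'(c')^{-2}$ vanishes, and $M$ is a reflection with centre the Eckardt point $(1:1:1:1)$), and your 4A $\Rightarrow$ 4B argument via the canonical point of the quartic del Pezzo surface $Y$ and the element $\iota_{\{3,5\}}(12)(34)$ of signed type $2\bar{2}\bar{1}$ is exactly right.

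The genuine gap is 4B $\Rightarrow$ 4A, which you explicitly leave as ``the main obstacle'': you reduce it to exhibiting inside the order-$192$ reflection group an order-$4$ element squaring to the central reflection (or one with three invariant exceptional lines forming a tritangent plane), but you never produce such an element, and your 6E $\Rightarrow$ 4B step leans on this same unfinished analysis. The paper closes precisely this point by the same mechanism you already use in the opposite direction, and you should do likewise rather than hunt inside the abstract group $2^3\rtimes\frakS_4$: given $g$ of class 4B, the canonical line of $g^2$ (class 2B) is $g$-invariant and blows down to the canonical point of a quartic del Pezzo surface $Y$ by Lemma~\ref{lem:2Bautos}; on $Y$ the element $g$ has signed type $2\bar{2}\bar{1}$ by Table~\ref{tbl:D5ccs}, so its image in $\frakS_5$ is a double transposition, and multiplying by a suitable element of the normal subgroup $2^4\subseteq\Aut(Y)$ (always present) yields an element of type $\bar{2}\bar{2}1$, which fixes the canonical point (as does all of $\Aut(Y)$ when $p=2$), lifts to $X$, and has class 4A. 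With that substitution your proof is complete; your worry about transferring the $13$-point computation of Lemma~\ref{lem:4Bautos} to a 6E surface is legitimate but resolvable as you suggest, since the configuration is forced by Lemmas~\ref{lem:6Eautos} and~\ref{lem:excLineEck} and is unique up to $W(\sfE_6)$-equivalence.
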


\begin{proof}
We begin by considering the classes 4A and 4B.
In both cases, the blow-down of any invariant exceptional line is the
canonical point on a quartic del Pezzo surface.
The class 4A corresponds to $\bar{2}\bar{2}1$,
while the class 4B corresponds to $2\bar{2}\bar{1}$.
Any surface admitting one of these must admit the other.
Since they both preserve the canonical point,
blowing it up we obtain a cubic surface admitting automorphisms of both
classes.

If the surface admits an automorphism of class 4A or 4B, then it admits
a group of automorphisms of order $192$ as described in
Lemma~\ref{lem:4Bautos}.
There is an automorphism $a$ of class 3D --- for example, the product of any two
non-commuting reflections.
The center of the group is an involution $b$ of class
2A.
Since $b$ commutes with $a$, the product $g=ab$ is an element of order $6$.
From Table~\ref{tbl:cyclicWE6}, we see that $g$ must be of class 6E on
account of its powers $a=g^4$ and $b=g^3$.

Now suppose there is an automorphism of class 6E.
From Lemma~\ref{lem:6Eautos}, we have an Eckardt point where the
exceptional lines in its tritangent plane each contain another Eckardt
point.
Since $p=2$, there are $5$ Eckardt points on each of these exceptional
lines and we have precisely the situation of Lemma~\ref{lem:4Bautos}.
In particular, there are automorphisms of class 4A and 4B.
\end{proof}

By the previous Lemma, a normal form for 6E surfaces when $p=2$ was
exhibited in Proposition~\ref{prop:normalForms4B}.

\begin{prop} \label{prop:normalForms6E}
Let $X$ be a 6E surface.  If $p \ne 2,3$, then $X$
has the normal form
\[
x_0^3+x_1^3+x_2^3+(x_0+x_1+x_2)x_3^2+cx_0x_1x_2 = 0.
\]
If $p=3$ then $X$ has the normal form
\[
c(x_0+x_1+x_2)(x_0x_1+x_0x_2+x_1x_2) + (x_0x_1x_2)
+ (x_0+x_1+x_2)x_3^2 =0 \ .
\]

\end{prop}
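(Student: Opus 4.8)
The plan is to follow the proof of Lemma~\ref{lem:normalForms3D}, using the larger symmetry supplied by Lemma~\ref{lem:6Eautos}. The case $p=2$ needs no new work: by Lemma~\ref{lem:mash} a 6E surface in characteristic $2$ also admits an automorphism of class 4B, and such surfaces were put in normal form in Proposition~\ref{prop:normalForms4B}. So assume $p\neq 2$, and let $g$ be an automorphism of $X$ of class 6E.

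By Lemma~\ref{lem:6Eautos} there is an action on $X$ of a group $G=\langle r_1,r_2,r_3,r_4\rangle\cong\frakS_3\times 2$ generated by the reflections attached to four Eckardt points, where $r_1,r_2,r_3$ generate the $\frakS_3$-factor (permuting the three exceptional lines of a trihedral configuration, as in Lemma~\ref{lem:3Dautos}) and $s:=r_4$ generates the central factor of order $2$. First I would fix coordinates adapted to $G$. Since the $\frakS_3$-subgroup acts on the $27$ lines with the orbit structure of class 3D, the argument of Lemma~\ref{lem:3Dautos} lets us assume $\frakS_3$ acts by permuting $x_0,x_1,x_2$ and fixing $x_3$. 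The reflection $s$ commutes with this $\frakS_3$; writing down the centralizer of the three permutation matrices inside $\GL_4(\bbk)$ and using that $s$ has eigenvalues $1,1,1,-1$, one checks that after a further change of coordinates commuting with the $\frakS_3$-action --- which in characteristic $3$ must be allowed to modify $x_3$ by a multiple of $x_0+x_1+x_2$ because the permutation module $\langle x_0,x_1,x_2\rangle$ is no longer semisimple --- we may take $s$ to act by $x_3\mapsto -x_3$ with $x_0,x_1,x_2$ fixed.

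With these coordinates, $s$-invariance kills every monomial odd in $x_3$ and $\frakS_3$-invariance pins down the rest, so
\[
F=a\,\sigma_1^3+b\,\sigma_1\sigma_2+c\,\sigma_3+d\,\sigma_1 x_3^2 ,
\]
where $\sigma_1,\sigma_2,\sigma_3$ are the elementary symmetric functions in $x_0,x_1,x_2$. Smoothness forces $c\neq 0$ (else $\sigma_1\mid F$) and $d\neq 0$ (else $(0:0:0:1)$ is singular). The only remaining coordinate changes preserving $G$ are $x_i\mapsto \alpha x_i+\gamma\sigma_1$ for $i=0,1,2$, together with $x_3\mapsto \mu x_3$ and rescaling $F$. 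When $p\neq 3$, requiring the coefficient of $\sigma_1\sigma_2$ to become $-3$ times the coefficient of $\sigma_1^3$ is a single polynomial condition on $\gamma$ whose leading coefficient is a nonzero multiple of $27a+9b+c$; and $27a+9b+c\neq 0$ since $X$ is smooth (otherwise $(1:1:1:0)$ lies on $X$ and is a singular point), so $\gamma$ can be chosen. After this, $F|_{x_3=0}$ is a scalar multiple of $x_0^3+x_1^3+x_2^3+c'x_0x_1x_2$ (the Hesse form); rescaling $x_3$ to make the coefficient of $\sigma_1 x_3^2$ equal to $1$ yields the asserted normal form. When $p=3$, the transformation $x_i\mapsto x_i+\gamma\sigma_1$ fixes $\sigma_1$, so --- exactly as in the characteristic-$3$ part of Lemma~\ref{lem:normalForms3D} --- $\gamma$ is used to kill the coefficient of $\sigma_1^3$ (a cubic in $\gamma$ with leading coefficient $c\neq 0$), and then the scalings of $x_3$ and of $F$ normalize the coefficients of $\sigma_3$ and of $\sigma_1 x_3^2$ to $1$, leaving the coefficient of $\sigma_1\sigma_2$ as the parameter $c$.

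The main obstacle is the coordinate-adaptation step: showing that the class-2A reflection $s$ can be brought simultaneously to diagonal form against the permutation action of $\frakS_3$. In characteristics $\neq 2,3$ this is immediate from Schur's lemma (the standard $2$-dimensional summand is irreducible, so $s$ acts on it by $\pm1$, hence by $+1$ since $s$ is a reflection), but in characteristic $3$ the permutation module is indecomposable rather than irreducible and one must conjugate $s$ by a transformation mixing $x_3$ with $x_0+x_1+x_2$ in order to decouple the $(-1)$-eigenline; once the coordinates are fixed, everything else is a routine invariant-theory computation identical in form to Lemma~\ref{lem:normalForms3D}.
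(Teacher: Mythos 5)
Your proposal is correct, and for $p=3$ it coincides with the paper's proof, which likewise says to rerun Lemma~\ref{lem:normalForms3D} with the extra invariance $x_3\mapsto -x_3$ (so $d=e=g=0$ from the start) and then kill the $\sigma_1^3$-coefficient. For $p\ne 2,3$ your route is genuinely different in execution: the paper diagonalizes the order-$6$ element $g$ itself, so that the invariant cubic $t_3^2t_0+t_0^3+t_1^3+t_2^3+ct_0t_1t_2$ is already in Hesse form in the eigencoordinates, and then applies the substitution $t_i\mapsto x_0+\zeta^{-i}x_1+\zeta^{i}x_2$ to exhibit the permutation symmetry; you instead work in permutation coordinates throughout and import the symmetric-function normalization of Proposition~\ref{prop:normalForms4B} (forcing the coefficient of $\sigma_1\sigma_2$ to be $-3$ times that of $\sigma_1^3$ via the shear $x_i\mapsto x_i+\gamma\sigma_1$, using smoothness at $(1:1:1:0)$ to guarantee a root). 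What your approach buys is uniformity — the two characteristics are handled by one invariant-theoretic template and no cube root of unity is needed — at the cost of the shear computation and the coordinate-adaptation argument for the central reflection $s$, which the paper's diagonalization sidesteps. Your treatment of that adaptation (Schur's lemma for $p\ne 3$, and the non-semisimplicity caveat for $p=3$) is accurate; in fact for $p=3$ the $(-1)$-eigenline of $s$ automatically has nonzero $e_3$-component, since the only $2$-dimensional $\frakS_3$-submodule of $\langle x_0,x_1,x_2\rangle$ contains $\sigma_1$. The one point you should add is the invertibility check for the shear when $p\ne 3$: one needs $1+3\gamma\ne 0$, and this holds because evaluating your cubic condition at $\gamma=-1/3$ gives $-c/9\ne 0$ (with $c$ the coefficient of $\sigma_3$, nonzero by irreducibility) — the same style of verification the paper carries out for $D=1+4\beta$ in Proposition~\ref{prop:normalForms4B}.
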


\begin{proof}
If $p\ne 2,3$, we can diagonalize $g$ to assume that it acts as 
\[
(t_0:t_1:t_2:t_3)\mapsto (t_0:\zeta t_1:\zeta^2t_2,-t_3)\ ,
\]
where $\zeta$ is a primitive third root of unity.
The equation becomes
\[
t_3^2t_0+t_0^3+t_1^3+t_2^3+ct_0t_1t_2 = 0
\]
(c.f. \cite{CAG}, (9.7.4)).
After the change of coordinates 
\[
(t_0:t_1:t_2:t_3)\mapsto (x_0+x_1+x_2:x_0+\zeta^2x_1+\zeta
x_2:x_0+\zeta x_1+\zeta^2x_2:x_3)\ ,
\]
we arrive at the equation
\[
(c+3)(x_0^3+x_1^3+x_2^3)+(18-3c)x_0x_1x_2+(x_0+x_1+x_2)x_3^2 = 0.
\]
The surface is singular if $c+3=0$, so we may rescale the variables to
obtain the desired normal form.

For $p=3$, we may simply follow the same proof as in
Lemma~\ref{lem:normalForms3D} demanding in addition that
the form is invariant under $x_3 \mapsto -x_3$.
Now the initial form \eqref{eq:3Dp3startForm} has $d=e=g=0$ and
all the transformations using the matrix \eqref{eq:3Dp3matrix}
can be done assuming $a_4=a_5=0$.
\end{proof}

\subsection{Class 8A}
\label{sec:8A}

\begin{lem}
If $p=2$, no smooth cubic surface admits an automorphism of class 8A.
\end{lem}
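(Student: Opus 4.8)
The plan is to rule out an automorphism $g$ of order $8$ on a smooth cubic surface $X$ with $p=2$; note that this is already contained in Lemma~\ref{lem:excludedClasses}, but the direct argument is short and worth recording. The cleanest route is purely group-theoretic: since $\Aut(X)\subseteq\PGL_4(\bbk)$, it suffices to check that $\PGL_4(\bbk)$ has no element of order $8$ when $p=2$. First I would reduce to $\GL_4(\bbk)$: an order-$8$ element of $\PGL_4(\bbk)$ lifts, after rescaling by a scalar (using that $x\mapsto x^8$ is bijective on $\bbk^\times$ in characteristic $2$), to an element of $\GL_4(\bbk)$ of order exactly $8$. Then I would invoke the multiplicative Jordan decomposition $h=su=us$ with $s$ semisimple and $u$ unipotent: here $\mathrm{ord}(s)$ is odd (all roots of unity have odd order in characteristic $2$) and $\mathrm{ord}(u)$ divides $4$ (the largest order of a unipotent $4\times 4$ matrix over a field of characteristic $2$, realized by a single Jordan block), so $\mathrm{ord}(h)=\mathrm{ord}(s)\,\mathrm{ord}(u)$ is never equal to $8$, the desired contradiction.

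Alternatively, one can keep the geometry. An order-$8$ automorphism must fix an exceptional line $\ell$, since there are $27$ of them and each $\langle g\rangle$-orbit has size dividing $8$, so some orbit has size $1$. Blowing down $\ell$ yields a del Pezzo surface $Y$ of degree $4$ carrying an automorphism that is still of order $8$ (an automorphism of the blow-up acting trivially on the complement of the exceptional divisor is trivial). By Theorem~\ref{thm:dp4autos}, $\Aut(Y)\cong 2^4\rtimes G$ with $G\subseteq\PGL_2(\bbk)$, and since the kernel $2^4$ has exponent $2$, an order-$8$ element would project to an element of $G$ of order $\ge 4$; but $\PGL_2(\bbk)$ has no element of order $4$ in characteristic $2$ by Theorem~\ref{thm:suz}. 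Either way the statement follows.

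There is no serious obstacle here. The only points requiring a sentence of justification are the bijectivity of the $8$th-power map on $\bbk^\times$ (immediate in characteristic $2$), the bound on unipotent orders in $\GL_4$ (an explicit computation with $J_4(1)=I+N$, where $N^4=0$ and $N^3\ne 0$, so $(I+N)^4=I+N^4=I$), and, in the geometric variant, the verification that the descended automorphism has the same order as $g$.
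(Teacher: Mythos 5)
Your first argument is essentially the paper's own proof, which simply notes that, by checking Jordan canonical forms, $\GL_4(\bbk)$ has no element of order $8$ when $p=2$; your multiplicative Jordan decomposition (semisimple part of odd order, unipotent part of order at most $4$) and your explicit rescaling to lift from $\PGL_4(\bbk)$ to $\GL_4(\bbk)$ just make that one-line argument fully airtight, and both are correct. Your alternative geometric route is also valid, but it is exactly the argument the paper already used in Lemma~\ref{lem:excludedClasses} (blowing down an invariant exceptional line and consulting Table~\ref{tbl:D5ccs}, equivalently the absence of order-$4$ elements in $\PGL_2(\bbk)$ from Theorem~\ref{thm:suz}), so it offers no genuinely new approach.
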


\begin{proof}
By checking Jordan canonical forms,
there is no element of $\GL_4(\bbk)$ of order $8$.
\end{proof}

\begin{lem} \label{lem:8Aautos}
Suppose $p \ne 2$ and $X$ is a smooth cubic surface admitting an
automorphism $g$ of class 8A.
There is a unique such surface defined by the normal form:
\begin{equation} \label{eq:8AnormalForm}
x_0^3 + x_0x_3^2 - x_1x_2^2 + x_1^2x_3 = 0
\end{equation}
where $g$ acts via a diagonal matrix with entries
$(1,\epsilon^6,\epsilon, \epsilon^4)$
where $\epsilon$ is a primitive eighth root of unity.

If $p=3$, then the surface is cyclic and contains a subgroup isomorphic
to $\calH_3(3) \rtimes 8$.
\end{lem}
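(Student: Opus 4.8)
The plan is to treat the two characteristic cases separately, since the structure of an order-$8$ automorphism is rather different depending on whether $p=3$.

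First suppose $p \neq 2$. Since $8A$ is (by Table~\ref{tbl:cyclicWE6}) the unique conjugacy class of order $8$ in $W(\sfE_6)$ and it has $4A$ as a power and $2A$ as a power, any surface carrying an automorphism $g$ of class $8A$ carries $g^2$ of class $4A$; by Lemma~\ref{lem:normalForms4A} we may therefore start from the normal form \eqref{eq:4Anormal0}, with $g^2 : (x_0:x_1:x_2:x_3) \mapsto (x_0:x_1:-x_2:ix_3)$. I would then diagonalize $g$ itself: $g^2$ has eigenvalues $(1,1,-1,i)$ on $V$, so $g$ has eigenvalues which are square roots of these, and after ordering the coordinates one is forced (up to the choice of primitive eighth root $\epsilon$, and up to replacing $g$ by an odd power, which does not change the class) into a diagonal action with entries $(1,\epsilon^6,\epsilon,\epsilon^4)$ — note $\epsilon^6 = -1$ squares to $1$ but must be distinguished from the first $1$-eigenvalue for $g$ to have order $8$. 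Enumerating the monomials of degree $3$ invariant under this diagonal action gives a very short list: $x_0^3$, $x_0 x_3^2$, $x_1 x_2^2$, $x_1^2 x_3$ are the only weight-zero cubics, so the invariant cubic form is $a_0 x_0^3 + a_1 x_0 x_3^2 + a_2 x_1 x_2^2 + a_3 x_1^2 x_3$. Smoothness forces all $a_i \neq 0$ (each vanishing $a_i$ produces a singular point, checked by a partial-derivative computation at an explicit point), and then rescaling the four coordinates normalizes all coefficients to arrive at \eqref{eq:8AnormalForm}. Uniqueness is immediate since no parameters remain. One should also record that this surface is cyclic: the monomial $x_2$ appears only in $x_1 x_2^2$, so projecting away from a coordinate exhibits the $3A$-structure (indeed $g^2$ already has class $4A$ with $g^4$ of class $2A$ and the cube of $g^2$... — more directly, one checks the diagonal action fixes a plane pointwise on a power, placing us in case $3A$ by Theorem~\ref{thm:3matrices}).

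Now suppose $p=3$. Here $g^4$ has order $2$ and $g^2$... but more usefully, some power of $g$ has class $3A$? In fact I would argue as follows: by Lemma~\ref{lem:excludedClasses} and the discussion after it, when $p=3$ the surface $X$ carrying an order-$8$ automorphism $g$ cannot have $g^2$ of class $4A$ in the same way, so instead I would go through the cyclic-surface analysis of Section~\ref{sec:3A}. Recall from the paragraph preceding this lemma (the discussion of automorphisms of the affine line preserving $\{\alpha^9 - c\alpha^3 - \alpha = 0\}$) that, when $p=3$, a cyclic cubic surface in the normal form \eqref{eq:3AnormalFormP3} admits an extra automorphism $\alpha \mapsto \lambda\alpha$ of the ramification cuspidal cubic precisely when $\lambda^8 = 1$ — and this forces $c=0$, giving a single surface on which the group generated by $\calH_3(3)$ (from the explicit matrices after Remark~\ref{rem:defOverF9}) together with this order-$8$ element acts. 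The plan is therefore: first show any $8A$-surface in characteristic $3$ is cyclic (a power of $g$ must act as a pseudoreflection, i.e.\ has class $3A$ — verify via Jordan forms of order-$8$ elements of $\GL_4(\bbk)$ in characteristic $3$, noting $8$ is coprime to $3$ so $g$ is diagonalizable, and its fourth power is an involution which on a cyclic surface must be $2A$, so its action on the $\sfE_6$ lattice combined with the classification of powers pins down the class of $g^{\text{(even)}}$ as $3A$); then invoke Lemma~\ref{lem:invPoints} to get an induced order-dividing-$8$ action on $(C,\calE)$ that is not a translation; then, by the computation already done in the text, conclude $c=0$ and read off the automorphism group as $\calH_3(3) \rtimes 8$ from the explicit matrices. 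For the converse (existence) I would simply exhibit the order-$8$ matrix normalizing the displayed $\calH_3(3)$ and check it preserves \eqref{eq:3AnormalFormP3} with $c=0$.

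The main obstacle I expect is the bookkeeping in the $p=3$ case: establishing cleanly that a power of $g$ necessarily has class $3A$ (rather than, say, $3C$, which is excluded in characteristic $3$ anyway, or $3D$), and then that the induced action on the parameter $\alpha$ is genuinely a scaling of order $8$ rather than something conjugate to a more complicated affine map — this requires using that $\calH_3(3)$ already accounts for all the translations, so the quotient of the stabilizer of $\calE \subset \bbG_a$ by translations is linear, combined with the normal-form rigidity. The $p \neq 2$ case is essentially a finite computation once one commits to the diagonal form, so I do not anticipate difficulty there beyond the routine smoothness and rescaling checks; the identification that this surface is the one called $8A$ (as opposed to merely admitting \emph{some} order-$8$ automorphism) follows because $8A$ is the only order-$8$ class in $W(\sfE_6)$, so no separate Lefschetz/trace computation is strictly needed, though one could confirm it against Table~\ref{tbl:cyclicWE6} using the eigenvalues $(1,\epsilon^6,\epsilon,\epsilon^4)$ and the induced characteristic polynomial $\Phi_1^1\Phi_2^1\Phi_8^1$ on the root lattice.
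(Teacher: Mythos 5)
Your treatment of the case $p \ne 2$ is essentially the paper's argument (the paper simply defers the diagonalization-and-invariant-monomials computation to CAG, p.~489); entering through the 4A normal form and splitting the eigenspaces of $g^2$ is a reasonable way to organize it, your list of weight-zero invariant monomials is correct, and the rescaling argument goes through in every characteristic $\ne 2$ because $\bbk^\times$ is divisible. The only soft spot there is the assertion that the eigenvalue tuple is ``forced'': ruling out the other sign choices (e.g.\ eigenvalues $(1,1,\pm i,\pm\epsilon)$, whose invariant cubics are all divisible by a coordinate and hence singular) is exactly the enumeration the CAG reference carries out, so you should either do it or cite it.

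The $p=3$ half has a genuine error. You propose to prove that $X$ is cyclic by showing that ``a power of $g$ must act as a pseudoreflection, i.e.\ has class 3A.'' No power of an element of order $8$ has order $3$, so no power of $g$ can have class 3A, and the parenthetical chain (``its fourth power is an involution which on a cyclic surface must be 2A, so \dots pins down the class of $g^{(\mathrm{even})}$ as 3A'') is circular and cannot manufacture an order-$3$ element out of $\langle g\rangle$. (Your preceding worry that $g^2$ fails to have class 4A when $p=3$ is also unfounded: 8A always powers to 4A.) The deck transformation is an \emph{extra} automorphism, and the way cyclicity actually falls out --- which is how the paper does it --- is that the normal form \eqref{eq:8AnormalForm} derived in the first half is valid for all $p\ne 2$, and when $p=3$ it is literally the cyclic normal form \eqref{eq:3AnormalFormP3} with $c=0$, so the order-$3$ pseudoreflection $x_0\mapsto x_0+ix_3$ is visible by inspection. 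Once cyclicity is in hand, your remaining plan (Lemma~\ref{lem:invPoints}, the scaling $\alpha\mapsto\lambda\alpha$ with $\lambda^8=1$ forcing $c=0$, and exhibiting an order-$8$ matrix normalizing $\calH_3(3)$) does recover the subgroup $\calH_3(3)\rtimes 8$; the paper instead obtains the semidirect product abstractly, noting that $g$ normalizes $N=\calH_3(3)\rtimes 2$ with $g^4\in N$, that $\calH_3(3)$ is characteristic in $N$, and applying Schur--Zassenhaus to $\langle N,g\rangle$, which has order $216$.
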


\begin{proof}
Since $p \ne 2$, we can assume $g$ is defined using a diagonal matrix.
The argument is then the same as on page 489 of \cite{CAG}.
When $p=3$, this is a special case of the normal form
\eqref{eq:3AnormalFormP3}.
The element $g$ normalizes the Galois group of the cover and so
$g$ leaves invariant the group $N=\calH_3(3) \rtimes 2$
by Lemma~\ref{lem:invPoints}.
Note that $g^4$ is an element of $N$ of order $2$.
Since $\calH_3(3)$ is a normal subgroup of $N$ and is a Sylow $3$-group,
it is a characteristic subgroup of $N$.
Thus, it is a normal subgroup of $\langle N, g \rangle$.
By Schur-Zassenhaus, we conclude that
$\langle N, g \rangle \cong \calH_3(3) \rtimes 8$.
\end{proof}

\begin{lem} \label{lem:mush8A12A}
If $p=3$, a cubic surface admits an automorphism of class 8A if and only
if it admits an automorphism of class 12A.
\end{lem}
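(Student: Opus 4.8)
The plan is to reduce everything to the structure of cyclic cubic surfaces in characteristic $3$ established in Section~\ref{sec:3A}, together with the conjugacy-class data of Table~\ref{tbl:cyclicWE6}. Throughout, recall that there is no Fermat cubic surface when $p=3$, so any cyclic cubic surface in this characteristic has a \emph{unique} triple cover; in particular every automorphism of it normalizes the associated Galois group $\langle g\rangle$, where $g$ denotes the deck transformation (of class $3A$).

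For the forward implication, suppose $X$ carries an automorphism $h$ of class $8A$. By Lemma~\ref{lem:8Aautos}, $X$ is cyclic; let $g$ be its deck transformation. Since $h$ normalizes $\langle g\rangle$ and $\Aut(\bbZ/3)\cong\bbZ/2$, the element $h^2$ centralizes $g$. As $\langle g\rangle$ and $\langle h^2\rangle$ commute and have coprime orders $3$ and $4$, the element $gh^2$ has order $12$. Using that the order-$4$ power of an $8A$ element has class $4A$ (Table~\ref{tbl:cyclicWE6}), one computes $(gh^2)^4=g$, of class $3A$, and $(gh^2)^3=h^6$, of class $4A$; consulting Table~\ref{tbl:cyclicWE6}, among the two order-$12$ classes in $W(\sfE_6)$ only $12A$ has a fourth power of class $3A$ and a cube of class $4A$, so $gh^2$ has class $12A$.

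For the converse, suppose $X$ carries an automorphism $g'$ of class $12A$. Then $(g')^4$ has class $3A$, so $X$ is cyclic and $\langle (g')^4\rangle$ is its Galois group. Writing $X$ in the normal form \eqref{eq:3AnormalFormP3} with parameter $c$, Lemma~\ref{lem:invPoints} shows that $g'$ induces an automorphism $\bar{g}'$ of the pair $(C,\calE)$, where $C$ is the cuspidal ramification cubic and $\calE$ its nine Eckardt points. Since $\langle (g')^4\rangle$ acts trivially on $C$ and has order $3$, the induced automorphism $\bar{g}'$ has order $12/3=4$. On the other hand, by the discussion following Proposition~\ref{prop:normalForms3Ap3}, when $c\ne 0$ every automorphism of $(C,\calE)$ is a translation by an element of $\calE$ followed by a scaling $\alpha\mapsto\lambda\alpha$ with $\lambda^2=1$, and these form a group isomorphic to $3^2\rtimes 2$, all of whose elements have order $1$, $2$, or $3$. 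This contradicts the existence of $\bar{g}'$ unless $c=0$; and the $c=0$ surface is precisely the one of Lemma~\ref{lem:8Aautos}, which carries an order-$8$ automorphism — necessarily of class $8A$, since $8A$ is the only order-$8$ conjugacy class in $W(\sfE_6)$.

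The main obstacle is the converse, and within it the assertion that $\bar{g}'$ genuinely has order $4$: this needs both that $g'$ normalizes the Galois group (which rests on uniqueness of the cyclic structure, hence on the absence of the Fermat cubic in characteristic $3$) and that the kernel of $\Aut(X)\to\Aut(C,\calE)$ is exactly $\langle g\rangle$, so that the image of $\langle g'\rangle$ has order $4$ rather than dividing $2$. Once these points are secured, the contradiction with the structure of $\Aut(C,\calE)$ for $c\ne 0$ is immediate, and the remaining verifications are routine checks of conjugacy classes via the characteristic polynomials and power maps recorded in Table~\ref{tbl:cyclicWE6}.
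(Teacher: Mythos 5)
Your argument is correct and follows essentially the same route as the paper: the forward direction builds a class-12A element as the product of the deck transformation with the square of the 8A element (the paper writes this as $g^2h$ with the letters in the opposite roles), and the converse uses Lemma~\ref{lem:invPoints} to get an induced order-$4$ action on the nine Eckardt points of the cyclic structure, which forces $c=0$ in \eqref{eq:3AnormalFormP3}. The point you flag about the kernel of $\Aut(X)\to\Aut(C,\calE)$ being no larger than the deck group is left implicit in the paper's proof as well, so your write-up is at the same level of rigor (and slightly more explicit about the conjugacy-class bookkeeping via the power maps in Table~\ref{tbl:cyclicWE6}).
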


\begin{proof}
If there is an automorphism $g$ of class 8A, then $g^2$ has class 4A.
The Galois group of the cyclic cover is generated by an element
$h$ and $ghg^{-1}=h^{-1}$.
Thus $g^2$ and $h$ commute.  Thus $g^2h$ is of class 12A.
Conversely, if there is an automorphism of class 12A then there exist
elements of class 4A and 3A that commute with each other.
Thus $X$ is cyclic and has an element of order $4$ which normalizes the
Galois group of the cover.
By Lemma~\ref{lem:invPoints}, there is an automorphism of order $4$ of
the $9$ Eckardt points.
This is only possible if $c=0$ in \eqref{eq:3AnormalFormP3};
thus there is also an automorphism of order $8$.
\end{proof}

\subsection{Class 12A}
\label{sec:12A}

We already saw above that, when $p=3$, this case coincides with those of
8A by Lemma~\ref{lem:mush8A12A}.

\begin{lem} \label{lem:mush3C5A12A}
When $p=2$, if a smooth cubic surface admits an automorphism
whose class is one of 3C, 5A, 12A then it admits automorphisms of all
three classes.
\end{lem}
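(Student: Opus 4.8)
The plan is to show that, when $p=2$, each of the classes $3C$, $5A$, $12A$ is realized on a smooth cubic surface $X$ exactly when $X$ is isomorphic to the Fermat cubic surface; the claimed equivalence follows immediately. Two of these reductions are already available. By Lemma~\ref{lem:mush3C6C6F9A}, $X$ admits an automorphism of class $3C$ if and only if $X$ is the Fermat cubic. By the theorem of Section~\ref{sec:5A}, $X$ admits an automorphism of class $5A$ if and only if $X$ is the Clebsch cubic surface, and in characteristic $2$ the Clebsch and the Fermat coincide by Lemma~\ref{lem:mush3C5A}. So it remains only to prove: for $p=2$, a smooth cubic surface admits an automorphism of class $12A$ if and only if it is the Fermat cubic.

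For the ``if'' direction, recall from Section~\ref{sec:Fermat} that in characteristic $2$ the group $\Aut(X)$ of the Fermat cubic has index $2$ in $W(\sfE_6)$. Since $W(\sfE_6)$ is generated by its reflections, which are all conjugate, its abelianization is $\bbZ/2$; hence $W(\sfE_6)$ has a unique subgroup of index $2$, namely the kernel of the sign character, which on the reflection representation is the determinant. Thus $\Aut(X)$ equals this subgroup. An element of class $12A$ has characteristic polynomial $\Phi_3\Phi_{12}$ on the $6$-dimensional representation, hence determinant $\Phi_3(0)\Phi_{12}(0)=1$; so the class $12A$ lies in $\Aut(X)$, and the Fermat cubic carries an automorphism of class $12A$. (It also admits $3C$ and $5A$, being the Fermat and, in characteristic $2$, also the Clebsch.)

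For the converse, suppose $g\in\Aut(X)$ has image of class $12A$. From Table~\ref{tbl:cyclicWE6}, $g^4$ has class $3A$, $g^3$ class $4A$, and $g^6$ class $2A$. We may assume $X$ is not the Fermat cubic. By Section~\ref{sec:3A}, the class-$3A$ element $g^4$ realizes $X$ as a cyclic cubic surface; in suitable coordinates (Proposition~\ref{prop:normalForms3Apnot3}) $g^4$ scales $x_3$, the ramification curve $C=\{x_3=0\}\cap X$ is a smooth genus-$1$ plane cubic, and the canonical point $(0:0:0:1)$ does not lie on $X$. The automorphisms $g$, $g^3$, $g^6$ centralize $g^4$, hence normalize the Galois group $\langle g^4\rangle$, so by Lemma~\ref{lem:invPoints} each preserves $C$. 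Now $g^6$ is a reflection (Theorem~\ref{thm:Eckardt}); its center $q$ is fixed by $g^4$ and lies on $X$, so (as the only $g^4$-fixed point off the plane $\{x_3=0\}$ is $(0:0:0:1)\notin X$) we have $q\in\{x_3=0\}\cap X=C$. Since a nontrivial automorphism of $\bbP^3$ fixing a plane pointwise has odd order, neither $g^6$ nor $g^3$ can act trivially on the plane $\{x_3=0\}$ spanned by $C$, so $g^6|_C\ne\id$ and $g^3|_C\ne\id$; thus $g^6|_C$ is the inversion of $C$ about $q$, and, since $g^3$ also fixes $q$ and $(g^3|_C)^2=g^6|_C$, the restriction $g^3|_C$ is an automorphism of $C$ fixing $q$ of order exactly $4$. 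Hence $C$ has a group automorphism of order $4$, which in characteristic $2$ forces $C$ to be the supersingular elliptic curve. Finally, a cyclic cubic surface has equation $x_3^3+C(x_0,x_1,x_2)=0$ and is therefore determined up to isomorphism by the plane-cubic model of its ramification curve, which for a genus-$1$ curve is unique up to $\PGL_3$ and scaling; so $X$ is isomorphic to the unique cyclic surface whose ramification curve is supersingular, namely the Fermat cubic $x_0^3+x_1^3+x_2^3+x_3^3=0$.

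The main difficulty lies in this last direction: one must squeeze out of the single class-$12A$ element enough information about the induced action on the ramification curve to identify it as the supersingular elliptic curve, the delicate points being that the center of the reflection $g^6$ lands on $C$ and that $g^3$ induces an order-$4$ rather than order-$2$ automorphism of $C$. Once these are established, recognising $C$ and the uniqueness of the cyclic surface lying over it are routine.
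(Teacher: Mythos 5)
Your overall strategy is the same as the paper's: reduce everything to the Fermat cubic, using Lemma~\ref{lem:mush3C6C6F9A} for 3C, Lemma~\ref{lem:mush3C5A} for 5A, and, for 12A, the fact that a 12A surface is cyclic whose ramification curve carries an induced automorphism of order $4$ and is therefore supersingular, hence the Fermat's. Your extra verification that the Fermat actually admits a class-12A automorphism (via $\Aut(X)=\PSU_4(2)$ being the unique index-$2$ subgroup $\ker(\det)$ of $W(\sfE_6)$ and $\det=\Phi_3(0)\Phi_{12}(0)=1$ on class 12A) is correct and is a clean addition.

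However, the justification of the crucial step --- that $g^3$ and $g^6$ restrict to \emph{nontrivial} automorphisms of $C$, so that $g^3|_C$ has order exactly $4$ --- rests on a false claim. It is not true that ``a nontrivial automorphism of $\bbP^3$ fixing a plane pointwise has odd order'': in characteristic $\ne 2$ the reflection $\operatorname{diag}(-1,1,1,1)$ fixes the plane $x_0=0$ pointwise and has order $2$, and in characteristic $2$ a transvection does the same (indeed, by Proposition~\ref{prop:involutions} every class-2A involution in characteristic $2$ fixes its axis plane pointwise). So this sentence cannot carry the weight you put on it. The conclusion you want is nevertheless correct, and the right argument is the exact sequence $1 \to \langle g^4\rangle \to G \to \Aut_{\bbP^2}(C) \to 1$ underlying Section~\ref{sec:3A}: any automorphism commuting with the deck transformation $g^4=\operatorname{diag}(1,1,1,\zeta)$ is block-diagonal for the eigenspace decomposition $\bbk^3\oplus\bbk$, and if it acts trivially on the plane $x_3=0$ (equivalently on $C$, which spans that plane) its $3\times 3$ block is scalar, so projectively it equals $\operatorname{diag}(1,1,1,\lambda)$ with $\lambda^3=1$, i.e.\ it lies in $\langle g^4\rangle$. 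Since $g^3$ and $g^6$ have orders $4$ and $2$, they are not in $\langle g^4\rangle$, so they act nontrivially on $C$; the rest of your argument ($g^6|_C$ is the inversion about $q$, $g^3|_C$ has order $4$, $C$ is supersingular, the cyclic surface is determined by $C$) then goes through as written.
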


\begin{proof}
We have already seen that in Lemma~\ref{lem:mush3C5A} that the classes
3C and 5A coincide.
Note that 3C is a cyclic surface whose ramification divisor is a
supersingular elliptic curve.
The surface 12A is also a cyclic surface; by Lemma~\ref{lem:invPoints}
there is an automorphism of order $4$ acting faithfully on its
ramification divisor.  Thus 12A also has a supersingular ramification
divisor.
Thus their ramification divisors are isomorphic and so are the
corresponding surfaces.
\end{proof}

\begin{lem} \label{lem:12Aautos}
Suppose $p \ne 2,3$ and $X$ is a smooth cubic surface admitting an
automorphism $g$ of class 12A.  Then $X$ is cyclic and its ramification
divisor is an anharmonic cubic.
The surface $X$ has normal form
\[
x_0^3 + x_1^3 + x_2^3 + x_3^3 + \lambda x_0x_1x_2
\]
where $\lambda=3(\sqrt{3}-1)$.
Here $\Aut(X) \cong \calH_3(3) \rtimes 4$.
\end{lem}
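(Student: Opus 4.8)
The plan is to exploit the cyclic structure forced by the class of $g$. Since $g$ has class 12A, Table~\ref{tbl:cyclicWE6} shows $g^4$ has class 3A, so by the results of Section~\ref{sec:3A} the surface $X$ is cyclic with $g^4$ lying in the Galois group of a triple cover $X \to \bbP^2$; let $C$ be the ramification curve, an elliptic curve since $p \ne 3$ (Proposition~\ref{prop:normalForms3Apnot3}). As $g$ commutes with $g^4$ it normalizes the Galois group, hence by Lemma~\ref{lem:invPoints} it preserves $C$ and the set $\calE$ of nine coplanar Eckardt points. The Galois group acts trivially on $C$, and conversely any automorphism of $X$ restricting to the identity on $C$ fixes the plane spanned by $C$ pointwise and is therefore a deck transformation (a diagonal matrix $(1,1,1,\mu)$ with $\mu^3=1$ in the normal form of Proposition~\ref{prop:normalForms3Apnot3}); so $\langle g\rangle \to \Aut(C,\calE)$ has kernel exactly $\langle g^4\rangle$ and its image $\langle\bar g\rangle$ has order $4$.

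Next I would classify order-$4$ elements of $\Aut(C,\calE)$. Translations by the $3$-torsion $\calE$ have order dividing $3$, so writing $\bar g = t_a\circ\phi$ with $\phi$ a group automorphism fixing a flex, $\phi^4 = 1$ gives $\mathrm{ord}(\phi)\in\{1,2,4\}$; order $1$ is impossible ($\bar g$ would be a translation) and order $2$ is impossible (then $\bar g^2$ would be a nonzero translation, of order $3$, not $2$). Hence $C$ admits a group automorphism of order $4$, so $C$ is the anharmonic curve ($j=1728$); for every $p \ge 5$ this curve is unique up to isomorphism and has automorphism group $\bbZ/4$, even in the characteristics where it is supersingular. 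Since $X$ is not the Fermat surface (whose ramification divisor is equianharmonic), its cyclic structure is unique, so $X$ is determined up to isomorphism by $C$, hence is unique. For the explicit normal form I would take the cyclic surface $x_0^3+x_1^3+x_2^3+x_3^3+\lambda x_0x_1x_2=0$, whose ramification curve is the member $C_\lambda$ of the Hesse pencil, write down the classical expression for $j(C_\lambda)$ as a rational function of $\lambda$, impose $j(C_\lambda)=1728$, and solve; the root yielding $\lambda^3\neq -27$, hence a smooth $C_\lambda$ and smooth $X$, is $\lambda = 3(\sqrt 3 - 1)$. Equivalently, as for class 8A or as in \S 9.5 of \cite{CAG}, one may diagonalize $g$, list the finitely many diagonal actions of order $12$ preserving a smooth cubic, and single out the 12A one by its trace from Table~\ref{tbl:cyclicWE6}.

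For the automorphism group, Lemma~\ref{lem:3Aautos} already gives $\calH_3(3)\rtimes 2 \subseteq \Aut(X)$, so $\calH_3(3)\subseteq\Aut(X)$; the order-$4$ automorphism $\phi$ of $(C,\calE)$ lifts, by the converse to Lemma~\ref{lem:invPoints}, to an automorphism of $X$ normalizing $\calH_3(3)$, and the two generate a copy of $\calH_3(3)\rtimes 4$. For the reverse inclusion: every automorphism of $X$ normalizes the unique Galois group $\langle g^4\rangle$ and descends to $\Aut_{\bbP^2}(C)$ with kernel $\langle g^4\rangle$ of order $3$, and the map is surjective since every embedded automorphism of $C$ preserves $\calE$ and lifts; as $C$ is anharmonic, $\Aut_{\bbP^2}(C)\cong 3^2\rtimes 4$ has order $36$, so $|\Aut(X)| = 108 = |\calH_3(3)\rtimes 4|$, forcing equality. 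Finally $\calH_3(3)$, being the preimage of the normal subgroup $3^2 \trianglelefteq \Aut_{\bbP^2}(C)$, is normal (indeed the Sylow $3$-subgroup) in $\Aut(X)$, so Schur--Zassenhaus yields $\Aut(X)\cong\calH_3(3)\rtimes\bbZ/4$. As elsewhere in the paper, one may instead defer the equality to Section~\ref{sec:proof} and prove only the inclusion here.

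The only genuinely computational step, and the main obstacle, is identifying the anharmonic member of the Hesse pencil to obtain the explicit constant $\lambda = 3(\sqrt 3-1)$; everything else is structural, resting on the theory of cyclic surfaces in Section~\ref{sec:3A}, on Lemmas~\ref{lem:3Aautos} and~\ref{lem:invPoints}, and on the (characteristic-sensitive) classification of automorphisms of elliptic curves.
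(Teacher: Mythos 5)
Your proposal is correct and follows essentially the same route as the paper: deduce from $g^4$ having class 3A that $X$ is cyclic, use the induced order-$4$ action on the nine Eckardt points (Lemma~\ref{lem:invPoints}) to force the ramification curve to be anharmonic and hence $X$ to be unique, and then identify $\Aut(X)\cong\calH_3(3)\rtimes 4$ via the unique cyclic structure. The only divergence is at the computational step pinning down the constant: the paper verifies $\lambda=3(\sqrt{3}-1)$ by exhibiting an explicit order-$4$ matrix preserving that member of the Hesse pencil, whereas you propose solving $j(C_\lambda)=1728$; both are legitimate.
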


\begin{proof}
Since $g^4$ is of class 3A, we have a cyclic cubic surface.
Since $g$ acts with order $4$ on the $3$-torsion subgroup of the
ramification divisor $C$, we conclude $C$ is anharmonic
by Lemma~\ref{lem:invPoints}.
It remains only to find an automorphism of order $4$ on a cyclic
surface.
The following matrix, from \S 3.1.4 of \cite{CAG},
\[
\frac{1}{\sqrt{3}}
\begin{pmatrix}
1 & 1 & 1 & 0 \\
1 & \zeta & \zeta^2 & 0 \\
1 & \zeta^2 & \zeta & 0 \\
0 & 0 & 0 & \sqrt{3}
\end{pmatrix}\ ,
\]
where $\zeta$ is a primitive third root of unity,
has order $4$ and leaves invariant the given normal form.
Note that $\calH_3(3) \rtimes 4$ must be the full automorphism group
since there is only one cyclic surface structure on $X$.
\end{proof}

\section{Collections of Eckardt points}
\label{sec:EckardtCollections}

The possible automorphisms of cubic surfaces are closely controlled by
the subgroup generated by reflections.
From Theorem~\ref{thm:Eckardt}, along with
Lemmas~\ref{lem:3Cautos},
\ref{lem:2Bautos},
\ref{lem:3Aautos},
\ref{lem:3Dautos},
\ref{lem:4Bautos},
\ref{lem:5Aautos} and
\ref{lem:6Eautos},
we have the following:

\begin{lem}
Let $X$ be a smooth cubic surface with an automorphism $g$ of any class
except 4A, 8A or 12A.
Then the reflections corresponding to the Eckardt points on $X$ generate
a subgroup containing $g$.
\end{lem}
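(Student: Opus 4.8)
The plan is to handle the statement by case analysis on the conjugacy class of $g$, since the excerpt has already done almost all the work in the preceding lemmas. The key observation is that the list of classes realizable on a smooth cubic surface is finite (Table~\ref{tbl:cyclicWE6} together with Lemma~\ref{lem:excludedClasses}), and once we exclude 4A, 8A and 12A we are left with exactly the classes 1A, 2A, 2B, 3A, 3C, 3D, 4B, 5A, 6A, 6C, 6E, 6F, 9A (subject to the characteristic restrictions). First I would dispose of the trivial class 1A, where $g=1$ lies in every subgroup. Then for 2A this is precisely Theorem~\ref{thm:Eckardt}: $g$ is a reflection whose center is an Eckardt point, so $g$ itself is one of the generating reflections. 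For 2B, 3A, 3C, 3D, 4B, 5A, 6E the conclusion is the explicit content of Lemmas~\ref{lem:2Bautos}, \ref{lem:3Aautos}, \ref{lem:3Cautos}, \ref{lem:3Dautos}, \ref{lem:4Bautos}, \ref{lem:5Aautos}, \ref{lem:6Eautos} respectively: in each case a reflection group containing $g$ is produced from the Eckardt points.

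The remaining classes to address are the order-$6$ and order-$9$ classes $6A$, $6C$, $6F$, $9A$ not already covered. For these I would pass to an appropriate power of $g$. By Table~\ref{tbl:cyclicWE6}, an element of class $6A$ has $g^2$ of class $3A$ and $g^3$ of class $2A$; an element of class $6C$ or $6F$ has $g^2$ of class $3C$ and $g^3$ of class $2A$ or $2B$; an element of class $9A$ has $g^3$ of class $3A$. The strategy is: for $6A$, $6C$, $6F$ the cube $g^3$ is a reflection (class 2A) or an involution of class 2B, and in all these cases Lemmas~\ref{lem:mush3A6A}, \ref{lem:mush3C6C6F9A} show the surface is respectively a cyclic surface or the Fermat surface, where the full automorphism group $\mathcal{H}_3(3)\rtimes 2$ (Lemma~\ref{lem:3Aautos}) or $\mathrm{PSU}_4(2)$ (Lemma~\ref{lem:3Cautos}) is generated by reflections; since $g$ is an automorphism of such a surface, $g$ lies in that reflection group. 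The class $9A$ is subtler: by Lemma~\ref{lem:mush3C6C6F9A} a $9A$ surface is the Fermat cubic, whose automorphism group $\mathrm{PSU}_4(2)$ is generated by reflections (Lemma~\ref{lem:3Cautos}), so again $g$ belongs to the reflection subgroup.

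The cleanest way to write this is to first establish the general principle: \emph{if $X$ admits an automorphism of class 3A (resp. 3C), then the subgroup of $\mathrm{Aut}(X)$ generated by all reflections is $\mathcal{H}_3(3)\rtimes 2$ (resp. $\mathrm{PSU}_4(2)$ when $p=2$, $3^3\rtimes\frakS_4$ when $p\neq 2,3$), which equals the full automorphism group.} This is exactly what Lemmas~\ref{lem:3Aautos} and \ref{lem:3Cautos} give. Then every class in the list either is directly covered by a cited lemma, or has a power of class 3A or 3C, reducing it to the principle just stated (this uses that the surface in those cases is forced to be a cyclic surface or the Fermat, via Lemmas~\ref{lem:mush3A6A} and \ref{lem:mush3C6C6F9A}). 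The main obstacle is simply bookkeeping: making sure every realizable class other than 4A, 8A, 12A has been accounted for, and that for each order-$6$ or order-$9$ class the reduction to 3A or 3C is correctly invoked with the right characteristic hypotheses (in particular remembering that 3C, 6C, 6F, 9A do not occur when $p=3$, so there is nothing to check there). No genuinely new computation is needed; the proof is an assembly of the already-established lemmas, organized by the power-structure in Table~\ref{tbl:cyclicWE6}.
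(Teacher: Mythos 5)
Your overall route is the paper's: the lemma is assembled from Theorem~\ref{thm:Eckardt} together with Lemmas~\ref{lem:2Bautos}, \ref{lem:3Aautos}, \ref{lem:3Cautos}, \ref{lem:3Dautos}, \ref{lem:4Bautos}, \ref{lem:5Aautos}, \ref{lem:6Eautos}, with the order-$6$ and order-$9$ classes reduced to their powers via Lemmas~\ref{lem:mush3A6A} and~\ref{lem:mush3C6C6F9A}; you are in fact more explicit than the paper about the bookkeeping for 6A, 6C, 6F and 9A. However, one step is justified by a false premise and needs repair.

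For class 6A you argue: the surface is cyclic, its full automorphism group is $\calH_3(3)\rtimes 2$ and is generated by reflections, hence any automorphism of the surface --- in particular $g$ --- lies in the reflection subgroup. That premise fails precisely for the special cyclic surfaces that the lemma still covers: the 12A surface (for $p\ne 2,3$) has $\Aut(X)\cong\calH_3(3)\rtimes 4$ and the 8A surface (for $p=3$) has $\Aut(X)\cong\calH_3(3)\rtimes 8$, in which the reflection subgroup has index $2$ and $4$ respectively (Corollary~\ref{cor:reflectionIndex}), and both surfaces carry elements of class 6A (e.g.\ the square of an element of class 12A, by the power data in Table~\ref{tbl:cyclicWE6}). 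So ``every automorphism of a cyclic surface is a product of reflections'' is not available. The conclusion is still true, and the fix is exactly the power decomposition you announce but do not actually use: write $g=g^{3}\cdot(g^{2})^{-1}$. The element $g^{3}$ has class 2A and is therefore itself a reflection attached to an Eckardt point (Theorem~\ref{thm:Eckardt}), and $g^{2}$ has class 3A and generates the center of the group $\calH_3(3)\rtimes 2$ produced by the nine coplanar Eckardt points (Lemma~\ref{lem:3Aautos}), hence lies in the subgroup generated by reflections; therefore so does $g$. For 6C, 6F and 9A there is no such issue, since by Lemma~\ref{lem:mush3C6C6F9A} the surface is the Fermat cubic, whose full automorphism group genuinely is generated by reflections (Lemma~\ref{lem:3Cautos}).
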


\subsection{Reflection groups acting on cubic surfaces}
\label{sec:reflectionGroups}

In this section, we classify the possible collections of Eckardt points
that may occur on a smooth cubic surface 
along with the reflection groups they generate.
Over finite fields, the possibilities are classified
in Lemma~20.2.10~of~\cite{Hirschfeld}.
However, Hirschfeld's proof does not enumerate the corresponding reflection groups.
Our proof follows similar lines.

Let $X$ be a smooth cubic surface.
Let $\calE$ be the set of Eckardt points of $X$.
Let $\calL$ (resp. $\calT$) be the set of lines passing through two
points of $\calE$ that lie on $X$ (resp. \emph{do not} lie on $X$).
The lines in $\calL$ are all exceptional lines; they contain exactly $2$
points of $\calE$ if $p\ne 2$ or exactly $5$ points of $\calE$ if
$p = 2$.
The lines in $\calT$ are precisely the trihedral lines of $X$ and they
contain exactly three points of $\calE$.

Since Eckardt points can always be identified with tritangent planes,
one can view a triple $(\calE, \calL, \calT)$ as a kind of incidence
structure where $\calE$ is a subset of the 45 tritangent trios
and $\calL$ and $\calT$ are subsets of the power set of $\calE$
corresponding to the Eckardt points lying on the corresponding line.
We say two such triples are \emph{equivalent} if they are in the same
$W(\sfE_6)$-orbit.

\begin{lem} \label{lem:configurations}
Let $X$ be a smooth cubic surface with corresponding triple
$(\calE, \calL, \calT)$.
Let $G$ be the subgroup of $\Aut(X)$ generated by reflections
(equivalently, $G$ is generated by the reflections corresponding to the
Eckardt points $\calE$).
Up to equivalence, the possibilities are enumerated in
Table~\ref{tbl:configurations}.  
\end{lem}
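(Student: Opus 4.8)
The plan is to reduce the classification of triples $(\calE,\calL,\calT)$ to a finite combinatorial search inside the $27$-line configuration, organized by the conjugacy class of a single automorphism and then by the reflection group it sits inside. First I would recall that, by Theorem~\ref{thm:Eckardt}, each Eckardt point corresponds to a reflection of class 2A, and that by Corollary~\ref{cor:2Bconverse} two reflections commute exactly when their Eckardt points share an exceptional line (the 2B case), while by Lemma~\ref{lem:3Dautos} two non-commuting reflections generate an $\frakS_3$ whose third Eckardt point lies on a trihedral line together with the first two. Thus the data $(\calE,\calL,\calT)$ is entirely determined by the set $\calE$ viewed as a set of tritangent trios, together with the induced commutation graph; the group $G$ generated by the corresponding reflections is then intrinsic to $W(\sfE_6)$, so the whole classification is a statement purely about subsets of the $45$ tritangent trios stable under the combinatorial constraints imposed by an actual cubic surface.

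The key steps, in order: (1) Split into the case $|\calE|\le 1$ (trivial), and $|\calE|\ge 2$. In the latter case pick two Eckardt points; either all reflections mutually commute, giving an elementary abelian group, or some pair generates $\frakS_3$. (2) Use the characteristic dependence: by Lemma~\ref{lem:excLineEck}, a line in $\calL$ carries exactly $2$ Eckardt points when $p\ne2$ and exactly $5$ when $p=2$; by Lemma~\ref{lem:2Bautos} the $p=2$ case forces the full $2^4$ from the quartic del Pezzo to act, which already pins down large pieces of $\calE$. (3) Work class-by-class through the conjugacy classes that force reflection subgroups — invoking Lemmas~\ref{lem:3Cautos}, \ref{lem:3Aautos}, \ref{lem:4Bautos}, \ref{lem:5Aautos}, \ref{lem:6Eautos} — to produce the candidate groups $\frakS_2$, $2^2$ (and $2^4$ in $p=2$), $\frakS_3$, $\frakS_3\times2$, $\frakA_4$-related, $\frakS_4$ (and $2^3\rtimes\frakS_4$ in $p=2$), $\frakS_5$, $\calH_3(3)\rtimes2$, and $\PSU_4(2)$. (4) For each candidate $G$, read off $\calE$ as the set of reflections in $G$ and compute $\calL,\calT$ from the incidence rules; cross-check against the Fermat "universal model" of Lemma~\ref{lem:HermitianSubspaces} in characteristic $2$, where the orbits $C_1,C_5,C_3,C_{13},\dots$ already list every admissible configuration. (5) Show no other configuration occurs: any $\calE$ with $\ge2$ points generates one of the listed reflection groups, because the reflections lie in $W(\sfE_6)$ (or $\PSU_4(2)$ when $p=2$) and the subgroups of those generated by 2A-reflections are constrained — here one can either cite the enumeration of reflection subgroups of $W(\sfE_6)$ or argue directly that a third non-collinear Eckardt point in the $\frakS_3$ case forces $\frakS_4$ or larger, and iterate.

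I expect the main obstacle to be step (5): proving that the list in Table~\ref{tbl:configurations} is \emph{complete}, i.e. that every admissible $\calE$ generates one of the enumerated groups and realizes one of the enumerated incidence structures, without simply deferring to a computer search. The delicate point is that an abstract set of 2A-reflections in $W(\sfE_6)$ need not come from a geometric $\calE$ — Lemma~\ref{lem:excludedClasses} and the quartic del Pezzo analysis (Theorem~\ref{thm:dp4autos}, Proposition~\ref{prop:inv12kind}) rule out various classes, and one must thread these obstructions carefully, especially in characteristic $2$ where the jump from $2$ to $5$ Eckardt points on a line means small configurations "snap" to large ones. My approach here would be: first handle $p\ne2$ by noting every Eckardt line carries exactly two points, so the commutation graph is a graph where each clique of size $\ge2$ has all edges realized by exceptional lines, and such configurations inside the $45$ tritangent trios are rigid enough to enumerate by hand using the $\sfA_2^{\perp3}$ Steiner-complex structure from Theorem~\ref{thm:3matrices}; then for $p=2$ use that the automorphism group is a subgroup of $\PSU_4(2)$ acting on the Hermitian $\bbF_4^4$, so $\calE$ is a set of isotropic points and the classification of Lemma~\ref{lem:HermitianSubspaces} directly yields the finitely many orbits of subspaces, each giving one row of Table~\ref{tbl:configurations}.
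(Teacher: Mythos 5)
Your outline shares several ingredients with the paper's proof --- the closure rules from Lemma~\ref{lem:excLineEck} and Corollary~\ref{cor:triadLineEck}, the class-by-class existence results, and the use of the Hermitian model of Lemma~\ref{lem:HermitianSubspaces} as a universal configuration --- but the completeness step, which you correctly single out as the crux, is where the proposal falls short. The paper's enumeration is organized by the dimension of the linear span of $\calE$ in $\bbP^3$ (a point, a line, a plane, or all of $\bbP^3$), and this stratification is what makes the case analysis finite and tractable. Your substitute for it in characteristic $\ne 2$ --- that the commutation graph together with the Steiner-complex structure is ``rigid enough to enumerate by hand'' --- is not backed by a concrete mechanism; the $\sfA_2\perp\sfA_2\perp\sfA_2$ decomposition of Theorem~\ref{thm:3matrices} governs order-$3$ automorphisms, not sets of tritangent trios, and it is not clear how it alone would exclude, say, a hypothetical spanning configuration strictly between $C_{10}$ and $C_{18}$.

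Two concrete arguments from the paper are missing and would be needed in any execution of your plan. First, the planar case: if $\calE$ spans a plane that is not a tritangent plane, one must show $\calE$ is all of $C_9$; the paper does this by showing that the reflections attached to any three non-collinear Eckardt points already act transitively on the nine points (each non-collinear pair produces an order-$3$ element whose orbits on $\calE$ are collinear triples), and that $C_9$ and $C_{13}$ are the maximal planar collections by Lemma~\ref{lem:HermitianSubspaces}; the tritangent-plane subcase is then settled by counting points on the three exceptional lines, giving $C_4$, $C_6$, or $C_{13}$. Second, the spanning case: the paper splits on whether some two trihedral lines in $\calT$ are skew --- if so, one manufactures either two distinct cyclic structures or an $\frakS_3\times\frakS_3$, forcing the Fermat ($C_{18}$ or $C_{45}$); if not, every pair of trihedral lines spans a $C_6$-plane, and the combinatorics of four-element subsets pairwise meeting in exactly one element pins down $C_{10}$ uniquely. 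Your characteristic-$2$ half is essentially sound (the closed subsets of the $45$ Hermitian points do correspond to subspaces), but even there one must prove the planar closure statement rather than read it off the orbit list, since Lemma~\ref{lem:HermitianSubspaces} classifies subspaces, not arbitrary closed subsets. Finally, a minor slip: no $\frakA_4$-type reflection group occurs in Table~\ref{tbl:configurations}, so that candidate in your step (3) is spurious.
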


\begin{table}[h]
\centering
\scalebox{0.75}{%
\begin{tabular}{|c|c|ccc|c|c|c|}
\hline
Name & $\dim \Span \calE$ & $|\calE|$ & $|\calL|$ & $|\calT|$ & $p$ & G
& Surfaces\\
\hline
$C_0$ & -1 & 0 & 0 & 0 & any & $1$ & 1A\\
\hline
$C_1$ & 0 & 1 & 0 & 0 & any & $2$
& 2A, 4A ($p \ne 2$), 8A ($p \ne 2,3$)\\
\hline
$C_2$ & 1 & 2 & 1 & 0 & $\ne 2$ & $2^2$ & 2B ($p \ne 2$)\\
$C_3$ & 1 & 3 & 0 & 1 & any & $\frakS_3$ & 3D\\
$C_5$ & 1 & 5 & 1 & 0 & $=2$ & $2^4$ & 2B ($p = 2$)\\
\hline
$C_4$ & 2 & 4 & 3 & 1 & $\ne 2$ & $\frakS_3 \times \frakS_2$
& 6E ($p \ne 2$)\\
$C_6$ & 2 & 6 & 3 & 4 & $\ne 2$ & $\frakS_4$
& 4B ($p \ne 2$)\\
$C_9$ & 2 & 9 & 0 & 12 & any & $H_3(3) \rtimes 2$
& 3A, 8A=12A ($p = 3$), 12A ($p \ne 2,3$)\\
$C_{13}$ & 2 & 13 & 3 & 16 & $=2$ & $2^3 \rtimes \frakS_4$
& 4A=4B=6E ($p=2$)\\
\hline
$C_{10}$ & 3 & 10 & 15 & 10 & $\ne 2,5$ & $\frakS_5$
& 5A ($p \ne 2$)\\
$C_{18}$ & 3 & 18 & 27 & 42 & $\ne 3$ & $3^3 \rtimes \frakS_4$
& 3C ($p \ne 2,3$)\\
$C_{45}$ & 3 & 45 & 27 & 240 & $=2$ & $\PSU_4(2)$
& 3C=5A=12A ($p =2$) \\
\hline
\end{tabular}
}
\caption{Possible collections of Eckardt points.}
\label{tbl:configurations}
\end{table}

\begin{proof}
We proceed by building the triples inductively by adding points
to existing triples taking into account the constraints imposed
on collinear Eckardt points by
Lemmas~\ref{lem:excLineEck}~and~\ref{cor:triadLineEck}.

The cases $C_0$ and $C_1$ are immediate.
The classification of the collinear collections $C_2$,
$C_3$, $C_5$ and their automorphisms follow immediately from
Lemmas~\ref{lem:excLineEck}~and~\ref{cor:triadLineEck}.

Suppose $\calE$ spans a plane $P$.
Let us consider the collection corresponding to $C_9$.
The reflections corresponding to any three non-collinear Eckardt points
in $C_9$ generate the entire reflection group associated to $C_9$.
Indeed, any two Eckardt points give rise to an element of order $3$
which has $3$ orbits each of $3$ collinear Eckardt points.
Since we have two pairs lying on different lines, we get a transitive
action of the group on the Eckardt points and thus their corresponding
reflections.

Thus, if a general planar collection $\calE$ can be embedded in $C_9$
as an abstract incidence structure, then we obtain $C_9$ itself.
By Lemma~\ref{lem:HermitianSubspaces}, $C_9$ and $C_{13}$ are the maximal
possible planar collections.
For the remaining planar cases, we may assume $\calE$ is a subset
of $C_{13}$; thus $P$ is a tritangent plane.

All of the Eckardt points must lie on the three exceptional lines in the
tritangent plane.  Since the points of $\calE$ are not collinear,
at least two points $p,q$ of $\calE$ lie on distinct exceptional lines.
The line $\overline{pq}$ is a trihedral line in $\calT$ whose third point
$r$ lies on the remaining exceptional line.
These points generate a group $\frakS_3$ which permutes the exceptional
lines, so there are an equal number of points of $\calE$ on each.
Since $p,q,r$ are collinear, there must be at least one more point.
If the characteristic is not $2$, there can only be two points on each
exceptional line.  If the point $t$ is the intersection of the three
lines we have the case $C_4$, otherwise, there must be 6 points in
$\calE$ and we are in case $C_6$.
If the characteristic is $2$, we must have $5$ points on every line and
we are in case $C_{13}$.

Now assume $E$ spans the whole space.
If two lines $\ell, \ell'$ from $\calT$ are skew, then we claim $X$ is
the Fermat surface.
Suppose there exists a third trihedral line $\ell''$ that passes through
Eckardt points of both $\ell$ and $\ell'$.
In this case $\{ \ell, \ell'' \}$ and $\{ \ell', \ell'' \}$ span
two distinct planes that each contain the $C_9$ collection.
This means that there are two distinct cyclic surface structures on $X$,
which is only possible if $X$ is the Fermat cubic surface.
We are left with only the possibility that all the lines between Eckardt
points on $\ell$ and $\ell'$ are exceptional.  This means that the
reflections corresponding to the Eckardt points on $\ell$ commute with
those from $\ell'$.
Thus, there is a group isomorphic $\frakS_3 \times \frakS_3$ acting on $X$.
This implies the existence of an automorphism of class 3C and thus
we must have the Fermat cubic surface by 
Theorem~\ref{thm:3matrices}.
This produces case $C_{18}$ when $p>3$
and $C_{45}$ when $p=2$.
This case does not occur when $p=3$.
Note that $C_{18}$ has two points on every exceptional line so it is
certainly maximal when $p \ne 2$.

We may now assume that any pair of trihedral lines in $\calE$ must intersect
in a point outside $X$.
Whenever this occurs, the two lines span a plane $P$ and $P \cap \calE$
must be a configuration of type $C_6$.
Thus the plane $P$ contains exactly 4 trihedral lines from $\calT$.
Let  $\calP$ be the subsets of $\calL$ corresponding to such planes $P$.
We thus want subsets of four elements where each pair of subsets
contains exactly one common element.
By directly trying to build such a collection, we see there is
only one possibility for $\calP$:
\[
\{1,2,3,4\}, \{1,5,6,7\}, \{2,5,8,9\}, \{3,6,8,10\}, \{4,7,9,10\} \ .
\]
This last case is $C_{10}$.
This is the Clebsch cubic surface.  If $p \ne 2$, then
the automorphism group of such a
surface is maximal, so the collection $C_{10}$ must be as well.
If $p=2$, then we obtain the collection $C_{45}$.
\end{proof}

\begin{remark} The Clebsch surface is classically called the
\emph{Clebsch diagonal cubic surface}.
We recall the reason for this name (see also \S{}9.5.4 of \cite{CAG}).
Consider the Clebsch in the coordinates \eqref{eq:charFreeCubic}.
Let $P$ be the linear subspace given by $\sum_{i=0}^4 x_i = 0$,
which is isomorphic to $\bbP^3$.
The \emph{Sylvester Pentahedron} is the set of 5 planes in $P$
defined by $x_i=0$ for $i=0,\ldots, 4$.
The planes are called \emph{faces} of the pentahedron,
intersections of pairs of planes form the \emph{edges},
and intersections of triples of planes are the \emph{vertices}.
The 10 vertices are Eckardt points $\calE$ of the Clebsch surface,
which are all of them when $p \ne 2,5$.
The 10 edges are the trihedral lines $\calT$ in $C_{10}$ described above.

Each face contains 6 coplanar Eckardt points as in the
$C_6$ arrangement described above.  The three exceptional lines on each
face are precisely the three diagonals of the tetrahedron defined by the other
4 faces.  Thus the 15 exceptional lines $\calL$ in $C_{10}$ are the
``diagonals'' from which the surface gets its name.
The remaining $12$ exceptional lines form a double-sixer.

When $p=2$, a Sylvester pentahedron can be constructed, but it is not
unique.
In the coordinates \eqref{eq:charFreeCubic}, we see that the three
diagonals of each quadrilateral meet at a point, which produces
additional Eckardt points.
This is yet another explanation for why the Fermat and the Clebsch surfaces
are isomorphic when $p=2$.
 \end{remark}
 
\subsection{Automorphisms not arising from reflection groups}
\label{sec:nonReflection}

It remains to consider automorphism groups that are \emph{not} generated
by reflection groups.  Since surfaces admitting automorphisms of class
8A or 12A also admit automorphisms of class 4A, the following
essentially settles the matter.

\begin{lem} \label{lem:handle4A}
Suppose $X$ admits an automorphism $g$ of class $4A$.
If $p=2$, then $g$ is contained in a reflection subgroup of $\Aut(X)$.
If $p \ne 2$, then either $\Aut(X)$ is cyclic, or $X$ is the 12A surface.
\end{lem}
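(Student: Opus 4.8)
The plan is to split into the two characteristic cases, handling $p=2$ first since it is the easier one. When $p=2$, Lemma~\ref{lem:4Adesc} and the lemma immediately following it show that blowing down a $g$-invariant exceptional line $\ell$ yields a quartic del Pezzo surface $Y$ whose canonical point is the image of $\ell$, and moreover $Y$ is simultaneously invariant under an involution of class $2\bar 2\bar 1$ (the class of $g$) and hence also under one of class $\bar 2\bar 2 1$. By Proposition~\ref{prop:inv12kind} the fixed locus of the involution $g^2$ of the first kind is then a pair of rational curves tangent at the canonical point, so the preimage of this locus in $X$ is a tritangent plane with an Eckardt point at the image of $\ell$. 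Thus $X$ has an Eckardt point lying on a $g$-invariant exceptional line; since $p=2$, Lemma~\ref{lem:excLineEck} forces that line to carry $5$ Eckardt points, and we are exactly in the situation of Lemma~\ref{lem:4Bautos}: the corresponding reflections generate a group of order $192$ isomorphic to $2^3\rtimes\frakS_4$. It remains to check that $g$ itself lies in this reflection subgroup; this follows because the reflection group contains the full automorphism group of $Y$ (the canonical point is fixed by all of $\Aut(Y)$), and $g$ descends to an automorphism of $Y$, so $g$ lies in the image of $\Aut(Y) \hookrightarrow \Aut(X)$, which is contained in the reflection group.

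For $p\ne 2$, I would argue by cases on the presence of an Eckardt point. By Lemma~\ref{lem:4Adesc}, $g$ has exactly three invariant exceptional lines forming a tritangent plane $T$. If $T$ corresponds to an Eckardt point $q_0$, then $g$ fixes $q_0$ (being the intersection of its three invariant lines), so $g$ commutes with the reflection $r$ centered at $q_0$; but then $gr$ has order $8$ or the element $g$ and $r$ together generate a group properly containing $\langle g\rangle$, and tracking classes in $W(\sfE_6)$ via Table~\ref{tbl:cyclicWE6} one sees we land in one of the cyclic-group cases or in a situation forcing class 8A or 12A — I will need to verify $8A$ and $12A$ reduce to the claimed ``12A surface'' using Lemma~\ref{lem:8Aautos} and Lemma~\ref{lem:12Aautos} (indeed, in the $p\ne 2,3$ case these surfaces are cyclic with a reflection group $\calH_3(3)\rtimes 2$, and adjoining $g$ of order $4$ gives $\calH_3(3)\rtimes 4$, which is the 12A surface). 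The remaining, and most delicate, case is when there is no Eckardt point at all: then $X$ has no reflections, so by Theorem~\ref{thm:Eckardt} the reflection subgroup is trivial, and I claim $\Aut(X)$ must be cyclic.

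The key step here is to show that in the absence of reflections, $\Aut(X)$ is cyclic, generated by (a suitable power structure around) $g$. The plan is to use the normal form \eqref{eq:4Anormal0} from Lemma~\ref{lem:normalForms4A}, namely $x_3^2x_2 + x_2^2x_0 + x_1(x_1-x_0)(x_1-cx_0)=0$ with $g:(x_0:x_1:x_2:x_3)\mapsto(x_0:x_1:-x_2:ix_3)$. Any automorphism must preserve the unique $g$-invariant tritangent plane $T$ — wait, more carefully: any $h\in\Aut(X)$ normalizes nothing a priori, but one can observe that the stratum of 4A-surfaces without an Eckardt point is $1$-dimensional and isomorphic to the moduli of elliptic curves (the curve $x_3=0$ slice in Legendre form with parameter $c$), so for generic $c$ the elliptic curve has no extra automorphisms. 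I would argue directly from the normal form: an automorphism $h$ must permute the distinguished lines and fix the relevant geometric structure (the plane $x_3=0$ cut out by the pencil structure associated to $g^2$), reducing to a computation on $\bbP^1$ showing that the only automorphisms are powers of $g$ composed with translations by $2$-torsion — but the latter, if present, would be reflections, contradicting our assumption. Hence $\Aut(X) = \langle g\rangle$ is cyclic. The main obstacle I anticipate is this last coordinate bookkeeping: pinning down exactly which linear transformations preserve the normal form and ruling out any non-cyclic extension without simply invoking the (not-yet-proven) final classification; I expect this is where careful use of the canonical plane / Hessian structure and the uniqueness statements in Lemmas~\ref{lem:8Aautos} and \ref{lem:12Aautos} will be needed to close the argument cleanly.
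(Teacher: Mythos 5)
Your $p=2$ argument is sound and follows essentially the paper's route: the paper simply cites Lemma~\ref{lem:mash}, whose proof is the blow-down-to-the-canonical-point argument you reproduce (with the harmless slip that $\bar 2\bar 2 1$, not $2\bar 2\bar 1$, is the $W(\sfD_5)$-class of 4A, and that these are order-$4$ elements rather than involutions); the conclusion that $g$ lies in the reflection group of order $192$ of Lemma~\ref{lem:4Bautos} is exactly what the paper uses.

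The $p\ne 2$ half has a genuine gap, and the case division itself is off. Since $g^2$ has class 2A it is a reflection, and its center $p_0$ is an Eckardt point (the common point of the three $g$-invariant exceptional lines), so your ``no Eckardt point at all'' case is vacuous; and in your first case the reflection centered at $q_0$ is forced by the bijection of Theorem~\ref{thm:Eckardt} to be $g^2$ itself, so $gr=g^3$ and ``$gr$ has order $8$'' does not follow --- that case produces nothing. The dichotomy that actually drives the paper's proof, and which your proposal never engages, is whether $X$ has Eckardt points \emph{other than} $p_0$. The paper shows: (i) no further Eckardt point can lie on an exceptional line through $p_0$, since by the orbit description in Lemma~\ref{lem:4Adesc} its $g$-orbit would give a third Eckardt point on that line, impossible for $p\ne 2$ by Lemma~\ref{lem:excLineEck}; (ii) if a further Eckardt point lies on a trihedral line through $p_0$, applying $g$ forces a second trihedral line through $p_0$ and the configuration must be $C_9$, so $X$ is cyclic with an induced order-$4$ action on the ramification curve, i.e.\ the 12A surface; (iii) if $p_0$ is the only Eckardt point, every element of $\Aut(X)$ has class 2A, 4A or 8A, so $\Aut(X)$ is a tame $2$-group, and restriction to the projectivized tangent space at $p_0$ has cyclic kernel and image of order at most $2$ (a line-swapping element must have order $8$), whence $\Aut(X)$ is a non-split central extension of cyclic groups and is cyclic. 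Your substitute for (iii) --- computing the stabilizer of the normal form \eqref{eq:4Anormal0} --- is only sketched and, as you acknowledge, not closed; in particular the claim that $2$-torsion translations ``would be reflections'' is unproved, and without (i)--(ii) you have no argument that the only non-cyclic alternative is the 12A surface.
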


\begin{proof}
When $p=2$, we know from Lemma~\ref{lem:mash} that $X$
is also a surface admitting an automorphism of class 4B and that $g$ is
contained in a reflection group.

Now suppose $p \ne 2$ and $p_0$ is the Eckardt point corresponding to
the reflection $g^2$.
From Lemma~\ref{lem:4Adesc}, we see that if $q$ is an Eckardt point
lying on an exceptional line $\ell$ containing $p_0$ then $g(q)$ is a
third Eckardt point on $\ell$ --- a contradiction when $p \ne 2$.
If $q$ is an Eckardt point lying on a trihedral line $\ell$ containing
$p_0$ then $g^2(q)$ is the third Eckardt point on the line.
Since there at most three Eckardt points on $\ell$, we see that
$g(q)$ must lie on a distinct trihedral line.
We conclude that if there is more than one Eckardt point then the
configuration of Eckardt points can only be $C_9$.
Since $g$ does not leave invariant any trihedral line, we have an induced action
of order $4$ on the ramification locus of a cyclic cubic surface.
Thus, $X$ is the 12A surface.

It remains to consider the case where the only Eckardt point is $p_0$.
Aside from 2A, 4A, 8A, all the other realizable classes imply the
existence of other Eckardt points.
Thus we may assume $\Aut(X)$ consists of elements only of classes
2A, 4A, 8A that leave invariant the tritangent plane containing $p_0$.
One now uses a similar argument as page~495~of~\cite{CAG}.
Let $H$ be the image of $\Aut(X)$ on the projectivized tangent space of
$p_0$.
Since $\Aut(X)$ must have order a power of $2$, the order is coprime to
the characteristic.
Thus, the morphism $\Aut(X) \to H$ has cyclic kernel.
Any other automorphism can only interchange two of the three lines
passing through $p_0$.
Since $\Aut(X)$ has order a power of $2$, this implies that $H=2$ or $H=1$.
Any automorphism interchanging two lines must have order $8$
by the description in Lemma~\ref{lem:4Adesc}.
Thus $\Aut(X)$ is a non-split central extension of cyclic groups
and so must be cyclic.
\end{proof}

\section{Proof of the Main Theorem}
\label{sec:proof}

\begin{proof}[Proof of Theorem~\ref{thm:main}]
Let $X$ be a smooth cubic surface.
Lemmas~\ref{lem:excludedClasses},
\ref{lem:mush3C6C6F9A},
and \ref{lem:exclude5A}
tell us which automorphism classes do not occur.
Lemmas~\ref{lem:mush3A6A},
\ref{lem:mush3C6C6F9A},
\ref{lem:mash},
and~\ref{lem:mush3C5A12A}
tell us that certain strata coincide with other strata. 

We now prove that the automorphism groups that appear in
Table~\ref{tbl:cubicAutos} actually act on a smooth cubic surface and
that no larger groups occur.
Let $R$ be the subgroup of $\Aut(X)$ generated by reflections.
The possible reflection groups are enumerated in
Lemma~\ref{lem:configurations} and coincide with the automorphism groups
described in the normal forms above.
Thus, Lemmas~\ref{lem:3Cautos},
\ref{lem:2Bautos},
\ref{lem:3Aautos},
\ref{lem:3Dautos},
\ref{lem:4Bautos},
\ref{lem:5Aautos}, and
\ref{lem:6Eautos}
tell us that the only automorphism classes not necessarily contained in
$R$ are 4A, 8A and 12A.
Aside from these cases, $R=\Aut(X)$ so we have a full description of
their automorphism groups.

From Lemma~\ref{lem:handle4A}, we have either that the surface has
automorphism generated by an automorphism of class 4A or 8A,
or is the surface 12A.
The surface 12A coincides with 3C when $p=2$, so this has already been
handled.
In Lemma~\ref{lem:12Aautos}, the full automorphism group of 12A is
computed when $p \ne 2, 3$.
Finally, when $p=3$, the 12A stratum coincides with the 8A stratum ---
its automorphism group is described completely in
Lemma~\ref{lem:8Aautos}.

It remains to prove that the strata from Table~\ref{tbl:cubicAutos}
are all distinct and have the expected dimensions.
In other words, we need to prove that there are no
other unusual coincidences like the Fermat and the Clebsch coinciding in
characteristic $2$.
Note that each stratum is irreducible.
Indeed, each conjugacy class of cyclic group can be represented by only
one matrix in $\GL_4(k)$.
Thus each stratum is the image of an eigenspace of a linear endomorphism
of $S^3(\bbk^4)$.
We need to show that each of the arrows in
Figures~\ref{fig:cubicSpecialization},
\ref{fig:cubicSpecializationP2},
\ref{fig:cubicSpecializationP3}, and
\ref{fig:cubicSpecializationP5} are not actually equalities.

First we consider $p \ne 2$.
By Lemma~\ref{lem:SylvesterAutos},
the strata 1A, 2A, 2B, 3D, 4B, 6E, and 5A are distinct and have the
expected dimensions.
When $p \ne 2$, there is a $1$-dimensional family of 4A surfaces
distinct from the other strata by Lemma~\ref{lem:normalForms4A}.
The remaining surfaces 3A, 3C, 12A, 8A are distinct, equal, or
non-existent by the characterization of cyclic surfaces in
Section~\ref{sec:3C}.

Now we consider $p = 2$.
The surfaces 2B, 4A, 3C are in bijection with del Pezzo surfaces of
degree 4 via blowing up the canonical point.
Thus they are distinct and their strata have the appropriate dimensions
by the results of Section~\ref{sec:DP4}.
The 3A stratum is in bijection with isomorphism classes
of cubic plane curves and so has dimension $1$ and does not coincide
with 4A.
The class 3D does not coincide with 2B since the latter does not admit
automorphisms of order $3$.
Since the 3D stratum is connected and contains the distinct $1$-dimensional
strata 4B and 3A, it is in fact $2$-dimensional in view of the normal
form from Lemma~\ref{lem:normalForms3D}.
Finally, the 2A stratum has dimension $3$ since it contains
the distinct $2$-dimensional strata 2B and 3D.
\end{proof}

\begin{cor} \label{cor:reflectionIndex}
Let $X$ be a smooth cubic surface.  Then the subgroup of $\Aut(X)$
generated by reflections has index $1$, $2$, or $4$ in $\Aut(X)$.
The surfaces of index $2$ are 4A (if $p \ne 2$) and 12A (if $p \ne
2,3$).
The surface of index $4$ is 8A.
\end{cor}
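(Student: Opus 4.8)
The plan is to read the statement off directly from Theorem~\ref{thm:main} and the classification of reflection groups in Lemma~\ref{lem:configurations}, working stratum by stratum. First I would recall from Theorem~\ref{thm:Eckardt} that the reflections in $\Aut(X)$ are exactly its elements of class 2A, so the reflection subgroup $R$ is generated by the reflections attached to the Eckardt points of $X$; and that the proof of Theorem~\ref{thm:main} already established $R=\Aut(X)$ for every stratum except 4A, 8A and 12A, because the reflection groups of Table~\ref{tbl:configurations} match the automorphism groups of Table~\ref{tbl:cubicAutos} on all the remaining strata (via Lemmas~\ref{lem:3Cautos}, \ref{lem:2Bautos}, \ref{lem:3Aautos}, \ref{lem:3Dautos}, \ref{lem:4Bautos}, \ref{lem:5Aautos} and \ref{lem:6Eautos}). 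Hence those strata give index $1$, and the whole question reduces to the three exceptional classes together with the coincidences in characteristics $2$ and $3$.

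Next I would dispose of characteristic $2$: there is no 8A surface, while the 4A and 6E strata coincide with the 4B stratum and the 12A stratum coincides with the 3C stratum; since the automorphism groups of 4B and 3C surfaces are generated by reflections (Lemmas~\ref{lem:4Bautos}, \ref{lem:mash}, \ref{lem:3Cautos}), the index is $1$ for every smooth cubic surface when $p=2$. Then, for $p\ne 2$, I would treat the cyclic cases by simply counting class-2A elements in a cyclic group: a general 4A surface has $\Aut(X)$ cyclic of order $4$ by Lemma~\ref{lem:handle4A}, whose unique class-2A element is the square of a generator, so $R$ has index $2$; an 8A surface with $p\ne 3$ has $\Aut(X)$ cyclic of order $8$ with unique class-2A element the fourth power of a generator, so $R$ has index $4$. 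For the $C_9$ cases --- the 12A surface with $p\ne 2,3$ and the 8A${}={}$12A surface with $p=3$ --- the surface is cyclic, so Lemma~\ref{lem:3Aautos} identifies $R$ with $\calH_3(3)\rtimes 2$; comparing this (order $54$) with $\Aut(X)\cong\calH_3(3)\rtimes 4$ (order $108$) and, when $p=3$, with $\Aut(X)\cong\calH_3(3)\rtimes 8$ (order $216$, by Lemma~\ref{lem:8Aautos}) gives index $2$ and $4$ respectively. Collecting the cases then yields the claimed list.

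I do not expect a genuine obstacle here: every structural ingredient --- the identification of reflections with class-2A elements, the list of possible reflection groups, the cyclic-versus-reflection dichotomy of Lemma~\ref{lem:handle4A}, and the explicit automorphism groups --- is already available. The only delicate point is bookkeeping: one must be careful to account for all the characteristic-dependent coincidences among strata (4A $=$ 4B $=$ 6E and 3C $=$ 12A when $p=2$, and 8A $=$ 12A when $p=3$) and to interpret the assertion ``the surface of index $4$ is 8A'' correctly, namely that 8A is the unique stratum of index $4$ in every characteristic in which it is nonempty, that is, for $p\ne 2$.
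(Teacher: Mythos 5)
Your proposal is correct and follows essentially the same route as the paper: the corollary is read off from the proof of Theorem~\ref{thm:main}, which already shows $R=\Aut(X)$ outside the 4A, 8A, 12A strata, and the remaining indices are computed exactly as you do, by locating the class-2A powers inside the cyclic groups of Lemma~\ref{lem:handle4A} and by comparing $\calH_3(3)\rtimes 2$ with $\calH_3(3)\rtimes 4$ and $\calH_3(3)\rtimes 8$ in the $C_9$ cases. Your bookkeeping of the characteristic-dependent coincidences (including that 8A${}={}$12A has index $4$ when $p=3$) matches the intended reading of the statement.
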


Since we have classified all the strata, from the explicit normal forms
we have the following:

\begin{cor} \label{cor:unirationalStrata}
Every nonempty stratum of the coarse moduli space $\calM_{cub}$ is unirational.
\end{cor}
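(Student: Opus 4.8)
The plan is to deduce this immediately from the explicit normal forms produced during the classification. For each nonempty stratum $S\subseteq \calM_{\cub}$, the proof of Theorem~\ref{thm:main} (summarized in Table~\ref{tbl:Normal Forms}) exhibits a family of cubic surfaces $\{X_{\mathbf c}\}$ whose defining equations have coefficients $\mathbf c=(c_0,\dots,c_{d-1})$ varying over a nonempty open subset $U$ of an affine space $\bbA^d$, in such a way that a general member of $S$ occurs among the $X_{\mathbf c}$ and $d=\dim S$; this is the content of Propositions~\ref{prop:normalForms2A}, \ref{prop:normalForms2B}, \ref{prop:normalForms3Apnot3}, \ref{prop:normalForms3Ap3}, \ref{prop:normalForms4B}, \ref{prop:normalForms5A}, \ref{prop:normalForms6E}, Lemmas~\ref{lem:normalForms3D}, \ref{lem:normalForms4A}, \ref{lem:8Aautos}, \ref{lem:12Aautos}, together with Theorems~\ref{thm:Sylvester} and~\ref{thm:Emch} for the open stratum 1A (Emch's form being the one valid in all characteristics). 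First I would note that sending $\mathbf c\in U$ to the isomorphism class $[X_{\mathbf c}]\in\calM_{\cub}$ defines a rational map $\phi\colon U\dashrightarrow \calM_{\cub}$ whose image lies in $S$.

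Next I would argue that $\phi$ is dominant onto $S$. Each stratum is irreducible: as observed in the proof of Theorem~\ref{thm:main}, every conjugacy class of cyclic subgroup of $W(\sfE_6)$ is realized by a single matrix in $\GL_4(\bbk)$, so $S$ is the image of an eigenspace of a linear endomorphism of $S^3(\bbk^4)$. Since a general point of the irreducible variety $S$ lies in the image of $\phi$, the closure of $\phi(U)$ is all of $S$. Because $U$ is a nonempty open subset of $\bbA^d$ it is rational, so $\phi\colon U\dashrightarrow S$ is a dominant rational map from a rational variety; hence $S$ is unirational. The $0$-dimensional strata (3C, 5A, 8A, 12A, in the characteristics where they are nonempty) are reduced points and so trivially rational, and strata that coincide — for instance 9A with 3C, or 4A with 4B with 6E when $p=2$, or 12A with 3C when $p=2$ and with 8A when $p=3$ — need only be handled once.

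The only thing requiring care is the assertion that the parameter spaces genuinely dominate the strata, i.e. that the normal forms capture a Zariski-dense subset of each stratum and that the parameter count equals $\dim S$ rather than exceeding it. But this is precisely part of the verification already carried out in the proof of Theorem~\ref{thm:main}: the dimension counts there are obtained by exhibiting exactly these normal-form families as subvarieties of the corresponding strata of the expected dimension. Consequently, once Theorem~\ref{thm:main} is available there is no further obstacle — the corollary is a formal consequence of the rationality of affine space — and in fact the matching of parameter count with dimension shows each $\phi$ is generically finite, a mild strengthening that we do not need here.
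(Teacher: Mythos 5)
Your argument is exactly the paper's: the corollary is stated as an immediate consequence of the explicit normal forms, each of which parametrizes a dense subset of its (irreducible) stratum by an open subset of an affine space whose dimension matches that of the stratum. You have simply spelled out the dominance and irreducibility points that the paper leaves implicit, so the proposal is correct and takes essentially the same route.
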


\section{Lifting to characteristic zero}
\label{sec:lifting}

In this section we prove Theorem~\ref{thm:liftingIntro}.

Let $\bbk$ be an algebraically closed field of positive characteristic
$p$ and let $A$ be a complete discrete valuation ring of characteristic
zero with residue field $\bbk$.
Such an $A$ always exists by the Witt vector construction
(see Theorem~II.5.3~in~\cite{SerreLF}).
Let $X$ be a smooth projective $k$-variety with an action of a finite
group $G$.

We say that \emph{$(X,G)$ lifts to $A$} if there exists an $A$-scheme
$X_A$, smooth and projective over $A$, with a $G$-action such that
the special fiber is $G$-equivariantly isomorphic to $X$.
We say \emph{$(X,G)$ lifts to characteristic $0$} if there exists a lift
to $(X,G)$ to $A$ for some $A$ as above.

Note that not every smooth cubic $G$-surface in positive characteristic
lifts to a $G$-surface in characteristic $0$.
An obvious obstruction to lifting is that $G$, as a subgroup of
$W(\sfE_6)$, may not be realizable as automorphisms on \emph{any}
cubic surface in characteristic $0$.
In this section, we show that this is the only obstruction.

\begin{thm} \label{thm:lifting}
Let $X$ be a smooth cubic surface defined over $\bbk$ with an action of
finite group $G$ such that $G \subset W(\sfE_6)$ also occurs as a group
of automorphisms for some cubic surface in characteristic $0$.
Then, after possibly replacing $A$ with a totally ramified quadratic
extension, $(X,G)$ lifts to $A$.  
\end{thm}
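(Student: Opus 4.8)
The plan is to combine the explicit normal forms already established in this paper with the standard fact that a smooth hypersurface over a complete DVR lifts together with any group action that can be written down ``in coordinates.'' More precisely, for each nonempty stratum of $\mathcal{M}_{\mathrm{cub}}(\bbk)$ we have produced in Sections~\ref{sec:ccs}--\ref{sec:higherOrder} an explicit normal form $F=0$ in $\bbP^3_\bbk$ (or in $\bbP^4_\bbk$ for $5$A) whose coefficients involve only a few parameters and a fixed set of algebraic constants (roots of unity, $\sqrt{3}$, the golden ratio, etc.), together with an explicit matrix description of generators of $\Aut(X)$ with entries of the same kind. The idea is: lift the parameters arbitrarily to $A$, lift the structure constants to $A$ (possibly after a ramified quadratic extension so that, e.g., a square root lands in $A$), and take $X_A = V(\widetilde{F}) \subset \bbP^3_A$ with the $G$-action given by the lifted matrices. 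One then checks that (i) $\widetilde{F}$ still defines generators of the correct order and relations, so the $G$-action is genuine over $A$; (ii) the generic fiber realizes $G$ as automorphisms of a cubic surface in characteristic zero, which holds precisely because we assumed $G \subset W(\sfE_6)$ occurs in characteristic $0$; and (iii) $X_A \to \Spec A$ is smooth, which by the jacobian criterion is an open condition on $\Spec A$ and holds on the special fiber by hypothesis, hence holds on all of $\Spec A$ since $\Spec A$ is local.

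The steps I would carry out in order are as follows. First, reduce to the generic surface in each stratum: since we want to lift one particular $(X,G)$, and $X$ lies in some stratum with a known normal form, we may assume $X$ is given by one of those normal forms (the normal-form propositions say every surface in the stratum is projectively equivalent to one in the listed family). Second, produce the lift of coefficients: lift each free parameter $c_i$ to an arbitrary element $\widetilde{c}_i \in A$, lift the required algebraic constants to $A$ --- here is where a ramified quadratic extension may be needed, e.g.\ to lift $\sqrt{3}$ when $p=3$, or the primitive fourth root of unity appearing in the $4$A, $8$A, $12$A forms, or the golden ratio; the Witt construction gives a DVR with residue field $\bbk$, and one adjoins the needed square root. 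Third, lift the automorphism matrices: these are written with entries in the same ring of constants and parameters, so they lift verbatim; one must verify the lifted matrices have the correct finite order and satisfy the same defining relations of $G$ --- this is automatic since the relevant matrix identities are polynomial identities in the structure constants that already hold over $\bbZ[\text{constants}]$. Fourth, verify invariance: $\widetilde{F}$ is $G$-invariant for the same reason, the invariance being a polynomial identity valid over the base ring. Fifth, verify smoothness of $X_A \to \Spec A$: the non-vanishing of the appropriate minors / discriminant (or divided discriminant in characteristics $2,3$, cf.\ Section~\ref{sec:criticalLoci}) is an open condition; it holds on the closed point by assumption, and $\Spec A$ has the closed point in the closure of every point, so it holds everywhere. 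Sixth, conclude: the generic fiber $X_{\eta}$ is a smooth cubic surface over a characteristic-zero field carrying a $G$-action whose image in $W(\sfE_6)$ equals that of $X$ (because the action on the $27$ lines is locally constant in flat families --- the specialization map on Picard groups is an isomorphism here); by the classification over $\bbk$ and over characteristic $0$ this realizes $G$ as assumed.

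The main obstacle, and the step requiring genuine care rather than formalities, is the smoothness/identification step in the bad characteristics $p=2,3$. In those characteristics the normal forms were chosen case by case, and one must confirm that for every stratum in Table~\ref{tbl:cubicAutos} the chosen equation lifts to a \emph{smooth} family rather than acquiring a singularity on some intermediate fiber, and that the lifted automorphisms still have the advertised order over $A$ (for instance, a matrix of additive order $p$ over $\bbk$ built from a unipotent Jordan block cannot survive as a finite-order lift, so in the wild cases one must check that the generators being lifted are the \emph{tame} ones and that the wild part of $G$ --- a $p$-group --- is handled correctly: either $G$ is tame and Serre's result already applies, or $p \mid |G|$ and one must exhibit the $p$-part over $A$ concretely, e.g.\ $\calH_3(3)$ lifting to $A$ when $p=3$, or the unipotent elements of $\PSU_4(2)$ when $p=2$). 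I expect the argument to run smoothly for the tame strata by the cited theorem of Serre, and for the wild strata to require explicitly checking, using the normal forms of Sections~\ref{sec:Fermat}, \ref{sec:order3}, \ref{sec:4B}, that the listed generators lift to genuine finite-order automorphisms; the necessity of the quadratic extension of $A$ will show up exactly in those wild cases where a square root of a residue-field element must be adjoined.
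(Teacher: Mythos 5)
Your overall architecture (Serre for the tame case, explicit normal forms for the wild strata, smoothness by openness over the local base) matches the paper's, and the smoothness step is fine: over a DVR there are no ``intermediate fibers,'' and properness of $X_A$ forces the non-smooth locus to miss the generic fiber once it misses the special one. But the central mechanism you propose for the wild strata --- ``lift the automorphism matrices verbatim; the correct finite order and the defining relations are automatic because they are polynomial identities over $\bbZ[\text{constants}]$'' --- is exactly where the argument breaks, and these wild strata are the only cases not already covered by Serre. The wild generators are represented in characteristic $p$ by unipotent matrices (e.g.\ the class 4A matrix $J_3(1)\oplus J_1(1)$ when $p=2$, or the transvection giving class 3A when $p=3$); a verbatim lift of such a matrix to a characteristic-zero ring has \emph{infinite} order, and the identity $M^p=I$ is a polynomial identity over $\bbF_p$, not over $\bbZ$. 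You flag this at the end, but flagging it is not resolving it: the entire content of the paper's proof in these cases is the construction of a finite-order matrix over $A$ that is semisimple over $K$ yet \emph{congruent to the unipotent matrix modulo $\frakm$} --- e.g.\ $\tfrac1r\bigl(\begin{smallmatrix}1&1&1\\1&\zeta&\zeta^2\\1&\zeta^2&\zeta\end{smallmatrix}\bigr)$ with $r=i(2\zeta+1)=\sqrt3$ and $r\equiv 1\bmod\frakm$ for 4A in $p=2$, and an order-3 matrix with eigenvalue $\zeta\equiv 1\bmod\frakm$ for 3A in $p=3$. This is where the quadratic extension of $A$ actually enters, and no such matrices appear in your proposal.

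A second gap, closely related: once the group no longer acts by the ``same'' matrices over $A$ and over $\bbk$, you cannot simply lift the coefficients of the normal form and assert $G$-invariance as a polynomial identity. You must show that the $A$-module of $G$-invariant cubic forms surjects onto the $\bbk$-vector space of $G$-invariant cubics --- and this is \emph{not} automatic when $p\mid |G|$, since taking invariants does not commute with reduction modulo $\frakm$ in the modular case. The paper handles this by exhibiting explicit invariants over $A$ whose reductions have unit leading terms ($f_1,f_2,f_3$ with leading terms $x_1,x_0^2,x_0^3$ for 4A, and the monic generating sets of $\bbZ[W]^{H_\lambda}$ for the Sylvester-type strata), and, for the order-54 group in the 3A stratum with $p=3$, by an indirect geometric argument: lift the cyclic-cover generator, show the nine Eckardt points lift to $A$-points (via the Hessian cubic reducing correctly), and then lift each of the nine reflections by choosing coordinates adapted to the lifted axis and tangent planes. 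None of these steps is a formality, and your proposal as written does not supply substitutes for them.
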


\begin{proof}
Let $K$ be the field of fractions of
$A$ and $\frakm$ the maximal ideal of $A$.
In \S{}5~of~\cite{SerreBourbaki}, Serre showed that any rational $G$-surface lifts to
characteristic $0$ in the tame case (in other words, when $p$ is coprime
to $|G|$.)
Thus, it remains only to consider the cases where $p$ divides $|G|$.

In what follows, we consider a subgroup $G \subset W(\sfE_6)$ that
is realized by automorphisms on some smooth cubic surface $Y$ of characteristic $0$.
Then we consider a smooth cubic surface $X$ over $\bbk$ that realizes
the group $G$ (assuming such a surface exists).
We then show that $(X,G)$ lifts to $A$ or a quadratic extension of $A$.
Looking at the classification, it suffices to assume that $G=\Aut(Y)$ for an
appropriate $Y$.

Consider the standard representation of $\frakS_5$ on $V=\bbZ^5$ that
acts by permuting basis elements.
Given a partition $\lambda$ of $5$, let $H_\lambda$ be the subgroup
of $\frakS_5$ isomorphic to $\frakS_{\lambda_1} \times \cdots
\frakS_{\lambda_n}$ that permutes the basis elements in each part of
$\lambda$.
By the fundamental theorem of symmetric functions,
$\bbZ[V]^{H_\lambda}$ is a polynomial ring generated by elementary
$\lambda$-symmetric functions.
If $W$ is the quotient of $V$ by the invariant linear form $x_1 + \cdots + x_5$,
then $\bbZ[W]^{H_\lambda}$ is a polynomial ring with a generating
set $f_1, \ldots, f_s$ where each $f_i$ contains a monic monomial.
Thus $f_1, \ldots, f_s$ generate $R[W_R]^{H_\lambda}$ for any ring $R$.

Now, consider a cubic $G$-surface $(X,G)$ over $\bbk$ that is one
of the cases 2A, 2B, 3D, 4B, 6E and 5A
appearing in Lemma~\ref{lem:SylvesterAutos}.
In order to lift $G$ to characteristic $0$, we may assume that
$G$ is of the form $H_\lambda$ described above.
Moreover, the normal forms show that we may assume $H_\lambda$ acts on
the space $W_\bbk$ in the manner described above.
Thus $X$ is defined by a cubic polynomial that can be written as a
linear combination of monomials in $f_1, \ldots, f_s$.
Lifting the coefficients to $A$ we have a lift of $(X,G)$.

Now, the class 3C corresponds to the Fermat cubic surface.  From the
normal form and the description of the automorphism group $3^3 \rtimes
\frakS_4$, it evidently lifts to characteristic $0$ when $p=2$.
Since it is not defined when $p=3$, there is nothing to check.

The automorphism group for 8A is only wild if $p=2$, but then it doesn't exist.

We now turn to the group 4A.  This is only wild when $p=2$.
By possibly replacing $A$ with a totally ramified quadratic extension
(see Proposition~II.2.3~in~\cite{SerreLF}),
we can assume that $A$ contains a primitive $4$th root of unity $i$.
Fix a primitive third root of unity $\zeta$ in $A$,
which exists by Hensel's Lemma since $\bbk$ is algebraically closed.
Consider the element $r=i(2\zeta+1)$, which is a square root of $3$.
Note that $r^{-1}=r/3 \in A$ and $r \equiv 1 \mod \mathfrak{m}$.

Consider the group $G$ of order $4$ generated by the matrix
\[
\frac{1}{r}
\begin{pmatrix}
1 & 1 & 1 \\
1 & \zeta & \zeta^2 \\
1 & \zeta^2 & \zeta
\end{pmatrix}\ ,
\]
in $A$.
Diagonalizing in $K$, or finding the Jordan canonical form in
$k$, we find that this group corresponds to the non-trivial $3 \times 3$
block of a matrix of class 4A.
One checks in these more convenient bases that, over both fields, the
invariants of degree $\le 3$ are generated by elements of degree $1$,
$2$, and $3$ that have no relations in degree $\le 3$.

The following three polynomials in $A[x_0,x_1,x_2]$ are $G$-invariant:
\begin{align*}
f_1 &= (r+1)x_0 + x_1 + x_2\\
f_2 &= x_0(x_0 + x_1 + x_2)\\
f_3 &= x_0^3 + x_1^3 + x_2^3 + 3(r-1)x_0x_1x_2 \ .
\end{align*}
Their leading terms mod $\frakm$ are $\{x_1,x_0^2,x_0^3\}$,
so they have no relations in degree $\le 3$.
Thus, $\{f_1^3,f_1f_2,f_3\}$ is a basis for the invariants of degree
$3$ over both fields $k$ and $K$.

Extending $G$ to act on $A[x_0,x_1,x_2,x_3]$ by the trivial action on
$x_3$, we conclude that the vector space of invariants of degree $3$ is
spanned by 
\[ f_1^3, f_1f_2, f_3, f_1^2x_3, f_2x_3, f_1x_3^2, x_3^3 \]
over both fields $K$ and $k$.
Thus we may lift 4A to characteristic $0$ from characteristic $2$.

It remains to consider the cases $3A$ and $12A$.
From Proposition~\ref{prop:normalForms3Apnot3},
the cyclic surfaces in characteristic $2$ have the same normal form as
in characteristic $0$:
\[
x_0^3 + x_1^3 + x_2^3 + x_3^3 + c x_0x_1x_2
\]
and the automorphism groups are defined over $\bbZ[\zeta]$ as well.
Thus they lift.
For the case 12A, we may already lift the natural automorphism 4A
by setting $c = 3(r-1)=3(\sqrt{3}-1)$ and noticing
that our normal form is $f_3 + x_3^3$.
Thus the  group $\calH_3(3) \rtimes 4$ from class 12A lifts when $p=2$.

If $p=3$,
by possibly replacing $A$ with a totally ramified quadratic extension,
we may assume that $A$ contains a primitive third root of unity
$\zeta$.  Note that $\zeta \equiv 1 \mod \frakm$.
Consider the group $G$ of order $3$ generated by the matrix
\[
\begin{pmatrix}
1 & 0 & 0 & 1 \\
0 & 1 & 0 & 0 \\
0 & 0 & 1 & 0 \\
0 & 0 & 0 & \zeta
\end{pmatrix}\ ,
\]
in $A$.  As above, this is of type $3A$ in both $K$ and $k$.
Consider the following cubic homogeneous form
\begin{gather*}
F = \zeta x_0^3 + (\zeta-1)x_0^2x_3 - x_0x_3^2
+ (1-\zeta)x_0x_1^2 + x_1^2x_3 - x_1x_2^2\\
+ c ( -3\zeta x_0^2x_1 + 2(1-\zeta)x_0x_1x_3 + x_1x_3^2 )
\end{gather*}
where $c$ is a parameter.
One checks that $F$ is $G$-invariant.
Passing to the residue field $k$, we have:
\[
F_k = x_0^3 - x_0x_3^2 + x_1^2x_3 - x_1x_2^2 + c x_1x_3^2
\]
so for any $3A$ surface we may construct a lift of $F$.
Note that we use a slightly modified normal form here,
see Remark~\ref{rem:defOverF9}.

Since both $F_K$ and $F_k$ are $3A$-surfaces, they each have exactly $9$
Eckardt points lying in the plane $P: x_3=0$.
The Hessian cubic of $X|_P$ is given by
\[
24\left[(3\zeta+3)c^2+2\zeta+1)x_0^2x_1 + (2\zeta+1)cx_0x_1^2 +\zeta x_0x_2^2
+\zeta x_1^3 - \zeta c x_1x_2^2\right] = 0
\]
and its intersection with $X \cap P$ gives the $9$ Eckardt points of
$X_K$.  Dividing first by $24$, the Hessian cubic reduces to
$x_0x_2^2+x_1^3-cx_1x_2^2$
modulo $\frakm$, which one checks cuts out the $9$ Eckardt points in
$X_k \cap P_k$.  Thus, the nine Eckardt points on $X_K$ and $X_k$ lift
to $A$-points of $X$.

Following the proof of Theorem~\ref{thm:Eckardt}, an Eckardt point in
$X_K$ produces an axis plane and a tangent plane, which are defined by
linear forms $L_1$ and $L_2$.
By rescaling by elements of $\frakm$ if necessary, these linear forms
can be defined over $A$ such that they are non-zero modulo $\frakm$.
Moreover, these correspond to distinct planes in $X_k$ since the
axis and tangent plane exist and are distinct here as well.
Thus we may set $x_0=L_1$ and $x_1=L_2$ and
extend to a basis for $A^4$ to write $F$ in the form
\[
x_0^2x_1 + C(x_1,x_2,x_3) = 0
\]
as in Theorem~\ref{thm:Eckardt}.
This is clearly invariant under the reflection taking $x_0 \mapsto
-x_0$.
Thus, all $9$ reflections on $X_k$ can be lifted to $X_A$.
This means that group of order $54$ that they generate lifts as desired.

Finally, if we consider an element of order $12$
generated by
\[
\begin{pmatrix}
1 & 0 & 0 & 1 \\
0 & -1 & 0 & 0 \\
0 & 0 & i & 0 \\
0 & 0 & 0 & \zeta
\end{pmatrix}\ ,
\]
then the same argument works for the $12A$ surface where $c=0$.
\end{proof}

\begin{remark}
From the proof of Theorem~\ref{thm:lifting}, we can make precise the
conditions required of $A$ so that we do not have to replace it with an
extension.
If $p=2$ for the classes 4A and 12A, then $A$ must contain a primitive
$4$th root of unity $i$;
moreover, this is necessary since a matrix of class 4A has eigenvalues $i,-1,1,1$
over $K$.
If $p=3$ for the classes 3A and 12A, then $A$ must contain a primitive
third root of unity; again this is necessary since a matrix of class 3A
has eigenvalues $\zeta,1,1,1$ over $K$.
In all remaining cases, the result holds with no conditions on $A$.
\end{remark}

\newpage
\section*{Appendix}
\label{sec:appendix}

 \begin{table}[ht]
\begin{tabular}{|| c |c| c | c |c | c | c | c | c| l||}
\hline
&Atlas &Carter&Manin&Ord&$\#C$ &$\Tr$&Char\\ \hline
x&1A&$\emptyset$&$c_{1}$&1&\footnotesize{51840}&6&$\Phi_{1}^{6}$\\ \hline
x&2A&$4A_1$&$c_3$&2&1152&-2&$\Phi_{1}^{2}\Phi_{2}^{4}$\\ \hline
x&2B&$2A_1$&$c_2$&2&192&2&$\Phi_{1}^{4}\Phi_{2}^{2}$\\ \hline
&2C&$A_1$ &$c_{16}$&2&1440&4&$\Phi_{1}^{5}\Phi_{2}^{1}$\\ \hline
&2D&$3A_1$&$c_{17}$ &2&96&0&$\Phi_{1}^{3}\Phi_{2}^{3}$\\ \hline
x&3A&$3A_2$&$c_{11}$&3&648&-3&$\Phi_{3}^{3}$\\ \hline
x&3C&$A_2$&$c_6$&3&216&3&$\Phi_{1}^{4}\Phi_{3}^{1}$\\ \hline
x&3D&$2A_2$&$c_9$&3&108&0&$\Phi_{1}^{2}\Phi_{3}^{2}$\\ \hline
x&4A&$D_4(a_1)$&$c_4$&4&96&2&$\Phi_{1}^{2}\Phi_{4}^{2}$\\ \hline
x&4B&$A_1+A_3$&$c_5$&4&16&0&$\Phi_{1}^{2}\Phi_{2}^{2}\Phi_{4}^{1}$\\ \hline
&4C&$2A_1+A_3$&$c_{19}$&4&96&-2&$\Phi_{1}^{1}\Phi_{2}^{3}\Phi_{4}^{1}$\\ \hline
&4D&$A_3$&$c_{18}$&4&32&2&$\Phi_{1}^{3}\Phi_{2}^{1}\Phi_{4}^{1}$\\ \hline
x&5A&$A_4$&$c_{15}$&5&10&1&$\Phi_{1}^{2}\Phi_{5}^{1}$\\ \hline
x&6A&$E_6(a_2)$&$c_{12}$& 6&72&1&$\Phi_{3}^{1}\Phi_{6}^{2}$\\ \hline
x&6C&$D_4$&$c_{21}$& 6&36&1&$\Phi_{1}^{2}\Phi_{2}^{2}\Phi_{6}^{1}$\\ \hline
x&6E&$A_1+A_5$&$c_{10}$&6&36&-2&$\Phi_{2}^{2}\Phi_{3}^{1}\Phi_{6}^{1}$\\ \hline
x&6F&$2A_1+A_2$&$c_{8}$&6&24&-1&$\Phi_{1}^{2}\Phi_{2}^{2}\Phi_{3}^{1}$\\ \hline
&6G&$A_1+A_2$&$c_{7}$&6&36&1&$\Phi_{1}^{3}\Phi_{2}^{1}\Phi_{3}^{1}$\\ \hline
&6H&$A_1+2A_2$&$c_{22}$&6&36&-2&$\Phi_{1}^{1}\Phi_{2}^{1}\Phi_{3}^{2}$\\ \hline
&6I&$A_5$&$c_{23}$&6&12&0&$\Phi_{1}^{1}\Phi_{2}^{1}\Phi_{3}^{1}\Phi_{6}^{1}$\\ \hline
x&8A&$D_5$&$c_{20}$&8&8&0&$\Phi_{1}^{1}\Phi_{2}^{1}\Phi_{8}^{1}$\\ \hline
x&9A&$E_6(a_1)$&$c_{14}$&9&9&0&$\Phi_{9}^{1}$\\ \hline
&10A&$A_1+A_4$&$c_{25}$&10&10&-1&$\Phi_{1}^{1}\Phi_{2}^{1}\Phi_{5}^{1}$\\ \hline
x&12A&$E_6$&$c_{13}$&12&12&-1&$\Phi_{3}^{1}\Phi_{12}^{1}$\\ \hline
&12C&$D_5(a_1)$&$c_{24}$&12&12&1&$\Phi_{1}^{1}\Phi_{2}^{1}\Phi_{4}^{1}\Phi_{6}^{1}$\\ \hline
\end{tabular}
 \centering
 \caption{Conjugacy classes of elements of finite order in $W(E_6)$}\label{tableconj}
\end{table}
Here in the first column we mark the conjugacy classes of elements which
may arise in automorphism groups of nonsingular cubic surfaces (in any
charactersitic).

\begin{figure}[!htbp]
\centering
\scalebox{0.88}{%
$$ \xymatrix{
& 1A \ar[d] & \\
& 2A \ar[dl] \ar[dr] & \\
2B \ar[d] & &
3D \ar[dll] \ar[d] \\
4A=4B=6E \ar[dr] & &
3A \ar[dl] \\
& 3C=5A=12A
}
$$
}
\caption{Specialization of strata in $\mathcal{M}_{\textrm{cub}}$ when
$p=2$.}
\label{fig:cubicSpecializationP2}
\end{figure}

\begin{figure}[!htbp]
\centering
\scalebox{0.88}{%
$$ \xymatrix{
& 1A \ar[d] \\
& 2A \ar[dl] \ar[d] \ar[ddrr] \\
2B \ar[d] \ar[dr] &
3D \ar[dl] \ar[d] \ar[dr] \\
4B \ar[dr] &
6E \ar[d] &
3A \ar[d] &
4A \ar[dl] \\
& 5A & 8A=12A\\
}
$$
}
\caption{Specialization of strata in $\mathcal{M}_{\textrm{cub}}$ when
$p=3$.}
\label{fig:cubicSpecializationP3}
\end{figure}

\begin{figure}[!htbp]
\centering
\scalebox{0.88}{%
$$ \xymatrix{
& 1A \ar[d] \\
& 2A \ar[dl] \ar[d] \ar[ddrr] \\
2B \ar[d] \ar[dr] &
3D \ar[dl] \ar[d] \ar[dr] \\
4B \ar[dr] &
6E \ar[d] &
3A \ar[dl] \ar[d] &
4A \ar[dl] \ar[d] \\
 & 3C & 12A & 8A\\
}
$$
}
\caption{Specialization of strata in $\mathcal{M}_{\textrm{cub}}$ when
$p=5$.}
\label{fig:cubicSpecializationP5}
\end{figure}

\begin{landscape}
\begin{table}[htbp]
\centering
\renewcommand{\arraystretch}{1.3}
\scalebox{0.88}{%
\begin{tabular}{|c|c|cc|cccccccccccccccc|}
\hline
Name & $\operatorname{char}$ & $\Aut(X)$ & Order & 1A & 2A & 2B & 3A
& 3C & 3D & 4A & 4B & 5A & 6A & 6C & 6E & 6F & 8A & 9A & 12A\\
\hline
1A & any & $1$ & 1 & 1 &  &  &  &  &  &  &  &  &  &  &  &  &  &  & \\
\hline
2A & any & $2$ & 2 & 1 & 1 &  &  &  &  &  &  &  &  &  &  &  &  &  & \\
\hline
2B & $\ne 2$ & $2^2$ & 4 & 1 & 2 & 1 &  &  &  &  &  &  &  &  &  &  &  &  & \\
 & 2 & $2^4$ & 16 & 1 & 5 & 10 &  &  &  &  &  &  &  &  &  &  &  &  & \\
\hline
3A & any & $\calH_3(3) \rtimes 2$ & 54 & 1 & 9 &  & 2 &  & 24 &  &  &  & 18 &  &  &  &  &  & \\
\hline
3C & $\ne 2,3$ & $3^3 \rtimes \frakS_4$ & 648 & 1 & 18 & 27 & 8 & 6 & 84 &  & 162 &  & 72 & 36 & 36 & 54 &  & 144 & \\
 & 2 & $\PSU_4(2)$ & 25920 & 1 & 45 & 270 & 80 & 240 & 480 & 540 & 3240 & 5184 & 720 & 1440 & 1440 & 2160 &  & 5760 & 4320\\
\hline
3D & any & $\frakS_3$ & 6 & 1 & 3 &  &  &  & 2 &  &  &  &  &  &  &  &  &  & \\
\hline
4A & $\ne 2$ & $4$ & 4 & 1 & 1 &  &  &  &  & 2 &  &  &  &  &  &  &  &  &
\\ & 2 & $2^3 \rtimes \frakS_4$ & 192 & 1 & 13 & 30 &  &  & 32 & 12 & 72 &  &  &  & 32 &  &  &  & \\
\hline
4B & $\ne 2$ & $\frakS_4$ & 24 & 1 & 6 & 3 &  &  & 8 &  & 6 &  &  &  &  &  &  &  & \\
 & 2 & \multicolumn{2}{c|}{\cellcolor{gray!20}(same as 4A)} & \multicolumn{16}{c|}{\cellcolor{gray!20}} \\
\hline
5A & $\ne 2,5$ & $\frakS_5$ & 120 & 1 & 10 & 15 &  &  & 20 &  & 30 & 24 &  &  & 20 &  &  &  & \\
 & 2 & \multicolumn{2}{c|}{\cellcolor{gray!20}(same as 3C)} & \multicolumn{16}{c|}{\cellcolor{gray!20}} \\
\hline
6E & $\ne 2$ & $\frakS_3 \times \frakS_2$ & 12 & 1 & 4 & 3 &  &  & 2 &  &  &  &  &  & 2 &  &  &  & \\
 & 2 & \multicolumn{2}{c|}{\cellcolor{gray!20}(same as 4A)} & \multicolumn{16}{c|}{\cellcolor{gray!20}} \\
\hline
8A & $\ne 2,3$ & $8$ & 8 & 1 & 1 &  &  &  &  & 2 &  &  &  &  &  &  & 4 &  & \\
 & 3 & $\calH_3(3) \rtimes 8$ & 216 & 1 & 9 &  & 2 &  & 24 & 18 &  &  & 18 &  &  &  & 108 &  & 36\\
\hline
12A & $\ne 2,3$ & $\calH_3(3) \rtimes 4$ & 108 & 1 & 9 &  & 2 &  & 24 & 18 &  &  & 18 &  &  &  &  &  & 36\\
 & 3 & \multicolumn{2}{c|}{\cellcolor{gray!20}(same as 8A)} & \multicolumn{16}{c|}{\cellcolor{gray!20}} \\
 & 2 & \multicolumn{2}{c|}{\cellcolor{gray!20}(same as 3C)} & \multicolumn{16}{c|}{\cellcolor{gray!20}} \\
\hline
\end{tabular}
}
\caption{Automorphism groups of cubic surfaces with enumeration of
conjugacy classes.}
\label{tbl:cubicAutosWithCCs}
\end{table}
\end{landscape}

\begin{landscape}
\begin{table}[htbp]
\centering
\renewcommand{\arraystretch}{1.3}
\scalebox{0.84}{%
\begin{tabular}{|c|c|c|l|}
\hline
Name & $\operatorname{char}$ & Normal Form & Reference \\
\hline
2A & $\ne 2$ &
$(x_0+x_1)(x_0x_1+c_0x_2^2+c_1x_3^2+c_2x_2x_3)+x_2x_3(x_2+x_3)$
& Proposition~\ref{prop:normalForms2A} \\

& $2$ &
$x_0x_1x_2 + (x_0+x_1)^2x_3 +
c_0(x_0+x_1)x_3^2 + c_1x_2^3 + c_2x_2x_3^2 + x_3^3$
& Proposition~\ref{prop:normalForms2A} \\ [-1ex]
& & {\tiny (Note: $p=2$ normal form does not include 2B surfaces)} & \\

\hline

2B & $\ne 2$ &
$x_0^2(x_2+c_0x_3) + x_1^2(c_1x_2+x_3) + x_2x_3(x_2+x_3) = 0$
& Proposition~\ref{prop:normalForms2B} \\

& $2$ &
$x_0^3+x_1^3 + x_2^3+x_3^3 +
c_0x_2x_3(x_0+x_1)+c_1x_0x_1(x_2+x_3)$
& Proposition~\ref{prop:normalForms2B} \\

\hline

3A & $\ne 3$ &
$x_0^3 + x_1^3 + x_2^3 + x_3^3 + c x_0x_1x_2$
& Proposition~\ref{prop:normalForms3Apnot3} \\

& $3$ &
$x_0^3 + x_0 x_3^2 - x_1x_2^2 + x_1^2x_3 + c x_1 x_3^2$
& Proposition~\ref{prop:normalForms3Ap3} \\

\hline

3C & $\ne 3$ &
$x_0^3 + x_1^3 + x_2^3 + x_3^3$ 
& Lemma~\ref{lem:mush3C6C6F9A} \\

\hline

3D & $\ne 3$ &
$x_0^3+x_1^3+x_2^3+x_3^3 +c_0x_0x_1x_2 + c_1(x_0+x_1+x_2)x_3^2$
& Lemma~\ref{lem:normalForms3D} \\ [-1ex]
& & {\tiny (Note: $p \ne 3$ normal form does not include 6E surfaces)} &
\\ [-1ex]

& $3$ &
$(c_0(x_0+x_1+x_2)+c_1x_3)(x_0x_1+x_0x_2+x_1x_2)
+x_0x_1x_2+(x_0+x_1+x_2)x_3^2$
& Lemma~\ref{lem:normalForms3D} \\

\hline

4A & $\ne 2$ &
$x_3^2 x_2 + x_2^2x_0 + x_1 (x_1-x_0) (x_1 - c x_0)$
& Lemma~\ref{lem:normalForms4A} \\

 & 2 & \cellcolor{gray!20}(same as 4B) & \\

\hline

4B & any &
$x_0^3+x_1^3+x_2^3+x_3^3+c(x_0x_1x_2 + x_0x_1x_3 + x_0x_2x_3
 + x_1x_2x_3)$
& Proposition~\ref{prop:normalForms4B} \\

\hline

5A & $\ne 2,5$ & $\left(\sum_{i \ne j} x_i^2x_j \right)
+ 2\left(\sum_{i < j < k} x_ix_jx_k\right)$
& Proposition~\ref{prop:normalForms5A} \\
 & 2 & \cellcolor{gray!20}(same as 3C) & \\

\hline

6E & $\ne 2,3$ &
$x_0^3+x_1^3+x_2^3+(x_0+x_1+x_2)x_3^2+cx_0x_1x_2$
& Proposition~\ref{prop:normalForms6E} \\

& $3$ &
$c(x_0+x_1+x_2)(x_0x_1+x_0x_2+x_1x_2) + (x_0x_1x_2)
+ (x_0+x_1+x_2)x_3^2$
& Proposition~\ref{prop:normalForms6E} \\

 & 2 & \cellcolor{gray!20}(same as 4B) & \\

\hline

8A & $\ne 2$ &
$x_0^3 + x_0x_3^2 - x_1x_2^2 + x_1^2x_3$
& Lemma~\ref{lem:8Aautos} \\

\hline

12A & $\ne 2,3$ &
$x_0^3 + x_1^3 + x_2^3 + x_3^3 + 3(\sqrt{3}-1) x_0x_1x_2$
& Lemma~\ref{lem:12Aautos} \\

 & 3 & \cellcolor{gray!20}(same as 8A) & \\

 & 2 & \cellcolor{gray!20}(same as 3C) & \\

\hline
\end{tabular}
}
\caption{Normal forms of cubic surfaces for each automorphism group stratum.}
\label{tbl:Normal Forms}
\end{table}
\end{landscape}

 \end{document}